\providecommand{\pgfsyspdfmark}[3]{}
\newtheorem{thm}{Theorem}[section]
\newtheorem{prop}[thm]{Proposition}
\newtheorem{lem}[thm]{Lemma}
\newtheorem{lemma}[thm]{Lemma}
\newtheorem{cor}[thm]{Corollary}
\newtheorem*{thm*}{Theorem}
\newtheorem*{cor*}{Corollary}
\newtheorem*{prop*}{Proposition}
\theoremstyle{definition}
\newtheorem{defn}[thm]{Definition}
\theoremstyle{remark}
\newtheorem{remark}[thm]{Remark}
\newtheorem{example}[thm]{Example}
\numberwithin{equation}{section}
\newcommand{\be}{\begin{equation}}
\newcommand{\ee}{\end{equation}}
\def\ba{\begin{eqnarray*}}
\def\ea{\end{eqnarray*}}
\newcommand{\bi}{\begin{itemize}}
\newcommand{\ei}{\end{itemize}}
\newcommand{\bn}{\begin{enumerate}}
\newcommand{\en}{\end{enumerate}}
\newcommand{\bbm}{\begin{bmatrix}}
\newcommand{\ebm}{\end{bmatrix}}
\newcommand{\bpm}{\begin{pmatrix}}
\newcommand{\epm}{\end{pmatrix}}
\newcommand{\bsm}{\left ( \begin{smallmatrix}}
\newcommand{\esm}{\end{smallmatrix} \right) }
\newcommand{\mr}{\ensuremath{\mathrm}}
\newcommand{\scr}{\ensuremath{\mathscr}}
\newcommand{\mbf}{\ensuremath{\boldsymbol}}
\newcommand{\mf}{\ensuremath{\mathfrak}}
\newcommand{\ov}{\ensuremath{\overline}}
\newcommand{\sm}{\ensuremath{\setminus}}
\newcommand{\wt}{\ensuremath{\widetilde}}
\newcommand{\ga}{\ensuremath{\gamma}}
\newcommand{\Om}{\ensuremath{\Omega}}
\newcommand{\la}{\ensuremath{\lambda }}
\newcommand{\om}{\ensuremath{\omega}}
\newcommand{\eps}{\ensuremath{\epsilon }}
\def\C{\mathbb{C}}
\def\D{\mathbb{D}}
\def\N{\mathbb{N}}
\def\B{\mathbb{B}}
\def\fr{\mathfrak{r}}
\def\fz{\mathfrak{z}}
\def\fp{\mathbb{C} \langle \fz \rangle }
\def\fps{\mathbb{C} \langle \! \langle  \fz  \rangle \! \rangle}
\def\mrt{\mathrm{t}}
\def\hardy{\mathbb{H} ^2 _d}
\def\mult{\mathbb{H} ^\infty _d}
\newcommand{\ip}[2]{\ensuremath{\langle {#1} , {#2} \rangle}}
\def\nbdom{\mr{Dom} \, }
\def\nbran{\mr{Ran} \, }
\def\nbker{\mr{Ker} \, }
\def\nbdim{\mr{dim} \, }
\def\fskew{\C \ \mathclap{\, <}{\left( \right.}   \fz  \mathclap{  \, \, \, \, \, >}{\left. \right)} \, \, }
\def\ratfps{\C _0 \ \mathclap{\, <}{\left( \right.}  \fz  \mathclap{ \, \, \, \, \, >}{\left. \right)} \, \, }
\def\cdn{\mathbb{C} ^{(n\times n)\cdot d}}
\def\cdm{\mathbb{C} ^{(m\times m)\cdot d}}
\def\cH{\mathcal{H}}
\def\cJ{\mathcal{J}}
\def\cK{\mathcal{K}}
\def\F{\mathbb{F}}
\def\ncu{\mathbb{C} ^{(\N \times \N) \cdot d}}
\def\rball{\mathbb{B} ^{(\N \times \N) \cdot d}}
\def\cint{\ointctrclockwise}
\title{Operator realizations of non-commutative analytic functions}
\author[1]{M\'eric L. Augat\thanks{Partially supported by NSF grant DMS-2155033}}
\affil[1]{\footnotesize James Madison University}
\author[2]{Robert T.W. Martin\thanks{Supported by NSERC grant 2020-05683}}
\affil[2]{\footnotesize University of Manitoba}
\author[3]{Eli Shamovich\thanks{Partially supported by BSF grant 2022235}}
\affil[3]{\footnotesize Ben-Gurion University of the Negev}
\date{}
\begin{document}
\maketitle
\vspace{-.75cm}

\begin{abstract}
A \emph{realization} is a triple, $(A,b,c)$, consisting of a $d-$tuple, $A= (A _1, \cdots, A_d )$, $d\in \N$, of bounded linear operators on a separable, complex Hilbert space, $\cH$, and vectors $b,c \in \cH$. Any such realization defines a (uniformly) analytic non-commutative (NC) function in an open neighbourhood of the origin, $0:= (0, \cdots , 0)$, of the \emph{NC universe} of $d-$tuples of square matrices of any fixed size via the formula $h(X) = I \otimes b^* ( I \otimes I _{\mathcal{H}} - \sum X_j \otimes A_j ) ^{-1} I \otimes c$.

It is well-known that an NC function has a finite--dimensional realization if and only if it is a \emph{non-commutative rational function} that is defined at $0$. Such finite realizations contain valuable information about the NC rational functions they generate.  By considering more general, infinite--dimensional realizations we study, construct and characterize more general classes of uniformly analytic NC functions. In particular, we show that an NC function is (uniformly) entire, if and only if it has a jointly compact and quasinilpotent realization. Restricting our results to one variable shows that an analytic Taylor--MacLaurin series extends globally to an entire or meromorphic function if and only if it has a realization whose component operator is compact and quasinilpotent, or compact, respectively. This then motivates our definition of the set of global uniformly meromorphic NC functions as the (universal) skew field (of fractions) generated by NC rational expressions in the (semi-free ideal) ring of NC functions with jointly compact realizations. 
\end{abstract}

\section{Introduction}
%\setstretch{1.1}

Realization theory has become a powerful tool in the study of analytic functions of several non-commuting (NC), as well as commuting, variables. A realization is a triple, $(A,b,c)$, consisting of a $d-$tuple, $A := \bsm A_1 \\ \vdots \\ A_d \esm \in \scr{B} (\cH ) ^d$ of bounded linear operators on a separable, complex Hilbert space, $\cH$, and vectors $b,c \in \cH$. Any such realization defines a free \emph{formal power series} (FPS) in the $d$ NC formal variables, $\fz = \{ \fz _1 , \cdots \fz _d \}$, by the formula,
$$ h (\fz) := b^* \left( 1 - \sum _{j=1} ^d \fz _j A_j \right) ^{-1} c = \sum _{\om \in \F ^d} b^* A^\om c \, \fz ^\om. $$ (The $A_j$ are assumed to commute with the formal variables $\fz _k$ so that the inverted expression above can be expanded as a formal geometric series to obtain the corresponding FPS.) Here, $\hat{h}_{\om} := b^* A^\om c \in \C$, $\om = i_1 \cdots i_n$ is any \emph{word} comprised of \emph{letters}, $i _j$, chosen from the \emph{alphabet} $\{ 1, \cdots , d \}$,  and $\F ^d$ is the \emph{free monoid} consisting of all words, with product given by concatenation of words and with unit $\emptyset$, the \emph{empty word}, containing no letters. The free monomial $\fz ^\om$ is defined in the obvious way: 
if $\om = i_1 \cdots i_n$, then $\fz ^\om = \fz _{i_1} \cdots \fz _{i_n}$ and $\mf{z} ^\emptyset =:1$. The \emph{free algebra} of all free or NC polynomials in the variables $\fz$ is denoted by $\fp$ and the ring of all free formal power series (FPS) with complex coefficients will be denoted by $\fps$. 

If $h \in \fps$ is given by such a realization, $(A,b,c)$, we write $h \sim (A,b,c)$. Any such $h$ can be viewed as a \emph{freely non-commutative function} in the sense of NC function theory, a recent and deep extension of classical complex analysis and analytic function theory to several NC variables \cite{Taylor,Taylor2,KVV,AgMcY}. Namely, any free polynomial can be evaluated on any $d-$tuple of complex $n\times n$ matrices, and hence defines a function on the $d-$dimensional \emph{NC universe}, of all $d-$tuples of square matrices of any fixed size, $n \in \N$. Any free polynomial, $p \in \fp$, viewed as a function on the NC universe, has three basic properties: (i) $p$ respects the \emph{grading} (matrix size), (ii) $p$ respects direct sums, and (iii) $p$ respects joint similarities. In modern NC function theory, these three properties are taken as the axioms defining a \emph{free} or \emph{non-commutative function}, which is any function defined on an \emph{NC set}, \emph{i.e.} a direct--sum closed subset of the NC universe, obeying (i)--(iii). These axioms, which are natural and may seem innocuous, are surprisingly rigid. Namely, any NC function on an `open NC set' which is `locally bounded' is automatically holomorphic, \emph{i.e.} it is Fr\'echet differentiable at any point in its NC domain and it is analytic in the sense that it has a Taylor-type power series expansion about any point in its domain (a so-called \emph{Taylor--Taylor series}) with non-zero radius of convergence \cite[Theorem 7.21, Theorem 8.11]{KVV}. (In order to precisely define `open' and `locally bounded', we need to define a suitable topology on the NC universe, which we will do in the next section. The topology most relevant to us is called the \emph{uniform topology} \cite[Section 7.2]{KVV}.)  

NC function theory was pioneered by J.L. Taylor in his work on multivariate spectral theory and functional calculus for $d-$tuples of non-commuting operators \cite{Taylor,Taylor2}. Some of his results were rediscovered and developed independently by D.-V. Voiculescu in his operator--valued free probability theory \cite{Voic,Voic2}. These works of Taylor have become extremely influential in the last decade or so, with several groups of prominent researchers advancing what is now called free analysis or NC function theory \cite{KVV,AgMcY}. This renaissance in NC function theory has been precipitated by an influx of both algebraic and analytic techniques. Fundamental links have now been forged between NC function theory and several established branches of mathematics including NC algebra -- in particular P.M. Cohn's theory of non-commutative rings that admit universal skew fields of fractions, invariant theory, convex analysis, algebraic geometry, systems and control theory, and operator algebra theory \cite{Augat-freeGrot,Pascoe-IFT,SSS,SSS2,KVV-local,HMS-realize,HKV-poly,KS-free,KV-freeloci,AHKMc-bianalytic,Ball-sys}.

The theory of non-commutative (NC) rational functions, in particular, lies at the intersection of NC algebra and free analysis. Here, an NC rational expression is any valid rational expression in elements of the free algebra, $\fp$, of free or NC polynomials in several NC variables, and an NC rational function is a suitably defined `evaluation' equivalence class of such expressions. From an algebraic perspective, the set of all NC rational functions is the \emph{free skew field}, $\fskew$, as introduced by P.M. Cohn \cite{Cohn,Cohn2}. As proven by Amitsur, the free skew field is a universal object, the \emph{universal skew field of fractions} of the free algebra \cite{Amitsur,Cohn,Cohn2}. By a theorem of Kleene and Sch\"utzenberger, a free formal power series, $h \in \fps$, is \emph{recognizable}, \emph{i.e.} admits a finite--dimensional realization, if and only if it defines an NC rational function, $h = \fr \in \fskew$, with $0 \in \nbdom \fr$ \cite{Kleene,Schut}. (This is a multivariate generalization of a classical result of Kronecker, which gives a criterion for recognizing when a formal power series is the Taylor series of a rational function \cite{Kronecker}.) If a free FPS is not recognizable, we say it is \emph{unrecognizable}. More generally, we will say that any FPS that admits a (not necessarily finite) operator realization is \emph{familiar}, and that any FPS that does not admit any realization is \emph{unfamiliar}.  If $f \sim (A,b,c)$ is familiar and $X$ is any point in the NC universe for which the \emph{linear pencil}, $L_A (X):= I \otimes I_\cH - \sum X_j \otimes A_j$, is invertible, then $f$ can be evaluated at $X$ using the realization formula, $f(X) = I \otimes b^* L_A (X) ^{-1} I \otimes c$.  Realizations of NC rational functions originated in the work of Sch\"utzenberger in automata theory, and were further developed by Cohn and Reutenauer in the context of NC algebra \cite{Cohn,BR-rational}. This technique was rediscovered independently by Fliess in systems and control theory as well as by Haagerup and Thorbj{\o}rnsen in the setting of free probability theory \cite{Fliess1,Fliess-Hankel,MFliess,Hup-realize,Hup-realize2}. Finite realizations contain  useful information about the corresponding NC rational functions they generate, and we will see that this is also true for more general, infinite--dimensional operator realizations. 

While infinite--dimensional realizations have not been studied as thoroughly in their own right, they have appeared previously in the literature. In the de Branges--Rovnyak theory of contractive multipliers between vector--valued Hardy spaces and in the Nagy--Foias and de Branges--Rovnyak model theories for linear contractions on Hilbert space, the characteristic functions of such linear contractions are contractive, matrix or operator--valued analytic functions in the complex unit disk and are constructed via certain realizations \cite{dBR-model,dBR-ss,NF}. Also see \cite{BC-dBR,Helton-opreal}, which consider one-variable realizations that are generally infinite--dimensional. Here, the classical Hardy space, $H^2$, is the Hilbert space of square--summable Taylor series in the complex unit disk. The classical Hardy space has natural multivariate generalizations, namely, the commutative Drury--Arveson space of analytic functions in the open unit ball of $\C ^d$, and the non-commutative \emph{free Hardy space} or \emph{full Fock space} of square--summable formal power series in several NC variables \cite{Arv3,Pop-freeholo}. The de Branges--Rovnyak realization theory has been extended to the multivariate setting for contractive multipliers between vector--valued Drury--Arveson or free Hardy spaces by Ball--Bolotnikov--Fang in \cite{BBF-nc,BBF-commute}. In particular, any contractive left multiplier of the free Hardy space has a de Branges--Rovnyak realization, and this is an operator realization that is generally infinite--dimensional. Although our main focus will be on the realization theory of uniformly analytic NC functions, operator realizations of free holomorphic maps, in the sense of the ``free topology" on the NC universe, were constructed by Agler, M\textsuperscript{c}Carthy, and Young in \cite{AgMcY}. One of our main goals is to develop a general theory of operator realizations and to determine how properties of certain classes of operator realizations are related to properties of the corresponding classes of NC functions they generate. 

Recently, Klep, Vinnikov and Vol\v{c}i\v{c} have developed a local theory of germs of NC functions which are analytic in an open neighbourhood of $0 = (0, \cdots , 0)$, the origin of the NC universe \cite{KVV-local}. They show, in particular, that the ring of germs of uniformly analytic NC functions which are analytic in a uniformly open neighbourhood of $0$, $\scr{O} ^u _0$, is a \emph{semifir}, \emph{i.e.} a semi-free ideal ring in the sense of P.M. Cohn \cite{Cohn}. This is a class of non-commuative rings that admit universal skew fields of fractions. It is not difficult to see that any free FPS, $h$, is a uniformly analytic germ, if and only if it is familiar, \emph{i.e.} if and and only if $h \sim (A,b,c)$ is given by an operator realization. Hence the ring of all NC rational functions that are uniformly analytic in a uniformly open neighbourhood of $0$, $\ratfps$, embeds as a proper subring in $\scr{O} ^u _0$. (And it is not difficult to see that any $\fr \in \ratfps$ is uniformly analytic in a uniformly open neighbourhood of $0$ directly, since any $\fr \in \ratfps$ has a finite--dimensional realization.) This raises the question as to whether there are other proper subsets, $\scr{S}$, of operator $d-$tuples, so that the sets of all FPS with realizations $(A,b,c)$, $A \in \scr{S}$, generate semifirs, $\scr{O} _0 ^\scr{S}$, that lie properly between $\ratfps$ and $\scr{O} ^u _0$, and moreover so that there is also proper containment of their universal skew fields, $\fskew \subsetneqq \scr{M} ^\scr{S} _0 \subsetneqq  \scr{M} ^u _0$. Here, $\scr{M} _0 ^u$, the universal skew field of fractions of $\scr{O} _0 ^u$, is called the skew field of uniformly meromorphic NC germs at $0$. We prove that this is indeed the case, by taking $\scr{S} = \scr{C} (\cH ) ^d$ or $\scr{S} = \scr{T} _p (\cH ) ^d$, where $\scr{C} (\cH )$ and $\scr{T} _p (\cH )$ are the ideals of compact and Schatten $p-$class operators. This produces a chain of universal skew fields which properly interpolate between $\fskew$ and $\scr{M} _0 ^u$.

It is not difficult to see, using Popescu's generalization of the Hadamard radius of convergence formula, as well as Popescu's multivariate generalization of the Gelfand--Beurling spectral radius formula, that if $h$ is an NC function with a jointly quasinilpotent realization, then $h$ extends to a uniformly entire NC function, defined on the entire NC universe \cite{Pop-freeholo,Pop-joint}, see Lemma \ref{QNisentire}.  Here, a $d-$tuple of bounded linear operators is called \emph{quasinilpotent} if its \emph{outer}, or \emph{joint spectral radius}, is $0$ \cite{Pop-joint}. Indeed, it is well-known that a uniformly analytic function, $h$, with $0 \in \nbdom h$ is a free polynomial, $h=p \in \fp$, and hence uniformly entire, if and only if $h$ has a finite--dimensional realization that is jointly nilpotent, see \emph{e.g.} \cite{HKV-poly} or \cite[Section 6]{JMS-ratFock}.  In Theorem \ref{uentireqnc} we prove that the converse also holds. Namely, an NC function is uniformly entire if and only if it admits a compact and jointly quasinilpotent realization. In one-variable it follows that an analytic Taylor series (at $0$) extends globally to an entire function if and only if it has a compact and quasinilpotent realization. As a corollary, any such Taylor series in one-variable extends globally to a meromorphic function if and only if it has a compact realization. This motivates the definition of the ring of global uniformly meromorphic NC functions with analytic germs at $0$, $\scr{O} _0 ^\scr{C}$, $\scr{C} := \scr{C} (\cH) ^d$, as the ring of all free FPS that admit jointly compact realizations. Since we prove that this ring is a semifir, it has a universal skew field of fractions, given by rational expressions in elements of the semifir, which we then call the skew field of global uniformly meromorphic NC functions. 

Elements of $\scr{M} _0 ^u$ are inherently local objects, \emph{i.e.} they are `uniformly meromorphic NC germs at $0$'. However, as we will show, any $f \in \scr{M} _0 ^\scr{C} \subsetneqq \scr{M} _0 ^u$, $\scr{C} = \scr{C} (\cH) ^d$, can be identified, uniquely, with a uniformly analytic NC function on a uniformly open and joint-similarity invariant NC domain that is analytic--Zariski open and dense, as well as matrix-norm open and connected, at every level, $n$, of the NC universe, for sufficiently large $n$. This motivates and justifies our interpretation of $\scr{M} _0 ^\scr{C}$ as the skew field of \emph{global} uniformly meromorphic NC functions.

\subsection{Outline}

The subsequent section provides some preliminary background material on realizations, free FPS and NC function theory. Our first main results appear in Section 3, where we define a notion of minimality for an operator realization, $(A,b,c)$, based on cyclicity of the vectors $b,c$ for $A$ and $A^*$. Theorem \ref{minunique} proves that minimal realizations are unique up to unique \emph{pseudo-similarity}, \emph{i.e.} a generally unbounded, closed, `similarity', and Theorem \ref{Kalman} shows that a minimal realization can be constructed from any realization, via compression to a certain semi-invariant `minimal' subspace. As demonstrated in Example \ref{entireeg}, pseudo-similarity is an extremely weak relation which preserves few spectral properties of realizations or of domains of NC functions. This example shows that the Volterra operator, which is quasinilpotent and compact, is pseudo-similar to the backward shift on the Hardy space, $H^2$. In Subsection \ref{ss-matrixreal}, we initiate the construction and study of operator--realizations about a matrix--centre, $Y = (Y_1, \cdots, Y_d) \in \C ^{m\times m} \otimes \C ^{1\times d}$, in the domain of a uniformly analytic NC function, $f$. We will leave a full development of this theory, as is done for NC rational functions in \cite{PV1,PV2}, to future research. Our main motivation for introducing matrix--centre realizations is to use them as a tool in our construction of the skew field of global uniformly meromorphic NC functions, $\scr{M} _0 ^\scr{C}$, $\scr{C} = \scr{C} (\cH) ^d$, generated by all familiar NC functions with jointly compact realizations, as well as to show that this skew field sits properly between the free skew field, $\fskew$, and the skew field of uniformly meromorphic NC germs at $0$, $\scr{M} _0 ^u$.

Section \ref{sec-entire} provides a complete realization--theoretic characterization of uniformly entire NC functions, as well as of meromorphic functions of a single complex variable. Namely, in Theorem \ref{uentireqnc} we prove:

\begin{thm*}
An NC function, $h$, is uniformly entire if and only if it has a realization $(A,b,c)$ with $A \in \scr{C} (\cH ) ^d$ jointly compact and quasinilpotent. Moreover, if $h$ is uniformly entire, then it has a minimal compact and quasinilpotent realization that is the operator--norm limit of finite--rank and jointly nilpotent realizations.
\end{thm*}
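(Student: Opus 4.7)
\emph{Proof proposal.} The backward direction is immediate from Lemma \ref{QNisentire}: a jointly quasinilpotent realization forces the associated FPS to have infinite radius of convergence by Popescu's joint spectral radius formula.

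For the forward direction, the plan is to construct an explicit realization on a weighted Fock space. Let $\mathcal{F} = \bigoplus_{n\geq 0}\mathcal{F}_n$ denote the full Fock space over $\C^d$, with orthonormal basis $\{e_\omega\}_{\omega\in\F^d}$ graded by word length. For a positive weight sequence $\{t_n\}_{n\geq 0}$ (to be chosen), define
\[
A_j e_\omega := t_{|\omega|}\,e_{j\omega},\qquad c := e_\emptyset,\qquad b := \sum_{\omega}\overline{\hat h_\omega}\,T_{|\omega|}^{-1}\,e_\omega,
\]
where $T_n := t_0\cdots t_{n-1}$ (with $T_0 := 1$). A direct computation gives $b^* A^\omega c = \hat h_\omega$, hence $h\sim(A,b,c)$. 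The task is to choose $t_n$ so that simultaneously (a) $t_n\to 0$, making each $A_j$ compact (as a block shift with block norms $t_n$ across the level grading); (b) $T_n^{1/n}\to 0$, yielding joint quasinilpotence via Popescu's formula; and (c) $\sum_n T_n^{-2}\|h_n\|_2^2 < \infty$ with $\|h_n\|_2^2 := \sum_{|\omega|=n}|\hat h_\omega|^2$, ensuring $b\in\mathcal{F}$. Since $h$ is uniformly entire, Popescu's radius-of-convergence formula gives $\|h_n\|_2^{1/n}\to 0$, and a concrete choice such as $t_n := 2\sup_{k\geq n}\|h_k\|_2^{1/k}$ (modified slightly when zeros appear; the polynomial case is handled trivially) produces $T_n^2 \geq 4^n\|h_n\|_2$, securing (c), while (a) and (b) follow at once from $t_n\to 0$ (the geometric mean of a null sequence being null).

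For the norm approximation, truncate: let $A^{(N)}_j$ agree with $A_j$ on levels $<N$ and vanish on levels $\geq N$. These are finite-rank and jointly nilpotent, since $(A^{(N)})^\omega = 0$ whenever $|\omega|>N$, and $\|A_j - A^{(N)}_j\| = \sup_{n\geq N} t_n \to 0$. Finally, apply Theorem \ref{Kalman} to compress $(A,b,c)$ to its minimal semi-invariant subspace $\mathcal{K}$: compactness is preserved under compression, and Sarason's identity $P_{\mathcal{K}} A^\omega|_{\mathcal{K}} = (P_{\mathcal{K}} A|_{\mathcal{K}})^\omega$ preserves joint quasinilpotence because $\|(P_{\mathcal{K}} A|_{\mathcal{K}})^\omega\|\leq\|A^\omega\|$.

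The main obstacle will be transferring the finite-rank \emph{jointly nilpotent} approximants to the minimal realization on $\mathcal{K}$, since $\mathcal{K}$ is semi-invariant for $A$ but not a priori for any individual $A^{(N)}$. I would resolve this by observing that the reachable and co-reachable subspaces generating $\mathcal{K}$ respect the level grading of $\mathcal{F}$ (since $c = e_\emptyset\in\mathcal{F}_0$ and each $A_j$ raises level by one), so $\mathcal{K}$ inherits a compatible finite-dimensional filtration, and the induced level-truncations on $\mathcal{K}$ are again finite-rank, jointly nilpotent, and converge in norm to the minimal compressed tuple.
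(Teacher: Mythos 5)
Your construction of the compact quasinilpotent realization is a valid alternative to the paper's, and the verification of $\rho(A)=0$, compactness, and $b\in\mathcal{F}$ all go through with the weights you chose (minor slip: with $t_n = 2\sup_{k\geq n}\|h_k\|_2^{1/k}$ one gets $T_n \geq 2^n\|h_n\|_2$, hence $T_n^{-2}\|h_n\|_2^2\leq 4^{-n}$, which is what you want — your displayed bound $T_n^2\geq 4^n\|h_n\|_2$ is off by a power but the conclusion is right). The paper instead builds its realization on a two-scale direct sum $\mathcal{H} = \bigoplus_n\bigoplus_{|\omega|=n}\C^{n+1}$, with $A$ acting block-diagonally across the word index $\omega$; your single weighted-creation tuple on the full Fock space is more economical. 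The converse direction via Lemma \ref{QNisentire} is also fine.

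However, there is a genuine gap in your argument for the final clause, exactly at the point you flagged. Your proposed fix asserts that ``the reachable and co-reachable subspaces generating $\mathcal{K}$ respect the level grading of $\mathcal{F}$,'' but this is only half true. The controllable space $\mathscr{C}_{A,c}=\bigvee_\omega A^\omega e_\emptyset = \bigvee_\omega T_{|\omega|}e_\omega = \mathcal{F}$ is indeed graded (trivially so). But the observable space $\mathscr{O}_{A,b}=\bigvee_\omega A^{*\omega}b$ is \emph{not} graded, because $b=\sum_\omega\overline{\hat h_\omega}T_{|\omega|}^{-1}e_\omega$ is spread across every level and each $A^{*\omega}b$ is a vector with support on all levels. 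Already in one variable with $h(z)=e^z$ one sees that $\{A^{*m}b\}_m$ spans a subspace with no nontrivial intersection with any finite level truncation $\bigoplus_{n<N}\mathcal{F}_n$, so $\mathcal{K}=\mathscr{O}_{A,b}$ does not inherit a compatible finite-dimensional filtration. Consequently, the ``induced level-truncations on $\mathcal{K}$'' are not well-defined as jointly nilpotent approximants to the compressed tuple.

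What makes the paper's argument work, and what your construction lacks, is that its $A$ is a genuine \emph{direct sum} over words $\omega$ of finite nilpotent blocks $A(\omega)$ on $\C^{|\omega|+1}$, so the level truncations $\mathcal{H}_N := \bigoplus_{|\omega|\leq N}\C^{|\omega|+1}$ are jointly $A$-\emph{reducing}. This reducing property is what lets $P_N$ commute with the Kalman data ($P_N\mathscr{C}_{A,c}=\mathscr{C}_{A^{(N)},c_N}$, $P_N\mathscr{O}_{A,b}^\perp=\mathscr{O}_{A^{(N)*},b_N}^\perp$), so the Kalman minimal subspaces of $(A^{(N)},b_N,c_N)$ are nested compressions of the Kalman minimal subspace of $(A,b,c)$, and the minimal finite-rank nilpotent realizations converge to the minimal compact quasinilpotent one in row operator norm. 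In your Fock-space model, $\bigoplus_{n<N}\mathcal{F}_n$ is only $A$-\emph{co}-invariant, not reducing, so this transfer breaks down. To salvage your approach you would need either to redesign the realization to be a direct sum over words (essentially reproducing the paper's construction), or supply an independent argument that a minimal compact quasinilpotent tuple is a row-norm limit of finite-rank jointly nilpotent tuples — neither of which the current sketch provides.
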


Here, a $d-$tuple of linear operators, $A \in \scr{B} (\cH ) ^d$, is \emph{jointly nilpotent} if there is an $n \in \N$ so that $A^\om \equiv 0$ for any word, $\om = i_1 \cdots i_m$, of length $|\om | = m >n$. An NC function is a free polynomial if and only if it has a jointly nilpotent realization \cite{KV-freeloci}. If $(A,b,c)$ is a minimal realization with $A \in \scr{B} (\cH ) ^d$ jointly nilpotent, then $\cH$ is necessarily finite--dimensional.  

It follows from the above theorem and the standard realization algorithm for sums, products and inverses of familiar NC functions, see Subsection \ref{FMalg}, that any meromorphic function in $\C$ has a compact realization and we prove the converse of this in Theorem \ref{meroreal} and Theorem \ref{merodomain}.

\begin{thm*}
Let $f \in \scr{O} (\Om)$ be analytic in an open neighbourhood, $\Om \subseteq \C$, of $0$. Then $f$ is meromorphic if and only if it has a compact realization, $(A,b,c)$, \emph{i.e.} with $A \in \scr{C} (\cH)$ a compact linear operator. 
\end{thm*}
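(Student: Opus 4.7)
The plan is to combine the one-variable specialization of Theorem \ref{uentireqnc} (entire Taylor series $\Leftrightarrow$ compact quasinilpotent realization) with the Fornasini--Marchesini realization algebra from Subsection \ref{FMalg} and the Riesz--Schauder spectral theory of compact operators.

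For the forward direction, suppose $f$ extends to a meromorphic function on $\C$. Since $0 \in \nbdom f$, standard Weierstrass factorization lets us write $f = g/h$ with $g, h$ entire and $h(0) = 1$. By the preceding theorem specialized to one variable, each of $g$ and $h$ admits a compact realization $(A_g,b_g,c_g)$, $(A_h,b_h,c_h)$. Applying the Fornasini--Marchesini algebra to form $f = g \cdot h^{-1}$ produces a realization whose state-space operator is assembled from $A_g$ and $A_h$ via direct sums, block upper-triangular products coupled by rank-one off-diagonal terms of the form $bc^*$, and -- for the inversion of $h$, legitimate because $h(0) = 1 \neq 0$ -- a rank-one perturbation of $A_h$ (the standard Schur-complement formula yields $A_h' = A_h - c_h b_h^* A_h$ or an analogous rank-one alteration). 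Since the compact operators are closed under direct sums, multiplication by bounded operators, and finite-rank perturbations, the assembled operator is compact, providing the desired realization of $f$.

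For the converse, suppose $f \sim (A,b,c)$ with $A \in \scr{C}(\cH)$. By Riesz--Schauder, $\sigma(A) \setminus \{0\}$ consists of at most countably many eigenvalues $\{\lambda_n\}$ of finite algebraic multiplicity, accumulating only at $0$. The linear pencil $L_A(z) = I - zA$ is invertible precisely when $z \notin P := \{1/\lambda_n\}$, a discrete subset of $\C$ with no finite accumulation point (since $\lambda_n \to 0$). On $\C \setminus P$, the map $z \mapsto L_A(z)^{-1}$ is holomorphic, hence so is $f(z) = b^* L_A(z)^{-1} c$. At each pole candidate $z_n = 1/\lambda_n$, the classical fact that the resolvent $(\mu - A)^{-1}$ of a compact operator has a Laurent expansion around the isolated eigenvalue $\lambda_n$ with finite principal part (of order at most the algebraic multiplicity of $\lambda_n$) translates, via the substitution $\mu = 1/z$, into $L_A(z)^{-1}$ having a pole of finite order at $z_n$. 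Sandwiching with $b^*$ and $c$ preserves this, so $f$ has at most a pole of finite order at each $z_n$ and is meromorphic on $\C$. Since the extension agrees with the original germ near $0$, we are done.

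The main obstacle is the bookkeeping for the forward direction: one must verify that the inversion step of the Fornasini--Marchesini algorithm -- not just the direct-sum and product steps -- preserves compactness. This reduces to checking that the explicit realization of $1/h$ (assuming $h(0) \neq 0$) modifies $A_h$ only by a finite-rank operator, which follows directly from the Schur-complement inversion formula. The converse is then essentially a direct application of compact operator spectral theory, with the only subtlety being the observation that $\lambda_n \to 0$ sends the potential singularity locus of $f$ to infinity.
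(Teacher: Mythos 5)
Your proof is correct and follows essentially the same route as the paper: the forward direction via Weierstrass factorization $f = g/h$ into entire functions, each realized compactly and quasinilpotently (the paper's Theorem \ref{entire-qnc}), assembled with the Fornasini--Marchesini algorithm whose inversion step is a rank-one perturbation of the state operator; the converse via Riesz--Schauder spectral theory of compact operators showing the resolvent (and hence $z\mapsto b^*L_A(z)^{-1}c$) is meromorphic with finite-order poles at $1/\lambda_n$ accumulating only at infinity (the paper's Theorems \ref{compresolvent} and \ref{merodomain}). The details are sound; the exact rank-one formula for inversion is $A_j - \frac{1}{D} B_j C$ rather than the variant you wrote, but as you hedged, any rank-one alteration suffices, and similarly the descriptor version of inversion enlarges $\cH$ by a one-dimensional summand, which is harmless for compactness.
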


Motivated by these results, and the fact that a reasonable definition of a uniformly meromorphic NC function would be any NC rational expression in uniformly entire NC functions, we define the ring of all uniformly meromorphic NC functions that are analytic in a uniformly open neighbourhood of $0$, $\scr{O} _0 ^\scr{C}$, $\scr{C} = \scr{C} (\cH) ^d$, as the set of all NC functions with jointly compact operator realizations. If a familiar NC function, $f$ has a realization, $(A,b,c)$, the \emph{invertibility domain} of $A$, $\scr{D} (A)$, is the uniformly--open and joint-similarity invariant NC subset of the NC universe consisting of all $X = (X_1, \cdots , X_d)$ in the NC universe for which the linear pencil, $L_A (X)$, is invertible. While invertibility domains of different minimal realizations for the same familiar NC function can be very different, compact realizations exhibit much better behaviour:

\begin{thm*}[Theorem \ref{meroreal}, Theorem \ref{merodomain}, Theorem \ref{merocompreal} and Theorem \ref{globalmerothm}]
Any two minimal and compact realizations of the same NC function, $f$, have the same invertibility domains and the invertibility domain of any minimal compact realization of $f$ is equal to the largest NC domain on which $f$ can be defined. If $f \sim (A,b,c)$ with jointly compact $A \in \scr{C}$, then the invertibility domain, $\scr{D} (A)$, of the linear pencil, $L_A$, is analytic--Zariski open and dense as well as matrix-norm open and connected at every level, $n \in \N$. If $X = (X_1, \cdots , X_d ) \in \C ^{n\times n}\otimes \C ^{1\times d}$ and $f$ has a compact realization, then the matrix-valued function $f(\la X)$, $\la \in \C$, is meromorphic.  
\end{thm*}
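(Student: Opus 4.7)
The plan is to address the four assertions of this composite theorem from easiest to hardest. The slicing statement admits a direct reduction: for fixed $X \in \C^{n\times n}\otimes\C^{1\times d}$, the operator $B(X) := \sum_{j=1}^d X_j \otimes A_j$ is compact on $\C^n \otimes \cH$, since each summand is the tensor product of a finite matrix with a compact operator, and such tensors are compact. The realization formula expresses each matrix entry of $f(\la X) = I\otimes b^*(I - \la B(X))^{-1} I \otimes c$ as a scalar familiar function of $\la$ with a single compact component operator, and the one--variable characterization in Theorem~\ref{meroreal} then yields a meromorphic extension of each entry to all of $\C$.

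The topological statement about $\scr{D}(A)$ at each level $n$ follows from the analytic Fredholm theorem. The map $X \mapsto K(X) := \sum X_j \otimes A_j$ is a linear map from $(\C^{n\times n})^d$ into the compact operators on $\C^n \otimes \cH$, and $L_A(0) = I$ is invertible; the singular locus of $L_A$ at level $n$ is therefore forced to be a proper analytic subvariety of $(\C^{n\times n})^d$. Such a subvariety is matrix--norm closed and nowhere dense, its complement is both analytic--Zariski open and matrix--norm open and dense, and removing a proper analytic subvariety from the connected finite--dimensional affine space $(\C^{n\times n})^d$ preserves connectedness.

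For invariance of $\scr{D}(A)$ across minimal compact realizations, let $(A,b,c)$ and $(A',b',c')$ be two such. Theorem~\ref{minunique} furnishes a closed, densely defined pseudo--similarity $S$ with closed densely defined inverse, satisfying $S A_j \subseteq A'_j S$; tensoring with $I_{\C^n}$ yields the intertwining $(I\otimes S) L_A(X) \subseteq L_{A'}(X) (I\otimes S)$ on $\C^n\otimes\nbdom(S)$. The decisive point is that $L_A(X)$ and $L_{A'}(X)$ are compact perturbations of the identity and hence index--zero Fredholm, so invertibility reduces to injectivity. Combined with the analogous intertwining coming from $S^{-1}$, this transports null spaces bijectively between the two pencils and yields $\scr{D}(A) = \scr{D}(A')$. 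Maximality then follows by contradiction: any uniformly analytic extension of $f$ past $\scr{D}(A)$ would, via the matrix--centre construction of Subsection~\ref{ss-matrixreal}, the direct--sum algorithm of Subsection~\ref{FMalg}, and the Kalman--type compression of Theorem~\ref{Kalman}, yield a minimal compact realization of $f$ whose invertibility domain strictly contains $\scr{D}(A)$, contradicting the invariance just established.

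The main obstacle I expect is this invariance step. Example~\ref{entireeg} shows that pseudo--similarity does not in general preserve spectra, so compactness must enter essentially. The cleanest route is through index--zero Fredholm theory for compact perturbations of the identity, using the two--sided intertwining by $S$ and $S^{-1}$, density of $\nbdom(S^{\pm 1})$, and the closedness of null spaces of bounded Fredholm operators to propagate kernels between $L_A(X)$ and $L_{A'}(X)$.
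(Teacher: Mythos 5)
Your factoring of the statement is sound and two of the three pieces are close to the paper's own route. The slicing claim is handled essentially as in the paper: $X \otimes A$ is compact, so one reduces to the resolvent analysis of compact operators (Theorem~\ref{compresolvent}); your citation should point to Theorem~\ref{merodomain} rather than Theorem~\ref{meroreal}, which is the converse direction, but the reduction is the same. The analytic--Zariski piece also matches the paper's appeal to the analytic Fredholm theorem (the paper cites \cite{compactdet}); your observation that removing a proper analytic subvariety from $(\C^{n\times n})^d$ leaves a connected set is a valid and perhaps cleaner alternative to the paper's radial-path argument, which connects $X$ to $0$ along $z \mapsto zX$ while dodging the discrete set $\{z : z^{-1} \in \sigma(X \otimes A)\}$.

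The gap is in the domain-invariance step, which is the crux of the theorem. Your intertwinings $(I \otimes S)L_A(X) \subseteq L_{A'}(X)(I \otimes S)$ on $\C^n \otimes \nbdom S$, and the reduction of invertibility to injectivity via index-zero Fredholmness, are both correct. But to turn this into ``null spaces transport bijectively'' you need, for each $0 \neq \xi \in \ker L_A(X)$, a nonzero element of $\ker L_A(X) \cap (\C^n \otimes \nbdom S)$, and there is no reason one should exist: $\nbdom S$ is merely dense, and a dense subspace can meet a given finite-dimensional subspace only at $0$ (in $\ell^2$ the finitely supported vectors are dense yet meet $\mathrm{span}\{(1,1/2,1/3,\dots)\}$ trivially). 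The paper flags exactly this obstacle in the remark after Example~\ref{entireeg}: if $Z \otimes A\, x = x$, minimality gives $p_n(A)c \to x$, but there is no reason $S p_n(A)c$ is bounded or convergent. The same problem recurs with the $S^{-1}$ intertwining. The paper's Theorem~\ref{danequiv} circumvents this by never touching kernel vectors: it works with test vectors $p(A)c$, $q(A^*)b$ that genuinely lie in the cores of $S$ and $S^*$, forms the scalar meromorphic functions $g_{p,q}(\la) = \la^{-1} f_{p,q}(\la^{-1}X)$, integrates $(1-\la)^{\ell} g_{p,q}$ against a small contour around $\la = 1$ (with $\ell$ the pole order of $R_{X \otimes A}$) to obtain
$$ 0 \;=\; I_n \otimes b'^* q(A')\,(I - X \otimes A')^{\ell}\, E_1(X \otimes A')\, I_n \otimes p(A')c' \quad \text{for all } p,q \in \fp, $$
and then invokes minimality of $(A',b',c')$ a \emph{second} time to conclude $(I - X \otimes A')^{\ell} E_1(X \otimes A') = 0$, hence $m \le \ell$, and by symmetry $\ell \le m$. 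In your sketch minimality enters only through the existence of $S$, which is not enough; compactness must enter not through Fredholm index alone but through Theorem~\ref{compresolvent} (finite pole order, finite-rank Riesz idempotent) so that the contour integral computation closes. Your maximality argument inherits the same deficiency, since it presupposes that every uniformly analytic extension of $f$ furnishes a jointly compact minimal realization with strictly larger $\scr{D}(A)$, and making that precise again requires the contour/residue argument rather than the intertwining.
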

In particular, in one-variable we obtain:
\begin{cor*}[Corollary \ref{anequiv}]
If $A \in \scr{C} (\cH )$ and $A' \in \scr{C} (\cH' )$ are analytically equivalent in the sense that there exist $b,c \in \cH$ and $b', c' \in \cH'$ so that $(A,b,c)$ and $(A',b',c')$ are minimal compact realizations for the same meromorphic function, then $A$ and $A'$ have the same spectrum. In this case $\sigma (A) \sm \{ 0 \} = \sigma (A' ) \sm \{ 0 \}$ is either the empty set, or equal to $\{ \la _j \} _{j=1} ^N$, $N \in \N \cup \{ +\infty \}$, where each $\la _j \in \sigma (A) \sm \{ 0 \}$ is an eigenvalue of finite multiplicity. For each $0 \neq \la _j \in \sigma (A)$, the size of the largest Jordan blocks in the Jordan normal forms of $A_j = A | _{\nbran E_j (A)}$, and of $A' _j$, where $E_j(A)$ is the Riesz idempotent so that $\sigma (A_j )$ is equal to the singleton, $\{ \la _j \}$, are the same, and the Riesz idempotents $E_j (A)$, $E_j (A')$ have the same rank.
\end{cor*}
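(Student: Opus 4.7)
The plan is to combine the invertibility-domain equality for minimal compact realizations from Theorem \ref{meroreal} and Theorem \ref{merodomain} with a Riesz-projection compression that reduces the Jordan-structure and multiplicity statements to the classical uniqueness, up to similarity, of minimal finite-dimensional realizations of a scalar rational function.

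First, in one variable $\scr{D}(A) = \C \setminus \{1/\la : \la \in \sigma(A)\setminus \{0\}\}$, so $\scr{D}(A) = \scr{D}(A')$ immediately gives $\sigma(A) \setminus \{0\} = \sigma(A')\setminus\{0\}$; the structural description of this common set (empty or $\{\la_j\}_{j=1}^N$ with each $\la_j$ an eigenvalue of finite multiplicity) is then the Riesz--Schauder theorem for compact operators. To promote this to $\sigma(A) = \sigma(A')$ I would split cases according to rationality of $f$: by the Kleene--Sch\"utzenberger theorem, $f$ is rational iff its minimal realization is finite-dimensional, with dimension equal to the rank of the Hankel matrix of $f$, an invariant of $f$; so in the rational case $\cH$ and $\cH'$ are finite-dimensional of equal dimension, while in the non-rational case both $\cH$ and $\cH'$ are infinite-dimensional and $0$ automatically lies in the spectrum of any compact operator on an infinite-dimensional Hilbert space.

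For the comparisons at a fixed nonzero $\la_j$, I would compress to the generalized eigenspace. Since $A$ is compact with isolated nonzero spectral point $\la_j$, the Riesz idempotent $E_j := E_j(A)$ is a bounded finite-rank idempotent commuting with $A$, giving a topological decomposition $\cH = \nbran E_j \oplus \ker E_j$ and a block decomposition $A = A_j \oplus B_j$ with $\sigma(A_j) = \{\la_j\}$ and $\la_j \notin \sigma(B_j)$. Setting $c_j := E_j c$, $b_j := E_j^* b$, and analogously $c_{-j}, b_{-j}$ using $I - E_j$, the realization formula splits as
\[
f(\la) = b_j^*(I - \la A_j)^{-1} c_j + b_{-j}^*(I - \la B_j)^{-1} c_{-j},
\]
where the first summand is the principal part of $f$ at $\mu_j = 1/\la_j$, a scalar rational function with unique pole $\mu_j$, and the second is analytic at $\mu_j$.

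The key observation is that $(A_j, b_j, c_j)$ is itself a \emph{minimal} realization of the principal part $f_j$. Since $E_j$ commutes with $A$, one has $A_j^n c_j = E_j A^n c$, and continuity of $E_j$ together with the cyclicity of $c$ for $A$ yields $\nbran E_j = E_j\cH = E_j \overline{\mathrm{span}}\{A^n c\} \subseteq \overline{\mathrm{span}}\{A_j^n c_j\} = \mathrm{span}\{A_j^n c_j\}$, the final equality because $\nbran E_j$ is finite-dimensional; the dual cyclicity of $b_j$ under $A_j^*$ follows symmetrically from $E_j^* A^* = A^* E_j^*$. The same compression of $(A', b', c')$ produces a minimal realization $(A'_j, b'_j, c'_j)$ of the \emph{same} rational function $f_j$, and Theorem \ref{minunique} (whose pseudo-similarities collapse to honest bounded invertible similarities between finite-dimensional spaces) then furnishes an invertible $S_j : \nbran E_j \to \nbran E'_j$ with $S_j A_j S_j^{-1} = A'_j$. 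This conjugacy immediately yields $\mathrm{rank}\, E_j = \dim \nbran E_j = \dim \nbran E'_j = \mathrm{rank}\, E'_j$ and identical Jordan normal forms of $A_j$ and $A'_j$, hence in particular equal largest Jordan-block sizes. The main anticipated obstacle is the verification that the compressions $(A_j, b_j, c_j)$ remain minimal, which is handled cleanly by the boundedness and commutation properties of the spectral projection $E_j$ together with the finite-dimensionality of $\nbran E_j$.
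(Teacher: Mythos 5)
Your proof is correct, and it takes a genuinely different route from the paper's which in fact yields a \emph{stronger} conclusion. The paper argues directly with Riesz--Dunford contour integrals at the origin of the problem: it forms $f_{p,q}(\lambda)=b^*p(A)(\lambda I-A)^{-1}q(A)c$ for arbitrary polynomials $p,q$, integrates $(\lambda-\lambda_0)^{m_0}f_{p,q}$ around $\lambda_0$ to obtain $(A'-\lambda_0 I)^{m_0}E_{\lambda_0}(A')=0$ from observability of $(A',b',c')$, and then separately matches a finite basis of $\nbran E_{\lambda_0}(A)$ against a spanning set of $\nbran E_{\lambda_0}(A')$ via the identity $b^*p(A)E_{\lambda_0}(A)q(A)c=b'^*p(A')E_{\lambda_0}(A')q(A')c'$ to equate ranks. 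You instead compress each realization to the finite-dimensional spectral subspace $\nbran E_j$, show the compression is a minimal realization of the same strictly proper rational function $f_j$ (your identification of $f_j$ with the principal part of $f$ at $1/\lambda_j$ is right, because $A_j$ is invertible so $f_j(z)=b_j^*(I-zA_j)^{-1}c_j$ is strictly proper with unique pole at $1/\lambda_j$, hence equals its own principal part, and so the two compressions realize the same $f_j$), and then invoke uniqueness of finite-dimensional minimal realizations to get a bounded invertible similarity $S_jA_jS_j^{-1}=A_j'$. This buys equality of the \emph{entire} Jordan normal forms of $A_j$ and $A_j'$, which is strictly more than equal largest-block sizes plus $\mathrm{rank}\,E_j(A)=\mathrm{rank}\,E_j(A')$. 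Two small technical points worth tidying in your write-up: $E_j^*b$ lies in $\nbran E_j^*$, which differs from $\nbran E_j$ when $E_j$ is not self-adjoint, so the intended $b_j$ is the orthogonal projection of $b$ onto $\nbran E_j$ (this gives the same realization coefficients, since for $v \in \nbran E_j$ one has $\langle E_j^* b, v\rangle_{\mathcal H}=\langle b,E_jv\rangle_{\mathcal H}=\langle b,v\rangle_{\mathcal H}$); and for observability of the compression the symmetric intertwining of $E_j^*$ with $A^*$ does not directly produce the adjoint of $A_j$ on $\nbran E_j$, so argue instead: if $v\in\nbran E_j$ satisfies $\langle A_j^{*n}b_j,v\rangle_{\nbran E_j}=0$ for all $n$, then by $A$-invariance of $\nbran E_j$ one has $\langle b,A^nv\rangle_{\mathcal H}=0$ for all $n$, and so $v=0$ by observability of $(A,b,c)$.
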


In Section \ref{subsemi}, we investigate when a subset $\scr{S} \subseteq \scr{B} (\cH ) ^d$ of operator $d-$tuples generates a sub-semifir of the ring of uniformly analytic germs, $\scr{O} _0 ^u$. For example, any free FPS, $h$, is familiar if and only if it belongs to the semifir of uniformly analytic germs at $0$, as constructed in \cite{KVV-local}, and any such $h$ is an NC rational, recognizable free FPS, if and only if it has a finite--rank realization.  In particular, setting $\scr{C} = \scr{C} (\cH ) ^d$ to be the set of $d-$tuples of compact operators and $\scr{T} _p := \scr{T} _p (\cH ) ^d$ to be $d-$tuples of operators in the Schatten $p-$classes of trace-class operators obeying $\mr{tr} \, |A| ^p < +\infty$, we obtain: 

\begin{thm*}[Theorem \ref{skewchain}]
The sets, $\scr{O} _0 ^\scr{C}$ and $\scr{O} _0 ^{\scr{T} _p}$, $p \in [1, +\infty)$, of free FPS with jointly compact and Schatten $p-$class realizations, respectively, are semifirs, and for any $1\leq q < p < +\infty$ we have the proper inclusions of semifirs:
$$\ratfps \subsetneqq \scr{O} _0 ^{\scr{T} _q } \subsetneqq \scr{O} _0 ^{\scr{T} _p} \subsetneqq \scr{O} _0 ^\scr{C} \subsetneqq \scr{O} _0 ^u, $$ where $\ratfps$ is the semifir of rational free FPS. These proper inclusions also hold for the universal skew fields of fractions generated by these semifirs, namely,
$$\fskew \subsetneqq \scr{M} _0 ^{\scr{T} _q } \subsetneqq \scr{M} _0 ^{\scr{T} _p} \subsetneqq \scr{M} _0 ^\scr{C} \subsetneqq \scr{M} _0 ^u. $$
\end{thm*}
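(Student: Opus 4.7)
The plan is to prove this in three stages: (1) establishing that each $\scr{O}_0^\scr{C}$ and $\scr{O}_0^{\scr{T}_p}$ is a sub-semifir of $\scr{O}_0^u$, (2) producing explicit separating examples along the chain at the FPS level, and (3) lifting properness to the universal skew fields of fractions via Cohn's uniqueness theorem.

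For (1), the subring property follows from the Fliess-Malinkov realization calculus (Subsection \ref{FMalg}): the standard constructions for $f+g$, $fg$, and $f^{-1}$ (when $f(0) \neq 0$) produce direct-sum realizations and rank-one perturbations of $A$, both of which preserve membership in the two-sided operator ideals $\scr{C}(\cH)$ and $\scr{T}_p(\cH)$. Upgrading these subrings to semifirs would proceed by adapting the KVV proof that $\scr{O}_0^u$ is a semifir \cite{KVV-local}: the realization-theoretic compressions to minimal semi-invariant subspaces (Theorem \ref{Kalman}) used to trivialize free linear relations preserve operator-ideal class, since such ideals are closed under compression.

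For (2), I would exhibit explicit one-variable witnesses (lifted to $d$ variables by $h(X_1, \ldots, X_d) := f(X_1)$, which amounts to the realization $(A,0, \ldots, 0)$). For $\ratfps \subsetneqq \scr{O}_0^{\scr{T}_q}$, take $f(z) = \sum_n z^n/(n!)^k$ with $k$ large enough that its weighted-shift realization has Schatten-$q$ weights, while remaining non-rational. For $\scr{O}_0^{\scr{T}_q} \subsetneqq \scr{O}_0^{\scr{T}_p}$ with $q < p$, take a meromorphic $f$ with simple poles $\la_j = j^{1/q}$, so that $\sum |\la_j|^{-p} < \infty$ but $\sum |\la_j|^{-q} = \infty$; Corollary \ref{anequiv} shows every minimal compact realization of $f$ has singular values $\{|\la_j|^{-1}\}$, placing $f$ in $\scr{O}_0^{\scr{T}_p} \sm \scr{O}_0^{\scr{T}_q}$. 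For $\scr{O}_0^{\scr{T}_p}\subsetneqq\scr{O}_0^\scr{C}$, take a meromorphic function whose poles grow logarithmically, $\la_j = \log(j+2)$, producing a compact but non-Schatten minimal realization. Finally, $\scr{O}_0^\scr{C} \subsetneqq \scr{O}_0^u$ is witnessed by a lacunary germ such as $\sum_n z^{n!}$ with natural boundary $|z|=1$, which by Theorem \ref{meroreal} cannot admit any compact realization.

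For (3), Cohn's uniqueness theorem furnishes a canonical chain of injections $\fskew \hookrightarrow \scr{M}_0^{\scr{T}_q} \hookrightarrow \scr{M}_0^{\scr{T}_p} \hookrightarrow \scr{M}_0^\scr{C} \hookrightarrow \scr{M}_0^u$. Strictness reduces to showing that the separating $f$'s of step (2) are not expressible as NC rational expressions in elements of the smaller sub-semifir. This is the main obstacle of the proof: membership in $\scr{M}_0^\scr{S}$ allows arbitrary NC rational combinations of $\scr{O}_0^\scr{S}$-elements, and one must constrain their combined operator-theoretic ``footprint'' to remain in $\scr{S}$. The natural route is to extend the matrix-centre realization theory of Subsection \ref{ss-matrixreal}: argue that any rational combination of $\scr{O}_0^{\scr{T}_p}$-elements, evaluated about a matrix centre $Y$ in its joint invertibility domain, admits a matrix-centre realization whose operator tuple still lies in $\scr{T}_p$, via an FM-style calculus at matrix centres. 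Corollary \ref{anequiv} (or its matrix-centre analogue) then forces a Schatten-$p$ summability condition on the spectral data of the resulting minimal compact realization, which is violated by each witness of step (2); a parallel meromorphic-extension argument handles $\scr{M}_0^\scr{C} \subsetneqq \scr{M}_0^u$.
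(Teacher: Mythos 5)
Your proposal follows the paper's strategy in broad strokes, but there are specific errors and gaps.

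First, your separating witness for $\scr{O}_0^{\scr{T}_q} \subsetneqq \scr{O}_0^{\scr{T}_p}$ rests on a misreading of Corollary \ref{anequiv}: you assert it shows ``every minimal compact realization of $f$ has singular values $\{|\la_j|^{-1}\}$.'' Corollary \ref{anequiv} controls the \emph{spectrum} (and ranks of Riesz idempotents) of analytically equivalent compact operators, not singular values, which are not invariant under (pseudo-)similarity. The correct bridge — used by the paper — is Weyl's inequality: the eigenvalue sequence of a Schatten-$q$ operator is $q$-summable. One starts from a positive diagonal $A_1 = \mr{diag}\,(k^{-1/q})$ (so $A_1 \in \scr{T}_p \setminus \scr{T}_q$), and if $f$ had a minimal realization $A' \in \scr{T}_q$, Corollary \ref{anequiv} would force $\sigma(A_1')\setminus\{0\}=\{k^{-1/q}\}$ while Weyl forces $q$-summability of that set — contradiction. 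Without Weyl your step fails.

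Second, the semifir claim in step (1) is not the right argument. Appealing to the Kalman decomposition and ``trivializing free linear relations'' is not how the paper (or the underlying result) proceeds; semifir-ness comes from Cohn's criterion (\cite[Proposition 2.9.19]{Cohn}): a subring of $\fps$ containing $\C$, closed under inversion of invertibles, and closed under backward left/right shifts is a semifir. The decisive check is that $L_j^* h \sim (A, A_j^*b, c)$ and $R_j^* h \sim (A, b, A_j c)$ keep $A$ in the same operator ideal, so $\scr{O}_0^\scr{S}$ is backward-shift stable whenever $\scr{S}$ is determined by a two-sided ideal and is closed under the descriptor algorithm operations. Theorem \ref{Kalman} only gives existence of minimal representatives; it is not what supplies the semifir structure.

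Third, your step (3) acknowledges the ``main obstacle'' — that membership in $\scr{M}_0^\scr{S}$ allows arbitrary NC rational expressions over $\scr{O}_0^\scr{S}$ — but leaves it unresolved. The paper closes this gap with Proposition \ref{polesum}: every $F \in \fskew \circ \scr{O}_0^{\scr{T}_q}$ has a matrix-centre realization about any $Y\in\nbdom F$ with $\mbf{A}_j$ taking values in Schatten-$q$ operators, hence the inverted poles of $z \mapsto F(zX + I_s\otimes Y)$ are $q$-summable. Choosing $Y$ with $Y_1$ non-nilpotent and using $\sigma(Y_1\otimes A_1)=\sigma(Y_1)\cdot\sigma(A_1)$ then pushes the $q$-summability back onto the eigenvalues of $A_1$, contradicting the choice of witness. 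Absent this precise step, the strict inclusions at the skew-field level are not established. (Minor: your lacunary witness should be ruled out of $\scr{O}_0^\scr{C}$ by Theorem \ref{merodomain}, not Theorem \ref{meroreal}; the paper's choice $e^{1/(1-z)}$ additionally extends via Lemma \ref{globalmero} to separate $\scr{M}_0^\scr{C}$ from $\scr{M}_0^u$.)
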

Since we have identified $\scr{O} ^\scr{C} _0$ as the ring of global uniformly meromorphic NC functions with analytic germs at $0$, we define the skew field of all global uniformly meromorphic NC functions as the universal skew of field of fractions, $\scr{M} _0 ^\scr{C}$, of the semifir $\scr{O} _0 ^\scr{C}$, see Subsection \ref{ss-globalunimero}.

In Theorem \ref{globalmerodomain} and Subsection \ref{ss-globalunimero}, we show that any `uniformly meromorphic NC germ', $f$, in the universal skew field generated by familiar NC functions with compact realizations, $\scr{M} _0 ^\scr{C} \subsetneqq \scr{M} _0 ^u$, extends uniquely to a uniformly analytic NC function on a uniformly open and joint-similarity invariant NC domain that is analytic--Zariski open and dense, as well as matrix--norm open and connected, at every level, $n$, of the NC universe, for sufficiently large $n$. Again, this motivates and justifies our interpretation of $\scr{M} _0 ^\scr{C}$ as the skew field of \emph{global} uniformly meromorphic NC functions.

\section{Background}

\subsection{NC Function Theory}

The $d-$dimensional complex \emph{NC universe}, $\ncu$ is the graded set of all row $d-$tuples of square matrices of any fixed size, $n \in \N$:
$$ \C ^{(\N \times \N) \cdot d} := \bigsqcup _{n=1} ^\infty \C ^{(n\times n)\cdot d}; \quad \quad \C ^{(n\times n)\cdot d} := \C ^{n\times n} \otimes \C ^{1 \times d}. $$ Given any row $d-$tuple $Z = (Z_1, \cdots , Z_d) \in \cdn$, we will view $Z$ as a linear map from $d$ copies of $\C ^n$ into one copy and we define the \emph{row-norm} of $Z$ as the norm of this linear map:
$$ \| Z \| _{\mr{row}} := \| Z \| _{\scr{B} (\C ^n \otimes \C ^d , \C ^n)} = \sqrt{ \left\| \sum_{j=1}^d Z_j Z_j^* \right\|}. $$ When we write $Z^*$, this will then denote the Hilbert space adjoint of the linear map $Z : \C ^n \otimes \C ^d \rightarrow \C ^n$. That is, 
$$ Z^* = \bpm Z_1 ^* \\ \vdots \\ Z_d ^* \epm : \C ^n \rightarrow \C ^n \otimes \C ^d. $$ We will also, on occasion, need to consider $\mr{col} (Z) := \bsm Z_1 \\ \vdots \\ Z_d \esm$ and $\mr{row} (Z^* ) := (Z_1 ^*, \cdots , Z_d ^* )$. In this case,
given $Z \in \cdn$, 
$$ \| Z \| _{\mr{col}} := \| \mr{col} (Z) \| _{\scr{B} (\C ^n , \C ^n \otimes \C ^{d})} = \sqrt{\left\| \sum_{j=1}^d Z_j^* Z_j \right\|}. $$

If the row-norm of $Z \in \cdn$ is less than or equal to $1$, we say $Z$ is a \emph{row contraction}, and if $\| Z \| _{\mr{row}} <1$, we say $Z$ is a strict row contraction. The NC unit row-ball, $\rball$, is the open unit ball of $\ncu$ with respect to this row-norm. The \emph{uniform topology} on $\ncu$ is related to the topology generated by the \emph{row pseudo-metric}. Namely, given $Z \in \cdn$ and $W \in \cdm$,
$$ d_{\mr{row}} (Z,W) := \| Z ^{\oplus m} - W^{\oplus n} \| _{\mr{row}}, $$ and the NC unit row-ball, $\rball$, then consists of all $Z$ so that $d_{\mr{row}} (Z,0) <1$. A sub-base for the row pseudo-metric topology is given by the row-balls centred at any $Y \in \cdm$, 
$$ r \cdot \rball (Y ) := \bigsqcup _{m=1} ^\infty \{ X \in \ncu  | \ d_{\mr{row}} (X,Y) < r \}. $$ The \emph{uniform topology} on $\ncu$, is the topology generated by the sub-base of sets, $r \cdot \mathbb{B} ^d _{m\N} (Y) \subseteq r \cdot \rball (Y)$, where if $Y \in \cdm$, then $\mathbb{B} ^d _{m\N} (Y)$ contains only the levels $mn$, $n \in \N$, 
$$ r \cdot \mathbb{B} ^d _{m\N} (Y) := \bigsqcup _{n=1} ^\infty \{ X \in \C ^{(mn \times mn) \cdot d} |  \ d_{\mr{row}} (X,Y) <r \}. $$
As explained in \cite[Proposition 7.18]{KVV}, there is subtle difference between the uniform topology and the topology generated by $d_{\mr{row}}$, and the topology generated by the row pseudo-metric is strictly weaker than the uniform topology. The main difference is that these open sets, $\mathbb{B} ^d _{m\N} (Y)$, need not be closed under direct summands. This difference, however, will not play a significant role in this paper.

An NC set is any subset, $\Om \subseteq \ncu$, that is closed under direct sums. Hence, any such $\Om$ can be written as a graded set,
$$ \Om = \bigsqcup _{n=1} ^\infty \Om _n, \quad \quad \Om _n := \Om \cap \cdn. $$ 
A \emph{freely non-commutative function}, or more simply, \emph{NC function} on an NC set $\Om \subseteq \ncu$ is any function $f : \Om \rightarrow \C ^{\N \times \N}$, where $\C ^{\N \times \N}$ denotes the one-dimensional NC universe, that obeys the three axioms:
\bn
    \item $f : \Om _n \rightarrow \C ^{n\times n}$, \emph{i.e.} $f$ is \emph{graded}, or $f$ preserves matrix size. 
    \item Given $X \in \Om _n$ and $Y \in \Om _m$, 
    $$ f \bpm X & 0 \\ 0 & Y \epm = \bpm f(X) & 0 \\ 0 & f(Y) \epm, $$ \emph{i.e.} $f$ \emph{preserves direct sums}. 
    \item If $S \in \mr{GL} _n$ is any invertible matrix, $X \in \Om _n$, and $S ^{-1} X S := (S^{-1} X_1 S , \cdots , S ^{-1} X_d S ) \in \Om _n$, then $f (S^{-1} X S ) = S^{-1} f(X) S$, \emph{i.e.} $f$ \emph{preserves joint similarities}. 
\en

We say that an NC function, $h$, on a uniformly open NC domain, $\nbdom h$, is \emph{locally uniformly bounded} if for any $X \in \nbdom h$, there is a uniformly open neighbourhood of $X$ in $\nbdom h$, on which $h(Y)$ is uniformly bounded in matrix--norm. As proven in \cite[Corollary 7.6, Corollary 7.28]{KVV}, the axioms defining NC functions and this local boundedness condition imply that $h$ is holomorphic, in the sense that it is Fr\'echet and hence G\^ateaux differentiable at any point in its domain, and uniformly analytic in the sense that it has a Taylor-type power series expansion about any point, $X$, in its NC domain, \emph{i.e.} a Taylor--Taylor series, with non-zero radius of convergence, $R_X >0$ \cite[Corollary 7.26, Corollary 7.28, Theorem 8.11]{KVV}. That is, its Taylor--Taylor series converges absolutely and uniformly on any (uniformly) open row-ball of radius $0<r <R_X$, centred at any $X \in \nbdom h$. Here, if $h$ is uniformly analytic in a uniformly open neighbourhood, $\scr{U}$, of $Y \in \cdn$, and $X \in \scr{U} _{mn}$ is sufficiently close to $Y$, then the Taylor--Taylor series of $h$, centred at $Y$ and evaluated at $X$ is, 
$$ h(X) = \sum _{j =0} ^\infty \frac{\left( \partial _{X - Y ^{\oplus m}} ^j h\right) (Y^{\oplus m})}{j!}, $$ Where $\partial ^j _{X - Y ^{\oplus m}} h (Y ^{\oplus m})$ is the $j$th directional or G\^ateaux derivative of $h$ in the direction $X - Y ^{\oplus m}$. That is, 
$$ \partial _X h (Y) := \lim _{t \rightarrow 0} \frac{h(Y+tX) - h(Y)}{t}. $$
In particular, the Taylor--Taylor series of $h$ at the origin, $0$, of the NC universe can be written,
$$ h (X) = \sum _{j=0} ^\infty \frac{1}{j!} \partial _X ^j h (0), $$ where $\partial ^j _X h (0)$ is a homogeneous free polynomial of degree $j$ in $X = (X_1, \cdots, X_d)$. This Taylor--Taylor series at $0$ can be expanded further as 
$$ h(X) = \sum _{\om \in \F ^d}   \Delta ^{(\om ^\mrt)} _X f (0). $$   
Here, $\Delta ^{\om ^\mrt } _{(\cdot )} f (0)$ is a \emph{partial difference--differential operator} of $f$ at $0$ of order $|\om |$, where if $\om = i_1 \cdots i_m$, then $|\om | =m$ is the \emph{length} of $\om$ and $\om ^\mrt = i_m \cdots i_1$. Given any $n \in \N$, $\Delta ^{\om ^\mrt} _{(\cdot), \cdots, (\cdot)} f (0) = \Delta ^{\om ^\mrt} _{(\cdot), \cdots, (\cdot)} f (0 _n) : \cdn \otimes \C ^{|\om |} \rightarrow \C ^{n \times n}$, is a multi-linear map from $|\om |$ copies of $\cdn$ into $\C ^{n \times n}$ and we have written $\Delta ^{\om ^\mrt } _X f(0)$ as a short-form notation for $\Delta ^{\om ^\mrt } _{X, \cdots, X} f (0, \cdots, 0)$, where $X \in \cdn$ is repeated $|\om|$ times and $0 =(0_n, \cdots , 0_n ) \in \cdn$ is repeated $|\om | +1$ times. That is, $\Delta ^{\om ^\mrt} _{H^{(1)}, \cdots, H^{(|\om |)}} f(0)$ is multilinear in the `directional arguments', $H^{(j)} \in \cdn$, $1 \leq j \leq |\om |$.  See \cite[Section 3.5 and Corollary 4.4]{KVV} for a precise definition of these higher--order partial difference--differential operators. Given any $\om \in \F ^d$, one can compute that $$ \Delta ^{\om ^\mrt} _X h (0) = c_\om X^\om, \quad c_\om \in \C, $$ so that this gives a free power series expansion of $h(X)$, and any $h$ that is uniformly analytic in a uniformly open neighbourhood of $0$ can be identified with a free FPS \cite[Corollary 4.4]{KVV}.

\subsection{Free formal power series}

Recall that any $h \in \fps$ can be written as
$$ h(\fz ) = \sum _{\om \in \F ^d} \hat{h} _\om \fz ^\om; \quad \quad \hat{h} _\om \in \C. $$
There is a natural `letter reversal' involution on the free monoid, $\mrt : \F ^d \rightarrow \F ^d$ defined by $\om \mapsto \om ^\mrt$, where if $\om = i _1 \cdots i _n$, $\om ^\mrt := i _n \cdots i _1$ and $\emptyset ^\mrt := \emptyset$. This gives rise to a natural involutive anti-isomorphism of the ring of all complex free FPS, $\mrt : \fps \twoheadrightarrow \fps$, via 
$$ h (\fz ) = \sum _{\om \in \F ^d} \hat{h} _\om \fz ^\om \quad \mapsto \quad h ^\mrt (\fz ):= \sum _{\om \in \F ^d} h _{\om ^\mrt} \fz ^\om =  \sum  h _{\om } \fz ^{\om ^\mrt}, $$
and we call $h ^\mrt$ the \emph{transpose} of $h$.

Given a free formal power series, $h (\fz ) = \sum \hat{h} _\om \fz ^\om$, its \emph{radius of convergence}, $R_h \in [0, +\infty]$ is the largest value so that the power series $h(Z)$ converges absolutely and uniformly on $r \cdot \rball$ for any $0<r<R_h$. This radius can be computed by Popescu's generalization of the Hadamard formula \cite{Pop-freeholo},
\be \frac{1}{R_h} = \limsup _{n\rightarrow \infty} \sqrt[2n]{\sum _{|\om| =n} | \hat{h} _\om | ^2 }. \label{radconv} \ee Hence any free FPS with radius of convergence $r>0$ defines a uniformly analytic NC function on $r\cdot \rball$.

\subsection{The free Hardy space} \label{freehardy}

Let $\hardy$ denote the set of all free FPS with square--summable coefficients:
$$ \hardy := \left\{ h(\fz) = \sum \hat{h} _\om \fz ^\om \left| \ \sum |\hat{h} _\om| ^2 < + \infty \right. \right\}. $$ Any $h \in \hardy$ has radius of convergence at least $1$, so that $\hardy$ is a vector space of uniformly analytic NC functions on $\rball$. In fact, equipping $\hardy$ with the $\ell ^2-$norm of the coefficients, this is a Hilbert space of NC functions that is clearly a natural multivariate and NC generalization of the classical Hardy space, $H^2$, of square--summable Taylor series in the complex unit disk.

Left multiplication by each of the $d$ independent variables, $L_k := M ^L _{\fz _k}$, defines a $d-$tuple of isometries on $\hardy$, with pairwise orthogonal ranges which we call the \emph{left free shifts}. Since these have pairwise orthogonal ranges, $L := (L_1 , \cdots, L_d ) : \hardy \otimes \C ^d \rightarrow \hardy$, is a \emph{row isometry}, \emph{i.e.} an isometry from several copies of a Hilbert space into one copy. The standard orthonormal basis for $\hardy \simeq \ell ^2 (\F ^d )$ is given by the free monomials $\{ \fz ^\om \} _{\om \in \F ^d}$, and it follows that the letter reversal involution on free FPS defines an involutive unitary operator on $\hardy$, the \emph{flip unitary}, $U_\mrt = U_\mrt ^*$. One can check that $U_\mrt L_k U_\mrt = M^R _{\fz _k} =: R_k$ is the \emph{right free shift} of right multiplication by $\fz _k$. Hence, $R = U_\mrt L U_\mrt$ is also a row isometry. In the single-variable setting, $d=1$, $\mathbb{H} ^2 _1 =H^2$ is the classical Hardy space of analytic Taylor series in the complex unit disk with square--summable coefficients. In this case we use the standard notation, $S=M_z$, for the isometric \emph{shift} operator on $H^2$. 

We also consider the Banach algebra of uniformly bounded, and hence uniformly analytic, NC functions in the NC unit row-ball, $\rball$, which we denote by $\mult$. That is, $h \in \mult$ if 
$$ \| h \| _\infty := \sup _{Z \in \rball} \| h (Z ) \| < +\infty. $$ Any $h \in \mult$ is necessarily a uniformly bounded and, hence, analytic NC function in $\rball$ and as proven in \cite[Theorem 3.1]{Pop-freeholo} and \cite[Theorem 3.1]{SSS}, $h \in \mult$ if and only if $h$ belongs to the \emph{left multiplier algebra} of $\hardy$. Namely, an NC function, $h$, on $\rball$ is a \emph{left multiplier} of the free Hardy space if $h \cdot f \in \hardy$ for any $f \in \hardy$. Moreover, the left multiplier algebra is equal to $\mult$, and if $M^L _h : \hardy \rightarrow \hardy$ denotes the linear map of left multiplication by $h \in \mult$, then we have equality of norms, $\| h \| _\infty = \| M ^L _h \|$ \cite[Theorem 3.1]{Pop-freeholo}, \cite[Theorem 3.1]{SSS}. Note that since $1 \in \hardy$, $\mult \subseteq \hardy$.

If $h \in \hardy$, then $h_r (Z) := h(rZ) \in \mult$, see \emph{e.g.} \cite[Lemma 4.5]{JMS-ncBSO}. Given any $h \in \mult$, the operator $h(L)$, obtained by substituting the $L_j$ in for the variables $\fz _j$ in the free power series for $h$ is the operator, $M^L _h$, of left multiplication by $h$. On the other hand, one can check that $h(R) = M^R _{h ^\mrt}$. 

Any $h \in \hardy$ is familiar, \emph{i.e.} is given by an operator realization $(A,b,c)$. Indeed, given any $h \in \hardy$,
$$ h(\fz ) = \sum _{\om \in \F ^d} \hat{h} _\om \fz ^\om, \quad \mbox{and} \quad (L_j ^* h) (\fz) = \sum \hat{h} _{j\om} \fz ^\om. $$ Since the monomials, $\{ \fz ^\om \}$, are an ON basis of $\hardy$, it follows that for any $\om \in \F ^d$, 
$$ 1 ^* L^{*\om} h ^\mrt  = \sum _\alpha \hat{h} _{\alpha}  \ip{ \fz ^{\om ^\mrt} }{\fz ^{ \alpha ^\mrt}}_{\hardy} = \hat{h} _\om. $$ Hence, $h\sim ( L^* , 1, h ^\mrt)$ has an operator realization. Note that the constant function, $1$, is $L-$cyclic. Here, and throughout, all inner products are conjugate linear in their first argument. 

If $h \in \hardy$ has the realization $(A,b,c)$, then 
$$ h (\fz) = \sum _{\om \in \F ^d} \hat{h} _\om \fz ^\om, \quad \mbox{where} \quad \hat{h} _\om = b^* A^\om c. $$ As one can readily calculate,
$$ (L_j ^* h) (\fz) = \sum _{\om \in \F ^d} \hat{h}_{j\om} \fz ^\om, \quad \mbox{and} \quad (R_j ^* h) (\fz) = \sum _{\om \in \F ^d} \hat{h} _{\om j} \fz ^\om, $$ so that 
\be L_j ^* h \sim (A, b, A_j c), \quad \mbox{and} \quad R_j ^* h \sim (A, A_j ^* b, c). \ee 
Moreover, given any $p \in \fp$, it then follows that 
\be p(L^*) h \sim (A,\ov{p}(A^*)b, c), \quad \quad p(R^*) h \sim (A, b, p(A) c), \label{bwfreeshifts} \ee where $\ov{p} \in \fp$ denotes the free polynomial obtained by complex conjugation of the coefficients of $p$.

\section{Operator realizations}
\label{sec-opreal}

A \emph{descriptor realization} is a triple, $(A,b,c)$, with $A \in \scr{B} (\cH ) ^d := \scr{B} (\cH) \otimes \C ^d$, $b,c \in \cH$. Here, note that $A \in \scr{B} (\cH ) ^d$ is viewed a column $d-$tuple, $A = \bsm A_1 \\ \vdots \\ A_d \esm$. We will use the notation $\scr{B} (\cH) ^{1\times d}$ for a row $d-$tuple.

Given any $d-$tuple, $A \in \scr{B} (\cH ) ^d$, we define the formal \emph{linear pencil}: 
$$ L_A (\fz ) := 1 - \sum _{j=1} ^d \fz _j A_j, $$ and we view this as a (monic, affine) linear function on $\ncu$ via evaluation,
$$ L_A (Z) := I_n \otimes I_\cH  - \sum _{j=1} ^d Z_j \otimes A_j =: I_n \otimes I - Z \otimes A; \quad \quad Z \in \cdn. $$ We can then define the \emph{transfer function}, $h$, of the descriptor realization, $(A,b,c)$, as the NC function with domain,
$$ \nbdom h := \bigsqcup _{n=1} ^\infty \mr{Dom} _n \, h; \quad \quad \mr{Dom} _n \, h := \{ Z \in \cdn | \,  L_A (Z) \text{ is invertible} \}, $$
via the formula: $$ h(Z) := I_n \otimes b^* L_A (Z) ^{-1} I_n \otimes c. $$ Note that as an operator on $\cH \otimes \C ^n$, 
$$ \left\| \sum _{j=1} ^d Z _j \otimes A_j \right\| _{\scr{B} (\cH ) \otimes \C ^{n\times n}} \leq  \| Z \| _{\mr{row}} \| A \| _{\mr{col}}, $$ so that 
the operator $L_A (Z)$ will be invertible for all $Z \in r \cdot \B ^{(\N \times \N ) \cdot d}$, where $r = \| A \| _{\mr{col}} ^{-1} >0$. Hence there is an $r>0$ so that $r \cdot \B ^{(\N \times \N) \cdot d} \subseteq \nbdom h$, and $h$ will be uniformly bounded on any NC row-ball of radius $\| A \| _{\mr{col}} ^{-1} > r>0$, hence uniformly analytic on this uniformly open NC domain. We will write $h \sim (A,b,c)$ to denote that $h$ is the transfer--function of this realization. By evaluating $h$ on jointly nilpotent points $Z$ in $r \cdot \rball$, we see that any $h$ that admits a realization $(A,b,c)$ can be identified with the formal power series, $h \in \fps$,
$$ h(\fz ) = \sum _{\om \in \F ^d} \hat{h} _\om \fz ^\om, \quad \mbox{where} \quad \hat{h} _\om = b^* A ^{\om} c. $$ Indeed, this also follows readily from expansion of $L_A (Z) ^{-1}$ as a convergent geometric series for any $Z$ in $r \cdot \rball$, $r = \| A \| _{\mr{col}} ^{-1}$.

Conversely, if $h$ is a uniformly analytic NC function in some row-ball, $R \cdot \rball$ of non-zero radius $R>0$, then by re-scaling the argument of $h$, $h_r (Z) = h(r Z)$, $0<r<R$, we obtain (by the NC maximum modulus principle \cite[Lemma 6.11]{SSS}), a uniformly bounded NC function in $\B ^d _\N$, which therefore is uniformly analytic and belongs to $\mult$, the left multiplier algebra of the free Hardy space. As described in Subsection \ref{freehardy}, $h_r$ then has the realization $(L^*, 1, h_r ^\mrt)$, $1 =\fz ^\emptyset$, so that $h \sim (r^{-1} L^* , 1, h_r ^\mrt )$ is also familiar. Alternatively, $\| h_r \| _\infty ^{-1} h_r$ is a contractive element of $\mult$ and hence it has a de Branges--Rovnyak operator realization, and so $h$ also has an operator realization \cite{BBF-nc}. In summary we have proven:

\begin{lemma} \label{uarealize}
An NC function, $h$, is uniformly analytic in a uniformly open neighbourhood of $0 \in \ncu$ if and only if it is familiar, \emph{i.e.} if and only if it has an operator realization.  
\end{lemma}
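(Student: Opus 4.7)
One direction is essentially already in hand from the discussion preceding the statement: if $h \sim (A,b,c)$, then the operator norm bound $\|Z \otimes A\| \leq \|Z\|_{\mr{row}}\|A\|_{\mr{col}}$ ensures $L_A(Z)$ is invertible, and its inverse is given by a convergent geometric series, whenever $Z$ lies in the row-ball of radius $r := \|A\|_{\mr{col}}^{-1}$; evaluating against $I \otimes b^*$ and $I \otimes c$ produces a uniformly bounded, and hence uniformly analytic, NC function on $r \cdot \rball$, which is a uniformly open neighbourhood of $0$. So I would open the proof by citing this observation.

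For the converse, the plan is to reduce an arbitrary uniformly analytic $h$ near $0$ to an element of the left multiplier algebra $\mult$, and then invoke the standard free-shift realization from Subsection~\ref{freehardy}. First I would use the definition of the uniform topology to extract an $R>0$ with $R \cdot \rball \subseteq \nbdom h$, on which $h$ is uniformly analytic. Next, for any fixed $0 < r < R$, I define the rescaled NC function $h_r(Z) := h(rZ)$, which is uniformly analytic on $\rball$. The key analytic input is the NC maximum modulus principle \cite[Lemma 6.11]{SSS}, which upgrades uniform analyticity on $\rball$ together with boundedness on the (closed) row-ball of radius $r/R < 1$ to uniform boundedness on all of $\rball$; hence $h_r \in \mult$.

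Once $h_r$ is identified as a left multiplier of the free Hardy space, the explicit realization recorded at the end of Subsection~\ref{freehardy} gives $h_r \sim (L^*, 1, h_r^\mrt)$, using that $1 = \fz^\emptyset$ is $L$-cyclic in $\hardy$ and that the coefficients of $h_r$ recover as $\hat{(h_r)}_\om = 1^* L^{*\om} h_r^\mrt$. Finally, scaling the realization back, i.e.\ replacing $A_j = L_j^*$ by $r^{-1} L_j^*$, yields $h \sim (r^{-1} L^*, 1, h_r^\mrt)$, which is the desired operator realization of the original $h$.

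The main potential obstacle is the passage from ``uniformly analytic on a uniformly open neighbourhood of $0$'' to ``uniformly bounded on $\rball$ after rescaling'': one must be careful that the uniform topology differs from the row pseudo-metric topology (as noted in the paragraph following the definition of the uniform topology), so a uniformly open neighbourhood of $0$ need not itself be a row-ball. However, the row-balls $r \cdot \mathbb{B}^d_{m\N}(0)$ form a sub-base, and by the grading axiom combined with closure under direct sums these still supply a row-ball $R \cdot \rball$ inside $\nbdom h$, so the rescaling and maximum modulus step go through as stated. Alternatively, normalising $\|h_r\|_\infty^{-1} h_r$ to a contractive multiplier permits appealing to the de Branges--Rovnyak realization of \cite{BBF-nc} in place of the free-shift realization, which is the route indicated in the paper.
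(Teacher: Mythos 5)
Your proposal follows the paper's own argument in both directions: for the forward implication, the operator-norm bound on $Z\otimes A$ and the resulting geometric series inversion of $L_A(Z)$ on $\|A\|_{\mr{col}}^{-1}\cdot\rball$; for the converse, the rescaling $h_r(Z)=h(rZ)$, the NC maximum modulus principle \cite[Lemma 6.11]{SSS} to land $h_r$ in $\mult$, and the free-shift realization $h_r\sim(L^*,1,h_r^\mrt)$, scaled back to $h\sim(r^{-1}L^*,1,h_r^\mrt)$. Your additional remark about extracting a row-ball $R\cdot\rball\subseteq\nbdom h$ from the uniform topology is a worthwhile clarification of a step the paper leaves implicit, but it does not change the route.
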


Given $(A,b,c)$, one defines the \emph{controllable subspace}, 
\be \scr{C} _{A,c} := \bigvee _{\om \in \F ^d} A^\om c, \ee and the \emph{observable subspace},
\be \scr{O} _{A,b} := \bigvee _{\om \in \F ^d} A^{*\om} b, \ee and $(A,b,c)$ is said to be \emph{controllable} if $\scr{C} _{A,c} = \cH$ and \emph{observable} if $\scr{O} _{A,b} = \cH$, \emph{i.e.} if and only if $c$ is $A-$cyclic and $b$ is $A^*-$cyclic, respectively. In the above, $\bigvee$ denotes closed linear span. (In \cite{Helton-opreal,BC-dBR}, what we have termed controllable and observable are called \emph{approximately controllable} and \emph{approximately observable}, since the non-closed linear span is generally not the entire space.) We then say that $(A,b,c)$ on $\cH$ is \emph{minimal} if it is both controllable and observable. Here, recall that a finite--dimensional realization $A \in \cdn$ is said to be minimal if $n$ is as small as possible. It is then a theorem that a finite--dimensional realization is minimal if and only if it is both observable and controllable \cite[Section 3.1.2]{HMS-realize}, and minimal and finite realizations are unique up to unique joint similarity \cite[Theorem 2.4]{BR-rational}.

It will also be convenient to consider \emph{Fornasini--Marchesini} (FM) realizations. An FM realization is a quadruple $(A,B,C,D)$, where 
$$ A = \bpm A_1 \\ \vdots \\ A_d \epm \in  \scr{B} (\cH ) ^d, \quad B = \bpm B_1 \\ \vdots \\ B_d \epm \in \cH \otimes \C ^d, \quad C \in \cH ^\dag, \quad \mbox{and} \quad D \in \C. $$ 
An FM realization is often written in the block \emph{operator--colligation} form: 
$$ \bpm A & B \\ C & D \epm = \bpm A_1 & B_1 \\ \vdots & \vdots \\ A_d & B_d \\ C & D \epm : \bpm \cH \\ \C \epm \longrightarrow \bpm \cH \otimes \C ^d \\ \C \epm. $$ 
As before, we then define the \emph{transfer function}, $h$, of the FM realization $(A,B,C,D)$ as the NC function with domain
$$ \nbdom h = \bigsqcup \mr{Dom} _n \, h; \quad \quad \mr{Dom} _n \, h = \{ Z \in \cdn | \,  L_A (Z) ^{-1} \, \exists \}, $$
via the formula:
$$ h(Z) := D I_n + I_n \otimes C \, L_A (Z) ^{-1} \sum Z_j \otimes B_j =: D + I_n \otimes C \, L_A (Z) ^{-1} Z \otimes B. $$ In particular, as discussed above, there is an $r>0$ so that $r \cdot \B ^{(\N \times \N) \cdot d} \subseteq \nbdom h$, and $h$ will be uniformly bounded on any NC row-ball of radius $\| A \| _{\mr{col}} ^{-1} > r>0$, hence uniformly analytic on this NC domain. We will write $h \sim (A,B,C,D)$ to denote that $h$ is the transfer--function of this FM realization. As before, we will say that the FM realization, $(A,B,C,D)$, for $h$, is \emph{minimal}, if $A$ is as `small as possible' in the sense that the set $\{ B_1 , \cdots , B_d \} \subseteq \cH$ is $A-$cyclic and $C^* \in \cH$ is $A^*-$cyclic, \emph{i.e.}  
\be \cH = \bigvee _{\substack{\om \in \F ^d, \\ 1 \leq j \leq d}} A ^\om B_j =: \scr{C} _{A,B} , \label{control} \ee 
and \be \cH = \bigvee A^{*\om} C^* =: \scr{O} _{A,C}.  \label{observe} \ee In the above, as before, $\bigvee$ denotes closed linear span. More generally, the subspaces, $\scr{C} _{A,B}$ and $\scr{O} _{A,C}$ of $\cH$ defined above are called the \emph{controllable subspace} and the \emph{observable subspace} of the FM realization and we say the FM realization is \emph{controllable} if $\cH = \scr{C} _{A,B}$ and \emph{observable} if $\cH = \scr{O} _{A,C}$. Hence, as before, $(A,B,C,D)$ is minimal if and only if it is both controllable and observable. 

\paragraph{FM realizations from descriptor realizations and vice versa.} Any NC function that admits an FM realization admits a descriptor realization and vice versa. Namely, if $h$ has descriptor realization $(A,b,c)$ then one can define 
\be \cH ' := \bigvee _{\om \neq \emptyset} A^\om c, \quad A' := A | _{\cH '},  \quad B_j := A _j c, \quad C := (P_{\cH '} b)^*, \quad \mbox{and} \quad D := h(0). \label{FMfromd} \ee It follows that $(A',B,C,D)$ is an FM realization of $h$. Moreover, it is readily checked that if $(A,b,c)$ is a minimal descriptor realization then $(A',B,C,D)$ is a minimal FM realization. Conversely, if $(A,B,C,D)$ is an FM realization of $h$ on $\cH$, one can construct a descriptor realization, $(\hat{A}, b , c)$ on $\hat{\cH} := \cH \oplus \C$ by setting $\hat{A} _j | _{\cH } := A_j $ and defining $\hat{A} _j 0 \oplus 1 := B_j \in \cH$. That is, with respect to the direct sum decomposition $\hat{\cH} = \cH \oplus \C$,
\be \hat{A} _j = \bpm A_j & B_j \ip{0 \oplus 1}{\cdot}_{\hat{\cH}} \\ 0 & 0 \epm. \label{dfromFM} \ee  Next, one defines $b:= C^* \oplus \ov{D} 1$ and $c:= 0 \oplus 1$. It is easy to verify that $(\hat{A} , b, c )$ is then a descriptor realization of $h$. If $(A,B,C,D)$ is a minimal FM realization, then $(\hat{A}, b, c )$ is controllable, but it may not be observable. Even it is not observable, however, its observable subspace has codimension at most one in $\hat{\cH}$.

\subsection{Uniqueness of minimal realizations and domains} \label{ss-minrealdom}

In this subsection we show, as in the finite setting, that minimal realizations are unique up to a suitable notion of isomorphism.

\begin{remark}[On closed and closeable operators]
Recall that a linear map $L : \nbdom L \subseteq \cH \rightarrow \cJ$, with dense linear domain, $\nbdom L$, in a Hilbert space, $\cH$, and range in a Hilbert space, $\cJ$, is said to be \emph{closed} if its graph is a closed subspace of $\cH \oplus \cJ$. Such a linear map is \emph{closeable} if it has a closed \emph{extension}, $\hat{L}$, \emph{i.e.} $\nbdom L \subseteq \nbdom \hat{L} \subseteq \cH$ and $\hat{L} | _{\nbdom L} = L$. In this case we write $L \subseteq \hat{L}$ and say that $L$ is a \emph{restriction} of $\hat{L}$. If $L$ is closeable, its \emph{closure}, $\ov{L}$, is defined as its minimal closed extension, which can be obtained by taking the closure of the graph of $L$ in $\cH \oplus \cJ$. In particular, $L$ is closeable if and only if the closure of its graph in $\cH \oplus \cJ$ does not contain any vectors of the form, $0 \oplus y$ for $y \neq 0$. Equivalently, $L$ is closeable if and only if whenever $x_n \in \nbdom L$, $x_n \rightarrow 0$ and $y_n = Lx_n \rightarrow y \in \cJ$, then necessarily $y =0$.

Given a densely--defined linear map, $L : \nbdom L \subseteq \cH \rightarrow \cJ$, as above, its Hilbert space \emph{adjoint}, $L^*$, is a linear map defined on the linear domain, $\scr{D} _*$, consisting of all $y \in \cJ$ for which there exists a $y_* \in \cH$, so that 
$$ \ip{Lx}{y}_\cJ = \ip{x}{y_*}_\cH, \quad \quad \forall x \in \nbdom L. $$ One then defines $\nbdom L^* := \scr{D} _*$ and $L^* y := y_*$. The adjoint of a densely--defined linear map is always a closed linear map. However, it may not be densely--defined. One can prove that $L^*$ is densely--defined if and only if $L$ is closeable, in which case $\ov{L} = L^{**}$, \emph{i.e.} the bi-adjoint is the minimal closed extension of $L$. Finally, a subset, $\scr{D} \subseteq \nbdom L$ of the domain of a closed linear operator, $L$, is called a \emph{core} for $L$ if $(\scr{D} , L \scr{D} )$ is dense in the graph of $L$. In particular, if $L$ is closeable with closure $\ov{L}$, then $\nbdom L$ is a core for $\ov{L}$. Standard references for the theory of closed linear operators include the functional analysis texts, \cite{RnS,AG,Conway}. 
\end{remark}

\begin{defn}
Let $X : \cH \rightarrow \cJ$ be a closed, densely--defined linear map on $\nbdom X \subseteq \cH$. We say that $X$ is a \emph{pseudo-affinity} if $X$ is injective and  has dense range. If $A \in \scr{B} (\cH )$ and $B \in \scr{B} (\cJ )$ are bounded linear maps so that $A \nbdom X \subseteq \nbdom X$ and $X A \subseteq BX$, we say that $A,B$ are \emph{pseudo-similar}.
\end{defn}

\begin{remark}
In \cite{NF}, a \emph{quasi-affinity}, $X : \cH \rightarrow \cJ$ is defined as any bounded linear map that is injective and has dense range. Bounded linear operators $A\in \scr{B} (\cH )$ and $B \in \scr{B} (\cJ)$ are said to be \emph{quasi-similar}, if there are quasi-affinities $X : \cH \rightarrow \cJ$ and $Y: \cJ \rightarrow \cH$ so that $X A = B X$ and $YB = A Y$. In comparison, if $A,B$ are pseudo-similar, then there is a pseudo-affinity, $X$, so that $XA \subseteq BX$. However, $Y:=X^{-1}$, is then also a pseudo-affinity so that $YB \subseteq A Y$. This motivates the terminology `pseudo-similar'.  Hence, pseudo-similarity is a weaker concept than quasi-similarity as in the definition of pseudo-similarity, neither of the closed intertwining pseudo-affinities, $X$ and $Y=X^{-1}$, need to be bounded.  
\end{remark}

\begin{thm}[Uniqueness of minimal realizations] \label{minunique}
Minimal realizations are unique up to unique pseudo-similarity. That is, if $(A,b,c)$ and $(A' , b', c')$ are two minimal realizations defined on $\cH$ and $\cH'$, respectively, for the same NC function, $h$, then there is a closed, injective and densely--defined linear transformation, $S : \nbdom S \subseteq \cH \rightarrow \cH '$, with dense range so that for all $p,q \in \fp$, 
\bn 
    \item[(i)] The non-closed linear span $\C \langle A \rangle c \subseteq \nbdom S$ is a core for $S$ and $S p (A) c = p(A' ) c'$. 
    \item[(ii)] The non-closed linear span $\C \langle A^{'*} \rangle b' \subseteq \nbdom S^*$ is a core for $S^*$ and $S^* q(A^{'*} ) b' = q(A^*) b$. 
\en
\end{thm}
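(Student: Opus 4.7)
The plan is to construct $S$ as the closure of the natural algebraic map on the polynomial cyclic subspace, and then, by a symmetric argument using the minimality hypotheses on both sides, to obtain the analogous description of $S^*$ on $\C\langle A'^{*}\rangle b'$.

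First, on $\C\langle A\rangle c \subseteq \cH$, which is dense by controllability of the first realization, define $S_0\,p(A)c := p(A')c'$. Well-definedness is the crucial algebraic input: if $p(A)c = 0$, then for every $q \in \fp$ the shared Taylor coefficients of $h$ give
\[
b'^{*}q(A')p(A')c' = \hat h_{qp} = b^{*}q(A)p(A)c = 0,
\]
and observability of the second realization ($\C\langle A'^{*}\rangle b'$ dense in $\cH'$) then forces $p(A')c' = 0$. Define $S_0^\dagger : \C\langle A'^{*}\rangle b' \to \cH$ symmetrically by $S_0^\dagger q(A'^{*})b' := q(A^{*})b$, well-defined for the analogous reason using observability of the first realization.

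Next, I would verify the formal adjoint relation using the cross inner-product identity
\[
\langle q(A^{*})b, p(A)c\rangle_{\cH} = b^{*}\bar q(A)p(A)c = \hat h_{\bar q p} = \langle q(A'^{*})b', p(A')c'\rangle_{\cH'},
\]
which shows $S_0^\dagger \subseteq S_0^{*}$. Since $\nbdom S_0^\dagger$ is dense, $S_0^{*}$ is densely defined, so $S_0$ is closeable; set $S := \overline{S_0}$ and $S^\dagger := \overline{S_0^\dagger}$, so $S^\dagger \subseteq S^{*}$. Property (i) is immediate, and $\nbran S$ contains the dense set $\C\langle A'\rangle c'$. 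For injectivity I would build the symmetric inverse $T_0 : \C\langle A'\rangle c' \to \cH$ by $T_0 p(A')c' = p(A)c$; since $T_0$ and $S_0$ are mutual algebraic inverses on their dense domains, passing to graph closures gives $\overline{T_0} = S^{-1}$, so $S$ is an injective closed densely-defined pseudo-affinity with dense range.

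The main obstacle is establishing the second core property, i.e.\ the equality $\overline{S_0^\dagger} = S^{*}$ rather than mere inclusion. For this I would work at the level of graphs in $\cH' \oplus \cH$, using the characterization $\mathrm{graph}(S^{*}) = (J\,\mathrm{graph}(S))^{\perp}$ with $J(x,y):= (y,-x)$. A key structural tool is the intertwining $S_0 A_j \subseteq A_j' S_0$ on $\C\langle A\rangle c$, which passes to the adjoint as $S^{*} A_j'^{*} = A_j^{*} S^{*}$ on $\nbdom S^{*}$, so $\nbdom S^{*}$ is invariant under each $A_j'^{*}$ with $S^{*}$ intertwining the adjoint $d$-tuples. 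Given a hypothetical $(y_0,z_0)\in \mathrm{graph}(S^{*})$ orthogonal to $\mathrm{graph}(S_0^\dagger)$, one obtains the orthogonality identities $\langle z_0, q(A^{*})b\rangle + \langle y_0, q(A'^{*})b'\rangle = 0$ for all $q\in\fp$; iterating via the intertwining to generate analogous identities for $A_j'^{*}y_0$ and $A_j^{*}z_0 = S^{*}A_j'^{*}y_0$, and combining with the density of $\C\langle A^{*}\rangle b$ in $\cH$ and $\C\langle A'^{*}\rangle b'$ in $\cH'$ guaranteed by observability on both sides, should force $y_0 = z_0 = 0$. Balancing these two families of orthogonality identities against the full minimality hypothesis (controllability and observability of both realizations) is the crux of the argument; uniqueness of $S$ up to pseudo-similarity then follows from the fact that any closed operator is determined by its action on a core, together with the symmetric inverse construction.
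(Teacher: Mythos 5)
Your construction follows the same route as the paper's proof. You define $S_0$ on $\C \langle A \rangle c$ by $S_0\,p(A)c := p(A')c'$, and the well-definedness argument via the shared Taylor coefficients and observability of the second realization is exactly the paper's. A small packaging difference: you establish closeability by exhibiting the formal adjoint $S_0^\dagger$ on $\C\langle A'^{*}\rangle b'$ and noting that density of its domain forces $S_0^{*}$ to be densely defined, whereas the paper shows closeability directly by pairing a putative garbage limit $y'$ against $q(A'^{*})b'$; these are equivalent, and your variant has the minor virtue of also recording $S_0^\dagger \subseteq S^{*}$ at the same time, yielding the containment half of (ii) for free. Injectivity via the reverse map $T_0$ also matches the paper, though you should be careful about the order of logic: one must first deduce injectivity of $S$ from closeability of $T_0$ before speaking of $S^{-1}$ or identifying $\overline{T_0}$ with it.

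The one place where your proposal does not fully deliver is precisely the place you flag as the crux: showing that $\C\langle A'^{*}\rangle b'$ is a \emph{core} for $S^{*}$, i.e.\ that $\overline{S_0^\dagger} = S^{*}$ and not merely $\overline{S_0^\dagger} \subseteq S^{*}$. You sketch a graph-orthogonality argument, using the intertwining $A_j^{*}S^{*} \subseteq S^{*}A_j'^{*}$ (note it is an inclusion, not an equality, on $\nbdom S^{*}$), but you stop at ``should force $y_0 = z_0 = 0$'' without completing the calculation; the two families of identities you write down are not visibly sufficient by themselves, and you do not supply the missing step. In fairness, at this exact point the paper is equally terse: it simply asserts that the core property ``also follows from a symmetric argument'' without saying what that argument is. By symmetry one does obtain a closed operator $R = \overline{S_0^\dagger}$ with the right cyclicity properties satisfying $R \subseteq S^{*}$ and $S \subseteq R^{*}$, but these two inclusions are adjoints of each other and do not by themselves force $R = S^{*}$. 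So your proposal matches the paper's own proof step for step, including being non-rigorous at the same final stage; you have at least made the difficulty explicit where the paper glosses over it.
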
 \label{pseudosim}
In particular, the above theorem implies that $Sc =c'$, $S^{-*} b = b'$, and $S^{-1} A' S = A$, suitably interpreted. An analogous result holds for FM realizations. This theorem has been previously established, essentially, in one-variable in \cite[Theorem 3b.1]{Helton-opreal} and \cite[Theorem 3.2]{BC-dBR}, and it is a natural extension of a well-known result on uniqueness of finite--dimensional minimal realizations from systems theory \cite[Theorem 5-21]{opsys}. 
\begin{proof}
By assumption, we have that for any $\om \in \F ^d$, 
$$ b^* A^\om c = b^{'*} A^{'\om} c'. $$
We define a linear map, $S_0$, with dense domain $\C \langle A \rangle c \subseteq \cH$ by $S_0 p(A) c = p(A') c'$. First, in order that this be well-defined, if $p(A)c = \wt{p} (A) c$ for $p,\wt{p} \in \fp$, then we need to check that $p(A') c' = \wt{p} (A') c'$. This follows from minimality. Indeed, if $p(A)c = \wt{p} (A)c$ in $\cH$, then for any $q \in \fp$, 
$$ b^* q(A) p(A) c = b^* q(A) \wt{p} (A) c, $$ so that 
$$  b^{'*} q(A') p(A') c' = b^{'*} q(A') \wt{p} (A') c'. $$ By minimality of the realization $(A',b',c')$, it follows that $p(A') c' = \wt{p} (A' ) c' \in \cH '$.

The linear map, $S_0$, has dense range in $\cH'$, by construction. We next prove that $S_0$ is closeable. If $p_n (A) c \in \nbdom S_0$, $p_n (A) c \rightarrow 0$ and $S_0 p_n (A) c = p_n (A' ) c' \rightarrow y' \in \cH '$, we need to show that $y' =0$. For any $q \in \fp$, consider
\ba \ip{ q(A^{'*} ) b'}{y'}_{\cH'} & = & \lim _n \ip{b'}{\ov{q} (A') p_n (A') c'}_{\cH '} \\
& = & \lim b^* \ov{q} (A) p_n (A) c =0, \ea since $p_n (A) c \rightarrow 0$ in $\cH$ and $\ov{q} (A)$ is a bounded linear operator for any $q \in \fp$. It follows that $y' \in \cH' $ is orthogonal to $\scr{O} _{A', b'} = \cH '$ and hence $y' =0$. This proves that $S_0$ is closeable and we set $S := \ov{S_0}$, the minimal closed extension of $S_0$. 

The closed operator, $S$, has dense range, by construction, and we claim that $S$ is also injective. Indeed, if $S$ were not injective, then $Sh =0$ for some $h \in \nbdom S \subseteq \cH$. Since $S = \ov{S_0}$, $\nbdom S_0 = \C \langle A \rangle c$ is a core for $S$. In particular, there is a sequence $p_n (A) c \in \nbdom S_0$ so that $p_n (A) c \rightarrow h$ and $S p_n (A) c \rightarrow Sh =0$. By symmetry, if we defined instead $S' _0 : \C \langle A' \rangle \subseteq \cH ' \rightarrow \cH$ by $S' p(A' ) c' = p(A) c$, $S'$ is closeable which implies that $h =0$. (It is also easy to argue directly that $S$ is injective.) Since $S$ is injective, closed and has dense range, $S^{-1}$ is also closed and densely--defined on $\nbran S$, is also injective and has dense range. 

Finally, we claim that $\C \langle A^{'*} \rangle  b' \subseteq \nbdom S^*$ and that $S^* q(A^{'*} ) b' = q(A^*) b$ for any $q \in \fp$. Indeed for any $p \in \fp$, 
\ba \ip{q(A^{'*} ) b'}{Sp(A)c}_{\cH ' } & = & \ip{b'}{\ov{q} (A') p(A') c'}_{\cH'} \\ 
& = & \ip{b}{\ov{q} (A) p(A) c}_{\cH} = \ip{q(A^*)b}{p(A)c}_\cH. \ea 
By the definition of the Hilbert space adjoint of a densely--defined linear map, $q(A^{'*})b' \in \nbdom S^*$ and $S^* q(A^{'*})b' = q(A^* ) b$. The fact that $\C \langle A^{'*} \rangle b'$ is a core for $S^*$ also follows from a symmetric argument. 
\end{proof}

\begin{defn}
Let $A \in \scr{B} (\cH ) ^d$ and $A' \in \scr{B} (\cH ' ) ^d$ be two column $d-$tuples of bounded linear operators. We say that $A$ and $B$ are \emph{analytically equivalent} if there exist $b,c \in \cH$ and $b',c' \in \cH'$ so that the realizations $(A,b,c)$ and $(A',b',c')$ are both minimal and define the same uniformly analytic NC function in a uniformly open neighbourhood of $0$. That is, $A$ and $A'$ are analytically equivalent if there exists an $r>0$ so that 
$$ b^* L_A (Z) ^{-1} c = b' L_{A'} (Z) ^{-1} c'; \quad \quad \forall Z \in r \cdot \rball, $$
or equivalently, 
$$ b^* A^\om c = b^{'*} A^{'\om}c' \quad \quad \forall \om \in \F ^d. $$ 
\end{defn}

Theorem \ref{pseudosim} implies, in particular, that any two analytically equivalent $A \in \scr{B} (\cH) ^d$ and $A' \in \scr{B} (\cH ' ) ^d$ are pseudo-similar.

\begin{defn}[Domain of a realization]
The \emph{invertibility domain} of a realization, $(A,b,c)$, or of a $d-$tuple, $A \in \scr{B} (\cH ) ^d$, is the set of all $Z \in \ncu$ so that the evaluation of the linear pencil, $L_A$, at $Z$, $L_A (Z)$, is invertible. 
\end{defn}

Note that if $h \sim (A,b,c)$ is familiar, then $h$ is a uniformly analytic NC function on $\scr{D} (A)$, $\scr{D} (A)$ is uniformly open by operator--norm continuity of the inversion map, $r \cdot \rball \subseteq \scr{D} (A)$ for $r := \| A \| ^{-1} _{\mr{col}}>0$, and $\scr{D} (A)$ is joint similarity invariant. 

\begin{remark}
In one-variable, if $A \in \scr{B} (\cH )$ and $z \in \C$, $L_A (z) = I - zA$. In this case, it follows that $z \notin \scr{D} (A)$ if and only if $1/z \in \sigma (A)$. Hence the complement of the invertibility domain, $\scr{D} (A)$ of $A \in \scr{B} (\cH ) ^d$, can be thought of as a generalized or `quantized' notion of spectrum for operator tuples. Similar notions of spectra for general tuples of operators were considered by Taylor in \cite{Taylor2}, and by Yang in \cite{Yang-projective}. In the latter reference, however, the spectrum consists of scalar points in complex projective space.
\end{remark}

\begin{example} \label{entireeg}
Consider the entire function, $f(z) = \frac{e^z - 1}{z}$. By a result of Douglas, Shapiro, and Shields \cite[Theorem 2.2.4]{DouglasShapiroShields}, any function that is analytic in a disk of radius $r>1$ (centred at $0$) is either cyclic for the backward shift on the Hardy space, $H^2 (\D)$, or rational. Hence, this $f \in H^2$ is cyclic for the backward shift. (It would be interesting to see whether there is an analogue of this result for the free Hardy space.) In other words, $H^2 = \bigvee_{n=0}^{\infty} S^{*n} f$, where $S:= M_z$ denotes the shift on $H^2$. Moreover, $1$ is cyclic for $S$, so we have a minimal realization for $f$, \emph{i.e.} $f \sim (S^*, 1, f)$.

To construct another realization, we consider the Volterra operator on $L^2[0,1]$ given by $(T g)(x) = \int_0^x g(t) dt$. It is well known that $T$ is a quasinilpotent Hilbert--Schmidt operator \cite{GoKr}. Moreover, the vector $1 \in L^2 [0,1]$ is $T-$cyclic, since $T^n 1 = \frac{x^n}{n!}$. Since $(T^* g)(x) = \int_x^1 g(t)dt$, it follows that $1$ is also $T^*-$cyclic. The power series generated by the realization $(T, 1, 1)$ is,
\[
\sum_{n=0}^{\infty} \langle  1, T^n 1 \rangle z^n = \sum_{n=0}^{\infty} \left( \int_0^1 \frac{x^n}{n!}dx \right) z^n = \sum_{n=0}^{\infty} \frac{z^n}{(n+1)!} = f(z), 
\]
so that $(T,1,1)$ is also a minimal realization of $f$. Theorem \ref{minunique} now implies that $S^*$ and $T$ are analytically equivalent, hence pseudo-similar. 
\end{example}

\begin{remark}
The previous example is striking in that it shows that pseudo-similarity preserves very few spectral properties of operators. Namely, the previous example shows that the backward shift, which has spectral radius $1$ and spectrum equal to the closed unit disk, is pseudo-similar to the Volterra operator, which is quasinilpotent (and compact) and hence has spectrum equal to $\{ 0 \}$. Many spectral properties are not even preserved under quasi-similarity \cite{KRD-Herr}. See \cite{KRD-Herr,NF} for examples of quasinilpotent operators that are quasi-similar to operators whose spectrum is the closed unit disk, much as in the previous example. 

If $(A,b,c)$ and $(A', b' , c')$ are two minimal realizations for the same NC function, and hence pseudo-similar, then it is certainly possible that there exist points $Z$ not in the invertibility domain, $\scr{D} (A)$ of $A$, but which do belong to the invertibility domain of $A'$. If $Z \otimes A x = x$, so that $Z \notin \scr{D} (A)$, then by minimality there is a sequence $(p_n) \subseteq \fp$ so that $Z \otimes p_n (A) c \rightarrow x$, but there is no reason that $S p_n (A) c = p_n (A' ) c'$ is even a bounded sequence let alone convergent. Hence, it is certainly possible that $Z \in \scr{D} (A') \sm \scr{D} (A)$. Indeed, this occurs in the previous example.
\end{remark}

\paragraph{Domain of a familiar NC function.} Recall that NC rational functions are defined as certain evaluation equivalence classes of NC rational expressions \cite{KVV-diff}. Namely, an \emph{NC rational expression}, $\mr{r}$, is any syntactically valid expression obtained by applying the arithmetic operations of inversion, addition and multiplication to the free algebra. The \emph{domain}, $\nbdom \mr{r}$, of such an expression is the set of all $X \in \ncu$ for which the expression can be evaluated at $X$, and $\mr{r}$ is said to be \emph{valid}, if this domain is not empty. Two valid NC rational expressions, $\mr{r} _1$ and $\mr{r} _2$, are said to be \emph{equivalent}, if $\mr{r} _1 (X) = \mr{r} _2 (X)$ for all $X \in \nbdom \mr{r} _1 \bigcap \nbdom \mr{r} _2$. One can prove that for any two valid NC rational expressions, there is a finite $N \in \N$ so that $\mr{Dom} _n \, \mr{r} _1 \bigcap \mr{Dom} _n \, \mr{r} _2$ is non-empty, and in fact is Zariski open and dense in $\cdn$ for any $n \geq N$, see \cite[Footnote 1, Section 2]{KVV-diff}. It follows that this evaluation relation is transitive, and an equivalence relation on valid NC rational expressions. One then defines an NC rational function, $\fr$, as an evaluation equivalence class of valid NC rational expressions with domain $\nbdom \fr := \bigcup _{\mr{r} \in \fr} \nbdom \mr{r}$. One views $\fr$ as an NC function on $\nbdom \fr$, and we simply write $\fr (X) := \mr{r} (X)$ if $X \in \nbdom \mr{r}$ and $\mr{r} \in \fr$. Any NC rational function, $\fr \in \fskew$, is uniformly analytic on its domain, and its domain is a joint similarity--invariant and uniformly open NC set. This follows from the fact that the arithmetic operations $+, \cdot, (\cdot) ^{-1}$ are all operator--norm continuous and free polynomials are uniformly entire. 

The evaluation equivalence relation, $\sim$, described above, applied to arbitrary uniformly analytic NC functions need not be well-defined. First, given uniformly analytic $f,g$, there is no reason for $\nbdom f \cap \nbdom g$ to be non-empty. Also, this relation need not be transitive when applied to such arbitrary functions. If $f\sim g$ and $g \sim h$, and $\nbdom f \cap \nbdom g \neq \emptyset$, $\nbdom g \cap \nbdom h \neq \emptyset$, it could still be that $\nbdom f \cap \nbdom h = \emptyset$, or worse, it could be that this intersection is non-empty but $f(X) \neq h (X)$ for some $X \in \nbdom f \cap \nbdom h$. 

The domains of familiar (and hence) uniformly analytic NC functions exhibit better behaviour. If $f\sim (A,b,c)$ and $g \sim (A' ,b' ,c')$, then $f,g$ are uniformly analytic on $\scr{D} (A)$ and $\scr{D} (A')$, respectively, and $\scr{D} (A) \cap \scr{D} (A') \supseteq r \cdot \rball$, where $r := \min \{ \| A \| _{\mr{col}} ^{-1} , \| A ' \| _{\mr{col}} ^{-1} \}$. It follows, by the identity theorem in several complex variables, that we can define an evaluation relation on such familiar NC functions by $f\sim g$ if $f(X) = g(X)$ for all $X$ that belong to the (level-wise) connected component of $0$ in $\scr{D} (A) \cap \scr{D} (A')$. If $\scr{D} ^{(0)} (A)$ is this level-wise connected component of $0$ in $\scr{D} (A) \cap \scr{D} (A')$, we view any familiar NC function as an evaluation equivalence class, $[f]$, defined on the domain 
\be \nbdom [f] := \bigcup _{(A,b,c) \sim g \in [f]} \scr{D} ^{(0)} (A), \label{globaldom} \ee and we write $[f](X) := g(X)$ if $g\in [f]$, and $X \in \scr{D} ^{(0)} (A)$. We will typically write $f$ in place of $[f]$.

\subsection{Kalman decomposition}  

Given any operator realization for a fixed NC function, one can construct a minimal realization by compressing the realization to a certain semi-invariant subspace. This extends what is called the \emph{Kalman decomposition} from finite--dimensional realization theory \cite{Kalman,Bart}.

Let $(A,b,c) \sim h$ be any descriptor realization of $h$ on $\cH$. Define the \emph{minimal space} of $(A,b,c)$ as the subspace
\be \scr{M} _{A,b,c} := \scr{C} _{A,c} \ominus  \scr{O} _{A,b} ^\perp \cap \scr{C} _{A,c}. \label{min} \ee
Then, $\scr{M} _{A,b,c}$ is the direct difference of the nested, $A-$invariant subspaces, $\scr{C} _{A,c}$ and $\scr{O} _{A,b} ^\perp \cap \scr{C} _{A,c}$, and is hence \emph{semi-invariant} for $A$ \cite{Sarason}. That is, if $Q_0 : \cH \rightarrow \scr{M} _{A,b,c}$ denotes the orthogonal projection, then
$$ Q_0 A^\om Q_0 = (Q_0 A Q_0 ) ^\om, \quad \om \in \F ^d, $$ so that compression to $\scr{M} _{A,b,c}$ is a surjective, unital homomorphism of the unital algebra generated by $A$, $\mr{Alg} \{ I, A_1, \cdots, A_d \}$, onto $\mr{Alg} \{ I, A^{(0)} _1, \cdots, A^{(0)} _d \}$ where $A^{(0)}:= Q_0 A | _{\scr{M} _{A,b,c}}$.

\begin{thm}[Kalman decomposition] \label{Kalman}
Let $(A,b,c)$ be any descriptor realization of $h$ on $\cH$, and define the minimal space, $\scr{M} _{A,b,c}$, as in Equation (\ref{min}) above, with projector $Q_0$. Then setting $A^{(0)} := Q_0 A | _{\scr{M} _{A,b,c}}$, $b_0 := Q_0 b$ and $c := Q_0 c$, $(A^{(0)},b_0,c_0)$ is a minimal realization of $h$ so that the invertibility domain of $(A,b,c)$ is contained in that of $(A^{(0)}, b_0, c_0)$.   
\end{thm}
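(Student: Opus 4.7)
The plan is to verify, in order, the semi-invariance of $\scr{M}_{A,b,c}$, the transfer--function identity $b^*A^\om c = b_0^* (A^{(0)})^\om c_0$, minimality of $(A^{(0)}, b_0, c_0)$, and the inclusion $\scr{D}(A) \subseteq \scr{D}(A^{(0)})$ of invertibility domains.

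\emph{Semi-invariance and transfer function.} Directly from the definitions as closed linear spans, $\scr{C}_{A,c}$ is $A$-invariant and $\scr{O}_{A,b}$ is $A^*$-invariant, whence $\scr{K} := \scr{O}_{A,b}^\perp \cap \scr{C}_{A,c}$ is $A$-invariant and sits inside $\scr{C}_{A,c}$. This exhibits $\scr{M}_{A,b,c} = \scr{C}_{A,c} \ominus \scr{K}$ as semi-invariant for $A$, and Sarason's lemma applied word-by-word gives $Q_0 A^\om Q_0 = (Q_0 A Q_0)^\om$ on $\scr{M}_{A,b,c}$. For the transfer function identity I would compute
$$\ip{b_0}{(A^{(0)})^\om c_0} = \ip{Q_0 b}{(Q_0 A Q_0)^\om Q_0 c} = \ip{b}{Q_0 A^\om Q_0 c},$$
using that the middle vector lies in $\scr{M}_{A,b,c}$, and then reduce the right side to $\ip{b}{A^\om c}$ via two cancellations: first, $Q_0 c = c - P_\scr{K} c$ and $A^\om P_\scr{K} c \in \scr{K}$, which $Q_0$ kills, so $Q_0 A^\om Q_0 c = Q_0 A^\om c$; second, since $A^\om c \in \scr{C}_{A,c}$ one has $Q_0 A^\om c = A^\om c - P_\scr{K} A^\om c$, and $b \in \scr{O}_{A,b}$ is orthogonal to $P_\scr{K} A^\om c \in \scr{O}_{A,b}^\perp$.

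\emph{Minimality.} Controllability is immediate from the same cancellation: $(A^{(0)})^\om c_0 = Q_0 A^\om c$, so any $y \in \scr{M}_{A,b,c}\subseteq \scr{C}_{A,c}$ can be approximated by $y_n \in \C\langle A\rangle c$, and continuity of $Q_0$ together with $Q_0 y = y$ gives $Q_0 y_n \to y$ inside the closed span of $\{(A^{(0)})^\om c_0\}$. Observability is the delicate step, and the key is the dual characterization
$$\scr{M}_{A,b,c} = \ov{\scr{O}_{A,b} + \scr{C}_{A,c}^\perp} \ominus \scr{C}_{A,c}^\perp,$$
obtained by taking complements in $(\scr{O}_{A,b}^\perp \cap \scr{C}_{A,c})^\perp = \ov{\scr{O}_{A,b} + \scr{C}_{A,c}^\perp}$. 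This realizes $\scr{M}_{A,b,c}$ as semi-invariant for $A^*$ (both $\scr{C}_{A,c}^\perp$ and $\ov{\scr{O}_{A,b} + \scr{C}_{A,c}^\perp}$ are $A^*$-invariant), so a second application of Sarason combined with the observation that $(I-Q_0) b \in \scr{C}_{A,c}^\perp$ (because $b \in \scr{O}_{A,b}$ is orthogonal to $\scr{K}$) yields $(A^{(0)})^{*\om} b_0 = Q_0 A^{*\om} b$. Then, given $y \in \scr{M}_{A,b,c} \subseteq \ov{\scr{O}_{A,b} + \scr{C}_{A,c}^\perp}$, approximate $y \approx o_n + u_n$ with $o_n \in \scr{O}_{A,b}$ and $u_n \in \scr{C}_{A,c}^\perp$; applying $Q_0$ kills each $u_n$, and writing each $o_n$ as a limit of combinations of $A^{*\om} b$ places $y$ in the closed span of $\{(A^{(0)})^{*\om} b_0\}$.

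\emph{Invertibility domains and main obstacle.} With respect to the orthogonal decomposition $\cH = \scr{M}_{A,b,c} \oplus \scr{K} \oplus \scr{C}_{A,c}^\perp$, invariance of $\scr{C}_{A,c}$ under $A$ makes $A$ block upper--triangular across $\scr{C}_{A,c} \oplus \scr{C}_{A,c}^\perp$, and within $\scr{C}_{A,c}$ the invariance of $\scr{K}$ makes $A|_{\scr{C}_{A,c}}$ block lower--triangular with $A^{(0)}$ in the top--left corner. Hence $L_A(Z)$ inherits a block--triangular form with $L_{A^{(0)}}(Z)$ as one of its diagonal blocks, and a routine Schur--complement argument forces invertibility of $L_A(Z)$ to imply invertibility of each diagonal block, in particular of $L_{A^{(0)}}(Z)$. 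The main obstacle is the observability step, which requires the dual semi-invariance description; in infinite dimensions $\ov{\scr{O}_{A,b} + \scr{C}_{A,c}^\perp}$ need not coincide with $\scr{O}_{A,b} + \scr{C}_{A,c}^\perp$, so one must carefully distinguish $A$-invariance from $A^*$-invariance to apply Sarason's lemma on the appropriate side.
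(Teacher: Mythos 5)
Your proof is correct, and it takes a genuinely different route from the paper on the observability step (the part you correctly identify as delicate). The paper proceeds in two sequential reductions: first restrict $A$ to the $A$-invariant subspace $\scr{C}_{A,c}$ to get a controllable realization $(A',b',c')$, then define $\scr{M} := \bigvee A^{'*\om}b'$ inside $\scr{C}_{A,c}$ and verify $\scr{M} = \scr{M}_{A,b,c}$. With that description, $\scr{M}$ is by construction $A^{'*}$-invariant (not merely semi-invariant), so observability of the compressed realization is tautological: $\bigvee A^{(0)*\om}b_0 = \bigvee A^{'*\om}b' = \scr{M}$. You instead stay with the original definition \eqref{min} throughout, establish the dual identity $\scr{M}_{A,b,c} = \ov{\scr{O}_{A,b} + \scr{C}_{A,c}^\perp}\ominus\scr{C}_{A,c}^\perp$, and use it to exhibit $\scr{M}_{A,b,c}$ as semi-invariant for $A^*$ as well, then run a second Sarason/density argument. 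Both routes hinge on the same structural facts---$\scr{K}$ is $A$-invariant and nested in the $A$-invariant $\scr{C}_{A,c}$---and arrive at the same block-triangular picture for the domain inclusion; what your approach buys is a single-compression argument that never needs the intermediate restricted tuple $A'$, at the cost of invoking Sarason on both the $A$-side and the $A^*$-side. Your cancellations in the transfer-function identity and your controllability argument are both sound, and you correctly flag that in infinite dimensions $\scr{O}_{A,b}+\scr{C}_{A,c}^\perp$ need not already be closed, which is precisely why the density argument is needed.
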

This theorem allows one to extend $h \sim (A,b,c)$ to an NC function on a potentially larger NC domain. One can construct a minimal FM realization from any FM realization similarly. 
\begin{proof}
Given $(A,b,c)$, first let $A'$ be the restriction of $A$ to the $A-$invariant controllable subspace, $\scr{C} _{A,c}$. If $P'$ is the orthogonal projection onto $\scr{C} _{A,c}$, let $b' = P' b$ and $c' = c \in \scr{C} _{A,c}$. Then, $(A',b',c')$ is a controllable realization of $h$ on $\scr{C} _{A,c}$ since, 
$$ b^{'*} A^{'w} c' = (P'b) ^{*} (P' A P') ^\om P' c = b^* A^\om P' c = b^* A^\om c, $$ for any word $\om \in \F ^d$. 

Define the subspace $\scr{M} := \bigvee A^{'\om *} b' \subseteq \scr{C} _{A,c}$ with orthogonal projection $Q_0 \leq P'$. First observe that $x \in \scr{C} _{A,c} \ominus \scr{M}$ if and only if, for all $\om \in \F ^d$,
\ba 0 & = & \ip{A^{'\om *} b'}{x} = \ip{(P' A P') ^{\om *} P'b}{x} \\
& = & \ip{P' A^{\om *} b}{x}, \quad \quad \mbox{since $\nbran P'$ is co-invariant for $A^*$,} \\
& = & \ip{A^{\om *}b}{x},  \ea 
since $P' x = x$. It follows that $x \in \scr{C} _{A,c} \ominus \scr{M}$ if and only if $x \in \scr{O} _{A,b} ^\perp \cap \scr{C} _{A,c}$, so that 
$$ \scr{M} =  \scr{C} _{A,c} \ominus \scr{C} _{A,c} \cap \scr{O} _{A,b} ^\perp =\scr{M} _{A,b,c}. $$

Set $A^{(0)} = Q_0 A | _{\scr{M} _{A,b,c}}$, $b_0 = b'$ and $c_0 = Q_0 c$. We claim that $(A^{(0)}, b_0 , c_0)$ is a minimal realization of $h$ on $\scr{M} _{A,b,c}$. Indeed, for any word $\om \in \F ^d$, 
\ba b_0 ^* A^{(0) \om} c_0 & = & b^{'*} (Q_0 A' Q_0 ) ^\om Q_0 c ' \\
& = & b^{'*} Q_0 A^{'\om} c \quad \quad \quad \mbox{($Q_0$ is co-invariant for $A'$ and $c'=c$)} \\
& = & (P' b) ^* A^{'\om} P'c \quad \quad \quad \mbox{($b' \in \nbran Q_0 = \scr{M} _{A,b,c}$)} \\
& = & b^* A^\om c = \hat{h} _\om, \ea since $c = c' \in \nbran P'$ and $\nbran P' = \scr{C} _{A,c}$ is $A-$invariant. 
This proves that $(A^{(0)}, b_0, c_0)$ is a realization for $h$. Moreover, 
$$ \bigvee A^{(0) * \om} b_0 =  \bigvee A^{' *\om} \underbrace{Q_0 b'}_{=b '} = \scr{M} _{A,b,c}, \quad \quad \mbox{and} $$
\ba \bigvee A^{(0) \om} c_0 & = & \bigvee Q_0 A^{'\om} Q_0 c \\
& = & Q_0 \bigvee A^{'\om} c, \quad \quad \mbox{since $Q_0$ is $A'-$co-invariant,} \\
&= & Q_0 \bigvee A^\om c, \quad \quad \mbox{since $A' = A| _{\scr{C} _{A,c}}$,} \\
&= & Q_0 \scr{C} _{A,c} = \scr{M} _{A,b,c}, \ea since $\scr{M} _{A,b,c} \subseteq \scr{C} _{A,c}$. This proves that the realization $(A^{(0)}, b_0, c_0)$ on $\scr{M} _{A,b,c}$ is minimal. 

Finally, observe that with respect to the orthogonal decomposition, 
$$ \scr{H} = \scr{M} _{A,b,c} \oplus ( \scr{C} _{A,c} \ominus \scr{M} _{A,b,c} ) \oplus \scr{C} _{A,c} ^\perp, $$ we have that,
$$ A _j =\begin{pmatrix}   \begin{array}{cc}  \cellcolor{blue!15} A ^{(0)} _j & \cellcolor{green!15} 0   \\ \cellcolor{green!15} *  & \cellcolor{green!15} A^{(1)} _j \end{array} &  \mbox{\Large $*$} \\  \begin{array}{cc}   0 \quad   &   \quad 0   \\ 0  \quad   &  \quad 0 \end{array} &  \mbox{\Large $A ^{(2)} _j$ } \epm. $$

Hence, the entire operator, $A_j$, is block upper triangular with respect to the orthogonal decompostion $\cH = \scr{C} _{A,c} \oplus \scr{C} _{A,c} ^\perp$, while the upper left block corresponding to the decomposition of $\scr{C} _{A,b}$ is block lower triangular. It follows, from Schur complement theory, that $L_A (Z)$ will be invertible if and only if the diagonal blocks, $L_{A^{(0)}} (Z)$, $L_{A^{(1)}} (Z)$ and $L_{A^{(2)}} (Z)$ are each invertible. In particular, the linear pencil of the minimal realization $L_{A^{(0)}} (Z)$ is invertible whenever $L_A (Z)$ is invertible. 
\end{proof}

\begin{defn} \label{dilate}
Let $(A,b,c)$ be any realization on $\cH$. We say that $(\hat{A} , \hat{b}, \hat{c})$, on $\cK \supseteq \cH$, is a \emph{dilation} of $(A,b,c)$, or that $(A,b,c)$ is a \emph{compression} of $(\hat{A} , \hat{b}, \hat{c} )$, if $\cH$ is semi-invariant for $\hat{A}$, $A = P_{\cH} A | _{\cH}$, $b = P_{\cH} \hat{b}$ and $c = P_{\cH} \hat{c}$. 
\end{defn}

\begin{cor}
Any realization $(A,b,c)$ is a dilation of a minimal realization. If $(\hat{A}, \hat{b}, \hat{c} )$ is any dilation of $(A,b,c)$, then the invertibility domain of $(\hat{A}, \hat{b}, \hat{c})$ is contained in that of $(A,b,c)$, and these two realizations define the same NC function on the invertibility domain of $(\hat{A},\hat{b},\hat{c})$.
\end{cor}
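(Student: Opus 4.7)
My plan splits into two parts. For the first assertion, I apply Theorem~\ref{Kalman} directly: the Kalman decomposition of any descriptor realization $(A,b,c)$ on $\cH$ produces a minimal realization $(A^{(0)}, b_0, c_0) = (Q_0 A|_{\scr{M}_{A,b,c}}, Q_0 b, Q_0 c)$ on the semi-invariant subspace $\scr{M}_{A,b,c}$. Since $\scr{M}_{A,b,c}$ is by construction semi-invariant for $A$ with orthogonal projection $Q_0$, and $b_0, c_0$ are the images of $b, c$ under $Q_0$, Definition~\ref{dilate} is satisfied and $(A,b,c)$ is a dilation of this minimal realization.

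For the second assertion, let $(\hat A, \hat b, \hat c)$ be any dilation of $(A,b,c)$ on $\cK \supseteq \cH$. I would first invoke the natural multivariable analogue of Sarason's characterization of semi-invariance to produce jointly $\hat A$-invariant subspaces $\cM_2 \subseteq \cM_1 \subseteq \cK$ with $\cH = \cM_1 \ominus \cM_2$. In the orthogonal decomposition $\cK = \cM_2 \oplus \cH \oplus \cM_1^\perp$, each $\hat A_j$ is then block upper triangular with $(2,2)$-entry equal to $A_j = P_\cH \hat A_j|_\cH$. Consequently the linear pencil $L_{\hat A}(Z) = I \otimes I - Z \otimes \hat A$ inherits this block upper triangular form, with diagonal blocks $L_{\hat A|_{\cM_2}}(Z)$, $L_A(Z)$, and $L_{P_{\cM_1^\perp}\hat A|_{\cM_1^\perp}}(Z)$. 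Standard Schur complement theory for operator matrices then gives that $L_{\hat A}(Z)$ is invertible iff each of its three diagonal blocks is, yielding the inclusion $\scr{D}(\hat A) \subseteq \scr{D}(A)$ of invertibility domains.

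For the equality of transfer functions on $\scr{D}(\hat A)$, observe that $L_{\hat A}(Z)^{-1}$ is again block upper triangular with $(2,2)$-block exactly $L_A(Z)^{-1}$. Decomposing $\hat b$ and $\hat c$ along $\cK = \cM_2 \oplus \cH \oplus \cM_1^\perp$ and using $b = P_\cH \hat b$, $c = P_\cH \hat c$, the $(2,2)$-contribution to $\hat b^* L_{\hat A}(Z)^{-1} \hat c$ is precisely $b^* L_A(Z)^{-1} c$. The main technical obstacle I anticipate is verifying that the remaining cross-block contributions vanish; this will rely on the orthogonality relations naturally arising from the dilation construction (in the Kalman prototype, $\hat b$ is orthogonal to $\cM_2 = \scr{C}_{A,c} \cap \scr{O}_{A,b}^\perp$ and $\hat c$ lies in $\cM_1 = \scr{C}_{A,c}$, which forces the off-diagonal terms to disappear). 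Equivalently, one can establish the identity coefficient-wise as $\hat b^* \hat A^\om \hat c = b^* A^\om c$ for every $\om \in \F^d$ by iterating the semi-invariance identity $P_\cH \hat A^\om P_\cH = A^\om$ together with these orthogonality conditions, and then extend the equality to all of $\scr{D}(\hat A)$ by analytic continuation from a neighbourhood of the origin.
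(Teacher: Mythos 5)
Your application of Theorem~\ref{Kalman} for the first assertion is correct, and the Sarason decomposition $\cK = \mathcal{M}_2 \oplus \cH \oplus \mathcal{M}_1^\perp$, with each $\hat A_j$ block upper triangular, is the right framework for the remaining claims. One caution on the domain containment, though: in infinite dimensions an invertible block upper triangular operator need not have invertible diagonal blocks (the unilateral shift $S$ and its adjoint furnish examples), so the implication $L_{\hat A}(Z)$ invertible $\Rightarrow L_A(Z)$ invertible is \emph{not} an automatic consequence of the Schur complement formula, which only gives the converse direction; this step needs a separate justification.

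The more serious gap is exactly the ``main technical obstacle'' you flag, and you should treat it as a genuine obstruction rather than a deferred detail: it cannot be resolved from Definition~\ref{dilate} alone. Decomposing $\hat b = \hat b_1 \oplus b \oplus \hat b_3$ and $\hat c = \hat c_1 \oplus c \oplus \hat c_3$ along $\cK = \mathcal{M}_2 \oplus \cH \oplus \mathcal{M}_1^\perp$, the coefficient $\hat b^*\hat A^\om\hat c$ equals $b^* A^\om c$ plus cross terms that vanish precisely when $P_{\mathcal{M}_2}\hat b = 0$ and $P_{\mathcal{M}_1^\perp}\hat c = 0$. You correctly observe that both hold in the Kalman prototype, where $\mathcal{M}_1 = \scr{C}_{\hat A,\hat c}$ contains $\hat c$ and $\mathcal{M}_2 = \mathcal{M}_1 \cap \scr{O}_{\hat A,\hat b}^\perp$ is orthogonal to $\hat b$. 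But Definition~\ref{dilate} only requires $b = P_{\cH}\hat b$, $c = P_{\cH}\hat c$ and semi-invariance of $\cH$; it places no constraint on the components of $\hat b$, $\hat c$ outside $\cH$, and without the two orthogonality conditions the claimed coefficient identity fails. A one-variable counterexample: $\cK = \C^2$, $\cH = \C e_1$, $\hat A = E_{1,2}$, $\hat b = e_1$, $\hat c = e_1 + e_2$. Then $\cH$ is $\hat A$-invariant, $A = P_\cH\hat A|_\cH = 0$, $b = c = 1$, and $(A,b,c)$ is a minimal realization of the constant function $1$; yet $(\hat A,\hat b,\hat c)$ realizes $1 + z$, while both invertibility domains are all of $\C$. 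So the identity $\hat b^*\hat A^\om\hat c = b^*A^\om c$ on which your last step rests is false for a general dilation in the sense of the paper's definition; the two orthogonality conditions must be imposed as an explicit additional hypothesis (or be folded into a stricter notion of dilation), not merely expected to ``naturally arise.''
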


\subsection{The Fornasini--Marchesini and descriptor algorithms} \label{FMalg}

Let $(A,B,C,D) \sim g$ and $(A', B' , C' , D') \sim h$ be FM operator realizations on separable, complex Hilbert spaces, $\cH$ and $\cH '$. An FM operator realization for $g +h$ is given by 
$(A^+, B^+, C^+, D^+)$ where, 
\be A^+ _j := A _j \oplus A' _j, \quad B^+ _j := \bpm B _j \\ B' _j \epm, \quad C^+ _j := ( C_j , C' _j ), \quad \mbox{and} \quad D^+ := D + D'. \label{sum} \ee 
An FM realization for $g \cdot h$ is given by $(A^\times , B^\times , C^\times, D^\times)$, where 
\be A^\times _j := \bpm A _j & B_j C' \\ 0 & A' _j \epm, \quad B^\times _j := \bpm B_j D' \\ B' _j \epm, \quad C^\times = ( C , D C'), \quad \mbox{and} \quad D ^\times := D \cdot D'. \label{mult} \ee 
And an FM operator realization for $g^{-1}$, assuming that $g(0) = D \neq 0$, is given by $( A^{(-1)}, B^{(-1)}, C^{(-1)}, D^{(-1)} )$, where 
\be A^{(-1)} _j := A_j - \frac{1}{D} B_j C, \quad B^{(-1)} _j :=  -\frac{1}{D} B_j, \quad C^{(-1)} := \frac{1}{D} C, \quad \mbox{and} \quad D^{(-1)} := \frac{1}{D}. \label{inv} \ee 

It is well known that these formulas give realizations for sums, products and inverses in the case of finite--dimensional realizations. It is straightforward to verify, by direct computation, that these formulas still hold for operator--realizations, so we omit the proof. (The arguments are independent of dimension.) The fact that Equation (\ref{inv}) above gives an FM realization for $g^{-1}$, for example, can be readily checked using the Woodbury identity \cite{Woodbury}. 

\begin{remark}
If $(A,B,C,D)$ is a minimal FM realization so that $D \neq 0$, then it is easy to check that $( A^{(-1)}, B^{(-1)}, C^{(-1)}, D^{(-1)} )$ is also minimal, as in \cite[Section 5.2]{HKV-poly}. Indeed, assuming $(A,B,C,D)$ is minimal,
$$ \bigvee _{\substack{\om \in \F ^d \\ 1 \leq j \leq d}} A^{(-1) \om } B^{(-1)} _j  \supseteq  \bigvee A^\om B_j = \cH, \quad \mbox{and} \quad \bigvee _{\om \in \F ^d} A^{(-1) * \om} C^{(-1)*} \supseteq \bigvee A^{*\om} C^* = \cH. $$
\end{remark}

Similarly, there is an algorithm for constructing descriptor realizations of sums, products and inverses as follows. 
Let $(A,b,c) \sim g$ and $(A',b',c') \sim h$ be descriptor realizations on complex, separable Hilbert spaces $\cH$ and $\cH '$. A descriptor realization for $g+h$ is given by $(A^+ , b^+ , c^+)$ where,
\be A_j ^+ := A_j \oplus A_j ', \quad b^+ := \bpm b \\ b' \epm, \quad \mbox{and} \quad c^+ := \bpm c \\ c' \epm. \label{dsum} \ee 
A descriptor realization for $g \cdot h$ is given by $(A^\times, b ^\times, c^\times )$, where 
\be A_j ^\times := \bpm A _j & A_j c b^{'*} \\ 0 & A' \epm, \quad b^\times := \bpm b \\ c^* b \, b' \epm, \quad \mbox{and} \quad c^\times := \bpm 0 \\ c' \epm. \label{dmult} \ee 
And a descriptor realization for $g^{-1}$, assuming that $g(0) = b^* c \neq 0$, on the Hilbert space $\hat{\cH} := \cH \oplus \C$, is given by $(A^{(-1)}, b^{(-1)}, c ^{(-1)} )$, where $A^{(-1)} _j \in \scr{B} (\hat{\cH} )$, $1 \leq j \leq d$, $b^{(-1)}, c^{(-1)} \in \hat{\cH}$ and, 
\be A^{(-1)} _j := \bpm A_j - \frac{1}{b^*c} A_j c b^* & \frac{1}{b^* c} A_j c \ip{0 \oplus 1}{\cdot}_{\hat{\cH}}  \\ 0 & 0 \epm, \quad b^{(-1)} := \frac{1}{c^*b} ( -b \oplus 1 ), \quad \mbox{and} \quad c^{(-1)} := 0 \oplus 1. \label{dinv} \ee

\subsection{Operator realizations around a matrix centre}
\label{ss-matrixreal}

In \cite{PV1,PV2}, Porat and Vinnikov show that any NC rational function, $\fr \in \fskew$, has a `realization around a matrix--centre', $Y \in \nbdom \fr$. Namely, suppose that $Y \in \mr{Dom} _m \, \fr$. Then there is a quadruple, $(\mbf{A} , \mbf{B} , \mbf{C}, \mbf{D})$, where each
$\mbf{A} _j : \C ^{m \times m} \rightarrow \C ^{M \times M}$ is a linear map, each $\mbf{B} _j : \C ^{m \times m} \rightarrow \C ^{M \times m}$ is a linear map, $\mbf{C} \in \C ^{m \times M}$, and $\mbf{D} = \fr (Y) \in \C ^{m \times m}$, so that for any $X \in \mr{Dom} _m \, \fr$ that is sufficiently close to $Y$, 
$$ \fr (X) = \mbf{D} + \mbf{C} \left( I_M - \sum _j \mbf{A} _j ( X_j - Y_j ) \right) ^{-1} \sum _j \mbf{B} _j (X_j - Y_j ). $$ This can be extended to points $X  \in \C ^{(mn \times mn) \cdot d}$, $n \in \N$ by viewing each $X_j$ as an $n \times n$ block matrix with blocks of size $m \times m$,
$$ X _j = \left(  X _{j} ^{(k,\ell)} \right) _{ 1 \leq k , \ell \leq n} = \sum _{k,\ell =1} ^n E_{k, \ell} \otimes X^{(k,\ell)} _j; \quad \quad X_j ^{(k, \ell)} \in \C ^{m \times m}, $$ where $E_{k,\ell}$ are the standard matrix units of $\C ^{n \times n}$. We then define, 
$$ \mbf{A} _j (X_j ) := \left( \mbf{A} _j \left( X_j ^{(k, \ell )} \right) \right) \in \C ^{n \times n} \otimes \C ^{M \times M}, $$ and then the action of $\mbf{B} _j$ on $X_j$ is defined similarly. That is, we are identifying $\C ^{mn \times mn}$ with $\C ^{n\times n} \otimes \C ^{m \times m}$ and $\mbf{A} _j$ with its ampliation, $\mr{id} _n \otimes \mbf{A} _j$, where $\mr{id} _n$ is the identity map on $\C ^{n \times n}$. In this case, 
$$ \fr (X) = I_n \otimes \mbf{D} + I_n \otimes \mbf{C} \left( I_n \otimes I_M - \sum \mbf{A} _j (X _j  - I_n \otimes Y _j) \right) ^{-1} \sum \mbf{B} _j (X _j - I_n \otimes Y_j). $$ 
As for regular realizations, we will write $\mbf{A} (X), \mbf{B} (X)$ in place of $\sum \mbf{A} _j (X_j)$ and $\sum \mbf{B} _j (X_j)$, respectively, and we define the \emph{linear pencil} of $\mbf{A}$ as
$$ L_{\mbf{A}} (X) := I_n \otimes I_M - \mbf{A} (X); \quad \quad X \in \C ^{(mn \times mn) \cdot d}. $$

Let $(A,B,C,D)$, $A \in \scr{B} (\cH) ^d$, be a minimal FM realization of a familiar NC function, $f$, and suppose that $Y \in \scr{D} (A) \subseteq \nbdom f$, $Y \in \cdm$. Then $f$ also has an FM ``matrix--realization" centred at $Y$. To see this, consider the Taylor--Taylor series of $f$ centred at $Y$. 
$$ f(X) = \sum _{\om \in \F ^d} \Delta ^{\om ^\mrt} _{X-Y} f (Y). $$ Given any $H \in \cdm$, let $\vec{H} _j := (0_m, \cdots, H_j , 0_m, \cdots , 0_m ) \in \cdm$, with $H_j$ in the $j$th slot. For any $1 \leq j \leq d$, $\Delta ^{(j)} _H f (Y)$ can be computed as the upper right corner of 
\ba f \bpm Y & \vec{H} _j \\ & Y \epm  &= & D I_m + I_m \otimes C \left( I _m \otimes I - \bsm Y \otimes A & H_j \otimes A_j  \\ & Y \otimes A \esm \right) ^{-1} \bsm Y \otimes B & H_j \otimes B_j \\ & Y \otimes B\esm \\
& = & DI_{2m} + I_{2m} \otimes C \bpm L_A (Y) ^{-1} &  L_A (Y) ^{-1} H_j \otimes A_j L_A (Y) ^{-1} \\ & L_A (Y) ^{-1} \epm \bpm Y \otimes B & H_j \otimes B_j \\ & Y \otimes B\epm \\
& = & \bpm * & I_m \otimes C \left( L_A (Y) ^{-1} H_j \otimes B_j + L_A (Y) ^{-1} H_j \otimes A_j L_A (Y) ^{-1} Y \otimes B \right) \\ & * \epm. \ea 
Hence, 
\be \Delta ^{(j)} _H  f (Y) = I_m \otimes C \left( L_A (Y) ^{-1} H_j \otimes B_j + L_A (Y) ^{-1} H_j \otimes A_j L_A (Y) ^{-1} Y \otimes B \right). \label{TTone} \ee
Similarly, $\Delta ^{(kj)} _H f (Y)$ can be calculated as the upper right corner of $f$ applied to 
$$ \bpm Y & \vec{H} _j & 0 \\ & Y & \vec{H} _k \\ & & Y \epm, $$ and this yields,
\ba & &   D I_{3m} + I_{3m} \otimes C  \left( I_{3m} \otimes I - \bsm Y \otimes A & H_j \otimes A_j & 0 \\ & Y \otimes A & H_k \otimes A_k \\ & & Y \otimes A \esm  \right) ^{-1} \bsm Y \otimes B & H_j \otimes B_j &  0 \\ & Y \otimes B & H_k \otimes B_k \\ & & Y \otimes B \esm \\ 
& = &  D I_{3m} + I_{3m} \otimes C \bsm L_A (Y) ^{-1} & L_A (Y) ^{-1} H_j \otimes A_j L_A (Y) ^{-1} & L_A (Y) ^{-1} H_j \otimes A_j L_A (Y) ^{-1} H_k \otimes A_k L_A (Y) ^{-1} \\ & * &  * \\ & & * \esm \bsm Y\otimes B & H_j \otimes B_j & 0 \\ &  * & H_k \otimes B_k \\ & & Y \otimes B \esm. \ea 
That is, 
\be \Delta ^{(kj)} _H f (Y) = I_m \otimes C \, L_A (Y) ^{-1} H_j \otimes A_j \left( L_A (Y) ^{-1} H_k \otimes B_k + L_A(Y) ^{-1} H_k \otimes A_k L_A (Y) ^{-1} Y \otimes B \right). \label{TTtwo} \ee
Hence, for any fixed $G \in \C ^{m \times m}$ and any $1\leq j \leq d$, define
$$ \mbf{A} _j (G) := L_A (Y) ^{-1} G \otimes A_j, \quad \quad \mbf{B} _j (G) := L_A (Y) ^{-1} G \otimes B_j + L_A (Y) ^{-1} G \otimes A_j L_A (Y) ^{-1} Y \otimes B, $$  $\mbf{C} := I_m \otimes C$, and $\mbf{D} := f(Y)$. Then each $\mbf{A} _j : \C ^{m \times m} \rightarrow \scr{B} ( \C ^m \otimes \cH )$ and each $\mbf{B} _j : \C ^{m \times m} \rightarrow \C ^{m \times m} \otimes \cH$ are linear maps, $\mbf{C} \in \C ^{m \times m} \otimes \cH ^\dag$, $\mbf{D} =f(Y) \in \C ^{m \times m}$, and $f$ is given by the matrix--realization $(\mbf{A}, \mbf{B}, \mbf{C}, \mbf{D} )$ in the sense of the following theorem. We will write $f \sim _Y (\mbf{A}, \mbf{B} , \mbf{C} , \mbf{D} )$ to denote that $(\mbf{A} , \mbf{B} , \mbf{C} , \mbf{D} )$ is a matrix-centre realization of $f$ about $Y$ and we define the invertibility domain of the matrix-centre realization $(\mbf{A} , \mbf{B} , \mbf{C} , \mbf{D} )$, $\scr{D} ^Y (\mbf{A} )$, $Y \in \cdm$, as  the set of all $X \in \C ^{(sm \times sm ) \cdot d}$, $s\in \N$, so that $L_{\mbf{A}} (X - I_s \otimes Y)$ is invertible.

\begin{thm} \label{matrealthm}
Let $f \sim (A,B,C,D)$ be a familiar NC function and suppose that $Y \in \scr{D} _m (A) = \scr{D} (A) \cap \cdm$. Then $f$ has a matrix-centre realization about $Y$, $f \sim _Y (\mbf{A},\mbf{B}, \mbf{C}, \mbf{D})$, where $\mbf{A}_j : \C ^{m\times m} \rightarrow \scr{B} (\C ^m \otimes \cH)$, $\mbf{B} _j : \C ^{m\times m} \rightarrow \C ^m \otimes \cH$, $1\leq j \leq d$, $\mbf{C} \in \C^{m  \times m } \otimes \cH ^\dag$, and $\mbf{D} \in \C^{m\times m}$ are given by, 
\begin{eqnarray} \mbf{A} _j (G)  & = &  L_A (Y) ^{-1} G \otimes A_j,  \quad \quad  \mbf{B} _j (G) = L_A (Y) ^{-1} G \otimes B_j + L_A (Y) ^{-1} G \otimes A_j L_A (Y) ^{-1} Y \otimes B,  \nonumber \\ 
& & \mbf{C}   =  I_m \otimes C, \quad \quad \mbox{and}  \quad \quad  \mbf{D} = f (Y). \label{matrealformula} \end{eqnarray}
That is, for any $X \in \scr{D} ^Y _{sm} (\mbf{A} )$, 
\be f(X) = I _s \otimes \mbf{D} + \mbf{C} \underbrace{\left(  I_{sm} \otimes I_\cH - \sum \mbf{A} _j (X_j - I_s \otimes Y_j )  \right) ^{-1}}_{= L_{\mbf{A}} (X-I_s \otimes Y) ^{-1}} \sum \mbf{B} _j (X_j - I_s \otimes Y_j). \label{matrixrealform} \ee
\end{thm}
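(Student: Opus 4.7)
The plan is a direct algebraic verification of (\ref{matrixrealform}) that sidesteps higher-order Taylor--Taylor expansions; the low-order computations leading to (\ref{TTone}) and (\ref{TTtwo}) motivate the precise form of $\mbf{A}_j$ and $\mbf{B}_j$, but the identity itself follows from manipulating linear pencils. Set $H_j := X_j - I_s \otimes Y_j$. Using $\sum_k Y_k \otimes A_k = I_m \otimes I_\cH - L_A(Y)$, one obtains
$$L_A(X) = I_{sm} \otimes I_\cH - \sum_j (I_s \otimes Y_j + H_j) \otimes A_j = (I_s \otimes L_A(Y)) - \sum_j H_j \otimes A_j,$$
so factoring $I_s \otimes L_A(Y)$ out on the left gives $L_A(X) = (I_s \otimes L_A(Y))\bigl(I - (I_s \otimes L_A(Y)^{-1})\sum_j H_j \otimes A_j\bigr)$. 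Under the canonical identification $\C^{sm\times sm} \cong \C^{s\times s} \otimes \C^{m\times m}$, the operator $(I_s \otimes L_A(Y)^{-1})(H_j \otimes A_j)$ is exactly the blockwise action of $G \mapsto \mbf{A}_j(G) = L_A(Y)^{-1}(G \otimes A_j)$ on the $m\times m$ blocks of $H_j$, so the bracketed factor is $L_{\mbf{A}}(X - I_s \otimes Y)$. Since $Y \in \scr{D}_m(A)$ makes $I_s \otimes L_A(Y)$ invertible, $L_A(X)$ is invertible precisely on $\scr{D}^Y_{sm}(\mbf{A})$, and there $L_A(X)^{-1} = L_{\mbf{A}}(X - I_s \otimes Y)^{-1}(I_s \otimes L_A(Y)^{-1})$.

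Next, I would substitute this factored inverse into $f(X) = D\,I_{sm} + (I_{sm}\otimes C) L_A(X)^{-1}\sum_j X_j\otimes B_j$ and split $\sum_j X_j \otimes B_j = I_s \otimes \sum_k Y_k \otimes B_k + \sum_j H_j \otimes B_j$. With $v := L_A(Y)^{-1}\sum_k Y_k \otimes B_k$, the definition (\ref{matrealformula}) of $\mbf{B}_j$ rearranges as $\mbf{B}_j(G) = L_A(Y)^{-1}(G\otimes B_j) + \mbf{A}_j(G)\,v$, so the blockwise action on $H_j$ yields the key identity
$$(I_s \otimes L_A(Y)^{-1})\sum_j H_j\otimes B_j = \sum_j \mbf{B}_j(H_j) - \bigl(I - L_{\mbf{A}}(X - I_s\otimes Y)\bigr)(I_s \otimes v),$$
where $\sum_j \mbf{A}_j(H_j) = I - L_{\mbf{A}}(X - I_s \otimes Y)$ has been used. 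Multiplying through by $L_{\mbf{A}}(X - I_s\otimes Y)^{-1}$ then cancels this factor against the inverse, producing $L_A(X)^{-1}\sum_j X_j\otimes B_j = L_{\mbf{A}}(X - I_s \otimes Y)^{-1}\sum_j \mbf{B}_j(H_j) + I_s \otimes v$.

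Finally, applying $I_{sm}\otimes C = I_s \otimes \mbf{C}$ and using $D\,I_m + (I_m \otimes C)v = f(Y) = \mbf{D}$ turns the residual $(I_{sm}\otimes C)(I_s \otimes v)$ into $I_s \otimes \mbf{D} - D\,I_{sm}$, which cancels the leading $D\,I_{sm}$ and collapses the right-hand side to precisely (\ref{matrixrealform}). The main (and essentially only) technical hurdle is the bookkeeping of the triple tensor structure $\C^s \otimes \C^m \otimes \cH$: one must verify that the linear maps $\mbf{A}_j$ and $\mbf{B}_j$, originally defined on a single $m\times m$ matrix, extend by ``$\mr{id}_s \otimes$'' ampliation to act on $\C^{sm\times sm}$ in exactly the way (\ref{matrixrealform}) requires, and that this ampliation commutes with factoring $I_s \otimes L_A(Y)$ out of $L_A(X)$. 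Because the derivation is purely algebraic, it holds verbatim at every $X$ where $L_{\mbf{A}}(X - I_s\otimes Y)$ is invertible, so (\ref{matrixrealform}) extends directly to all of $\scr{D}^Y_{sm}(\mbf{A})$ without any appeal to the identity theorem for NC functions.
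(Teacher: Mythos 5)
Your proof is correct, and it takes a genuinely different route from the one the paper sketches. The paper verifies (\ref{TTone}) and (\ref{TTtwo}), remarks that the general case follows by showing inductively that the Taylor--Taylor coefficients $\Delta^{\om^\mrt}_{(\cdot)} f(Y)$ computed from $(A,B,C,D)$ agree with those of the function $\wt{f}$ defined by (\ref{matrixrealform}), and then omits the induction; this gives equality on a row-ball around $Y$ and requires, at least implicitly, an identity-theorem argument to propagate the equality to all of $\scr{D}^Y(\mbf{A})$. You instead factor the pencil directly: writing $H_j = X_j - I_s \otimes Y_j$ and $I_m\otimes I_\cH - \sum Y_k\otimes A_k = L_A(Y)$ gives
$$L_A(X) = (I_s\otimes L_A(Y))\, L_{\mbf{A}}(X - I_s\otimes Y),$$
and then split $\sum_j X_j\otimes B_j$ accordingly and absorb the residual term into the pencil. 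This is a single algebraic identity valid at every $X$ where $L_{\mbf{A}}(X - I_s\otimes Y)$ is invertible, so there is no convergence or analytic-continuation issue to address, and it yields an additional fact the paper's sketch does not make explicit: $\scr{D}_{sm}(A) = \scr{D}^Y_{sm}(\mbf{A})$ for every $s\in\N$, i.e.\ the invertibility domain of the matrix-centre realization constructed from $(A,B,C,D)$ coincides with the invertibility domain of $A$ itself at all levels divisible by $m$.

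Two small points to keep you honest in the write-up. First, be explicit that the ampliated maps $\mr{id}_s\otimes\mbf{A}_j$ and $\mr{id}_s\otimes\mbf{B}_j$ acting blockwise on $H_j = \sum_{k,\ell} E_{k,\ell}\otimes H_j^{(k,\ell)}$ satisfy $\mbf{A}_j(H_j) = (I_s\otimes L_A(Y)^{-1})(H_j\otimes A_j)$ and $\mbf{B}_j(H_j) = (I_s\otimes L_A(Y)^{-1})(H_j\otimes B_j) + \mbf{A}_j(H_j)(I_s\otimes v)$ with $v := L_A(Y)^{-1}(Y\otimes B) \in \C^{m\times m}\otimes\cH$; you acknowledge this bookkeeping as the crux, and it is, so write it out with the matrix units rather than gesturing at it. Second, the final cancellation uses $(I_m\otimes C)v = f(Y) - D\,I_m = \mbf{D} - D\,I_m$, which is just the level-$m$ transfer-function formula for $(A,B,C,D)$ evaluated at $Y\in\scr{D}_m(A)$; state this explicitly so the appearance of $f(Y)$ is not mysterious. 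With those two elaborations your argument is complete and, in my view, preferable to the Taylor--Taylor induction for this purpose.
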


Note that each linear map, $$ \mbf{A} _j : \C ^{sm \times sm} \rightarrow \C ^{sm \times sm} \otimes \scr{B} (\cH ) = \scr{B} (\C ^{sm} \otimes \cH ), $$ in the above theorem statement, is completely bounded so that $\scr{D} ^Y (\mbf{A})$ is uniformly open, joint-similarity invariant, and contains a uniformly open row-ball centred at $Y$ of radius $r = \| \mbf{A} \| _{\mr{col}} ^{-1}$, where $\| \mbf{A} \| _{\mr{col}}$ is the completely bounded norm of the `column' $\mbf{A} : \C ^{m \times m} \rightarrow \scr{B} (\C ^m \otimes \cH ) \otimes \C ^d$.

As in the theory of NC rational functions, if $f \sim _Y (\mbf{A}, \mbf{B}, \mbf{C}, \mbf{D} )$, is given by the matrix-centre realization formula (\ref{matrixrealform}), then one can compute the Taylor--Taylor series of $f$ expanded about the point $Y$ in terms of the matrix-centre realization \cite[Lemma 2.12]{PV1}.
\begin{lemma}
Let $Y \in \mr{Dom} _m \, f$, where $f$ is a familiar NC function. If  $f \sim _Y (\mbf{A} , \mbf{B} , \mbf{C} , \mbf{D} )$, then the terms of the Taylor--Taylor series of $f$ at $Y$ are given by the multilinear partial difference--differential maps, $\Delta ^{\om ^\mrt} _{(\cdot)} f (Y)$, where if $\emptyset \neq \om = i_1 \cdots i_k \in \F ^d$ and $H \in \cdm$, then
\be \Delta ^{\om ^\mrt} _{H} f (Y) = \mbf{C} \mbf{A} _{i_1} (H_{i_1}) \mbf{A} _{i_2} (H_{i_2}) \cdots \mbf{A} _{i_{k-1}} (H_{i_{k-1}}) \mbf{B} _{i_k} (H_{i_k}). \label{TTterms} \ee  
\end{lemma}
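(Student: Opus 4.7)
My plan is to establish \eqref{TTterms} by the same block-matrix mechanism used to derive the $k=1$ and $k=2$ cases in \eqref{TTone} and \eqref{TTtwo}, but carried out for an arbitrary word $\om = i_1 \cdots i_k \in \F^d$. The key ingredient, which is part of the higher-order NC difference-differential calculus of \cite[Section 3.5, Corollary 4.4]{KVV}, is that $\Delta^{\om^\mrt}_H f(Y)$ equals the upper-right $m \times m$ block of $f(\tilde{Y}(H))$, where
$$ \tilde{Y}(H) \in \C^{((k+1)m \times (k+1)m) \cdot d} $$
is the block bidiagonal $d$-tuple whose main block diagonal is $Y$ and whose $(j, j{+}1)$ block superdiagonal entry is the single-slot insertion $\vec{H}_{i_j} = (0, \ldots, 0, H_{i_j}, 0, \ldots, 0)$ with $H_{i_j}$ in the $i_j$-th coordinate. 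The cases $k=1,2$ already displayed serve as the prototypes for this identification.

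First I would apply the FM realization formula for $f$ at $\tilde{Y}(H)$; this is valid for $H$ in a uniformly open neighbourhood of $0$, since $Y \in \scr{D}(A)$ and invertibility of the pencil is an open condition. Decomposing $L_A(\tilde{Y}(H)) = \Lambda - \mbf{N}$, where $\Lambda = I_{k+1} \otimes L_A(Y)$ is block diagonal and $\mbf{N}$ is strictly block upper-triangular with $(j, j{+}1)$ entry $H_{i_j} \otimes A_{i_j}$, the operator $\Lambda^{-1}\mbf{N}$ is nilpotent of order $k{+}1$, so the Neumann expansion
$$ L_A(\tilde{Y}(H))^{-1} = \sum_{m=0}^{k} (\Lambda^{-1} \mbf{N})^m \Lambda^{-1} $$
is a finite sum. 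Only the top term $m = k$ contributes to the $(1, k{+}1)$ block, giving
$$ \bigl[L_A(\tilde{Y}(H))^{-1}\bigr]_{1, k+1} = L_A(Y)^{-1} H_{i_1} \otimes A_{i_1} \cdots L_A(Y)^{-1} H_{i_k} \otimes A_{i_k}\, L_A(Y)^{-1} = \mbf{A}_{i_1}(H_{i_1}) \cdots \mbf{A}_{i_k}(H_{i_k})\, L_A(Y)^{-1}, $$
by the definition of $\mbf{A}_j$ in \eqref{matrealformula}.

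Multiplying on the right by $\tilde{Y}(H) \otimes B$, whose nonzero blocks are $Y \otimes B$ on the diagonal and $H_{i_j} \otimes B_{i_j}$ on the superdiagonal, exactly two pairings contribute to the $(1, k{+}1)$ block of the product: the $(1, k{+}1)$ block of the pencil inverse paired with $Y \otimes B$, and the $(1, k)$ block paired with $H_{i_k} \otimes B_{i_k}$. Factoring $\mbf{A}_{i_1}(H_{i_1}) \cdots \mbf{A}_{i_{k-1}}(H_{i_{k-1}})$ out on the left, the remaining trailing factor is
$$ L_A(Y)^{-1} H_{i_k} \otimes B_{i_k} + L_A(Y)^{-1} H_{i_k} \otimes A_{i_k}\, L_A(Y)^{-1} Y \otimes B = \mbf{B}_{i_k}(H_{i_k}), $$
again by \eqref{matrealformula}. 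Pre-multiplying by $I_{k+1} \otimes C = I_{k+1} \otimes \mbf{C}$ from the FM formula, and noting that the additive $D$-term is a scalar multiple of the identity and so vanishes at the off-diagonal position $(1, k{+}1)$ for $k \geq 1$, yields exactly \eqref{TTterms}.

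The only non-mechanical step is the initial identification of $\Delta^{\om^\mrt}_H f(Y)$ with the upper-right block of $f(\tilde{Y}(H))$; this is the multivariate extension of the $2\times 2$ and $3\times 3$ block calculations already verified above, and is a direct consequence of the grading, direct-sum, and joint-similarity axioms for the NC function $f$ combined with the definition of the higher-order partial difference-differential operator in \cite[Section 3.5]{KVV}. Everything else is purely algebraic manipulation with a block upper-triangular linear pencil and is therefore independent of the dimension of $\cH$.
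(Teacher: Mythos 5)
Your proof is correct and follows the approach the paper itself indicates (and attributes to \cite[Lemma 2.12]{PV1}): evaluate $f$ at a block-bidiagonal, jointly nilpotent perturbation $\tilde{Y}(H)$ of $Y^{\oplus(k{+}1)}$, use the finite Neumann expansion of the inverse pencil, and read off the $(1,k{+}1)$ block. The only observation worth flagging is that you carry out the block computation with $L_A$ for the \emph{underlying} FM realization $(A,B,C,D)$, so strictly speaking you verify \eqref{TTterms} only for the particular matrix-centre realization produced by Theorem \ref{matrealthm}; the same computation run directly on the matrix-centre pencil $L_{\mbf{A}}\bigl(\tilde{Y}(H) - I_{k+1}\otimes Y\bigr)$ (also invertible by a finite Neumann series, since $\mbf{A}(\tilde{Y}(H) - I_{k+1}\otimes Y)$ is strictly block upper-triangular, hence nilpotent) is both shorter and proves the lemma for an arbitrary matrix-centre realization, which is how it is stated. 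In that version $\mbf{B}(\tilde{Y}(H) - I_{k+1}\otimes Y)$ is itself strictly superdiagonal, so only the $(1,k)$ pencil-block pairs with $\mbf{B}_{i_k}(H_{i_k})$ and no $Y\otimes B$ term appears, eliminating the factoring step you needed.
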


That is, if the Taylor--Taylor series of $f \sim _Y (\mbf{A}, \mbf{B}, \mbf{C}, \mbf{D})$ centred at $Y \in \mr{Dom} _m \, f$ converges at $X \in \C ^{(sm \times sm) \cdot d}$, then 
$$ f(X) = \sum _{\om \in \F ^d} \Delta ^{\om ^\mrt} _{X - I_s \otimes Y} f (Y), $$ where the terms of this series are given by Equation (\ref{TTterms}). This lemma can be readily established with a proof identical to that of \cite[Lemma 2.12]{PV1}. Namely, these terms of the Taylor--Taylor series of $f$, centred at $Y$, can be computed by evaluating $f$ at points in the `jointly nilpotent ball centred at $Y$', which consists of all $X \in \C ^{(sm \times sm) \cdot d}$ for which $X - I_s \otimes Y$ is jointly nilpotent. We also omit the inductive proof of Theorem \ref{matrealthm}, which can be accomplished by showing that the Taylor--Taylor series of the familiar NC function $f \sim (A,B,C,D)$, at the point $Y \in \scr{D} _m (A)$, coincides with the Taylor--Taylor series at $Y$ of $\wt{f} \sim _Y (\mbf{A}, \mbf{B}, \mbf{C}, \mbf{D})$. (We verified this for all words of length $\leq 2$ in Equations (\ref{TTone}--\ref{TTtwo}).) Here, $\wt{f}$ is the NC function given by the matrix-centre realization formula of Equation (\ref{matrixrealform}) and $(\mbf{A}, \mbf{B}, \mbf{C}, \mbf{D})$ are defined by the FM realization, $(A,B,C,D)$, of $f$, as in the statement of Theorem \ref{matrealthm}.
  
Also as in \cite[Theorem 2.4]{PV1}, if $f \sim _Y (\mbf{A}, \mbf{B}, \mbf{C} , \mbf{D})$ and $g \sim _Y (\mbf{A} ', \mbf{B}', \mbf{C}', \mbf{D}')$, then $f+g$, $f\cdot g$ and, assuming $f(Y)$ is invertible, $f^{-1}$, all have matrix-centre realizations about $Y$ given by a natural extension of the Fornasini--Marchesini algorithm described in Subsection \ref{FMalg}. Namely, a matrix-centre realization for $f+g$ is given by $(\mbf{A} ^+ , \mbf{B} ^+ , \mbf{C} ^+, \mbf{D} ^+)$ where
\be \mbf{A} _j ^+ := \mbf{A} _j \oplus \mbf{A}' _j, \quad \mbf{B} _j ^+ := \bpm \mbf{B} _j \\ \mbf{B} ' _j \epm, \quad \mbf{C} ^+ := (\mbf{C} , \mbf{C} ' ), \quad \mbox{and} \quad \mbf{D} ^+ = \mbf{D} + \mbf{D}'. \label{matFMsum} \ee A matrix-centre realization for $f\cdot g$ is given by $(\mbf{A}^\times , \mbf{B}^\times, \mbf{C} ^\times, \mbf{D} ^\times )$, where 
\be \mbf{A} ^\times _j := \bpm \mbf{A} _j & \mbf{B} _j (\cdot ) \mbf{C} '  \\ & \mbf{A} _j ' \epm, \quad \mbf{B} ^\times = \bpm \mbf{B} _j (\cdot ) \mbf{D} ' \\ \mbf{B} _j ' \epm, \quad \mbf{C} ^\times = ( \mbf{C} , \mbf{D} \mbf{C} ' ), \quad \mbox{and} \quad \mbf{D} ^\times = \mbf{D} \mbf{D} '. \label{matFMmult} \ee Finally, a matrix-centre realization for $f^{-1}$, assuming that $\mbf{D} = f (Y)$ is invertible, is given by $(\mbf{A} ^{(-1)}, \mbf{B} ^{(-1)}, \mbf{C} ^{(-1)}, \mbf{D} ^{(-1)} )$, where 
\be \mbf{A} ^{(-1)} _j := \mbf{A} _j - \mbf{B} _j (\cdot ) \mbf{D} ^{-1} \mbf{C}, \quad \mbf{B} _j ^{(-1)} := -\mbf{B} _j (\cdot ) \mbf{D} ^{-1}, \quad \mbf{C} ^{(-1)} := \mbf{D} ^{-1} \mbf{C}, \quad \mbox{and} \quad \mbf{D} ^{(-1)} := \mbf{D} ^{-1}. \label{matFMinv} \ee Verification of these formulas is a straightforward computation, and we refer the reader to \cite[Theorem 2.4]{PV1} for their proofs (in the NC rational/ finite--dimensional setting). 

\section{Uniformly entire NC functions} \label{sec-entire}

We say that an NC function, $h$, is \emph{uniformly entire}, if it is defined and uniformly analytic on the entire NC universe, $\ncu$ and we denote the set of all uniformly entire NC functions by $\scr{O} ^u (\ncu )$. Recall that the joint spectral radius of a row $d-$tuple, $A = (A_1,\cdots,A_d) \in \scr{B} (\cH) ^{1\times d}$, introduced by Popescu in \cite{Pop-joint} is
\[
\rho(A) = \lim_{n \to \infty} \sqrt[2n]{\|\mr{Ad}_{A,A^*} ^{\circ n}(I _\cH)\|}; \quad \mbox{where} \quad  \mr{Ad} _{A,A^*} (X) := \sum_{j=1}^d A_j X A_j^*.
\]
Here $\mr{Ad} _{A,A^*}$ is a normal, completely positive map on $\scr{B} (\cH)$ which we call \emph{adjunction} by $A$ and $A^*$.

\begin{defn}
A row $d-$tuple, $A = (A_1,\cdots,A_d) \in \scr{B} (\cH ) ^{1\times d}$, is \emph{jointly quasinilpotent} if $\rho(A) = 0$. Similarly, a column $d-$tuple, $A \in \scr{B} (\cH ) ^d$ is jointly quasinilpotent if $\rho (A) := \rho (\mr{row} (A) ) =0$, \emph{i.e.} if $\mr{row}(A)$ is jointly quasinilpotent. 
\end{defn}
Note that if $d=1$, this recovers the usual definition of a quasinilpotent operator, $A \in \scr{B} (\cH)$, as an operator whose spectrum is $\sigma (A) = \{ 0 \}$. This follows as Popescu's joint spectral radius formula reduces to the Gelfand--Beurling spectral radius formula when $d=1$.

\begin{lem} \label{QNisentire}
Let $A \in \scr{B} (\cH) ^d$, $d\in \N$, be a quasinilpotent tuple and $b, c \in \cH$. Then the NC function, $h$, given by the realization, $(A,b,c)$, is uniformly entire.
\end{lem}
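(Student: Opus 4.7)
The plan is to show that the free formal power series $h(\fz) = \sum_{\om \in \F^d} (b^* A^\om c)\, \fz^\om$ has infinite radius of convergence in the sense of Popescu's Hadamard formula \eqref{radconv}, and then invoke the fact that an infinite radius of convergence already forces $h$ to be uniformly analytic on every row-ball $r \cdot \rball$, hence on the whole NC universe.

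First, I would estimate the coefficient sums on words of fixed length via Cauchy--Schwarz. For each word $\om$ with $|\om| = n$,
\[
|b^* A^\om c|^2 \;=\; |\ip{A^{*\om} b}{c}|^2 \;\leq\; \|c\|^2\, \ip{b}{A^\om A^{*\om} b},
\]
so summing over $|\om| = n$ yields
\[
\sum_{|\om|=n} |\hat h_\om|^2 \;\leq\; \|c\|^2\, \Bigip{b}{\Bigl(\sum_{|\om|=n} A^\om A^{*\om}\Bigr) b} \;\leq\; \|b\|^2 \|c\|^2 \,\bigl\| \mr{Ad}_{A,A^*}^{\circ n}(I_\cH)\bigr\|,
\]
using that $\mr{Ad}_{A,A^*}^{\circ n}(I) = \sum_{|\om|=n} A^\om A^{*\om}$ (which is an easy induction on $n$). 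Taking the $2n$-th root gives the inequality
\[
\sqrt[2n]{\sum_{|\om|=n} |\hat h_\om|^2} \;\leq\; (\|b\|\|c\|)^{1/n} \cdot \sqrt[2n]{\|\mr{Ad}_{A,A^*}^{\circ n}(I_\cH)\|}.
\]

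Next, I would let $n \to \infty$. The prefactor $(\|b\|\|c\|)^{1/n}$ tends to $1$, while by the hypothesis $\rho(A) = 0$, Popescu's joint spectral radius formula gives
\[
\lim_{n\to\infty} \sqrt[2n]{\|\mr{Ad}_{A,A^*}^{\circ n}(I_\cH)\|} \;=\; \rho(A) \;=\; 0.
\]
Therefore $\limsup_{n\to\infty} \sqrt[2n]{\sum_{|\om|=n} |\hat h_\om|^2} = 0$, which by Popescu's generalized Hadamard formula \eqref{radconv} means $R_h = +\infty$.

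Finally, since the power series for $h$ converges absolutely and uniformly on $r \cdot \rball$ for every $0 < r < R_h = +\infty$, it converges absolutely and uniformly on every row-ball, and thus defines a uniformly analytic NC function on all of $\ncu$: any $X \in \ncu$ lies in $r \cdot \rball$ for $r > \|X\|_{\mr{row}}$. This yields the desired conclusion that $h \in \scr{O}^u(\ncu)$. The only non-routine input is the Cauchy--Schwarz manoeuvre that pairs the realization coefficients with the completely positive map $\mr{Ad}_{A,A^*}$ defining Popescu's joint spectral radius; once that is in place, quasinilpotency plus Popescu's Hadamard formula do the rest mechanically.
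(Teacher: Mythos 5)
Your argument is correct and follows precisely the same route as the paper's proof: bound $\sum_{|\om|=n}|\hat h_\om|^2$ by $\|b\|^2\|c\|^2\,\|\mr{Ad}_{A,A^*}^{\circ n}(I_\cH)\|$, then invoke Popescu's Hadamard formula and the joint spectral radius to conclude $R_h = +\infty$. The only cosmetic difference is that you package the coefficient estimate as a Cauchy--Schwarz step while the paper factors the rank-one operator $cc^*\leq\|c\|^2 I$ inside the sum; these are the same computation.
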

It is well-known that $h=p \in \fp$ if and only if $h \sim (A,b,c)$ has a finite--dimensional realization so that the $d-$tuple $A$ is \emph{jointly nilpotent}, \emph{i.e.} if there exists an $n \in \N$ so that $A^\om =0$, for all $|\om | > n$, see \emph{e.g.} \cite{HKV-poly} or \cite[Section 6]{JMS-ratFock}.
\begin{proof}
Suppose that $h \sim (A,b,c)$ where $A \in \scr{B} (\cH ) ^d$ is jointly quasinilpotent. Hence, $h$ is a uniformly analytic NC function in a uniformly open neighbourhood of $0$ with Taylor--Taylor series at $0$, 
$$ h(X) = \sum _{\om \in \F ^d} b^* A^\om c \, X^\om, $$ for any $X \in r \cdot \rball$ with $r>0$ sufficiently small. By Popescu's generalization of the Cauchy--Hadamard radius of convergence formula, any formal power series, 
$$ f( \fz ) = \sum _{\om \in \F ^d} \hat{f} _\om \, \fz ^\om, $$ defines a uniformly analytic NC function in the row-ball of radius $R_f \geq 0$, where 
$$ \frac{1}{R_f} = \limsup _{n\rightarrow \infty} \sqrt[2n]{\sum _{|\om | =n} | \hat{f} _\om | ^2}, $$ and $f$ is uniformly bounded on any row-ball of radius $r$, $0<r<R_f$. Since, 
\ba \sum _{|\om | = n} | \hat{h} _\om | ^2 & = & \sum _{|\om | = n} b^* A^\om c \, c^* A^{\om *} b \\
& \leq & \| b \| ^2 \| c \| ^2 \left\| \sum _{|\om| =n} A^\om A^{\om *} \right\| = \| b \| ^2 \| c \| ^2 \| \mr{Ad} _{A, A^*} ^{\circ n} (I _\cH ) \|, \ea 
it follows that $R_h ^{-1} \leq \rho (A) =0$, so that $R_h = + \infty$ and $h$ is uniformly entire.
\end{proof}

Moreover, if $f$ is a uniformly entire NC function with jointly quasinilpotent realization $(A,b,c)$, then the joint spectral radius can only decrease by restricting the tuple to jointly invariant subspaces or compressing it to jointly co-invariant ones. Therefore, such an NC function will have a minimal realization that is also jointly quasinilpotent. However, as we have seen in Example \ref{entireeg}, not every minimal realization of a uniformly entire function is quasinilpotent.

\subsection{Entire functions of a single complex variable}

Let $h \in \scr{O} (\C )$ be an entire function (in one variable), with Taylor series at zero,
$$ h (z) = \sum _{j=0} ^\infty a_j z ^j; \quad \quad a_j \in \C. $$ This Taylor series has infinite radius of convergence so that the coefficients obey:
$$ \lim _{n\rightarrow \infty} \sqrt[n]{|a_n|} =0. $$ 
We can construct a realization of $h$ as follows. Let $\cH := \bigoplus _{n=0} ^\infty \C ^{n+1}$. For $n=0$, let $A_0 := 0$, $b_0 = 1$ and $c_0 = a_0 1$. For $n \in \N$ fix any complex $n$th root, $\sqrt[n]{a_n}$ of $a_n \in \C$, and define
$$ A _n := \bpm 0 & \sqrt[n]{a_n} &  &  & &  \\ & 0 & \sqrt[n]{a_n} & & &  \\
&   & \ddots &  \ddots  & &  \\
& & & & & \\
& & & & & \sqrt[n]{a_n}    \\
& & &  & & 0  \epm \in \C ^{(n+1)\times (n+1)}, \quad b _n := e_1, \quad \mbox{and} \quad c _n := e_{n+1}, $$
where $(e_{j} ) _{j=1} ^n$ is the standard orthonormal basis of $\C ^n$. 

\begin{thm} \label{entire-qnc}
Let $h (z) = \sum _{n=0} ^\infty a_n z^n$ be an entire function. Then $h$ has a compact and quasinilpotent realization $(A,b,c)$, on $\cH = \bigoplus _{n=0} ^\infty \C ^n$, given by 
$$ A :=  A_0 \oplus \bigoplus _{n=1} ^\infty \sqrt[n]{n^2} A_n, \quad b:= 1 \oplus \bigoplus _{n=1} ^\infty \frac{1}{n} e_1, \quad \mbox{and} \quad c:= a_0 1 \oplus \bigoplus _{n=1} ^\infty \frac{1}{n} e_{n+1}. $$
\end{thm}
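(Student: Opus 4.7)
The plan is to verify three things about the realization $(A,b,c)$: (i) the free Taylor coefficients $b^*A^n c$ match $a_n$ for every $n\geq 0$; (ii) $A$ is compact; (iii) $A$ is quasinilpotent. Because $A$, $b$, and $c$ are block diagonal with respect to the decomposition $\cH = \bigoplus_{n=0}^\infty \C^{n+1}$, every quantity of interest decomposes into a sum of contributions from each summand, so all three points reduce to elementary computations on the individual $A_k$.

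For (i), I first record the key fact about the nilpotent Jordan-type shift $A_k$: it acts as the down-shift $A_k e_{j+1} = \sqrt[k]{a_k}\, e_j$, so $A_k^n e_{k+1} = (\sqrt[k]{a_k})^n\, e_{k+1-n}$ if $n\leq k$ and $0$ otherwise. Hence
\[
e_1^* A_k^n e_{k+1} \;=\; (\sqrt[k]{a_k})^n \, \delta_{n,k} \;=\; a_k\, \delta_{n,k}.
\]
Now $A^n$ is the block-diagonal sum whose $k$th block is $k^{2n/k} A_k^n$, so
\[
b^* A^n c \;=\; \sum_{k\geq 1} \tfrac{1}{k}\tfrac{1}{k}\, k^{2n/k}\, e_1^* A_k^n e_{k+1} \;=\; \sum_{k\geq 1} \tfrac{k^{2n/k}}{k^2} a_k \, \delta_{n,k}
\]
for $n\geq 1$, which collapses to $a_n$ since the surviving term has $k=n$ and prefactor $n^{2n/n}/n^2 = 1$. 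The $n=0$ term gives $b^*c = a_0$ (only the $k=0$ summand contributes because $\langle e_1, e_{k+1}\rangle = 0$ for $k\geq 1$). Thus $h\sim (A,b,c)$.

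For (ii) and (iii), I would compute operator norms directly from the block-diagonal structure. The $k$th block $\sqrt[k]{k^2}\, A_k$ has norm $k^{2/k}|a_k|^{1/k}$, and since $h$ is entire, $|a_k|^{1/k}\to 0$ by the Cauchy--Hadamard formula, while $k^{2/k}\to 1$; hence the block norms tend to zero. Truncating the direct sum at level $N$ gives a finite-rank operator converging to $A$ in norm, so $A$ is compact (in fact, these truncations are jointly nilpotent, which will be useful for the broader Theorem \ref{uentireqnc}). For quasinilpotence, a direct calculation gives
\[
\|A^n\| \;=\; \sup_{k\geq n} k^{2n/k} |a_k|^{n/k},
\]
using $\|A_k^m\| = |a_k|^{m/k}$ for $m\leq k$ (it is an appropriately scaled partial isometry) and $A_k^m=0$ for $m>k$. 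Taking $n$th roots and using $|a_k|^{1/k}\to 0$ again shows $\|A^n\|^{1/n}\to 0$, so $\rho(A)=0$.

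None of the steps appears to present a genuine obstacle; the only point that requires some care is the bookkeeping in step (i), where the deliberate choice of weights $\sqrt[k]{k^2}$ in $A$ and $1/k$ in both $b$ and $c$ must combine exactly so that the single surviving term $k=n$ produces $a_n$ without leftover factors. The choice of roots $\sqrt[k]{a_k}$ is immaterial for (i) since only $(\sqrt[k]{a_k})^k = a_k$ enters, and for (ii)--(iii) only $|a_k|^{1/k}$ matters, so the construction is well defined independently of the branch chosen.
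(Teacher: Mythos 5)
Your proof is correct and follows essentially the same approach as the paper: exploit the block-diagonal structure to reduce everything to the nilpotent blocks $A_k = \sqrt[k]{a_k}\, S_k^*$, compute block norms $\|(\sqrt[k]{k^2}A_k)^m\| = (k^2|a_k|)^{m/k}$ for $m\leq k$, and invoke Cauchy--Hadamard to drive these to zero. The only cosmetic difference is the order (you verify the moment identity $b^*A^nc=a_n$ first, the paper last), which is immaterial.
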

\begin{proof}
Note that if $E_{j,k}$ denote the standard matrix units of $\C ^{(n+1) \times (n+1)}$, and 
$$ S_n := \sum _{j=1} ^{n} E_{j+1, j}, \quad \mbox{then,} \quad S_n ^* =  \bpm 0 & 1 & &   \\ & \ddots & \ddots &  \\ & & & 1 \\ & & &  0 \epm \in \C ^{(n+1) \times (n+1)}. $$ Hence, $A_n = \sqrt[n]{a_n} S_n ^*$. This `truncated backward shift' matrix, $S_n ^*$, is a partial isometry with 
$\nbker S_n ^* = \bigvee e_1$ and $\nbran S_n^{* \perp} = \bigvee e_{n+1}$. Also notice that each $S_n ^*$ is nilpotent as $S_n ^{*(n+1)} = 0_n$. It follows that 
$S_n  S_n ^* = P _1 ^\perp$ is the orthogonal projection onto $\bigvee \{ e_1 \} ^\perp$, and 
$$ A_n ^* A_n = |a_n| ^{\frac{2}{n}} (I_{n+1} - E_{1,1}). $$ Hence, 
$$ \| A \| = \sqrt{\| A A ^* \| } = \sup \sqrt[n]{n^2} \sqrt[n]{|a_n|} < +\infty, $$ since $(a_n)$ is the sequence of Taylor coefficients of an entire function. This proves that $A \in \scr{B} (\cH)$. Moreover, if $A^{(N)}$ denotes the $N+1$st partial direct sum of $A$, 
$A^{(N)} := A_0 \oplus \bigoplus _{n=1} ^N \sqrt[n]{n^2} A_n$, then each $A^{(N)}$ is clearly finite--rank and nilpotent. Moreover,  
$$ \| A - A^{(N)} \| = \sup _{n>N} \sqrt[n]{n^2} \sqrt[n]{|a_n|}, $$ which converges to $0$ as $N\rightarrow \infty$, by Hadamard's radius of convergence formula. This proves that $A$ is the operator--norm limit of a sequence of finite--rank operators, so that $A$ is compact. Setting $\hat{A} _n := \sqrt[n]{n^2} A_n$, similar calculations show that 
$$ \| \hat{A} _n ^k \| = (n ^2 |a_n| ) ^{\frac{k}{n}}, $$ which implies 
$$ \| A ^k \| = \sup _{n > k} ( n^2 |a_n| ) ^{\frac{k}{n}}. $$ Given any $\eps >0$, since $\sqrt[n]{n^2 |a_n|} \rightarrow 0$, there is an $N_\eps \in \N$ so that $n > N_\eps$ implies that $\sqrt[n]{n^2 |a_n|} < \eps$. Hence, for $n > k > N _\eps$, $\| A ^k \| < \eps ^{k}$ and by the Gelfand--Beurling spectral radius formula, 
$$ \rho (A) = \lim _{k \rightarrow \infty} \sqrt[k]{\| A ^k \| } \leq \eps. $$
Since $\eps >0$ was arbitrary we conclude that $\sigma (A) = \{ 0 \}$ and $A$ is both compact and quasinilpotent. 

Finally, it is easily checked that $(A,b,c)$ is a realization for $h$ as 
$$ A _n ^n = \bpm 0 & \cdots & 0 & a_n \\ \vdots & \ddots & & 0 \\ & & & \vdots \\ 0 & \cdots & & 0 \epm, \quad \mbox{and} \quad b_n ^* A _n ^j c_n = \delta _{j,n} a_n. $$ It follows that for all $n \in \N \cup \{ 0 \}$, 
$ b^* A^n c = a_n, $ and $h \sim (A,b,c)$.
\end{proof} 

Although the realization of an entire function, $h$, just constructed above need not be minimal, this is no matter.

\begin{cor}
Any entire function on $\C$ has a minimal, compact and quasinilpotent realization. An analytic function, $h \in \scr{O} (\Om )$, on an open neighbourhood, $\Om \subseteq \C$, of $0$, extends to an entire function if and only if it has a compact and quasinilpotent realization. 
\end{cor}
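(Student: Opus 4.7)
The plan is to combine Theorem \ref{entire-qnc}, which produces a (generally non-minimal) compact and quasinilpotent realization of any entire function, with the Kalman decomposition of Theorem \ref{Kalman}, which compresses an arbitrary realization to a minimal one on a semi-invariant subspace $\scr{M}_{A,b,c}$. The only thing that requires checking is that the two properties we care about, compactness and quasinilpotence, are stable under this compression.

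Concretely, given an entire $h$, I would first invoke Theorem \ref{entire-qnc} to obtain $h \sim (A,b,c)$ with $A$ compact and quasinilpotent, and then form the Kalman compression $(A^{(0)}, b_0, c_0)$ on $\scr{M}_{A,b,c}$ with projector $Q_0$. Compactness of $A^{(0)} = Q_0 A|_{\scr{M}_{A,b,c}}$ is immediate since the compact operators form a two-sided ideal in $\scr{B}(\cH)$. For quasinilpotence, the semi-invariance of $\scr{M}_{A,b,c}$ noted in the proof of Theorem \ref{Kalman} gives $(A^{(0)})^n = Q_0 A^n|_{\scr{M}_{A,b,c}}$, hence $\|(A^{(0)})^n\| \leq \|A^n\|$, and the Gelfand--Beurling formula then yields $\rho(A^{(0)}) \leq \rho(A) = 0$. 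This establishes the first sentence.

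For the second sentence, the forward direction follows from applying the first sentence to the entire extension of $h$; note that $h \in \scr{O}(\Om)$ and its entire extension share the same Taylor series at $0$, so any realization for the extension is also a realization for $h$ in the sense of recovering the same formal power series. The reverse direction is essentially Lemma \ref{QNisentire} specialized to $d=1$: any analytic germ at $0$ admitting a quasinilpotent realization (compactness is not even required here) extends to a uniformly entire NC function, which when restricted to $d = 1$ is just an entire function on $\C$.

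There is no serious obstacle: all nontrivial content is carried by Theorem \ref{entire-qnc}, Theorem \ref{Kalman} and Lemma \ref{QNisentire}. The only step that merits care is verifying that the Kalman compression preserves quasinilpotence, and this is handled cleanly by the semi-invariance identity $(Q_0 A Q_0)^n = Q_0 A^n Q_0$ together with the spectral radius formula, so the proof is short.
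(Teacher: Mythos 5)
Your proof is correct and matches the paper's argument: the paper also invokes Theorem \ref{entire-qnc} for a compact quasinilpotent realization and then compresses via Theorem \ref{Kalman}, noting that restrictions/compressions preserve compactness and quasinilpotence. Your semi-invariance identity $(Q_0 A Q_0)^n = Q_0 A^n Q_0$ together with the Gelfand--Beurling formula is exactly the mechanism the paper is implicitly using, and your treatment of the second sentence via Lemma \ref{QNisentire} is the intended one.
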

\begin{proof}
This is an immediate consequence of the previous theorem and the Kalman decomposition, Theorem \ref{Kalman}, as compressions/ restrictions of compact operators are compact and compressions/ restrictions of quasinilpotent operators to invariant or co-invariant subspaces are quasinilpotent.
\end{proof}

\subsection{Realizations of uniformly entire NC functions}

\begin{thm} \label{uentireqnc}
An NC function, $h$, is uniformly entire, if and only if it has a compact and jointly quasinilpotent realization, $(A,b,c)$. Moreover, if $h$ is uniformly entire, then it has a minimal compact and quasinilpotent realization that is the row operator--norm limit of finite--rank and jointly nilpotent realizations.
\end{thm}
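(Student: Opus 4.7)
The forward implication is immediate from Lemma \ref{QNisentire}: a compact and jointly quasinilpotent realization is, in particular, jointly quasinilpotent, hence yields a uniformly entire NC function. For the converse, the strategy is to adapt the one-variable construction of Theorem \ref{entire-qnc} by realizing the homogeneous pieces of $h$ on truncated free Fock spaces and then assembling a weighted direct sum. Write $h(X) = \sum_{\om} \hat{h}_\om X^\om$ and set $h_n^2 := \sum_{|\om|=n}|\hat{h}_\om|^2$; Popescu's Hadamard formula (\ref{radconv}) gives $h_n^{1/n} \to 0$ since $h$ is uniformly entire.

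For each $n \geq 1$, I would first build a finite-rank, jointly nilpotent realization of the homogeneous polynomial $p_n := \sum_{|\om|=n}\hat{h}_\om \fz^\om$ on the truncated free Fock space $\cH^{(n)} := \bigoplus_{k=0}^n (\C^d)^{\otimes k}$, using the truncated left creation operators $L^{(n)}_j e_\om := e_{j\om}$ for $|\om| < n$ (and zero on level $n$), together with the vacuum $c^{(n)} := e_\emptyset$ and the coefficient vector $b^{(n)} := \sum_{|\om|=n} \overline{\hat{h}_\om}\, e_\om$. A direct computation shows $L^{(n)\om} = 0$ whenever $|\om| > n$, $\|L^{(n)}\|_{\mr{col}} \leq \sqrt{d}$, $\|b^{(n)}\| = h_n$, and $(b^{(n)})^* L^{(n)\om} c^{(n)} = \hat{h}_\om \cdot \delta_{|\om|,n}$, so $(L^{(n)}, b^{(n)}, c^{(n)}) \sim p_n$.

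Next, assemble these on $\cH := \bigoplus_n \cH^{(n)}$ by
\[
A := \bigoplus_n \eps_n L^{(n)}, \qquad b := \bigoplus_n \beta_n b^{(n)}, \qquad c := \bigoplus_n \ga_n c^{(n)},
\]
with positive scalings obeying $\beta_n \ga_n \eps_n^n = 1$, which forces $b^* A^\om c = \hat{h}_\om$ for every $\om$ by the orthogonality of the summands. A convenient choice is $\ga_n := 2^{-n/2}$ and $\beta_n := 2^{n/2} \eps_n^{-n}$, so that $\|c\|^2 = \sum 2^{-n} < \infty$ automatically, while $\|b\|^2 = \sum (2 h_n^{2/n}/\eps_n^2)^n$. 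Since $h_n^{1/n} \to 0$, selecting any $\eps_n \to 0$ with $\eps_n \geq \max\{4 h_n^{1/n}, 1/n\}$ bounds the latter series by $\sum 2^{-n}$. Balancing these three sequences is the technical heart of the argument and relies essentially on the Hadamard decay of the coefficients.

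Finally, verify the remaining properties of $(A,b,c)$. The truncations $A_N := \bigoplus_{n \leq N} \eps_n L^{(n)}$ are finite-rank and jointly nilpotent of index $\leq N+1$; since $\|A - A_N\|_{\mr{col}} \leq \sqrt{d}\,\sup_{n > N}\eps_n \to 0$, the tuple $A$ is compact and is the operator-norm limit of finite-rank jointly nilpotent tuples. For joint quasinilpotence, the orthogonal direct sum structure gives
\[
\mr{Ad}^{\circ k}_{A,A^*}(I_\cH) = \bigoplus_n \eps_n^{2k} \mr{Ad}^{\circ k}_{L^{(n)}, L^{(n)*}}(I_{\cH^{(n)}}),
\]
where each inner summand is a projection of norm $\leq 1$ for $k \leq n$ and vanishes for $k > n$; hence $\|\mr{Ad}^{\circ k}_{A,A^*}(I)\| \leq \sup_{n \geq k} \eps_n^{2k}$, forcing $\rho(A) = 0$. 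Applying Theorem \ref{Kalman} then produces a minimal realization $(A^{(0)}, b_0, c_0) = (P_\scr{M} A|_\scr{M}, P_\scr{M} b, P_\scr{M} c)$ of $h$ that inherits both properties: compactness because compressions of compact operators are compact, and joint quasinilpotence because semi-invariance of $\scr{M}$ yields $(A^{(0)})^\om = P_\scr{M} A^\om|_\scr{M}$, so the $\mr{Ad}^{\circ k}$ norms can only decrease under the compression.
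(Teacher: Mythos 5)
Your forward direction is exactly the paper's, via Lemma \ref{QNisentire}. Your converse construction is a valid variation of the paper's: where the paper realizes each monomial $a_\om \fz^\om$ on a separate $(|\om|+1)$-dimensional block with a "truncated shift" matrix $T(\om)$, you realize the entire homogeneous piece $p_n$ at once on a truncated Fock space $\bigoplus_{k=0}^n(\C^d)^{\otimes k}$ with truncated left creation operators. This is arguably cleaner and uses fewer summands at each level, but the spirit is identical — a direct sum of finite, jointly nilpotent blocks with weights calibrated by the Popescu--Hadamard decay $h_n^{1/n}\to 0$ — and your weight analysis is correct. Two small points: you need an $n=0$ summand to pick up the constant coefficient $\hat{h}_\emptyset$ (your $(b^{(n)})^*c^{(n)}$ vanish for all $n\geq 1$), and the theorem asks for the \emph{row}-norm limit, so you should note $\|A-A_N\|_{\mathrm{row}}=\sup_{n>N}\eps_n$, though your column estimate already implies it up to a factor of $\sqrt d$.

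The one genuine gap is in the final sentence. The theorem asserts that the \emph{minimal} realization obtained from the Kalman decomposition is itself the row-norm limit of finite-rank jointly nilpotent realizations. You establish this approximation property only for the non-minimal $(A,b,c)$, and after compression you verify compactness and joint quasinilpotence of $A^{(0)}=Q_0 A|_{\scr{M}}$ but not the finite-rank-nilpotent approximability. Compactness of $A^{(0)}$ gives finite-rank approximants, but not jointly nilpotent ones, and there is no general theorem that a compact jointly quasinilpotent tuple is a limit of finite-rank jointly nilpotent tuples — one must use the structure at hand. The paper fills this in by observing that the $\cH_N$ are $A$-reducing, so $P_N\scr{C}_{A,c}=\scr{C}_{A_N,c_N}$ and $P_N\scr{O}_{A,b}^\perp=\scr{O}_{A_N,b_N}^\perp$; hence the Kalman subspace of $(A_N,b_N,c_N)$ is $\scr{M}\cap\cH_N$, and compressing $A_N$ there produces the minimal, finite-rank, jointly nilpotent realization of $(A_N,b_N,c_N)$, which then converges in row norm to $A^{(0)}$. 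Your $\cH^{(n)}$ are also reducing for your $A$, so the same argument carries over verbatim — but it does need to be said.
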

In order to establish this theorem, we will show that the construction of the realization in Theorem \ref{entire-qnc} above can be extended to several variables. It will be convenient to first prove several lemmas and preliminary results needed in this construction.

Fix any $\alpha = i_1 \cdots i_n \in \F ^d$, $1\leq i_k \leq d$, and consider $\C ^{n+1}$ with the standard basis $(e_j) _{j=1} ^{n+1}$. Also set $E_{j,k} := e_j e_k ^* \in \C ^{(n+1) \times (n+1)}$ to be the standard matrix units, and $E_j := E_{j,j}$. We define the
\emph{compressed shift matrix}, $S_n := \sum _{j=1} ^n E_{j+1 , j} \in \C ^{(n+1) \times (n+1)}$, so that $S_n ^* = \sum _{j=1} ^n E_{j, j+1}$, and we call $S_n ^*$ the \emph{truncated backward shift} matrix of size $n+1$. Then set, 
\be T (\alpha ) _k := S_n ^* \sum _{j=1} ^n \delta _{i_j, k} E_{j+1} = \sum _{j=1} ^n \delta _{i_j, k} E_j S_n ^* ; \quad 1\leq k \leq d. \ee
Let $T (\alpha ) := \left( T (\alpha ) _1 , \cdots , T (\alpha ) _d \right)  \in \C ^{\left( (n+1) \times (n+1) \right) \cdot d}$, and also consider $C(\alpha ) _k := T (\alpha ) _k ^* $, for $1\leq k \leq d$, $C(\alpha) := \mr{row} ( T (\alpha ) ^* )$. Observe that $T (\alpha)$ is jointly nilpotent of order $n$, \emph{i.e.}, $T (\alpha ) ^\beta \equiv 0 _{n+1}$ for $|\beta | > |\alpha | = n$. In particular, each $T (\alpha ) _k \in \C ^{(n+1) \times (n+1)}$ is nilpotent of order $n$. Similarly,
$ C(\alpha ) := \left( C(\alpha ) _1 , \cdots , C (\alpha ) _d \right)$ is jointly nilpotent of order $n$.

\begin{lemma} \label{rowpisom}
Each $C_k := C (\alpha ) _k $, $1 \leq k \leq d$ is a partial isometry with range projection, 
$$ P_k := \sum _{\ell } \delta _{i_\ell , k} E_{\ell +1}, $$ and source projection, 
$$ Q_k := \sum _{\ell } \delta _{i_\ell , k} E_{\ell }. $$ The partial isometries $C_k$ have pairwise orthogonal ranges and sources, 
$$ \sum _{k=1} ^d P_k = I_{n+1} - E_1, \quad \quad \sum _{k=1} ^d Q_k = I_{n+1} - E_{n+1}, $$ and both $C, T:= T(\alpha )$ are jointly nilpotent row partial isometries of order $n$.
\end{lemma}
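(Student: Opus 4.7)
My plan is to prove each assertion by direct computation with the matrix-unit algebra, exploiting the relations $E_{j,k} E_{\ell, m} = \delta_{k,\ell} E_{j,m}$ and the formula $C_k = T_k^* = \sum_{j=1}^n \delta_{i_j, k} E_{j+1, j}$, which follows from rewriting $T(\alpha)_k = \sum_{j=1}^n \delta_{i_j, k} E_{j, j+1}$ via $S_n^* E_{j+1} = E_{j, j+1}$. Everything else is bookkeeping with Kronecker deltas.

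First, I would compute $C_k C_k^* = \sum_{j, j'} \delta_{i_j, k} \delta_{i_{j'}, k} E_{j+1, j} E_{j', j'+1}$, collapsing via $\delta_{j, j'}$ to the stated range projection $P_k = \sum_\ell \delta_{i_\ell, k} E_{\ell+1}$. The symmetric calculation yields $C_k^* C_k = Q_k = \sum_\ell \delta_{i_\ell, k} E_\ell$. Since both $P_k$ and $Q_k$ are diagonal projections, $C_k$ is automatically a partial isometry. Pairwise orthogonality of the $P_k$ (respectively the $Q_k$) is then immediate from $\delta_{i_\ell, k}\,\delta_{i_\ell, k'} = 0$ whenever $k \neq k'$. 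Summing over $k$ collapses $\sum_{k=1}^d \delta_{i_\ell, k} = 1$ for each $\ell \in \{1, \ldots, n\}$, giving $\sum_k P_k = \sum_{\ell=1}^n E_{\ell+1} = I_{n+1} - E_1$ and $\sum_k Q_k = \sum_{\ell=1}^n E_\ell = I_{n+1} - E_{n+1}$.

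Next, to see that $C$ is a row partial isometry on $\mathbb{C}^{n+1} \otimes \mathbb{C}^d$, it suffices to check that the block operator $C^*C$ with entries $C_k^* C_{k'}$ is an orthogonal projection; by the previous step its off-diagonal entries vanish and its diagonal entries are the projections $Q_k$, so it is a block-diagonal projection. The same argument applied to $T = T(\alpha)$, using $T_k T_{k'}^* = C_k^* C_{k'}$ and $T_k^* T_{k'} = C_k C_{k'}^*$, shows that $T$ is likewise a row partial isometry.

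Finally, for joint nilpotency of order $n$, I would observe that any word $T(\alpha)^\beta$ of length $m = |\beta|$ expands as a sum of products of $m$ matrix units of the form $E_{j, j+1}$. Using $E_{j, j+1} E_{j', j'+1} = \delta_{j+1, j'} E_{j, j+2}$ inductively, every nonzero such product has the form $E_{j, j+m}$, which requires $j + m \leq n+1$, hence $m \leq n$. Consequently $T(\alpha)^\beta \equiv 0_{n+1}$ whenever $|\beta| > n$, and the same bound holds for $C(\alpha)$ by taking adjoints. There is no real obstacle here beyond keeping the index conventions consistent; the entire lemma is a verification of matrix identities.
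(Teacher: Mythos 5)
Your proof is correct and follows essentially the same route as the paper's: direct Kronecker-delta bookkeeping with matrix units to compute $C_k^* C_k = Q_k$, $C_k C_k^* = P_k$, the pairwise orthogonality, and the sums $\sum_k P_k$ and $\sum_k Q_k$. The only cosmetic difference is that the paper keeps the shifts $S_n, S_n^*$ explicit in its products whereas you substitute the closed form $C_k = \sum_j \delta_{i_j,k} E_{j+1,j}$ from the start; the nilpotency observation you spell out is stated in the paper just before the lemma rather than inside its proof.
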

\begin{proof}
First,
\ba C_j ^* C_k & = & \sum _{\ell =1} ^n \delta _{i_\ell , j} E_{\ell } S_n ^* S_n \sum _{p =1} ^n \delta _{i_p , k} E_{p }  \\
& = & \sum _{\ell, p} \delta _{i_\ell, j} \delta _{i_p , k}   \underbrace{ E_{\ell} (I _{n+1} - E_{n+1} ) E_p }_{= \delta _{\ell , p } E_{\ell} }  \\
& = & \delta _{j,k} \sum _{\ell} \delta _{i_{\ell} , j} E_{\ell }  \\
& = & \delta _{j,k} Q_j.  \ea 
Similarly,
\ba C_j C_k ^* & = &  \sum _{p=1} ^n \delta _{i_p , j } E_{p+1} S_n S_n^*  \sum _{\ell = 1 } ^n \delta _{i_\ell , k}  E_{\ell +1 }  \\
& = & \sum _{p, \ell} \delta _{i_p, j} \delta _{i_\ell , k} \underbrace{E_{p+1} S_n S_n^* E_{\ell +1 }}_{=\delta _{p, \ell} E_{p+1}}  \\
& = & \delta _{k , j} \sum _{p=1} ^n \delta _{i_p, j} E_{p +1}  \\
& = & \delta _{k , j} P_j.  \ea 
It follows that, 
\ba C C ^* & = & \sum _{k=1} ^d P_k = \sum _{k=1} ^d \sum _{p=1} ^n \delta _{i_p, k} E_{p +1}  \\
& = & \sum _{p=1} ^n E _{p+1}  \\
& = & I_{n+1} - E_1,  \ea while a similar calculation shows that $TT^* =  \sum _{k=1} ^d Q_k  =  I_{n+1} - E_{n+1}$. 
\end{proof}
\begin{lemma} \label{lem-observe}
Let $C := C (\alpha )$ and $T:= T(\alpha )$ as above and set $\beta = j_1 \cdots j_m$. Then,
$$ C^\beta e_k = \wt{e} _{k + m} = \left\{ \begin{array}{cc} e_{k+m} & \beta = i_{k+ m} \cdots i_k \\
0 & \mbox{else} \end{array}.  \right. $$ In particular, 
$$ T^{\beta *} e_1 = C ^{\beta ^\mrt} e_1 = \wt{e} _{1 + m} = \left\{ \begin{array}{cc} e_{1+m} & \beta = i_{1+ m} \cdots i_1 \\
0 & \mbox{else} \end{array}  \right. , $$ so that this is zero unless $\alpha  = \beta  \ga $, or equivalently, $\alpha ^\mrt = \ga ^\mrt \beta ^\mrt$.  
\end{lemma}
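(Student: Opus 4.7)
My plan is to prove both assertions by a direct induction on $m = |\beta|$, after recording the single-step action of each $C_k$ on the standard basis. From the definition $C_k = T(\alpha)_k^* = \sum_{j=1}^n \delta_{i_j,k} E_{j+1,j}$ in Lemma \ref{rowpisom}'s setup, one reads off that
\[
C_k e_j = \delta_{i_j, k}\, e_{j+1} \quad (1 \leq j \leq n), \qquad C_k e_{n+1} = 0.
\]
Thus $C_k$ sends $e_j$ to $e_{j+1}$ if and only if the $j$-th letter of $\alpha$ equals $k$, and kills $e_j$ otherwise; in particular, $e_{n+1}$ is in the kernel of every $C_k$, which will give joint nilpotency of order $n$ for free.

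The induction is on $m$. The base case $m=0$ is $C^\emptyset e_k = e_k$, in agreement with $\widetilde{e}_{k+0}$. For the inductive step, write $\beta = j_1 \beta'$ with $\beta' = j_2 \cdots j_m$; then $C^\beta e_k = C_{j_1}(C^{\beta'} e_k)$. By the inductive hypothesis, $C^{\beta'} e_k$ is nonzero exactly when $\beta'$ matches the reversed substring of $\alpha$ of length $m-1$ starting at position $k$, in which case it equals $e_{k+m-1}$. Applying one more factor, $C_{j_1} e_{k+m-1}$ equals $e_{k+m}$ precisely when $j_1$ matches the appropriate letter of $\alpha$ at position $k+m-1$, and is zero otherwise. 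Chaining these constraints produces the displayed word condition on $\beta$, with $\widetilde{e}_{k+m} = e_{k+m}$ when all matches occur and $\widetilde{e}_{k+m} = 0$ otherwise. Note also that if $k+m > n+1$ the formula forces zero, consistently with the joint nilpotency of $C$ (and hence of $T$) of order $n$ established in Lemma \ref{rowpisom}.

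For the specialization to $T^{\beta *} e_1$, I use that the tuple-adjoint reverses the word: since $T^\beta = T_{j_1} \cdots T_{j_m}$, we have
\[
T^{\beta *} = T_{j_m}^* \cdots T_{j_1}^* = C_{j_m} \cdots C_{j_1} = C^{\beta^\mrt}.
\]
Applying the first part of the lemma with $k = 1$ to $\beta^\mrt$ gives $T^{\beta *} e_1 = \widetilde{e}_{1+m}$, which is $e_{1+m}$ precisely when $\beta^\mrt$ matches the reversed substring of $\alpha$ of length $m$ starting at position $1$. This condition is equivalent to $\beta$ being the length-$m$ prefix of $\alpha$, and hence to the factorization $\alpha = \beta \gamma$ (equivalently $\alpha^\mrt = \gamma^\mrt \beta^\mrt$) for some $\gamma \in \F^d$.

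No step looks to me like a genuine obstacle; the only real care needed is the bookkeeping between the order in which operator letters $j_1, \dots, j_m$ are applied to a vector (right to left) and the order in which the matching positions of $\alpha$ are traversed (left to right from position $k$), which is exactly what produces the apparent transpose in the word condition and, in the second statement, the appearance of $\beta^\mrt$ rather than $\beta$.
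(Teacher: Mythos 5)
Your proof is correct and takes essentially the same route as the paper's: you compute $C_j e_k = \delta_{i_k,j}\,e_{k+1}$ directly from the definition of $C_k = \sum_j \delta_{i_j,k} E_{j+1,j}$ and then iterate, exactly as the paper does (you package the iteration as a formal induction where the paper writes "and so on", and you also make the trivial identity $T^{\beta*}=C^{\beta^{\mrt}}$ explicit for the second displayed formula, which the paper leaves unstated). One small remark: your description of the word condition as "$j_1$ matches the letter of $\alpha$ at position $k+m-1$" is the right bookkeeping — the displayed condition $\beta = i_{k+m}\cdots i_k$ in the lemma statement is off by one (it lists $m+1$ letters for a length-$m$ word; it should read $\beta = i_{k+m-1}\cdots i_k$), and your proof, like the paper's own, silently carries out the correct count.
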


\begin{proof}
$$ C_j e_k  =  S_n \sum _{\ell =1} ^n \delta _{i_\ell , j} \underbrace{E_\ell e_k}_{= e_k \delta _{\ell , k}} =  \delta _{i_k , j} e_{k+1}. $$ Similarly,
$$ C_{j_{m-1} } C_{j_m} e_k = \delta _{i_k , j_m } \delta _{i_{k+1} , j_{m-1} } e_{k+2}, $$ and so on.  
\end{proof}
Similarly, one can prove,
\begin{lemma} \label{lem-control}
Let $T:= T (\alpha )$ as above and set $\beta = j_1 \cdots j_m$. Then,
$$ T^\beta e_k = \wt{e} _{k -m} = \left\{ \begin{array}{cc} e_{k-m} & \beta = i_{k- m} \cdots i_{k-1} \\
0 & \mbox{else} \end{array}.  \right. $$ In particular, 
$$ T ^\beta e_{n+1} = \wt{e} _{n+1 - m} = \left\{ \begin{array}{cc} e_{n+1 - m} & \beta = i_{n+1 - m} \cdots i_n \\
0 & \mbox{else} \end{array}  \right. , $$ so that this is zero unless $\beta \leq \alpha$.
\end{lemma}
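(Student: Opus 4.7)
The plan is to prove Lemma \ref{lem-control} by direct computation, paralleling the calculation sketched for Lemma \ref{lem-observe}. First I would compute the action of a single operator $T_k := T(\alpha)_k$ on basis vectors. Using the second form of the definition, $T_k = \sum_{j=1}^n \delta_{i_j, k} E_j S_n^*$, and the fact that $S_n^* e_\ell = e_{\ell-1}$ for $\ell \geq 2$ while $S_n^* e_1 = 0$, one gets
\[ T_k e_\ell = \delta_{i_{\ell-1}, k}\, e_{\ell-1}, \qquad \ell \geq 2, \qquad T_k e_1 = 0. \]
So each $T_k$ either lowers an index by one (when the letter $k$ matches the label $i_{\ell-1}$ associated with position $\ell$) or annihilates the vector.

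Next I would run an induction on $m = |\beta|$. The base case $m = 0$ is trivial. For the inductive step, decompose $\beta = \gamma\, j$ with $|\gamma| = m-1$, apply $T_j$ first to $e_k$, and then apply the inductive hypothesis to $T^\gamma$ acting on the resulting vector. The telescoping product of Kronecker deltas that emerges yields $T^\beta e_k = e_{k-m}$ if and only if $j_1 = i_{k-m},\ j_2 = i_{k-m+1},\ \ldots,\ j_m = i_{k-1}$, that is, $\beta = i_{k-m} \cdots i_{k-1}$, and zero otherwise. As an alternative that avoids writing out the induction, one may take adjoints: since $T_k^* = C_k$ by construction, $(T^\beta)^* = C^{\beta^\mrt}$, and then $\langle T^\beta e_k, e_\ell \rangle = \langle e_k, C^{\beta^\mrt} e_\ell\rangle$ can be read off from Lemma \ref{lem-observe} applied with the word $\beta^\mrt$.

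The second claim then follows by specializing to $k = n+1$: the formula collapses to $T^\beta e_{n+1} = e_{n+1-m}$ if and only if $\beta = i_{n+1-m}\cdots i_n$, which requires $m \leq n$ and says that $\beta$ is precisely the length-$m$ suffix of $\alpha = i_1 \cdots i_n$. This is the content of the condition $\beta \leq \alpha$, and in all other cases the vector is zero.

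There is no real obstacle here; the entire argument is a bookkeeping exercise once the one-letter action of $T_k$ is computed. The only point to be mindful of is keeping the letter--reversal convention straight when converting between the statements for $T$ and $C$ (respectively Lemma \ref{lem-control} and Lemma \ref{lem-observe}), so that the correct side of $\alpha$ (suffix versus prefix) appears.
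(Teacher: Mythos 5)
Your proposal is correct and follows essentially the same route the paper sketches for Lemma \ref{lem-observe} and then invokes by ``Similarly'' for Lemma \ref{lem-control}: compute the one-letter action $T_k e_\ell = \delta_{i_{\ell-1},k}\,e_{\ell-1}$ (with $T_k e_1 = 0$) and iterate. The adjoint shortcut $(T^\beta)^* = C^{\beta^\mrt}$ is a valid alternative the paper does not spell out, and is a clean way to avoid rewriting the induction, provided one keeps track of the letter reversal exactly as you noted.
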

In the above statement, we define a partial order on $\F ^d$ by $\alpha \leq \om$ if $\om = \beta \alpha$ for some $\beta \in \F ^d$.
\begin{lemma} \label{realize}
Fix any $\alpha = i_1 \cdots i_n \in \F ^d$ and $\beta = j_1 \cdots j_m \in \F ^d$ as above. Then,
$$ \left( C (\alpha) ^{\beta ^\mrt} e_1 , e_{n+1} \right) _{\C ^{n+1} } = \delta _{\alpha , \beta} = \left( e_1 , T(\alpha ) ^\beta e_{n+1} \right) _{\C ^{n+1}}. $$ 
\end{lemma}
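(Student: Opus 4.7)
The plan is to reduce both identities to direct applications of Lemmas \ref{lem-observe} and \ref{lem-control}, using that $C(\alpha)_k = T(\alpha)_k^*$ by definition. First I would observe that this adjoint relation, together with the reversal $(T_{j_1}\cdots T_{j_m})^* = T_{j_m}^* \cdots T_{j_1}^* = C_{j_m}\cdots C_{j_1} = C^{\beta^\mrt}$, immediately gives $T(\alpha)^{\beta*} = C(\alpha)^{\beta^\mrt}$, so that
$$ (C(\alpha)^{\beta^\mrt} e_1, e_{n+1}) = (T(\alpha)^{\beta *}e_1, e_{n+1}) = (e_1, T(\alpha)^\beta e_{n+1}), $$
and only one of the two claimed equalities needs independent verification.

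For the left-hand equality, apply Lemma \ref{lem-observe} with $k=1$ and with $\beta$ replaced by $\beta^\mrt = j_m \cdots j_1$. This yields $C(\alpha)^{\beta^\mrt} e_1 = \wt{e}_{1+m}$, which is nonzero only when the letters of $\beta^\mrt$ match the prescribed pattern dictated by $\alpha$; unwinding the reversal, this forces $\beta = j_1\cdots j_m = i_1 \cdots i_m$, i.e., $\beta$ is the length-$m$ prefix of $\alpha$, and in that case $C(\alpha)^{\beta^\mrt}e_1 = e_{m+1}$. Taking the inner product with $e_{n+1}$ then picks out $\delta_{m+1, n+1} = \delta_{m,n}$, and combined with the prefix condition this is exactly $\delta_{\alpha,\beta}$.

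For the right-hand equality, proceed symmetrically using Lemma \ref{lem-control} with $k=n+1$: one has $T(\alpha)^\beta e_{n+1} = \wt{e}_{n+1-m}$, which is nonzero only when $\beta$ is the length-$m$ suffix of $\alpha$, and in that case equals $e_{n+1-m}$. The inner product with $e_1$ then contributes $\delta_{n+1-m, 1} = \delta_{m,n}$, which together with the suffix condition again yields $\delta_{\alpha, \beta}$. Both identities therefore coincide with $\delta_{\alpha,\beta}$, which is what we wanted.

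There is no real obstacle here; the lemma is a bookkeeping corollary of the two preceding lemmas. The only mildly delicate point is keeping the word-reversal convention straight when moving between $C$ and $T$, and confirming that the prefix condition coming from Lemma \ref{lem-observe} and the suffix condition from Lemma \ref{lem-control} both collapse to equality of the words precisely when the length constraint $m=n$ is also imposed by the $e_1$ versus $e_{n+1}$ endpoints.
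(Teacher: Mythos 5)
Your proposal is correct and follows essentially the same route as the paper's proof. The paper only writes out the computation for the $T$-side identity, applying Lemma \ref{lem-control} at $k=n+1$ to get $\left( e_1 , T(\alpha ) ^\beta e_{n+1} \right) = \delta _{i_n ,j_m} \cdots \delta _{i_{n+1 -m} , j_1}\left( e_1 , e_{n+1 -m} \right) = \delta_{\alpha,\beta}$, leaving the $C$-side implicit; you make the adjoint identity $T(\alpha)^{\beta*} = C(\alpha)^{\beta^\mrt}$ explicit (which immediately gives one side from the other) and additionally check the $C$-side directly via Lemma \ref{lem-observe}, which is mildly redundant but harmless. Your index bookkeeping---prefix condition from the $C$-computation, suffix condition from the $T$-computation, both collapsing to $\alpha = \beta$ once the $e_1$/$e_{n+1}$ endpoints force $m=n$---is accurate.
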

\begin{proof}
\ba \left( e_1 , T(\alpha ) ^\beta e_{n+1} \right) _{\C ^{n+1}} & = & \delta _{i_n ,j_m} \delta _{i_{n-1} , j_{m-1} } \cdots \delta _{i_{n+1 -m} , j_1}  \left( e_1 , e_{n+1 -m} \right) _{\C ^{n+1}}  \\
& = & \delta _{\alpha , \beta}.  \ea 
\end{proof}

\begin{lemma} \label{adjunction}
Given any $1 \leq m \leq n=|\alpha |$, 
$$ \sum _{|\om | = m} T(\alpha ) ^\om T(\alpha) ^{\om *} = \sum _{j=1} ^{n-m+1} E_{j}.$$
\end{lemma}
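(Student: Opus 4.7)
The plan is to compute each summand $T(\alpha)^\om T(\alpha)^{*\om}$ as an orthogonal projection onto a subset of standard basis vectors of $\C^{n+1}$, and then observe that for different $\om \neq \om'$ of length $m$ these projections are pairwise orthogonal, so that the sum collapses to the projection onto the span of the first $n-m+1$ basis vectors. First, I would extend Lemma \ref{lem-control} from single letters to words of arbitrary length $1 \leq m \leq n$: for $\om = j_1 \cdots j_m$,
\[ T(\alpha)^\om e_k \;=\; \begin{cases} e_{k-m} & \text{if } k \geq m+1 \text{ and } j_1 j_2 \cdots j_m = i_{k-m}\, i_{k-m+1} \cdots i_{k-1}, \\ 0 & \text{otherwise.} \end{cases} \]
This is a direct iteration of the single-letter identity $T(\alpha)_j e_k = \delta_{i_{k-1},j}\, e_{k-1}$, itself immediate from the definition of $T(\alpha)_j$ together with $S_n^* e_k = e_{k-1}$.

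From this formula, $T(\alpha)^\om$ is a partial isometry that maps a (possibly empty) subset of basis vectors isometrically onto another such subset and annihilates the rest. Consequently $T(\alpha)^\om T(\alpha)^{*\om} = T(\alpha)^\om (T(\alpha)^\om)^*$ is the orthogonal projection onto the range of $T(\alpha)^\om$, which by the formula above is spanned by those $e_p$ with $1 \leq p \leq n-m+1$ such that $\om$ equals the length-$m$ subword $\om(p) := i_p i_{p+1} \cdots i_{p+m-1}$ of $\alpha$ starting at position $p$.

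The key point is then that the assignment $p \mapsto \om(p)$ is a well-defined function on $\{1,\ldots,n-m+1\}$, so for each such $p$ there is exactly one word $\om$ of length $m$ whose range contains $e_p$. Hence the ranges of $T(\alpha)^\om$ for distinct words $\om$ of length $m$ are spanned by disjoint subsets of the orthonormal basis, and so the projections $T(\alpha)^\om T(\alpha)^{*\om}$ have pairwise orthogonal ranges. Summing over all $\om$ with $|\om| = m$ therefore yields the orthogonal projection onto $\mathrm{span}\{e_p : 1 \leq p \leq n-m+1\}$, which is precisely $\sum_{j=1}^{n-m+1} E_j$. The only real obstacle is keeping the indexing between letters of $\alpha$, length-$m$ subwords of $\alpha$, and basis vectors of $\C^{n+1}$ straight; once the extension of Lemma \ref{lem-control} is written down, the conclusion is essentially a bookkeeping step.
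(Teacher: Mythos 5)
Your proof is correct, and it takes a genuinely different route from the paper. The paper proceeds by induction on $m$: it starts from the $m=1$ case $\sum_k T(\alpha)_k T(\alpha)_k^* = I_{n+1} - E_{n+1}$ from Lemma \ref{rowpisom}, and then peels off one letter at a time, writing $\sum_{|\om|=m} T^\om T^{\om*} = \sum_{|\beta|=m-1} T^\beta (I_{n+1}-E_{n+1}) T^{\beta*}$ and using Lemma \ref{lem-control} to identify $\sum_{|\beta|=m-1} T^\beta E_{n+1} T^{\beta*} = E_{n-m+2}$, so that the sum telescopes. You instead compute each summand $T^\om T^{\om*}$ directly as the orthogonal projection onto $\mathrm{span}\{e_p : 1 \leq p \leq n-m+1, \; \om = i_p i_{p+1}\cdots i_{p+m-1}\}$ and observe that, since the length-$m$ subword of $\alpha$ starting at position $p$ is uniquely determined by $p$, these ranges for distinct $\om$ are pairwise orthogonal and jointly exhaust $\mathrm{span}\{e_1,\dots,e_{n-m+1}\}$. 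This is a clean combinatorial argument that makes the projection structure transparent and avoids iteration; the paper's induction is notationally lighter but less illuminating about which words actually contribute. One small remark: the word-level formula you propose as an ``extension'' of Lemma \ref{lem-control} is in fact exactly what Lemma \ref{lem-control} already states for arbitrary $\beta = j_1\cdots j_m$, so no additional lemma is needed -- you can cite it directly.
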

\begin{proof}
If $m=1$, this follows from Lemma \ref{rowpisom}. If $m>1$, then again by Lemma \ref{rowpisom},
\ba \sum _{|\om | = m} T(\alpha ) ^\om T(\alpha) ^{\om *} & = & \sum _{|\beta | = m-1} T(\alpha )^{\beta} \sum _{j=1} ^d T(\alpha ) _j T(\alpha ) _j ^* T(\alpha )  ^{\beta *} \\
& = & \sum _{|\beta | = m-1} T(\alpha )^{\beta} ( I _{n+1} - e_{n+1} e_{n+1} ^* ) T(\alpha )  ^{\beta *} \\
 & = & \sum _{|\beta | = m-1} T(\alpha )^{\beta} T(\alpha )  ^{\beta *} - E_{n-m+2},  \ea 
 where the last equality follows from Lemma \ref{lem-control}. Iterating this argument yields
\ba  \sum _{|\om | = m} T(\alpha ) ^\om T(\alpha) ^{\om *} & = &  I_{n+1} -  \sum _{j =1} ^m E_{n+2-j} \\ & = & \sum _{j=1} ^{n-m+1} E_j. \ea 
\end{proof}

\begin{prop} \label{realmon}
Any NC monomial, $\mf{z} ^\alpha$, has the minimal and jointly nilpotent realization $\left( T(\alpha ) , b(\alpha ) , c(\alpha ) \right)$ where $b(\alpha) = e_1, c(\alpha) = e_{|\alpha | +1 }$. This realization has size $n +1$ where $n=|\alpha|$, and $T(\alpha )$ is jointly nilpotent of order $|\alpha |$, \emph{i.e.} $T (\alpha ) ^\om \equiv 0$ for any word $\om$ of length $|\om | > |\alpha | =n$. 
\end{prop}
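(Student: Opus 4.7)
The plan is to verify the three claims in turn, each of which follows almost directly from the preparatory lemmas already established.

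First I would check that $(T(\alpha),e_1,e_{|\alpha|+1})$ really is a realization of $\mathfrak{z}^\alpha$. The coefficient of the word $\beta$ in the free FPS generated by this triple is $\hat{h}_\beta = e_1^* T(\alpha)^\beta e_{n+1}$, and Lemma \ref{realize} gives exactly $e_1^* T(\alpha)^\beta e_{n+1}=\delta_{\alpha,\beta}$. Hence the generated FPS is $\sum_{\beta}\delta_{\alpha,\beta}\,\mathfrak{z}^\beta=\mathfrak{z}^\alpha$, as required.

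Next I would record the joint nilpotence. By Lemma \ref{adjunction} applied with $m=n+1$ we would get $\sum_{|\omega|=n+1}T(\alpha)^\omega T(\alpha)^{\omega *}=0$, which forces $T(\alpha)^\omega=0$ for every $\omega$ with $|\omega|>n$; so $T(\alpha)$ is jointly nilpotent of order $n=|\alpha|$. (Alternatively, Lemma \ref{lem-control} already gives that for $|\beta|>n$ the action of $T(\alpha)^\beta$ on every basis vector $e_k$, $1\le k\le n+1$, is zero, which is the same conclusion.)

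The remaining content is minimality. I would verify controllability and observability by invoking the two indexing lemmas. For controllability: by Lemma \ref{lem-control}, for each $0\le m\le n$ the word $\beta=i_{n+1-m}\cdots i_n$ (a suffix of $\alpha$ of length $m$) satisfies $T(\alpha)^\beta e_{n+1}=e_{n+1-m}$. Letting $m$ range over $0,1,\ldots,n$ produces every standard basis vector $e_{n+1},e_n,\ldots,e_1$, so the $T(\alpha)$-cyclic subspace generated by $c(\alpha)=e_{n+1}$ is all of $\mathbb{C}^{n+1}$. For observability: by Lemma \ref{lem-observe}, for each $0\le m\le n$ the unique $\beta$ of length $m$ for which $T(\alpha)^{\beta *}e_1\ne 0$ is the prefix of $\alpha$ of that length, and then $T(\alpha)^{\beta *}e_1=e_{1+m}$. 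Again letting $m$ vary exhausts the standard basis, so $b(\alpha)=e_1$ is $T(\alpha)^*$-cyclic. This gives minimality.

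There is no genuine obstacle here; the whole proposition is bookkeeping on top of the preceding lemmas, and the only thing one has to be mildly careful about is matching the transpose conventions between Lemma \ref{lem-observe} (which states the action of $C(\alpha)^{\beta^{\mathrm{t}}}=T(\alpha)^{\beta *}$) and the realization identity, so that ``prefix of $\alpha$'' corresponds to the observability side and ``suffix of $\alpha$'' to the controllability side.
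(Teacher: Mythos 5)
Your proof is correct and follows essentially the same route as the paper's: Lemma \ref{realize} gives the realization identity, and Lemma \ref{lem-observe} together with Lemma \ref{lem-control} give observability and controllability. One small caveat: Lemma \ref{adjunction} is stated only for $1 \le m \le n$, so invoking it directly at $m=n+1$ oversteps its hypotheses, but the alternative you supply via Lemma \ref{lem-control} (that $T(\alpha)^\beta e_k = 0$ for every $k$ once $|\beta|>n$) is exactly the right argument and is what the paper implicitly relies on when it asserts joint nilpotence.
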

\begin{proof}
By Lemma \ref{realize}, this is a realization since $e_1 ^* T (\alpha ) ^\om e_{n+1} = \delta _{\alpha , \om }$. Moreover, Lemma \ref{lem-observe} and Lemma \ref{control} imply that this realization is both controllable and observable, hence it is minimal.
\end{proof}

\begin{remark}
Alternatively, $\left( C (\alpha ^\mrt ), e_{n+1} , e_1 \right)$ is also a minimal realization of $\mf{z} ^\alpha$. 
\end{remark}

\begin{proof}[Proof of Theorem \ref{uentireqnc}]
If $h \in \scr{O} ^u (\ncu )$ is uniformly entire, then $h$ has a Taylor--Taylor series at $0$, 
$$ h(\fz ) = \sum _{\om \in \F ^d} a_\om \fz ^\om, $$ with infinite radius of convergence, $R_h$. By Popescu's Cauchy--Hadamard radius of convergence formula, 
\be \frac{1}{R_h} =\lim _{n \rightarrow \infty} \sqrt[2n]{\sum _{|\om| =n} |a_\om | ^2} = 0. \label{radoconv} \ee
Let $\cH := \bigoplus _{n=0} ^\infty \bigoplus _{ |\om | = n} \C ^{n+1}$ and set 
$$ A _j := 0 \oplus \bigoplus _{n=1} ^\infty \bigoplus _{|\om | = n} \underbrace{d \sqrt[n]{n^2} \sqrt[n]{a_\om} \cdot T (\om)  _j}_{=: A(\om) _j} =  0 \oplus \bigoplus _{n=1} ^\infty \bigoplus _{|\om | = n} A(\om) _j, $$ where, as before, $\sqrt[n]{a_\om}$ is any fixed $n$th root of $a_\om \in \C$. Then define,
$$ b:= 1 \oplus \bigoplus _{n=1} ^\infty \bigoplus _{|\om| =n } \frac{1}{n \sqrt{d} ^n} e_1, $$ and, 
$$ c:=a_\emptyset \oplus \bigoplus _{n=1} ^\infty \bigoplus _{|\om| =n } \frac{1}{n \sqrt{d} ^n} e_{n+1}. $$ Here, $d^n$ is the number of words in $\F ^d$ of length $n$ so that $b,c \in \cH$. 

As before, $A$ is a compact and jointly quasinilpotent realization of $h$. Indeed, 
\ba  \| A  \| _{\mr{row}}  & = & \sqrt{\| A A^* \|} =  \sup _{n \geq 0}  \max _{|\om | = n} \sqrt{\| A(\om )  A (\om ) ^* \|} \\
& = & \sup _{n \geq 1}  \max _{|\om | = n}  d \sqrt[n]{n^2|a_\om|}  = d \, \sup _{n \geq 1} \sqrt[n]{n^2} \, \max _{|\om | =n} \sqrt[2n]{|a _\om | ^2} \\
& \leq & d \, \sup _n \sqrt[n]{n^2} \, \sqrt[2n]{ \sum _{| \om | = n } |a_\om | ^2}. \ea
This is bounded, by the radius of convergence formula, Equation (\ref{radoconv}), so that $\mr{row} (A) \in \scr{B} (\cH ) ^{1 \times d}$ and hence, $A \in \scr{B} (\cH ) ^d$. 
Similarly, if $A^{(N)}$ denotes the $N$th partial direct sum of $A$, then for each $1 \leq j \leq d$,
$$  \| A_j - A^{(N)} _j \| \leq  \| A - A  ^{(N)} \| _{\mr{row}} \leq  \sup _{n > N }   d \sqrt[n]{n^2} \sqrt[2n]{\sum _{|\om | =n} |a_\om| ^2}, $$ which converges to $0$ as $N \rightarrow \infty$ by the Popescu--Hadamard radius of convergence formula. Since each $A^{(N)} _j$ is of finite rank, it follows that $A \in \scr{C} (\cH ) ^d$ is a $d-$tuple of compact operators. 

Moreover, and also as before, for any $m \in \N$ so that $n = |\om | \geq m$, Lemma \ref{adjunction} implies that 
\ba \left\| \sum _{|\alpha | =m } A^\alpha A^{\alpha *} \right\| & = & \sup _{n \geq m} \max_{|\om | = n} \left\| \sum _{j=1} ^{n-m+1} E_j \right\| \, d^{2m} n ^{\frac{4m}{n}} |a _\om | ^{\frac{2m}{n}}.  \ea 
Hence, 
\ba \sqrt[2m]{ \left\| \sum _{|\alpha | = m } A^\alpha A^{\alpha ^*} \right\|} & = & d \, \sup _{n \geq m} \sqrt[n]{n^2} \, \max _{|\om| = n}  \sqrt[n]{|a_\om|} \\
& \leq & d \, \sup _{n\geq m} \sqrt[n]{n^2} \, \sqrt[2n]{\sum _{|\om | =n} |a _\om | ^2}.  \ea 
Again, this converges to $0$ as $m \rightarrow \infty$ by the Popescu--Hadamard radius of convergence formula. That is,
$$ \rho (A) = \limsup _{m \rightarrow \infty} \sqrt[2m]{\left\| \sum _{|\alpha | = m } A^\alpha A^{\alpha ^*} \right\|} = 0, $$ so that $A \in \scr{C} (\cH ) ^d$ is a jointly compact and quasinilpotent tuple of operators. 

If, for each $\om \in \F ^d$, $|\om | = n \in \N$, we define  $b_\om := \frac{1}{n \sqrt{d} ^n} e_1$ and $c_\om := \frac{1}{n \sqrt{d} ^n} e_{n+1}$, it is easily checked that $(A(\om ), b_\om , c_\om )$ is a minimal realization of $a_\om \fz ^\om$, and it follows that $(A,b,c)$ is a compact and quasinilpotent realization of $h$. Indeed, $b^* A^\emptyset c = b^* c = a_\emptyset$, and by Lemma \ref{realize}, for any $\alpha \neq \emptyset$,
\ba b^* A^\alpha c  & = & \sum _{n=0} ^\infty \sum _{|\om | =n} \frac{1}{n^2 d^n} e_1 ^* A (\om) ^\alpha e_{n+1} \\ 
& = & \sum _{n=0} ^\infty \sum _{|\om | =n} \frac{(n^2 ) ^{\frac{|\alpha|}{n}} d^{|\alpha|}}{n^2 d^n} \, \underbrace{e_1 ^* T (\om) ^\alpha e_{n+1}}_{\delta _{\alpha, \om}} \, a_\om ^{\frac{|\alpha|}{n}} \\
& = & a_\om. \ea

As before, compressions of compact operators are compact and the compression of a quasinilpotent tuple of operators to a jointly semi-invariant subspace is quasinilpotent, so that $h$ has a minimal compact and quasinilpotent realization by the Kalman decomposition theorem, Theorem \ref{Kalman}. The operator $d-$tuples $A ^{(N)}$ are all of finite rank, they are jointly nilpotent and the $A^{(N)}$ converge to $A$ in the row operator norm. If $P_N$ denotes the orthogonal projection of $\cH$ onto the finite dimensional subspace $\cH _N$ on which $A^{(N)}$ acts, then observe that 
$$ P_N \scr{C} _{A,c} = P_N \bigvee A^\om c = \bigvee A^{(N); \om} c_N = \scr{C} _{A^{(N)}, c_N}, $$ where $c_N = P_N c$. Indeed, each finite--dimensional subspace, $\cH _N$ is clearly jointly $A-$reducing so that this formula follows readily. Similarly, it easy to check that 
$$ P_N \scr{O} _{A^*, b} ^\perp = \scr{O} _{A^{(N)*}, b_N} ^\perp, \quad b_N := P_N b, $$ and it follows that if $(\check{A}, \check{b} , \check{c})$ is the minimal realization obtained from the Kalman decomposition of $(A,b,c)$, that compression of $(\check{A}, \check{b}, \check{c})$ to the minimal subspace of $(A^{(N)}, b_N , c_N )$ yields the minimal Kalman realization of $(A^{(N)}, b_N , c_N)$. It follows that the minimal Kalman realizations of $(A^{(N)}, b_N , c_N)$, which are all finite--rank and jointly nilpotent, converge  to the minimal Kalman realization of $(A,b,c)$ in the sense that $A^{(N)} \rightarrow A$ in operator row-norm, $b_N \rightarrow b$ and $c_N \rightarrow c$. In particular, we see that the minimal, jointly quasinilpotent and compact $d-$tuple, $\check{A} \in \scr{B} (\cH ) ^d$, obtained from the Kalman decomposition of $(A,b,c)$ is the operator row-norm limit of the sequence of minimal, finite--rank and jointly nilpotent $d-$tuples, $\check{A} ^{(N)}$, obtained from the Kalman decomposition of $(A^{(N)}, b_N, c_N)$.
\end{proof}

Note that a familiar NC $h$ has a jointly compact and quasinilpotent descriptor realization, $(A,b,c)$, and hence $h \in \scr{O} ^u (\ncu)$ is uniformly entire, if and only if it has a jointly compact and quasinilpotent FM realization $(A',B,C,D)$. Indeed this follows from the construction of an FM realization from a descriptor realization and vice versa, as described in Equations (\ref{FMfromd}) and (\ref{dfromFM}).

\begin{remark}
The construction of this subsection can be extended to give realizations of any NC function that is uniformly analytic in a uniformly open neighbourhood of $0$. However, if the Taylor--Taylor series of this NC function at $0$ has finite radius of convergence, the resulting realization will not be jointly quasinilpotent and it need not be compact.
\end{remark}

\section{Meromorphic functions}

\subsection{Meromorphic functions of a single complex variable}

Let $f$ be a meromorphic function in $\C$ so that there are entire functions $g,h$ with no common zeroes and $f = \frac{g}{h}$. Since both $g$ and $h$ have compact and quasinilpotent realizations, we obtain:

\begin{thm} \label{meroreal}
Any meromorphic function has a compact FM realization which is at most a rank$-2$ perturbation of a compact, quasinilpotent realization. 
\end{thm}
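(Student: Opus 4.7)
The plan is to write the meromorphic function $f$ as a quotient $f = g/h$ of entire functions with no common zeros, and to build its compact FM realization from the compact, quasinilpotent realizations of $g$ and $h$ produced by Theorem \ref{entire-qnc}, via the Fornasini--Marchesini algorithm of Subsection \ref{FMalg}. Since an FM realization is inherently a realization at $0$, we assume without loss of generality that $h(0) \neq 0$, so that $f$ is analytic at $0$. Theorem \ref{entire-qnc}, combined with the descriptor-to-FM passage of Equation (\ref{FMfromd})---which restricts the state operator to an invariant subspace and therefore preserves both compactness and quasinilpotence---supplies compact, quasinilpotent FM realizations $(A_g, B_g, C_g, D_g)$ and $(A_h, B_h, C_h, D_h)$ of $g$ and $h$.

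Applying the inversion formula (\ref{inv}) to the realization of $h$ yields an FM realization of $h^{-1}$ whose state operator is $A_{h^{-1}} = A_h - D_h^{-1} B_h C_h$, a rank-one perturbation of the compact, quasinilpotent $A_h$. This perturbation typically destroys quasinilpotence (as it must, since $h^{-1}$ has poles wherever $h$ vanishes), but it preserves compactness. Applying the product formula (\ref{mult}) then produces the FM realization of $f = g \cdot h^{-1}$ whose state operator is
\[
A_f = \begin{pmatrix} A_g & D_h^{-1} B_g C_h \\ 0 & A_h - D_h^{-1} B_h C_h \end{pmatrix} = (A_g \oplus A_h) + \begin{pmatrix} 0 & D_h^{-1} B_g C_h \\ 0 & -D_h^{-1} B_h C_h \end{pmatrix}.
\]
The first summand is compact (as a direct sum of compacts) and quasinilpotent, since $\sigma(A_g \oplus A_h) = \sigma(A_g) \cup \sigma(A_h) = \{0\}$. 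The second summand is the sum of two rank-one operators, hence has rank at most two. Because finite-rank perturbations of compacts are compact, $A_f$ is itself compact, and $f$ is exhibited as the claimed compact FM realization differing from the compact, quasinilpotent $A_g \oplus A_h$ by a rank-$\leq 2$ operator.

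I foresee no serious obstacle here, as the whole argument is substitution into the FM algorithm together with two elementary operator-theoretic facts (compactness is preserved under finite-rank perturbations and the spectrum of a block diagonal is the union of the block spectra). The only point meriting mild care is tracking, after both inversion and multiplication, exactly where the deviation from $A_g \oplus A_h$ lives; from the block display above it is confined to the right block column and decomposes as two outer products, giving the advertised rank bound.
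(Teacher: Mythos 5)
Your proposal is correct and follows essentially the same route as the paper: factor $f = g/h$ with $g$, $h$ entire with no common zeros, invoke Theorem \ref{entire-qnc} for compact, quasinilpotent realizations of $g$ and $h$, and run the FM algorithm on $f = g\cdot h^{-1}$. The paper's proof is one sentence of pointers; you have actually carried out the computation and exhibited the perturbation from $A_g \oplus A_h$ as a block matrix confined to the right block column, which gives the rank-$\leq 2$ bound. (In fact the perturbation
\[
\begin{pmatrix} 0 & D_h^{-1} B_g C_h \\ 0 & -D_h^{-1} B_h C_h \end{pmatrix}
= \begin{pmatrix} D_h^{-1} B_g \\ -D_h^{-1} B_h \end{pmatrix} \begin{pmatrix} 0 & C_h \end{pmatrix}
\]
factors as a single outer product, so it has rank at most $1$, slightly sharper than claimed; this does not affect correctness.) One point that needs to be stated more carefully: ``WLOG $h(0) \neq 0$'' is not a genuine reduction, since if $f$ has a pole at $0$ you cannot relocate the zero of $h$ away from the origin. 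The paper handles this by substituting $g(z) := f(z - c)$ for some $c \neq 0$, i.e.\ centering the realization at $c$; you should either add that sentence or otherwise make explicit that the claim in the pole-at-$0$ case is about a realization centred at a point where $f$ is analytic.
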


\begin{proof}
If $f$ is analytic in an open neighbourhood of $0$, then this follows immediately from Theorem \ref{entire-qnc} and the Fornasini--Marchesini algorithm applied to $f = g \cdot h^{-1}$, see Subsection \ref{FMalg}. If $f$ has a pole at $0$, simply define $g (z) := f (z -c )$, for some non-zero $c \in \C$. It follows that $g$ has such a compact realization, and so $f$ has a compact realization (centred at $c$). 
\end{proof}

Conversely, if $A \in \scr{C} (\cH )$ is compact and $\cH$ is separable, then its resolvent function, $R_A (\la) := ( \la I_\cH - A ) ^{-1}$ is meromorphic on $\C \sm \{ 0 \}$. That is, either $\sigma (A) = \{ 0 \}$, in which case $A$ is compact and quasinilpotent, or, $\sigma (A) = \{ 0 \} \cup \{ \la _j \} _{j=1} ^N$, $N \in \N \cup \{ + \infty \}$, where each $\la _j$ is a non-zero eigenvalue of finite multiplicity, and we can arrange them as a sequence $(\la _j) _{j=1} ^N$ so that $\la _j$, $| \la _j |$ is non-increasing, and if $N=+\infty$, $\la _j \rightarrow 0$. One can then show that $R_A (\la )$ is a meromorphic, operator--valued function on $\C \sm \{ 0 \}$ whose set of poles is $\{ \la _j \} _{j=1} ^N$. This is well known, however, for the convenience of the reader and for lack of a convenient reference which contains the precise formulation and statement we desire, we will prove this below. Since $R_A (\la )$ is meromorphic on $\C \sm \{ 0 \}$, for a compact $A$, it easily follows from this that if $f \sim (A, b, c)$ for some $b,c \in \cH$, that $f$ is meromorphic on the entire complex plane.

\begin{thm} \label{compresolvent}
Let $A \in \scr{C} (\cH )$ be a compact linear operator on a separable, complex Hilbert space, $\cH$. Then the resolvent function, $R_A (\la ) = (\la I_\cH - A) ^{-1}$, is meromorphic on $\C \sm \{ 0 \}$. If $\emptyset \neq \sigma (A)  \sm \{ 0 \}= \{ \la _j \} _{j=1} ^N$, $N \in \N \cup \{ + \infty \}$, where each non-zero $\la _j \in \sigma (A)$ is an eigenvalue of $A$ of finite multiplicity, then each $\la _j$ is a pole of finite order, $m_j$, equal to the size of the largest Jordan block in the Jordan normal form of $A_j := A | _{\nbran E_j (A)}$. Here, $E_j (A)$ is the Riesz idempotent so that $\sigma (A _j)$ is equal to the singleton, $\{ \la _j \}$, and $\nbran E_j (A)$ is finite--dimensional and $A-$invariant. If, $$R_A (\la ) = \sum _{n=-m_j} ^\infty C_n (\la - \la _j ) ^n, \quad \quad C_n \in \scr{B} (\cH ), $$ is the Laurent series expansion of $R_A$ about $\la _j \in \sigma (A) \sm \{ 0 \}$, then the negative coefficients are $$C _{-n-1} := (A - \la _j ) ^n E_j ( A ); \quad \quad 0 \leq n \leq m_j -1. $$ 
\end{thm}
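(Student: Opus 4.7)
The plan is to reduce the theorem to the classical spectral theory of compact operators via the Riesz functional calculus, and then to a short computation in finite dimensions. First I would invoke the Riesz--Schauder theorem, which tells us that for a compact $A \in \scr{C}(\cH)$ the nonzero spectrum consists of isolated eigenvalues of finite algebraic multiplicity whose only possible accumulation point is $0$. In particular, $\sigma(A) \sm \{0\}$ is a discrete subset of $\C \sm \{0\}$, and $R_A(\la) = (\la I_\cH - A)^{-1}$ is operator--norm holomorphic on the resolvent set $\rho(A)$, so meromorphy on $\C \sm \{0\}$ will follow once the Laurent expansion at each $\la_j$ is identified.

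Next I would fix $\la_j \in \sigma(A) \sm \{0\}$ and choose a positively oriented circle $\Ga_j$ centred at $\la_j$ that encloses no other point of $\sigma(A)$, and define the Riesz idempotent
$$E_j(A) := \frac{1}{2\pi i} \cint _{\Ga _j} R_A(\la) \, d\la .$$
The standard consequences of the Riesz functional calculus I would now invoke are: (a) $E_j(A)^2 = E_j(A)$, (b) $E_j(A)$ commutes with $A$, so $\cH$ splits as an $A$-invariant direct sum $\cH = \nbran E_j(A) \oplus \nbker E_j(A)$, (c) the spectrum of the restriction $A_j := A|_{\nbran E_j(A)}$ is exactly $\{\la_j\}$, and (d) the spectrum of $A|_{\nbker E_j(A)}$ is $\sigma(A) \sm \{\la_j\}$. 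Since $A$ is compact and $\la_j \neq 0$ is isolated, $\nbran E_j(A)$ is finite--dimensional (this is the classical step: the finite multiplicity of $\la_j$ forces the generalized eigenspace to have finite dimension). Therefore $N := A_j - \la_j I$ is a finite--dimensional operator with $\sigma(N) = \{0\}$, hence nilpotent, and its nilpotency index $m_j$ is exactly the size of the largest Jordan block of $A_j$ in its Jordan normal form.

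With the decomposition in hand, the Laurent expansion near $\la_j$ splits accordingly. On $\nbker E_j(A)$ the spectrum of $A$ avoids a neighbourhood of $\la_j$, so $R_A(\la)(I - E_j(A))$ is holomorphic at $\la_j$ and contributes only the non-negative part of the Laurent series. On $\nbran E_j(A)$, writing $z := \la - \la_j$ and using $N^{m_j} = 0$, the finite geometric expansion
$$(zI - N)^{-1} = z^{-1}\bigl(I - z^{-1} N\bigr)^{-1} = \sum_{n=0}^{m_j - 1} z^{-n-1} N^n$$
is valid for all $z \neq 0$. Multiplying by $E_j(A)$ to reinsert into $\cH$, this yields
$$R_A(\la) E_j(A) = \sum_{n=0}^{m_j - 1} (\la - \la_j)^{-n-1} (A - \la_j)^n E_j(A),$$
so the principal part of $R_A$ at $\la_j$ has exactly $m_j$ terms with coefficients $C_{-n-1} = (A - \la_j)^n E_j(A)$ for $0 \leq n \leq m_j - 1$, and $C_{-m_j} \neq 0$ because $N^{m_j - 1} \neq 0$. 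This is precisely the asserted Laurent data, and meromorphy on $\C \sm \{0\}$ follows by ranging over all $\la_j$.

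The only subtle point, and really the only thing beyond bookkeeping, is the identification of the order of the pole with the size of the largest Jordan block; that reduces to the elementary observation that for a nilpotent $N$ on a finite--dimensional space, the nilpotency index equals the maximal Jordan block size, which is clear from the Jordan normal form. Everything else is a direct application of the Riesz functional calculus together with the Riesz--Schauder theorem, so I do not anticipate any genuine obstacle.
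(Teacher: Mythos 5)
Your proof is correct, and it takes a recognizably different route from the paper's. The paper works directly with the Laurent series of $R_A$: it first identifies the residue $C_{-1} = E_j(A)$ via the contour integral, then substitutes the full Laurent expansion into the operator identity $(\la_j I - A)R_A(\la) = I - (\la - \la_j)R_A(\la)$ and equates coefficients to obtain the recurrence $(A - \la_j I) C_n = C_{n-1}$; iterating gives $C_{-n-1} = (A - \la_j I)^n E_j(A)$, and nilpotency of $A_j - \la_j I$ forces the series to truncate. You instead split $\cH$ up front via the Riesz idempotent into $\nbran E_j(A) \oplus \nbker E_j(A)$, observe holomorphy of $R_A(\la)(I - E_j(A))$ near $\la_j$, and on the finite--dimensional piece compute $(zI - N)^{-1} = \sum_{n=0}^{m_j-1} z^{-n-1}N^n$ directly from $N^{m_j} = 0$, reading off all the negative coefficients in one step. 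Both arguments hinge on the same two facts: the finite--dimensionality of $\nbran E_j(A)$ (Riesz--Schauder) and the nilpotency of the shifted restriction with index equal to the largest Jordan block. What your version buys is a more structural, self-evidently terminating computation -- the principal part is manufactured rather than deduced from a recurrence -- and it makes the claim $C_{-m_j} \neq 0$ immediate from $N^{m_j - 1} \neq 0$, a point the paper's recurrence argument leaves slightly implicit.
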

We will call the number, $\nbdim \nbran E_j (A)$, the algebraic multiplicity of the eigenvalue $\la _j$. (It is the algebraic multiplicity of $\la _j$ for the finite--dimensional linear operator $A_j = A | _{\nbran E_j (A)}$.) This number is an upper bound for the geometric multiplicity of $\la _j$, \emph{i.e.} the number of eigenvectors of $A$ corresponding to this eigenvalue. Equivalently, the geometric multiplicity is equal to the number of Jordan blocks in the Jordan normal form of $A_j$. Hence, $m_j \leq \mr{mult}_{\mr{alg}} (\la _j ) - \mr{mult}_{\mr{geo}} (\la _j ) +1$. 
\begin{proof}
Let $\la _j \neq 0$ be an eigenvalue of $A$. Then $R_A (\la )$ is analytic in a punctured disk of positive radius about $\la _j$, so that $\la _j$ is an isolated singularity of $R_A$, and $R_A$ therefore has a Laurent series expansion about $\la _j$, 
$$R_A (\la ) = \sum _{n=-\infty} ^\infty C_n (\la - \la _j ) ^n, \quad \quad C_n \in \scr{B} (\cH ). $$ This converges absolutely and uniformly in operator-norm for all $0 < r < | \la - \la _j | < R$, for some fixed $0<r<R <+\infty$. By the Riesz--Dunford holomorphic functional calculus, choosing $r < \rho < R$, the residue can be computed as
$$ C_{-1} = \frac{1}{2\pi i} \cint _{\rho \cdot \partial \D (\la _j )} (\la I - A ) ^{-1} d\la = E_j (A), $$ where $\rho \cdot \partial \D ( \la _j )$ is the simple, positively--oriented circular contour of radius $\rho$ centred at $\la _j$. Here, the Riesz idempotent, $E_j (A)$, is an idempotent operator so that  $\nbran E_j (A)$ and $\nbran I - E_j (A)$ are invariant subspaces for $A$, and the spectrum of $A_j := A | _{\nbran E_j (A)}$, is the singleton, $\{ \la _j \}$, while the spectrum of $A_j ^\perp := A | _{\nbran I - E_j (A)}$ is $\sigma (A) \sm \{ \la _j \}$. It follows that $\nbran E_j (A)$ must be finite dimensional as $A_j$ is necessarily compact, and $0$ belongs to the spectrum of any compact operator on a separable, non-finite--dimensional Hilbert space. Observe that for $r < | \la - \la _j | < R$, 
$$ (\la _j I - A ) (\la I - A ) ^{-1} = I - ( \la  - \la _j ) ( \la I - A ) ^{-1}. $$ Hence, replacing $R_A (\la )$ in this expression by its Laurent series yields, 
$$ \sum _{n = -\infty} ^\infty (\la_j I - A) C_n (\la- \la_j ) ^n = I - \sum _{n=-\infty} ^\infty C_n (\la-\la_j ) ^{n+1}. $$ 
Equating coefficients gives, $\forall n \neq 0$, $(\la_j I - A) C_n = - C_{n-1}$, or $(A -\la _j I) C_n = C_{n-1}$. Hence, 
$$ (A - \la _j I ) ^k C _{-1} = (A -\la _j I ) ^k E_j (A) = C_{-k -1}, $$ for all $k \geq 0$. However, if $A_j = A | _{\nbran E_j (A)}$ as above, then since $\sigma (A_j )$ is the singleton, $\{ \la _j \}$, the spectrum of $(A - \la _j I ) | _{\nbran E_j (A )} = (A_j - \la _j I)$ is $\{ 0 \}$. Since $\nbdim \nbran E_j (A) \in \N$ is finite, it follows that $A_j - \la _j I$ is nilpotent with $(A_j - \la _j I ) ^{n} =0$, for any $n \geq m_j$, the size of the largest Jordan block in $A_j$. (Indeed, it is upper triangularizable, with all eigenvalues on the diagonal equal to $0$, hence it is unitarily equivalent to a strictly upper triangular, hence nilpotent matrix.) In conclusion, $C _{-n } =0$ for all $n > m_j$ and $\la _j$ is a pole of order $m_j$ for $R_A$.
\end{proof}

\begin{thm} \label{merodomain}
Let $f \sim (A,b,c)$, where $A \in \scr{C} (\cH )$ is compact. Then $f$ is a meromorphic function on $\C$ which is analytic in an open neighbourhood of $0$. If $(A,b,c)$ is minimal, and if $\emptyset \neq \sigma (A ) \sm \{ 0 \} = \{ \la _j \} _{j=1} ^N$, $N \in \N \cup \{ + \infty \}$, then the set of poles of $f$ is equal to the set of points $\{ z_j = 1/\la _j \} _{j=1} ^N$. The size of the largest Jordan block in the Jordan normal form of  $A_j = A | _{\nbran E_{\la j} (A)}$ is an upper bound for the order of the pole $z_j$ for $f$.  
\end{thm}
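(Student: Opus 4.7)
The plan is to reduce everything to Theorem \ref{compresolvent} via the change of variable $w = 1/z$. For any $z \neq 0$, the identity $I - zA = z(w I - A)$ with $w = z^{-1}$ gives $(I - zA)^{-1} = z^{-1} R_A(z^{-1})$, so
\[ f(z) = b^* (I - zA)^{-1} c = z^{-1} b^* R_A(z^{-1}) c, \qquad z \neq 0. \]
First I would observe that $(A,b,c)$ being a descriptor realization already makes $f$ uniformly analytic in a neighbourhood of $0$ (since $I - zA$ is invertible for $|z| < \|A\|^{-1}$), so it suffices to analyze the behaviour at $z \neq 0$. Since $A$ is compact, Theorem \ref{compresolvent} shows that $R_A$ is meromorphic on $\C \sm \{0\}$, with poles located exactly at the nonzero eigenvalues $\la_j$. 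The above identity then transfers meromorphy to $f$ on $\C \sm \{0\}$, and combined with analyticity at $0$ this proves $f$ is meromorphic on all of $\C$.

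Next I would identify the poles and bound their order. Fix $\la_j \in \sigma(A) \sm \{0\}$ and set $z_j = 1/\la_j$. By Theorem \ref{compresolvent}, $R_A$ has Laurent expansion at $\la_j$ of the form
\[ R_A(w) = \sum_{n = -m_j}^{\infty} C_n (w - \la_j)^n, \qquad C_{-k-1} = (A - \la_j)^k E_j(A), \quad 0 \leq k \leq m_j - 1, \]
where $m_j$ is the size of the largest Jordan block of $A_j = A|_{\nbran E_j(A)}$. Substituting $w = z^{-1}$ and using $w - \la_j = -(z z_j)^{-1}(z - z_j)$ converts this into a Laurent expansion of $f$ at $z_j$ whose principal part has order at most $m_j$, with coefficients that are scalar multiples of $b^* (A - \la_j)^k E_j(A) c$ for $0 \leq k \leq m_j - 1$. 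This immediately yields the claimed upper bound on the pole order, and shows that if $\la \notin \sigma(A)$, then $f$ is analytic at $1/\la$.

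Finally I would use minimality to rule out removable singularities at the $z_j$. Suppose, toward a contradiction, that every coefficient $b^* (A - \la_j)^k E_j(A) c$ vanishes for $0 \leq k \leq m_j - 1$. Since $(A - \la_j)^k E_j(A) = 0$ for $k \geq m_j$, the binomial expansion $A^n = ((A - \la_j) + \la_j I)^n$ gives $b^* A^n E_j(A) c = 0$ for every $n \geq 0$. By observability, $b$ is $A^*$-cyclic, so $E_j(A) c \perp \cH$, hence $E_j(A) c = 0$. Since $E_j(A)$ commutes with $A$, the subspace $\nbker E_j(A)$ is $A$-invariant and contains $c$, so $\scr{C}_{A,c} \subseteq \nbker E_j(A)$; controllability then forces $E_j(A) = 0$, contradicting $\la_j \in \sigma(A)$ (as $\la_j$ is an isolated point of the spectrum of the compact operator $A$). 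Hence some negative Laurent coefficient of $f$ at $z_j$ is nonzero, so $z_j$ is a genuine pole of $f$. The main (mild) obstacle is keeping track of the change of variable and confirming that the pole-order count is unaffected by it; once one commits to the substitution $w = 1/z$, the rest is bookkeeping and the minimality argument above.
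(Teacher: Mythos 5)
Your proof is correct, and for the key step it takes a genuinely more direct route than the paper does. The first half (transferring meromorphy of the resolvent via $w = 1/z$ and reading off the pole-order bound from Theorem~\ref{compresolvent}) matches the paper. The interesting divergence is in showing that each $z_j = 1/\la_j$ is actually a pole and not a removable singularity. The paper's argument is considerably more elaborate: it translates the center of the realization to a point $w$ near $z_0$, constructs a new realization $\bigl((I-wA)^{-1}A, b, (I-wA)^{-1}c\bigr)$, re-verifies its minimality via a resolvent/analytic-continuation argument, rescales to land in $H^\infty(\mathbb D)$, and only then uses a Riesz--Dunford/Cauchy integral for the family $g_{p,q}$ to conclude $E_{\tau_0}(rA')=0$. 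You bypass all of that by reading off the Laurent coefficients $b^*(A-\la_j)^k E_j(A)c$ of $b^* R_A(w)c$ directly at $\la_j$, observing via the binomial theorem (and $(A-\la_j)^k E_j(A)=0$ for $k\geq m_j$) that their vanishing forces $b^* A^n E_j(A)c = 0$ for all $n$, then invoking observability to get $E_j(A)c = 0$ and controllability (together with $A$-invariance of $\ker E_j(A)$) to get $E_j(A) = 0$ --- the same contradiction, reached without any translation or $H^\infty$ machinery. This is closer in spirit to the Cauchy-integral computation the paper itself uses in the subsequent proof of Corollary~\ref{anequiv}, and it makes clear that the translation-of-center argument in the paper is not actually needed for this theorem.
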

\begin{proof}
First, since $f$ has a realization, it is analytic in an open neighbourhood of $0$. Moreover, if $z_j \neq 0$ is a pole of $f$, then $\la _j = 1 /z_j$ must be a pole of the resolvent function, $R_A (\la ) = (\la I - A ) ^{-1}$, as otherwise $R_A$ would be analytic in an open neighbourhood of $\la _j$ and then $f(z) = b^* (I -zA ) ^{-1} c$ would be analytic in an open neighbourhood of $z_j$. Moreover, by the previous Theorem \ref{compresolvent}, if $m_j$ is the size of the largest Jordan block in $A_j$, then in the Laurent series expansion of $R_A (\la )$ about $\la_j$, all Laurent series coefficients, $C_{-n} \in \scr{B} (\cH )$, vanish for $n > m_j$. Hence, if $f$ has a pole at $z_j = 1/ \la _j$, its order is at most $m_j$.

Conversely, given the minimal, compact realization, $(A,b,c)$, of $f$, fix any eigenvalue $\la _0 \in \sigma (A) \sm \{ 0 \}$. We claim that $z_0 = 1/\la _0$ is a pole of $f$. (We can assume that $A$ is not quasinilpotent as then $f$ is entire and the claim holds trivially.) Assume, to the contrary that $f$ is analytic in an open neighbourhood of $z_0$, even though $(I-zA)^{-1}$ has a pole at $z_0$. If $\sigma (A) \sm \{0 \}$ is equal to the singleton, $\la _0$, then we can consider the `rescaled' function $f_r (z) := f(rz)$, where $r>0$ is chosen so that $f_r \in \scr{O} (\D )$ is uniformly bounded in the open complex unit disk, $\D$, and so that $r^{-1} z_0 \in \D$. That is, $h := f_r \in H^\infty$ and $h$ has the minimal compact realization, $(rA, b,c)$, where $(I-zrA) ^{-1}$ has a pole at $r^{-1} z_0 \in \D$. 

If $\sigma (A) \sm \{ 0 \}$ is not a singleton, let $\delta >0$ be the finite distance from $z_0$ to the next closest pole of $(I-zA) ^{-1}$. Choose a pole, $z_1$, of $(I-zA)^{-1}$ so that $|z_0 -z_1| = \delta$, and set $w := \frac{3}{4} z_0 + \frac{1}{4} z_1$. Then, the distance from $z_0$ to $w$ is $\delta/4$ and $g(z) := f (z+w)$ is such that $g$ is holomorphic and uniformly bounded in the open unit disk of radius $\delta /2$. Applying a similar, albeit (much) simplified analysis as in Subsection \ref{ss-matrixreal}, one can show that $g$ has the compact realization, $( (I-wA) ^{-1} A, b, (I-wA) ^{-1} c ) =: (A',b',c')$. We claim that this realization is also minimal. Indeed, to check that it is controllable, consider 
$$\scr{C} _{A',c'} = \bigvee _{j=0} ^\infty (I-wA) ^{-(j+1)} A^j c.$$ 
It is readily checked, by induction, that
$$ \scr{C} _{A',c'} = \bigvee _{j=0} ^\infty ( \om I - A ) ^{-(j+1)} c; \quad \quad \om := w ^{-1}.$$ Then, for all $\la \in \C$ so that $r _\om := | \la - \om | < \| (\om I - A ) ^{-1} \| ^{-1}$, since 
$$ (\la I - A ) ^{-1} = \sum _{j=0} ^\infty (\om - \la ) ^j (\om I  - A ) ^{-(j+1)}, $$ is a convergent geometric series, we obtain that 
$$ \scr{C} _{A',c'} \supseteq \bigvee _{|\la - \om | < r _\om} (\la I - A ) ^{-1} c. $$ 
By operator--norm continuity and analyticity of the resolvent in the resolvent set, $\sigma (A) ^c = \C \sm \sigma (A)$, of $A$ and by considering difference quotients, $\eps ^{-1} ( R_A (\la +\eps) - R_A (\la ) )$, we obtain that 
$$ \scr{C} _{A',c'} \supseteq \bigvee _{\substack{j =0 \\ \la \in \sigma (A) ^c; \ |\la - \om | \leq r_\om}} ^\infty (\la I_\cH - A ) ^{-(j+1)}c. $$ Replacing $\om$ by $\om ' \in \sigma (A) ^c$ so that $| \om - \om ' | = r_\om$, repeating this argument, and using that $\sigma (A) ^c$ is open and connected (since $A$ is compact), shows that 
$$ \scr{C} _{A', c'} \supseteq \bigvee _{\la \in \sigma (A) ^c} (\la I - A) ^{-1} c. $$ The Riesz--Dunford holomorphic functional calculus now implies that 
$$ \scr{C} _{A',c'} \supseteq \bigvee _{j=0} ^\infty A^j c = \scr{C} _{A,c} = \cH, $$ so that $(A',b',c')$ is controllable. A similar argument shows $(A',b',c')$ is observable and hence minimal. 

Since $g \sim (A',b',c')$ is holomorphic in a disk of positive radius centred at $0$, we can again consider $h(z) := g(rz)$ so that $(rA',b',c')$ is a minimal and compact realization of $h \in H^\infty (\D )$, and $w_0 := r^{-1} (z_0 -w) \in \D$ is a pole of $(I - zrA') ^{-1}$. Concretely, since we chose $|z_0 -w| = \delta /4$, one can choose $r = \delta /2$ so that $|w_0| = 1/2$ and $w_0 \in \D$. Since $h \in H^\infty$, so is $h_{p,q} =p(S^*) q(S^*) h$, for any $p,q \in \C [z]$, and it follows that if we define $g_{p,q} (\la ) := \la ^{-1} h_{p,q} (\la ^{-1} )$, that 
$$ g_{p,q} (\la) = b^{'*} p(rA') (\la I _\cH - rA' ) ^{-1} q(rA') c', $$ is holomorphic in an open disk of some positive radius $\rho >0$ centred at $\tau _0 = w_0 ^{-1}$. Hence, by Cauchy's theorem,
\ba 0 & = & \frac{1}{2\pi i} \cint _{\rho \cdot \partial \D (\tau _0)} g_{p,q} (\la ) d\la \\
& = & b^{'*} p(rA')\, \frac{1}{2\pi i} \cint  _{\rho \cdot \partial \D (\tau _0)} (\la I _\cH - rA' ) ^{-1} d\la \, q(rA') c' \\
& = & b^{'*} p(rA') \, E_{\tau _0} (rA') q(rA') c'. \ea Minimality of the realization $(rA',b',c')$ now implies that $E_{\tau _0} (rA') =0$ which contradicts that $w_0 ^{-1} = \tau _0$ belongs to the spectrum of $rA'$. We conclude that $z_0 = \la _0 ^{-1}$ is a pole of $f$.
\end{proof}

\begin{cor} \label{anequiv}
Any two compact and minimal realizations for the same meromorphic function have the same domains in $\C$. Equivalently, if $A \in \scr{C} (\cH )$ and $A' \in \scr{C} (\cH ' )$ are two analytically equivalent compact operators, then they have the same spectra. 

If $0 \neq \la \in \sigma (A) = \sigma (A' )$ and $A_\la =  A | _{\nbran E_\la (A)}$, where $E_\la (A)$ is the Riesz idempotent so that $\sigma (A_\la ) = \{ \la \}$, then the largest Jordan blocks in the Jordan normal forms of $A_\la$ and of $A'_\la = A' |_{\nbran E_{\la} (A')}$ are of the same size, and $\nbdim \nbran E_\la (A) =  \nbdim \nbran E_{\la} (A')$. 
\end{cor}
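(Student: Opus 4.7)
The plan is to reduce both claims to the rigid correspondence between the nonzero spectral data of a minimal compact realization and the pole data of $f$, as established in Theorems \ref{merodomain} and \ref{compresolvent}. By Theorem \ref{merodomain}, any minimal compact realization $(A,b,c)$ of $f$ has $\sigma(A)\setminus\{0\}$ equal to the set of reciprocals of the poles of $f$, so any two such realizations of the same $f$ automatically agree on their nonzero spectra, and their invertibility domains $\scr{D}(A) = \C\setminus\{1/\la : \la \in \sigma(A)\setminus\{0\}\}$ coincide in $\C$. To settle whether $0$ lies in both spectra, I would invoke Theorem \ref{minunique}: the pseudo-affinity intertwining the two minimal realizations is closed, injective, densely defined and of dense range, so it restricts to a bijection between dense subspaces of $\cH$ and $\cH'$, forcing $\cH$ to be finite-dimensional exactly when $\cH'$ is, with equal dimension. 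In the infinite-dimensional case $0$ lies in both spectra because compact operators on infinite-dimensional separable Hilbert spaces cannot be invertible; in the finite-dimensional case the pseudo-affinity becomes a genuine similarity, so the spectra coincide directly.

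The structural input is that minimality forces each $A_\la := A|_{\nbran E_\la(A)}$ to be a single Jordan block. Since $E_\la := E_\la(A)$ commutes with $A$, one has $E_\la p(A) c = p(A_\la) E_\la c$ for every polynomial $p$; then continuity of $E_\la$, finite-dimensionality of $\cH_\la := \nbran E_\la$, and $A$-cyclicity of $c$ show that $E_\la c$ is cyclic for $A_\la$ on $\cH_\la$. A cyclic operator on a finite-dimensional space with spectrum $\{\la\}$ necessarily has minimal polynomial $(x - \la)^{\nbdim \cH_\la}$, and is therefore a single Jordan block of size $m_\la := \nbdim \nbran E_\la(A)$. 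The same reasoning applied to $(A',b',c')$ yields a single Jordan block for $A'_\la$ of size $m'_\la := \nbdim \nbran E_\la(A')$.

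It remains to prove $m_\la = m'_\la$, which will give the equalities of largest Jordan block sizes and of algebraic multiplicities simultaneously. I would do this by showing each equals the order of the pole of $f$ at $z_\la := 1/\la$, an intrinsic invariant of $f$. From $f(z) = z^{-1} b^* R_A(1/z) c$ and the Laurent expansion of $R_A$ about $\la$ given in Theorem \ref{compresolvent}, the change of variable $\mu - \la = -(z - z_\la)/(z z_\la)$ identifies the leading Laurent coefficient of $f$ at $z_\la$, of order $-m_\la$, as a nonzero scalar multiple of $b^* (A - \la)^{m_\la - 1} E_\la c$, with no lower-order Laurent coefficient nonzero. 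Setting $v := (A - \la)^{m_\la - 1} E_\la c$, the single-Jordan-block structure combined with the cyclicity of $E_\la c$ forces $v$ to be a nonzero eigenvector of $A$ with eigenvalue $\la$. Then $b^* A^n v = \la^n (b^* v)$ for every $n \geq 0$, so if $b^* v$ vanished, $v$ would be orthogonal to $\bigvee_n (A^*)^n b = \cH$ by observability, contradicting $v \neq 0$. Hence $b^* v \neq 0$, the pole of $f$ at $z_\la$ has order exactly $m_\la$, and the same argument applied to $(A', b', c')$ extracts the same pole order. The principal obstacle is this final nonvanishing step, where one must marry the cyclicity-forced single-Jordan-block structure with observability to preclude accidental cancellation in the leading Laurent coefficient; once that is in hand, the realization-independence of the pole order forces the claimed rigidity.
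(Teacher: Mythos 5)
Your proof is correct, and it takes a genuinely different route from the paper's. You first observe that minimality forces each restriction $A_\la = A|_{\nbran E_\la(A)}$ to be a \emph{single} Jordan block: since $E_\la$ commutes with $A$ and $\nbran E_\la$ is finite-dimensional, continuity of $E_\la$ pushes the $A$-cyclicity of $c$ down to $A_\la$-cyclicity of $E_\la c$, and a cyclic finite-dimensional operator with spectrum $\{\la\}$ is non-derogatory. With both $A_\la$ and $A'_\la$ identified as single Jordan blocks, the two conclusions (equal largest-block size and equal algebraic multiplicity) collapse to a single number, which you pin down as the exact order of the pole of $f$ at $1/\la$; the key nonvanishing of the leading Laurent coefficient $b^*(A-\la)^{m_\la-1}E_\la c$ follows because $(A-\la)^{m_\la-1}E_\la c$ is a nonzero $\la$-eigenvector and observability forbids $b$ from annihilating it. The paper instead works with the family of modified transfer functions $f_{p,q}(z) = b^*p(A)(I-zA)^{-1}q(A)c$, invokes the identity theorem to transfer these between the two realizations, applies Cauchy's theorem around $\la_0$ with the polynomial weight $(\la-\la_0)^{m_0}$ to bound pole orders symmetrically, and then obtains the equality of the idempotent ranks via a basis-transport argument using minimality. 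Your structural argument is cleaner and proves strictly more (the single-Jordan-block statement), while the paper's argument avoids ever needing to identify the exact pole order and is in a form that more readily parallels the multivariable versions (Theorems \ref{danequiv} and \ref{mcmero}) where cyclicity is no longer available. One small thing worth flagging: the statement that the pole of $f$ at $1/\la$ has order \emph{exactly} $m_\la$ (rather than at most $m_\la$, as worded in Theorem \ref{merodomain}) is a genuine sharpening, and your inclusion of the argument for it is what makes the realization-independence conclusion go through.
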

\begin{proof}
Only the final statement remains to be proven. Fix $0 \neq \la _0 \in \sigma (A) = \sigma (A')$ and let $m_0, n_0 \in \N$ be the order of the poles of $R_A (\la)$ and $R_{A'} (\la)$ at $\la _0$. 
If $f \sim (A,b,c) \sim (A' ,b' ,c')$, then $m_0$ is an upper bound for the order of the pole of $f$ at $z_0 = 1/\la _0$ so that  $(z-z_0) ^{m_0} f(z)$ is analytic in an open neighbourhood of $z_0$. 

For any $p,q \in \C [z]$, if we define, as before, 
$$ f_{p,q} (z) := b^* p(A) (I-zA) ^{-1} q(A)c, $$ then 
$$ f_{p,q} (z) = b^{'*} p(A') (I-zA') ^{-1} q(A') c', $$ by the identity theorem, since these two expressions agree in an open neighbourhood of $0$. Also as before, set $g_{p,q} (\la) := \la ^{-1} f_{p,q} (\la ^{-1} )$. Fixing an $r>0$ so that $(\la I - A ) ^{-1}$ is analytic in a punctured disk, $r' \cdot \D ( \la _0 )$, $r<r'$, Cauchy's theorem implies that 
\ba 0 & = & \frac{1}{2\pi i} \cint _{r \cdot \partial \D (\la _0 )} (\la - \la _0 ) ^{m_0} g_{p,q} (\la ) d\la \\
& = & b^* q(A) \underbrace{\frac{1}{2\pi i} \cint _{r \cdot \partial \D (\la _0 )} (\la - \la _0) ^{m_0} (\la I - A ) ^{-1}}_{= (A - \la _0 I) ^{m_0} E_{\la _j} (A)} p (A) c \\
& = & b^{'*} q(A') \underbrace{\frac{1}{2\pi i} \cint _{r \cdot \partial \D (\la _0 )} (\la -\la _0) ^{m_0} (\la I - A' ) ^{-1}}_{= (A' - \la _0 I ) ^{m_0} E_{\la _0} (A')} p (A') c'. \ea
By minimality of the realization $(A',b',c')$, we conclude that $(A' -\la _0 I) ^{m_0} E_{\la _0} (A') \equiv 0$, so that $n_0 \leq m_0$. By symmetry, $m_0 \leq n_0$ as well. 

A similar calculation shows that 
\be b^* p(A) E_{\la _0} (A) q(A) c  = b^{'*} p(A') E_{\la _0} (A') q(A') c', \label{eqmin} \ee for all $p,q \in \C [z]$. Hence, if $M_0 = \nbdim \nbran E_{\la _0} (A) \in \N$, $\nbran E_{\la _0} (A)$ has a finite basis of the form $\{ E_{\la _0} (A) q_j (A) c \} _{j=1} ^{M_0}$, for some $q_j \in \C [z]$, $1 \leq j \leq M_0$. Given any $x' \in \nbran E_{\la _0} (A')$, there exists $p \in \C [z]$ so that $x' = E _{\la _0} (A' ) p(A') c'$. Hence, setting $x := E_{\la _0} (A) p(A) c$, $x = \sum _{j=1} ^{M_0} \alpha _j \, E_{\la _0} (A) q_j (A) c$ for some $\alpha _j \in \C$. By minimality of the realizations and Equation (\ref{eqmin}) above, 
$$ x' = \sum \alpha _j E_{\la _0} (A') q_j (A' ) c', $$ so that $\{ E_{\la _0} (A') q_j (A') c' \} _{j=1} ^{M_0}$ spans $\nbran E_{\la _0} (A')$, and $N_0 := \nbdim \nbran E_{\la _0} (A') \leq M_0$. Again, by symmetry, $M_0 = N_0$.
\end{proof}

\subsection{Uniformly meromorphic NC functions with analytic germs}

Since a function of one complex variable, analytic in an open neighbourhood of $0$, is meromorphic if and only if it has a compact realization, this motivates the definition of ``global" uniformly meromorphic NC functions on $\ncu$, that are analytic at the origin, $0 \in \ncu$, as all those NC functions that admit jointly compact realizations, $f \sim (A,b,c)$, $A \in \scr{C} (\cH ) ^d$. Moreover, since any uniformly entire NC function has a jointly compact and quasinilpotent realization, it follows from the FM algorithm of Subsection \ref{FMalg} that any NC rational expression in uniformly entire NC functions that is uniformly analytic in a uniformly open neighbourhood of $0$ will have a jointly compact realization (and this will be a finite--rank perturbation of a jointly compact and quasinilpotent realization). This provides further motivation for this definition/ interpretation.

Again, by the final paragraph of Section \ref{sec-opreal}, a familiar NC function, $f$, has a jointly compact (and minimal) FM realization if and only if it has a jointly compact (and minimal) descriptor realization, see Equations (\ref{FMfromd}) and (\ref{dfromFM}).

Observe, by the Fornasini--Marchesini algorithm that the set, $\scr{O} _0 ^\scr{C}$, where $\scr{C} = \scr{C} (\cH ) ^d$, of all uniformly meromorphic functions with uniformly analytic germs at $0$, is a ring so that if $f \in \scr{O} _0 ^\scr{C}$, and $f(0) \neq 0$, then $f^{-1} \in \scr{O} _0 ^\scr{C}$. We will eventually prove that $\scr{O} _0 ^\scr{C}$ is a semifir, and hence has a universal skew field of fractions, $\scr{M} _0 ^\scr{C}$. Since, by \cite[Subsection 5.2, Corollary 5.4]{KVV-local}, elements of $\scr{M} _0 ^\scr{C} \subseteq \scr{M} _0 ^u$ can be identified with NC rational expressions in $\scr{O} _0 ^\scr{C}$, we will call elements of $\scr{M} _0 ^\scr{C}$ global uniformly meromorphic NC functions.  In Subsection \ref{ss-globalunimero} and Theorem \ref{globalmerodomain}, we will prove that any $f \in \scr{M} _0 ^\scr{C}$, can be identified, uniquely, with an NC function that is uniformly analytic on a uniformly open domain that is analytic--Zariski dense on every level, $\cdn$, of the NC universe for sufficiently large $n \in \N$. This justifies calling $\scr{M} _0 ^\scr{C}$ the skew field of  ``global" uniformly meromorphic NC functions. 

\begin{thm} \label{merocompreal}
Let $f \sim (A,b,c)$ be a realization with $A \in \scr{C} (\cH ) ^d$. For any fixed $X \in \cdn$, $n \in \N$, the matrix--valued function $g_X (z) := f(zX) \in \C ^{n\times n}$ is meromorphic. The operator $X \otimes A$ is compact so that $\sigma (X \otimes A ) \sm \{ 0 \}$ is either empty or equal to a discrete set of eigenvalues of finite algebraic multiplicities, $\{ \la _j \} _{j =1} ^N$, $N \in \N \cup \{ + \infty \}$, and the poles of $g_X$ are contained in set of points $\{ z _j \}_{j=1} ^N$ where $z_j = 1/ \la _j$. The order of the pole of $R_{X \otimes A} (\la)$ at $\la _j$ is an upper bound for the order of the pole of $g_X$ at $z_j$. 
\end{thm}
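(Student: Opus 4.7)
The plan is to reduce the statement to an application of Theorem \ref{compresolvent} applied to the single operator $B := X \otimes A = \sum_{j=1}^d X_j \otimes A_j$ acting on $\C ^n \otimes \cH$. First I would verify that $B$ is compact: each $X_j \in \C ^{n \times n}$ has finite rank, each $A_j \in \scr{C} (\cH )$ is compact, so each summand $X_j \otimes A_j$ is compact, and compactness is preserved under finite sums. Consequently Theorem \ref{compresolvent} applies to $B$: its resolvent $R_B (\la ) = (\la I - B) ^{-1}$ is meromorphic on $\C \sm \{ 0 \}$, and if $\sigma (B) \sm \{ 0 \}$ is non-empty, it is a discrete set $\{ \la _j \} _{j=1} ^N$ of eigenvalues of finite algebraic multiplicity, with pole of $R_B$ at $\la _j$ of order $m_j$ equal to the size of the largest Jordan block of $B _j := B | _{\nbran E_j (B)}$.

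Next I would rewrite $g_X$ in terms of $R_B$. Directly from the realization formula,
$$ g_X (z) = I_n \otimes b^* \, (I_n \otimes I_\cH - z B ) ^{-1} \, I_n \otimes c, $$
and for $z \neq 0$ the algebraic identity $I - zB = z ( (1/z) I - B )$ yields $(I - zB) ^{-1} = \tfrac{1}{z} R_B ( 1/z )$. Thus, away from $0$, $g_X (z) = \tfrac{1}{z} \, I_n \otimes b^* R_B ( 1/z ) I_n \otimes c$. Since the inversion $z \mapsto 1/z$ is a biholomorphism of $\C \sm \{ 0 \}$, this shows $g_X$ is meromorphic on $\C \sm \{ 0 \}$ with pole set contained in $\{ z_j := 1/ \la _j \} _{j=1} ^N$. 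At $z=0$ the function $(I-zB)^{-1}$ is analytic by the convergent geometric series (for $|z|<\|B\|^{-1}$), so $g_X$ is analytic at $0$, and therefore meromorphic on all of $\C$.

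It remains to bound the order of the pole of $g_X$ at $z_j$ by the order of the pole $m_j$ of $R_B$ at $\la _j$. For this I would substitute the Laurent expansion $R_B (\la ) = \sum _{k = -m_j} ^\infty C_k (\la - \la _j) ^k$ from Theorem \ref{compresolvent} into $(I-zB)^{-1} = \tfrac{1}{z} R_B (1/z)$, use the identity $1/z - 1/z_j = -(z - z_j)/(z z_j)$, and observe that in a punctured neighbourhood of $z_j$, each term $\tfrac{1}{z} C_k (1/z - 1/z_j) ^k$ contributes a factor of $(z - z_j) ^k$ times something analytic and non-vanishing at $z_j$. Hence $(z - z_j) ^{m_j} (I - zB) ^{-1}$ is operator-norm analytic at $z_j$, so the pole of $(I - zB) ^{-1}$ at $z_j$ has order at most $m_j$, and sandwiching between the fixed bounded maps $I_n \otimes b^*$ and $I_n \otimes c$ can only reduce (not increase) the pole order. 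This gives the claimed bound.

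The only mildly delicate step is the change-of-variables calculation that tracks Laurent orders through $\la = 1/z$; the rest is either a direct application of Theorem \ref{compresolvent} or elementary manipulation of the realization formula, so I do not anticipate a substantive obstacle.
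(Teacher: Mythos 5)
Your proposal is correct and follows the same route the paper indicates: the paper's proof consists of the single line ``This follows from Theorem \ref{compresolvent} as in the proof of Theorem \ref{merodomain},'' which is exactly the argument you spell out---compactness of $X\otimes A$, apply Theorem \ref{compresolvent} to $B = X\otimes A$, rewrite $L_A(zX)^{-1} = \tfrac{1}{z}R_B(1/z)$, and track pole orders through the change of variables $\la = 1/z$. The only added value in your write-up is that you make explicit the elementary bookkeeping (compactness of each summand $X_j\otimes A_j$, the factor $(z-z_j)^k/(zz_j)^k$) that the paper delegates to the reader; there is no gap.
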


\begin{proof}
This follows from Theorem \ref{compresolvent} as in the proof of Theorem \ref{merodomain}.  
\end{proof}

\begin{thm} \label{danequiv}
The invertibility domains of minimal, jointly compact realizations of the same uniformly meromorphic NC function, $f$, are unique and equal. That is, the linear pencils of analytically equivalent $A \in \scr{C} (\cH ) ^d$ and $A' \in \scr{C} (\cH ' ) ^d$ have equal invertibility domains. Moreover, given any $X \in \ncu$, the operator--valued meromorphic functions on $\C \sm \{ 0 \}$, $R_{X\otimes A} (\la)$ and $R_{X\otimes A'} (\la)$, have the same poles with the same orders.
\end{thm}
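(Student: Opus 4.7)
The plan is to reduce the problem, one direction $X \in \cdn$ at a time, to a one-variable meromorphic argument that upgrades the proof of Corollary \ref{anequiv}. Fix $X \in \cdn$ and set $T := X \otimes A$ and $T' := X \otimes A'$. Since $A, A'$ are jointly compact, both $T$ and $T'$ are compact, and Theorem \ref{compresolvent} identifies their resolvents $R_T(\nu), R_{T'}(\nu)$ as operator--valued meromorphic functions on $\C \sm \{0\}$ with poles exactly at the nonzero eigenvalues. Under the substitution $\nu = 1/\la$, $L_A(\la X)^{-1} = \nu R_T(\nu)$ is operator--meromorphic on all of $\C$, with poles at $\la = 1/\mu$ for $\mu \in \sigma(T) \sm \{0\}$ of the same order as the poles of $R_T$ at $\mu$; the most singular Laurent coefficient at $\la_0 = 1/\mu_0$ is a nonzero scalar multiple of $(T - \mu_0 I)^{\tilde m_0 - 1} E_{\mu_0}(T)$, where $\tilde m_0$ denotes that pole order. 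Equality of invertibility domains, and of the poles and orders of $R_T$ and $R_{T'}$, then amounts to showing $\sigma(T) \sm \{0\} = \sigma(T') \sm \{0\}$ with matching pole orders.

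My tool will be the family of scalar meromorphic test functions
\[
G_{v,w,p,q}(\la) := \langle v \otimes q(A)^* b,\; L_A(\la X)^{-1}\, w \otimes p(A) c \rangle_{\C^n \otimes \cH}, \qquad v, w \in \C^n,\; p, q \in \fp,
\]
and the analogous $G'_{v,w,p,q}$ built from $(A',b',c')$. Expanding $L_A(\la X)^{-1}$ as a Neumann series at $\la = 0$ shows that each Taylor coefficient of $G_{v,w,p,q}$ is a polynomial in $X, v, w$ and the scalars $b^* A^\alpha c$; since $b^* A^\alpha c = b'^* A'^\alpha c'$ for every $\alpha \in \F^d$, the Taylor series of $G_{v,w,p,q}$ and $G'_{v,w,p,q}$ at $0$ coincide. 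Both functions are meromorphic on the connected set $\C$, so the identity theorem gives $G_{v,w,p,q} \equiv G'_{v,w,p,q}$ on $\C$, and hence the operator Laurent coefficients $C_n \in \scr{B}(\C^n \otimes \cH)$ of $L_A(\la X)^{-1}$ and $C'_n \in \scr{B}(\C^n \otimes \cH')$ of $L_{A'}(\la X)^{-1}$ at any $\la_0$ satisfy
\[
\langle v \otimes q(A)^* b,\; C_n\, w \otimes p(A) c \rangle = \langle v \otimes q(A')^* b',\; C'_n\, w \otimes p(A') c' \rangle
\]
for every $n \in \Z$, $v, w \in \C^n$, $p, q \in \fp$.

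Minimality will close the argument: since $b$ is $A^*$-cyclic and $c$ is $A$-cyclic, the sets $\{q(A)^* b : q \in \fp\}$ and $\{p(A) c : p \in \fp\}$ are dense in $\cH$, so $\{v \otimes q(A)^* b\}$ and $\{w \otimes p(A) c\}$ are dense in $\C^n \otimes \cH$. Fix $\mu_0 \in \sigma(T) \sm \{0\}$ with pole order $\tilde m_0$ for $R_T$, and suppose, toward contradiction, that $R_{T'}$ has pole order $\tilde n_0 < \tilde m_0$ at $\mu_0$ (interpreted as $\tilde n_0 = 0$ if $\mu_0 \notin \sigma(T')$). Then $C'_n = 0$ for $n < -\tilde n_0$, so the Laurent identity forces $\langle v \otimes q(A)^* b, C_n\, w \otimes p(A) c\rangle = 0$ for all $v, w, p, q$ and $n < -\tilde n_0$; by density $C_n = 0$ for those $n$, contradicting $C_{-\tilde m_0} \neq 0$. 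Symmetry yields $\tilde m_0 = \tilde n_0$, and hence $\scr{D}(A) = \scr{D}(A')$ with matching pole data of $R_T$ and $R_{T'}$. The main delicacy will be bookkeeping across the two Hilbert spaces: we cannot compare $C_n$ and $C'_n$ as operators directly, so it is essential that the minimality of \emph{both} realizations supplies a density that transfers the vanishing of $C'_n$ to that of $C_n$.
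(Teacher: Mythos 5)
Your proof is correct and follows the same core strategy as the paper's: compare the Laurent coefficients of the operator resolvent at an eigenvalue $\mu_0 \neq 0$ through the dense families of vectors $q(A)^*b$ and $p(A)c$ supplied by minimality, and deduce matching pole orders by symmetry. The one noteworthy difference is organizational: you reduce to one complex variable at the outset by working with the scalar test functions $G_{v,w,p,q}(\la)$, which are meromorphic on all of $\C$ because $X\otimes A$ is compact, and then invoke the one-variable identity theorem at $\la=0$; the paper instead first proves $f_{p,q}=f'_{p,q}$ on $\scr{D}(A)\cap\scr{D}(A')$ via the several-variable identity theorem plus a separate level-wise path-connectedness argument, and only then restricts to the line $\la\mapsto \la^{-1}X$. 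You also avoid invoking Theorem~\ref{minunique} directly by observing that the Taylor coefficients of $G_{v,w,p,q}$ at $\la=0$ are universal linear combinations of the numbers $b^*A^\gamma c = \hat f_\gamma = b'^*A'^\gamma c'$, which is a mild but genuine streamlining; the pole-order comparison itself, via vanishing of the deeper Laurent coefficients and density, is equivalent to the paper's Riesz--Dunford contour-integral step.
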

\begin{proof}
Let $(A,b,c)$ and $(A',b',c')$ be any two minimal and jointly compact realizations for $f$. For any $p,q \in \fp$, consider 
$$ f_{p,q} (Z) = I_n \otimes b^*q(A)  L_A (Z ) ^{-1} I_n \otimes p(A) c = I_n \otimes b^{'*} q(A') L_{A'} (Z) ^{-1} I_n \otimes p(A') c', $$ for all $Z \in \scr{D} (A) \cap \scr{D} (A')$. Equality in the above formula follows from the uniqueness of minimal realizations, Theorem \ref{minunique}. Indeed, by multiplying $Z$ with sufficiently small $\la \in \C$ so that $L_A (\la Z) ^{-1}$ and $L_{A'} (\la Z) ^{-1}$ can be expanded as convergent geometric series, equality holds at $\la Z$ by Theorem \ref{minunique}. The identity theorem in several complex variables, and level-wise connectedness of $\scr{D} (A)$ and $\scr{D} (A')$ (since $A$ and $A'$ are compact $d-$tuples), then imply that equality holds for $\la =1$. Indeed, the invertibility domain, $\scr{D} (A)$, of any linear pencil with $A \in \scr{C} = \scr{C} (\cH ) ^d$ is level-wise path-connected, since for any $X \in \scr{D} _n (A)$ and $ 0 \neq z \in \C$, $zX \in \scr{D} _n (A)$ if and only if $z^{-1} \notin \sigma (X \otimes A)$, where $X \otimes A \in \scr{C} (\C ^n \otimes \cH )$. Hence, since $X \otimes A$ is compact and its spectrum is either $\{ 0 \}$, or $\{ 0 \} \cup \{ \la _j \} _{j=1} ^N$, $N \in \N \cup \{ + \infty \}$, where $\la _j$ is a sequence of non-zero distinct eigenvalues (which converge to $0$ if this sequence is infinite), we can find a continuous path in $\scr{D} _n (A)$ connecting $X$ to $0 = (0_n, \cdots, 0_n )$, and $\scr{D} _n (A)$ is path-connected. 

If $X \notin \nbdom f$ then by definition, $X \notin \scr{D} (A)$ and $X \notin \scr{D} (A')$. In this case, $\la =1$ is a pole of $R _{X \otimes A} (\la)$ of order $\ell$ and a pole of $R _{X \otimes A'} (\la )$  of order $m$, where $\ell, m \in \N$. Hence, consider the matrix--valued meromorphic function,
\ba g_{p,q} (\la ) & := & \la ^{-1} f _{p,q} (\la ^{-1} X ) = I_n \otimes b^* q(A) \left( \la I_n \otimes I - X \otimes A \right) ^{-1} I_n \otimes p(A) c \\ 
& = & I_n \otimes b^{'*} q(A') \left( \la I_n \otimes I - X \otimes A' \right) ^{-1} I_n \otimes p(A') c'. \ea 
We also consider the possibility that $X \in \scr{D} (A)$ so that $\ell =0$, in which case $R_{X\otimes A}$ is holomorphic in an open neighbourhood of $1$. Since both $X \otimes A$ and $X \otimes A'$ are compact, $g_{p,q}$ is a meromorphic matrix--valued function which is analytic in a punctured open neighbourhood of $\la =1$. Hence, by the Riesz--Dunford functional calculus, for sufficiently small $r>0$,
\ba 0 & = & I_n \otimes b^* q(A) \underbrace{\cint _{r\cdot \partial \D (1 )} (1- \la ) ^\ell \left( \la I_n \otimes I - X \otimes A \right) ^{-1} d\la}_{= (I_n \otimes I - X \otimes A ) ^\ell E_{1} (X \otimes A) }  I_n \otimes p(A) c \\ 
& = & I_n \otimes b^{'*} q(A') \left( I_n \otimes I - X \otimes A' \right) ^\ell E_{1} (X \otimes A')  I_n \otimes p(A') c'. \ea Since $(A',b',c')$ is a minimal realization, this implies 
$$ \left( I_n \otimes I - X \otimes A' \right) ^\ell E_{1} (X \otimes A') =0, $$ so that $m \leq \ell$, and by symmetry $\ell \leq m$. In particular, if $X \in \scr{D} (A)$ so that $\ell =0$, then $X \in \scr{D} (A')$ and vice versa. 
\end{proof}

\begin{thm} \label{mcmero}
Let $f \sim (A,B,C,D)$ be familiar with $A \in \scr{C} (\cH ) ^d$. Then, for any $Y \in \scr{D} _m (A)$, and any $X \in \C ^{(sm \times sm) \cdot d}$, $f (zX +I_s \otimes Y)$ is a meromorphic matrix--valued function.
\end{thm}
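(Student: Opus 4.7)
The plan is to combine the matrix-centre realization of $f$ about $Y$, supplied by Theorem~\ref{matrealthm}, with the spectral theory of compact operators from Theorem~\ref{compresolvent}. First I would invoke Theorem~\ref{matrealthm}, which is available since $Y \in \scr{D}_m(A) \subseteq \nbdom f$: it produces a matrix-centre realization $f \sim_Y (\mbf{A}, \mbf{B}, \mbf{C}, \mbf{D})$ with $\mbf{A}_j(G) = L_A(Y)^{-1} (G \otimes A_j)$. Substituting $W := zX + I_s \otimes Y$ into formula~(\ref{matrixrealform}), and using linearity of the maps $\mbf{A}_j$ and $\mbf{B}_j$ to pull the scalar $z$ out, yields
$$f(zX + I_s \otimes Y) = I_s \otimes \mbf{D} + z\,\mbf{C}\,(I_{sm} \otimes I_\cH - z K)^{-1}\,\mbf{B}(X),$$
where $K := \sum_{j=1}^d \mbf{A}_j(X_j)$ and $\mbf{B}(X) := \sum_{j=1}^d \mbf{B}_j(X_j)$ are both independent of $z$.

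Next I would verify that $K$ is a compact operator on $\C^{sm} \otimes \cH$. Expanding each $X_j$ as an $s \times s$ block matrix $X_j = \sum_{k,\ell} E_{k,\ell} \otimes X_j^{(k,\ell)}$ with $X_j^{(k,\ell)} \in \C^{m \times m}$, the ampliation definition gives
$$K = \sum_{j,k,\ell} E_{k,\ell} \otimes \bigl( L_A(Y)^{-1} (X_j^{(k,\ell)} \otimes A_j) \bigr).$$
Each inner factor $X_j^{(k,\ell)} \otimes A_j$ lies in $\scr{C}(\C^m \otimes \cH)$, since $A_j \in \scr{C}(\cH)$ and tensoring a compact operator with a finite-dimensional matrix preserves compactness; left-multiplication by the bounded operator $L_A(Y)^{-1}$ keeps the product in the compact ideal; finally, tensoring on the left with the finite-rank $E_{k,\ell}$ and summing the finitely many resulting terms preserves compactness, so $K \in \scr{C}(\C^{sm} \otimes \cH)$.

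With $K$ compact, Theorem~\ref{compresolvent} ensures that $\lambda \mapsto (\lambda I - K)^{-1}$ is a meromorphic $\scr{B}(\C^{sm} \otimes \cH)$-valued function on $\C \setminus \{0\}$. Reparametrising through $\lambda = 1/z$ and using $(I - zK)^{-1} = \lambda (\lambda I - K)^{-1}$ shows that $z \mapsto (I_{sm} \otimes I_\cH - zK)^{-1}$ is meromorphic on all of $\C$: it is analytic at $z = 0$ (via the geometric series) and has poles only at the reciprocals of the nonzero eigenvalues of $K$, a discrete set that can accumulate only at $\infty$. Multiplying on the left by the fixed $\mbf{C}$, on the right by the fixed element $\mbf{B}(X)$, scaling by the entire factor $z$, and adding the constant matrix $I_s \otimes \mbf{D}$ all preserve meromorphy, so $f(zX + I_s \otimes Y)$ is a meromorphic $\C^{sm \times sm}$-valued function of $z$.

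The principal obstacle is the compactness claim for $K$, which is precisely where the hypothesis $A \in \scr{C}(\cH)^d$ is used; once it is in hand, the conclusion is an immediate consequence of the matrix-centre realization formula together with Theorem~\ref{compresolvent}.
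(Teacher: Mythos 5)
Your proof is correct and follows essentially the same route as the paper's: invoke the matrix-centre realization from Theorem~\ref{matrealthm}, observe that $\mbf{A}(X)$ is compact because $A\in\scr{C}(\cH)^d$, and then read off meromorphy of $z\mapsto L_{\mbf{A}}(zX)^{-1}$ from the meromorphy of the resolvent of a compact operator. Your treatment simply fills in two things the paper leaves as references — the block-by-block verification that $K=\mbf{A}(X)\in\scr{C}(\C^{sm}\otimes\cH)$, and the explicit reparametrisation $\lambda=1/z$ reconciling Theorem~\ref{compresolvent} (meromorphy of $(\lambda I-K)^{-1}$ on $\C\setminus\{0\}$) with global meromorphy of $(I-zK)^{-1}$ — both of which are accurate.
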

\begin{proof}
By Subsection \ref{ss-matrixreal}, $f \sim _Y (\mbf{A}, \mbf{B}, \mbf{C}, \mbf{D})$ has a matrix-centre realization about the point $Y$, so that, for $z$ in a sufficiently small disk centred at $0$, $zX + I_s \otimes Y$ belongs to $\scr{D} ^Y (\mbf{A} )$, and 
$$ L_{\mbf{A}} (zX + I_s \otimes Y  - I_s \otimes Y) ^{-1} = z^{-1} \left( z^{-1} I_s \otimes I_\cH - \mbf{A} (X) \right) ^{-1}. $$ Since $A \in \scr{C} (\cH ) ^d$, Theorem \ref{matrealthm} and Equation (\ref{matrealformula}) imply that $\mbf{A} (X)$ takes values in $\scr{C} ( \C ^{sm} \otimes \cH )$. It follows that $L_{\mbf{A}} (zX) ^{-1}$ is a globally meromorphic operator--valued function of $z$. Since 
$$ f(zX + I_s \otimes Y) = I_s \otimes \mbf{D} + I_s \otimes \mbf{C} L_{\mbf{A}} (zX) ^{-1} \mbf{B} (zX), $$ the claim follows.
\end{proof}

By Theorem \ref{merocompreal} and Theorem \ref{mcmero} above, any uniformly meromorphic function with analytic germ at $0$, $f \in \scr{O} _0 ^{\scr{C}}$, is `globally defined' in the sense that for any $Y \in \mr{Dom} _m \, f$ and any $X \in \C ^{(sm \times sm)\cdot d}$, $f (I_s \otimes Y +zX)$ is a meromorphic matrix--valued function of $z$. In fact, one can say more. Namely, let $X := (X^{(1)}, \cdots , X^{(d)})$ be a $d-$tuple of $n\times n$ \emph{generic matrices}, \emph{i.e.} each $X^{(\ell)} = ( x^{(\ell )} _{ij} ) _{1\leq i,j \leq n}$ is an $n\times n$ matrix whose entries are the $n^2$ commuting complex variables, $x^{(\ell )} _{ij}$, $1\leq i,j \leq n$. If $f \sim (A,b,c)$ is a minimal and jointly compact realization, then consider 
\ba L_A (X) & = & I_n \otimes I_\cH - \sum _{\ell=1} ^d X ^{(\ell)} \otimes A_\ell \\
& = & I_n \otimes I_\cH - \sum _{\ell =1} ^d \sum _{i,j =1} ^n x^{(\ell)} _{ij}  E_{i,j} \otimes A_{\ell} =: L_A ( x ^{(\ell)} _{ij} ), \ea where $E_{i,j}$ are the standard matrix units of $\C ^{n\times n}$.  In the theorem statement below, for any $p \in [1 , +\infty)$, let $\scr{T} _p := \scr{T} _p (\cH ) ^d$, where $\scr{T} _p (\cH) \subsetneqq \scr{C} (\cH )$ is the Schatten $p-$class of all bounded linear operators so that $\mr{tr} \, | A | ^p  < +\infty$. Given any $A \in \scr{T} _p (\cH )$, one can define the \emph{Fredholm determinant}, $\mr{det} _F (I + A)$, of $I+A$, and $\mr{det} _F (I +A) \neq 0$ if and only if $I+A$ is invertible \cite{Simon-det}.

\begin{thm} \label{globalmerothm}
If $A \in \scr{C} = \scr{C} (\cH) ^d$, then the set of all $( x^{(\ell)} _{i,j} ) \subseteq \C ^{d\cdot n^2}$ for which $L_A ( x ^{(\ell)} _{ij})$ is not invertible is either: (i) empty, or (ii) an analytic--Zariski closed subset of codimension $1$. In particular, the invertibility domain, $\scr{D} (A)$, of $L_A$, is analytic--Zariski open and dense in $\C ^{d\cdot n^2}$ for every $n \in \N$.

If $A \in \scr{T} _p$, $p \in [1, +\infty)$, then $h ( x ^{(\ell)} _{i,j} ) := \mr{det} _F \, L_A ( x ^{(\ell)} _{ij} )$ is entire, so that the non-invertibility set of $L_A ( x ^{(\ell)} _{ij} )$ is either: (i) empty, or, (ii) the analytic--Zariski closed hypersurface given by the variety of the entire function $h$. 
\end{thm}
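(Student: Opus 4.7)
The plan is to handle part 2 first, since a globally defined entire function gives the cleanest proof there, and then to deduce part 1 by combining the multi-variable analytic Fredholm theorem with a local finite-rank reduction.

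For part 2, set $K(x) := \sum_{\ell=1}^d\sum_{i,j=1}^n x^{(\ell)}_{ij}\, E_{i,j}\otimes A_\ell$, so that $L_A(x^{(\ell)}_{ij}) = I_n\otimes I_\cH - K(x)$. Each $E_{i,j}\otimes A_\ell$ lies in $\scr{T}_p(\C^n\otimes\cH)$ because $\scr{T}_p$ is a two-sided ideal, and the linear map $x\mapsto K(x)$ from $\C^{d\cdot n^2}$ to $\scr{T}_p(\C^n\otimes\cH)$ is continuous in the $\scr{T}_p$-norm (and hence entire as a Banach-space-valued polynomial map). I would then invoke the standard fact that for each $p\in[1,+\infty)$ the (Weierstrass-regularized when $p>1$) Fredholm determinant $\mr{det}_F:\scr{T}_p\to\C$ is an entire, $\scr{T}_p$-norm continuous function satisfying $\mr{det}_F(I+T)\neq 0$ iff $I+T$ is invertible (see, e.g., Simon's trace-ideal monograph). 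The composition $h(x^{(\ell)}_{ij}) := \mr{det}_F\, L_A(x^{(\ell)}_{ij})$ is then entire in the $d\cdot n^2$ scalar variables and vanishes precisely on the non-invertibility locus of $L_A$.

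For part 1, the same $K$ defines a polynomial (hence holomorphic) map into $\scr{C}(\C^n\otimes\cH)$, and $L_A(0)=I$ is invertible. I would invoke the multi-variable analytic Fredholm theorem to produce a proper analytic subset $S\subseteq\C^{d\cdot n^2}$ outside of which $L_A(x)^{-1}$ exists and is holomorphic, and across which it is meromorphic. To upgrade $S$ to pure codimension one, I would localize around any $x_0$: pick a finite-rank $K_0$ approximating $K(x_0)$ in operator norm tightly enough that $I-(K(x)-K_0)$ remains invertible on a neighbourhood $U$ of $x_0$, and factor
$$L_A(x) = \bigl(I-(K(x)-K_0)\bigr)\,\bigl(I-(I-(K(x)-K_0))^{-1}K_0\bigr).$$
The second factor is a holomorphic perturbation of $I$ by a finite-rank operator, and so is invertible iff an $r\times r$ holomorphic determinant (with $r=\mr{rank}\,K_0$) is nonzero at $x$. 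Thus $S\cap U$ is the zero locus of a single holomorphic function on $U$, which cannot be identically zero (otherwise $S$ would contain the open set $U$, contradicting the fact that $S$ is a proper analytic subset of $\C^{d\cdot n^2}$ and hence nowhere dense). Patching these local hypersurface descriptions makes $S$ analytic-Zariski closed of pure codimension one whenever nonempty, and $\scr{D}(A)=\C^{d\cdot n^2}\setminus S$ is then its analytic-Zariski open, dense complement by standard complex-analytic geometry.

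The main obstacle is setting up the local finite-rank factorization and its associated determinant — this is the standard mechanism underlying the analytic Fredholm theorem and is available here precisely because $K(x_0)$ is compact — together with verifying nontriviality of the local determinant, which follows from properness of $S$ as an analytic subset. The Schatten case sidesteps all of this local machinery via the globally defined entire function $\mr{det}_F\,L_A(x)$.
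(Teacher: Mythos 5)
Your proof is correct and takes essentially the same route as the paper: both parts rest on the same ingredients (the Stessin--Yang--Zhu multivariable analytic Fredholm theorem, whose underlying local finite-rank factorization you reconstruct directly, for the compact case, and Simon's regularized Fredholm determinant theory for the Schatten $p$-class case). The paper simply cites \cite[Theorem 3]{compactdet} together with the observation that $0\in\scr{D}(A)$, and the relevant sections of \cite{Simon-det}, whereas you spell out the local determinant mechanism that those references encapsulate.
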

\begin{proof}
The first statement follows immediately from \cite[Theorem 3]{compactdet} and the fact that $r \cdot \rball \subseteq \scr{D} _A$ for some sufficiently small $r>0$, so that $\scr{D} _n (A)$ is not empty for any $n \in \N$. The second statement regarding the Schatten $p-$classes follows from Theorem 3.3, Theorem 3.9 and Section 6 of \cite{Simon-det}, since 
$$  L_A \left( x ^{(\ell)} _{ij} \right) = I_n \otimes I_\cH - \sum _{\ell =1} ^d \sum _{i,j =1} ^n x^{(\ell)} _{ij}  E_{i,j} \otimes A_{\ell}, $$ and each $E_{i,j} \otimes A_{\ell} \in \scr{T} _p (\C ^n \otimes \cH )$. 
\end{proof}

\begin{remark}
In Subsection \ref{ss-minrealdom}, we defined the domain of a familiar NC function, $f$, as the union of all $\scr{D} ^{(0)} (A)$, where $(A,b,c)$ is any realization of $f$. Here $\scr{D} ^{(0)} (A)$ is the level-wise connected component of $0$ in the invertibility domain, $\scr{D} _A$. If $A \in \scr{C}$ is a compact $d-$tuple, then it follows that $\scr{D} (A)$ is level-wise open and connected in matrix norm, as in the proof of Theorem \ref{danequiv}, so that $\scr{D} (A) = \scr{D} ^{(0)} (A)$, and the previous theorem shows that $\scr{D} (A)$ is level-wise analytic--Zariski dense in $\ncu$. Moreover, Theorem \ref{danequiv}, shows that if $(A,b,c) \sim f \sim (A',b',c')$ are two jointly compact realizations of $f$, then $\scr{D} (A) = \scr{D} (A') $, so that $\nbdom f = \scr{D} (A)$ for any jointly compact and minimal realization, $(A,b,c)$, of $f$.
\end{remark}

\section{Semifirs of familiar free power series and their universal skew fields} \label{subsemi}

A ring, $R$, is a  \emph{semifir} or \emph{semi-free ideal ring} \cite[Section 2.3]{Cohn}, if every finitely--generated left ideal in $R$ is a free left $R-$module of unique rank. Given a ring, $R$, a skew field, $U$, is the \emph{universal skew field of fractions} of $R$, if (i) $R$ embeds into $U$, $R \hookrightarrow U$, and $U$ is generated as a field by the image of $R$, and (ii) if $A \in R ^{n\times n}$ is any square matrix over $R$ whose image under some homomorphism from $R$ into a skew field is invertible, then the image of $A$ under the embedding $R \hookrightarrow U$ is invertible \cite[Theorem 7.2.7]{Cohn}. This is not the original definition of a universal skew field of fractions from \cite{Cohn}, but it is equivalent. (Here, the adjective \emph{skew} simply means non-commutative.) Any semifir has a (necessarily unique) universal skew field of fractions. For example, the ring of all rational or recognizable formal power series, $\ratfps$, is a semifir, and the free skew field, $\fskew$, is its universal skew field of fractions \cite{Amitsur}. 

Recently, Klep, Vinnikov, and Vol\v{c}i\v{c} have developed a local theory of germs of uniformly analytic NC functions in \cite{KVV-local}. Let $\scr{O} _0 ^u$ denote the ring of uniformly analytic NC germs at $0 \in \ncu$. Here, a uniformly analytic germ at $0$ is the evaluation equivalence class of all uniformly analytic NC functions that agree in some uniformly open neighbourhood of $0$. It follows from Lemma \ref{uarealize} that $\scr{O} _0 ^u$ can be identified with the ring of all familiar formal power series, \emph{i.e.} with the ring of all NC functions with operator realizations $(A,b,c)$, $A \in \scr{B} (\cH ) ^d$. In \cite{KVV-local}, it is proven that $\scr{O} _0 ^u$ is a semifir and that its universal skew field of fractions, $\scr{M} _0 ^u$, the skew field of \emph{uniformly meromorphic germs at $0$}, can be constructed by considering evaluation equivalence classes of formal rational expressions in elements of $\scr{O} _0 ^u$ \cite[Subsection 5.2, Corollary 5.4]{KVV-local}. 

%%We will soon see that the same evaluation relation can be used to describe the universal skew %%%%field of fractions of $\scr{O}_0^u$.

Our goal now is to construct sub-rings of formal power series that are sub-semifirs of $\scr{O} ^{u} _0$, and whose universal skew fields of fractions are generated by their inclusion into $\scr{O} ^u _0$. In order to do this, we will apply the following result of P.M. Cohn, \cite[Proposition 2.9.19]{Cohn}:

\begin{thm*}[P.M. Cohn]
Let $\scr{R} \subseteq \fps$ be any subring that contains $\C$ and inverses of all invertible elements in $\fps$, and which is closed under backward right or left shifts. Then $\scr{R}$ is a semifir. In particular, $\ratfps$ is a semifir. 
\end{thm*}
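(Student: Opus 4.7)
The plan is to verify that $\scr{R}$ satisfies Cohn's \emph{weak algorithm} with respect to the order filtration inherited from $\fps$, and then to invoke Cohn's classical theorem that any filtered ring satisfying the weak algorithm is a semifir \cite{Cohn}. Recall the order function $\nu : \fps \sm \{0\} \to \N \cup \{0\}$, $\nu(f) := \min\{ |\om| : \hat{f}_\om \neq 0 \}$ (with $\nu(0) := +\infty$); it is foundational that $\fps$, equipped with this filtration, satisfies the weak algorithm and is therefore a fir, and in particular a semifir.

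The three hypotheses on $\scr{R}$ are precisely what one needs to transport the weak-algorithm reduction from $\fps$ down to $\scr{R}$. The inclusion $\C \subseteq \scr{R}$ supplies the scalar coefficients used in reduction steps; the observation that $f \in \fps$ is invertible in $\fps$ if and only if $\nu(f) = 0$, combined with closure of $\scr{R}$ under such inverses, lets us normalize leading coefficients without leaving $\scr{R}$; finally, closure under one family of backward shifts (say $L_j^*$, $1 \leq j \leq d$) gives access to every coefficient of any $f \in \scr{R}$ --- the constant term of $L_{i_n}^* \cdots L_{i_1}^* f$ is $\hat{f}_{i_1 \cdots i_n}$ --- and, crucially, produces from any $f$ with $\nu(f) = k > 0$ an element of $\scr{R}$ of strictly smaller order.

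Given a family $a_1, \ldots, a_n \in \scr{R}$ with $\nu(a_1) \leq \cdots \leq \nu(a_n)$ and a putative $\scr{R}$-linear combination $\sum b_i a_i$ of anomalously low order, the standard weak-algorithm reduction peels off top-order contributions by iterating four operations: $\C$-linear combinations, left multiplications by elements of $\scr{R}$, backward shifts, and inversion of order-zero elements. Each of these preserves $\scr{R}$ by the three hypotheses, so the reduction runs entirely inside $\scr{R}$ and produces the right-dependence of the $a_i$ required by the weak algorithm, yielding the semifir conclusion. The specialization to $\ratfps$ is then immediate: it contains $\C$, is closed under backward shifts (\emph{cf.} Equation \eqref{bwfreeshifts}), and is closed under inversion of elements with non-zero constant term via the Fornasini--Marchesini algorithm of Subsection \ref{FMalg}. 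The principal technical obstacle is the careful bookkeeping required to confirm that every intermediate quotient and shifted remainder produced during the weak-algorithm reduction remains in $\scr{R}$, rather than escaping to the ambient $\fps$; once this verification is in place, Cohn's structural theorem delivers the semifir property automatically.
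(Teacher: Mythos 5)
The paper does not prove this statement; it is cited directly from Cohn \cite[Proposition 2.9.19]{Cohn}, and the paper's only proof-level contribution is the subsequent remark that the anti-isomorphism $h \mapsto h^{\mrt}$ of $\fps$ onto its opposite ring converts closure under right transductions into closure under left transductions.

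Your sketch identifies the right circle of ideas --- a Cohn-style dependence reduction relative to the order filtration $\nu$ on $\fps$ --- but there are signs of a degree/order confusion. Since $\nu(f) = \min\{|\om| : \hat{f}_\om \neq 0\}$ is a valuation-type order function (not a degree), the relevant machinery in Cohn's book is the \emph{inverse weak algorithm} of \S 2.9, which is precisely where Proposition 2.9.19 lives, not the direct weak algorithm of \S 2.4. The corresponding dependence criterion has $\nu\left(\sum b_i a_i\right)$ anomalously \emph{large} (cancellation kills the low-order terms), and the reduction peels off \emph{low}-order terms, not ``top-order contributions.'' These are not just wording issues; they trace precisely to the distinction Cohn emphasizes between the two versions of the algorithm.

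The genuine gap is that the reduction itself is never carried out. The entire content of the theorem is the claim that, given a right $\nu$-dependent family $a_1, \dots, a_n$ in $\scr{R}$, one can produce a dependence certificate (some $a_j$ right $\nu$-dependent on the rest, with coefficients in $\scr{R}$ and the requisite order inequalities) by iterated application of the three hypothesized closures, and that this establishes the $n$-term inverse weak algorithm for every $n$. You correctly identify $\C$-linearity, inversion of order-zero elements, and backward shifts as the operations involved, and then defer the verification that every intermediate step remains inside $\scr{R}$, calling it bookkeeping. But that verification \emph{is} the proof; without it you have an outline rather than an argument. (Minor additional caveat: you assert $\fps$ is a fir; Cohn's argument delivers semifir, which is all that is needed, and the fir assertion should not be made casually for $d \geq 2$.)
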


Note that an element, $f \in \fps$, is invertible if and only if it has non-vanishing constant term, $\hat{f}_\emptyset \neq 0$. Here, given a free FPS, $h\in \fps$, we define its backward left shift, $L_j ^* h$, $1 \leq j \leq d$, and its backward right shift, $R_j ^* h$, as the FPS:
$$ (L_j ^* h) (\fz ) := \sum _{\om \in \F ^d} \hat{h} _{j\om} \fz ^\om, \quad \mbox{and} \quad (R_j ^* h) (\fz ) := \sum _{\om \in \F ^d} \hat{h} _{\om j} \fz ^\om,  \quad \mbox{where} \quad h(\fz ) = \sum \hat{h} _\om \fz ^\om. $$

\begin{remark}
P.M. Cohn calls backward right shifts \emph{right transductions}. The statement of \cite[Proposition 2.9.19]{Cohn} does not include the claim regarding `left transductions'. However, since $\mrt : \fps \rightarrow \fps ^{op}$ is an isomorphism of $\fps$ onto its \emph{opposite ring} (or an anti-isomorphism of $\fps$), and since 
$(L_j ^* h) ^\mrt = R_j ^* h ^\mrt$, this claim follows easily. 
\end{remark}

As discussed previously, any $h \in \scr{O} ^u _0$ has a descriptor operator--realization, $h \sim (A,b,c)$ on some Hilbert space, $\cH$. Hence, to construct sub-semifirs of this ring, we can consider subsets of operator--realizations that correspond to sub-rings of free FPS that contain the constants and are closed under backward left or right free shifts. Namely, let $\scr{R} \subseteq \scr{B} (\cH) ^d \times \cH \times \cH$ be a subset of descriptor realizations.  In order that $\scr{R}$ correspond to a ring of formal power series, $\scr{O} _0 (\scr{R})$, \emph{i.e.} in order that $\scr{O} _0 (\scr{R} )$ be closed under summation and multiplication, we demand that $\scr{R}$ is closed under the descriptor algorithm operations given in Equation (\ref{dsum}) and Equation (\ref{dmult}). Note that if $f \sim (A,b,c)$ and $g \sim (A', b' , c' )$ where $(A,b,c)$ and $(A',b', c') \in \scr{R}$, then technically the realization given by Equation (\ref{dsum}) for $f+g$ is $(A^+ , b^+ , c^+)$, where $A^+ \in \scr{B} (\cH \oplus \cH  ) ^d$. However, assuming that $\cH$ is separable, we can apply a surjective isometry to obtain a unitarily equivalent realization for $f+g$ on $\cH$. In order to apply the above theorem of P.M. Cohn, we also need (i) $\C \subseteq \scr{O} _0 (\scr{R} )$, which means that for any $(A,b,c) \in \scr{R}$, $b^*c \in \C$ can be arbitrary, and (ii) if $h \in \scr{O} _0 (\scr{R} )$ is such that $h^{-1} \in \fps$, which happens if and only if $\hat{h} _\emptyset = h(0) = b^* c \neq 0$, we want $h^{-1} \in \scr{O} _0 (\scr{R})$, which means that $\scr{R}$ should be closed under the descriptor operation of Equation (\ref{dinv}), corresponding to inversion. That is, $\scr{R}$ needs to be closed under direct sums and certain rank--two or rank--one perturbations in the sense of Equations (\ref{dsum}--\ref{dinv}). Finally, we need that $\scr{O} _0 (\scr{R} )$ be closed under backward left or right shifts. Recall that if $h \in \scr{O} _0 ^u$ is any free FPS with realization $(A,b,c)$, and $\om \in \F ^d$ is any word, $\hat{h} _\om = b^* A^\om c$, so that 
$$ (L_j ^* h) \sim (A, A_j ^*b , c ), \quad \mbox{and} \quad (R_j ^* h) \sim (A, b, A_jc ). $$
If the set, $\scr{R}$, is closed under compressions to semi-invariant subspaces, then it follows that we can further assume that any $h \in \scr{O} _0 (\scr{R} )$ has a minimal descriptor realization in $\scr{R}$. Hence, if we want $\scr{O} _0 (\scr{R} )$ to be invariant under backward left and right shifts, the vectors $b, c \in \cH$ must be arbitrary. In particular, this means that $\scr{R}$ is completely determined by its first co-ordinate, $\scr{S} := \Pi _1 (\scr{R})$, a subset of $\scr{B} (\cH )^d$. 

Motivated by the above discussion, we will consider subsets, $\scr{R} \subseteq \scr{B} (\cH ) ^d \times \cH  \times \cH$, of descriptor operator realizations which have the following properties. First, we will assume that all but the first component of $\scr{R}$ are completely arbitrary. That is, if $\scr{S}:= \Pi _1 (\scr{R} )$ is projection onto the first co-ordinate, then given any $A \in \scr{S}$, and any $b,c \in \cH$, the descriptor realization $(A,b,c) \in \scr{R}$, so that $\scr{R}$ is completely determined by $\scr{S} \subseteq \scr{B} (\cH ) ^d$. Moreover, we will further assume that $\scr{S}$ has the following properties: (i) $\scr{S}$ is closed under direct sums,  (ii) $\scr{S}$ is closed under arbitrary rank--one perturbations of the form $A_j \mapsto A_j -g_j h^*$ with arbitrary $g \in \cH ^d, h \in \cH$, (iii) $\scr{S}$ is invariant under joint similarity, and (iv) $\scr{S}$ is closed under compressions to semi-invariant subspaces.  The properties (i--iii) ensure that $\scr{S}$ and hence $\scr{R}$ are closed under the descriptor algorithm operations of addition, multiplication and inversion given in Equations (\ref{dsum}--\ref{dinv}), up to unitary similarity, so that $\scr{O} _0 (\scr{R} )$ is a ring that contains the inverses of all elements with non-vanishing constant terms. We will write $\scr{O} _0 ^{\scr{S}} := \scr{O} _0 (\scr{R})$ since $\scr{R}$ is completely determined by its first co-ordinate, $\scr{S} = \Pi _1 (\scr{R} ) \subseteq \scr{B} (\cH ) ^d$. 

\begin{lem} \label{lemlocal}
If $\scr{S} \subseteq \scr{B} (\cH ) ^d$ has the three properties, (i) $\scr{S}$ is closed under direct sums, (ii) $\scr{S}$ is closed under arbitrary rank-one perturbations of the form $A \mapsto A - gh^*$ with $g \in \cH \otimes \C ^d$ and $h \in \cH$ arbitrary, and (iii) $\scr{S}$ is closed under joint similarity, then $\scr{O} _0 ^\scr{S}$ is a local ring. The (unique) maximal (two-sided) ideal of this ring is the ideal of all $h \in \scr{O} _0 ^\scr{S}$ with vanishing constant term.
\end{lem}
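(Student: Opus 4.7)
The plan is to show that $\scr{I} := \{h \in \scr{O}_0^\scr{S} : h(0) = 0\}$ is the unique maximal two-sided ideal of $\scr{O}_0^\scr{S}$ by verifying that it coincides with the set of non-units. I will invoke the standard characterization: a (not necessarily commutative) ring is local precisely when its non-units are closed under addition, in which case the non-units automatically form the unique maximal two-sided ideal.

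First I would record that $\scr{I}$ is a two-sided ideal: for $f \in \scr{I}$ and $g \in \scr{O}_0^\scr{S}$, the identities $(fg)(0) = f(0)g(0) = 0 = g(0)f(0) = (gf)(0)$ show that $\scr{I}$ absorbs multiplication from both sides, and closure under addition is immediate from $(f+g)(0) = f(0) + g(0)$.

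Next I would prove that the non-units of $\scr{O}_0^\scr{S}$ are exactly the elements of $\scr{I}$. One inclusion is trivial: any $h \in \scr{I}$ is a non-unit already in the ambient ring $\fps$, since an element of $\fps$ is invertible iff its constant term is non-zero. For the reverse, take $h \sim (A,b,c) \in \scr{O}_0^\scr{S}$ with $A \in \scr{S}$ and $b^*c = h(0) \neq 0$, and apply the descriptor inversion formula (\ref{dinv}) to produce a realization $(A^{(-1)}, b^{(-1)}, c^{(-1)})$ of $h^{-1}$ on $\hat{\cH} := \cH \oplus \C$. The task reduces to showing $A^{(-1)} \in \scr{S}$. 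A direct block calculation yields the identity
\be
A^{(-1)}_j \, = \, (A \oplus 0_\C)_j - \tilde{g}_j \tilde{h}^*, \qquad 1 \leq j \leq d,
\ee
where
\be
\tilde{g}_j := \frac{1}{b^*c} \bpm A_j c \\ 0 \epm \in \hat{\cH}, \qquad \tilde{h} := \bpm b \\ -1 \epm \in \hat{\cH}.
\ee
Crucially, $\tilde{h}$ is independent of $j$, so with $\tilde{g} := (\tilde{g}_1, \ldots, \tilde{g}_d) \in \hat{\cH} \otimes \C^d$ this fits precisely the shape of perturbation permitted by property (ii). After identifying $\hat{\cH}$ with $\cH$ via a fixed unitary (legitimate by property (iii)), property (i) places $A \oplus 0_\C$ in $\scr{S}$, and then property (ii) places $A^{(-1)}$ in $\scr{S}$. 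Hence $h^{-1} \in \scr{O}_0^\scr{S}$.

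The formal conclusion follows: since the non-units of $\scr{O}_0^\scr{S}$ form the two-sided ideal $\scr{I}$, the ring is local with $\scr{I}$ its unique maximal ideal. The only substantive content is the inversion step; the main (modest) obstacle is the bookkeeping that recognizes the explicit descriptor--algorithm formula for $A^{(-1)}$ as the direct sum $A \oplus 0_\C$ followed by a single rank-one perturbation of exactly the form allowed by (ii). Once that identity is in place, the remainder of the argument is essentially formal.
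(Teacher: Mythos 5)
Your proof is correct and reaches the same conclusion by the same basic mechanism as the paper: everything hinges on showing that $h(0)\neq 0$ forces $h^{-1}\in\scr{O}_0^\scr{S}$, after which the vanishing-constant-term ideal is the unique maximal ideal. Your framing via the standard criterion ``local $\iff$ non-units closed under addition'' is an equivalent repackaging of the paper's two-paragraph argument about maximal ideals. The one genuine difference is the choice of realization algorithm for the inversion step. The paper invokes the \emph{Fornasini--Marchesini} inversion formula (\ref{inv}), whose key virtue here is that $A^{(-1)}_j = A_j - \tfrac{1}{D}B_jC$ is a rank-one perturbation \emph{on the same Hilbert space} $\cH$: one passes from the given descriptor realization $(A,b,c)$ to the FM realization $(A,\, A_jc,\, b^*,\, b^*c)$ with the same column $A$, and then property (ii) alone places $A^{(-1)}$ in $\scr{S}$. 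You instead apply the descriptor inversion formula (\ref{dinv}) directly, which produces $A^{(-1)}$ on $\hat{\cH}=\cH\oplus\C$, and you then have to fold it back onto $\cH$. Your block computation $A^{(-1)}=(A\oplus 0_\C)-\tilde{g}\tilde{h}^*$ is correct, but the claim that ``property (i) places $A\oplus 0_\C$ in $\scr{S}$'' is a slight abuse: (i) as stated is closure under direct sums of elements of $\scr{S}$, and $0_\C\in\scr{B}(\C)^d$ is not such an element (for example, in the Fredholm example in the paper, $0_\cH\notin\scr{S}$). The paper's framework does implicitly allow adjoining a one-dimensional zero summand (it is baked into the remark that the descriptor algorithm requires a unitary identification $\hat\cH\simeq\cH$), so this is not a fatal gap, but the FM route is cleaner precisely because it bypasses that identification at the crucial step.
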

\begin{proof}
This follows as in \cite[Lemma 5.1]{KVV-local}. Certainly the set of all $h \in \scr{O} ^\scr{S} _0$ which vanish at $0$ is a two-sided ideal, $\scr{J} _0$. If $\scr{J} _0 \subseteq \scr{J}$, for another (left or right) ideal $\scr{J}$, then any $h \in \scr{J} \sm \scr{J} _0$ would have $h(0) \neq 0$. By the FM algorithm, it follows that $h^{-1} \in \scr{O} ^\scr{S} _0$, so that $1 \in \scr{J}$ and hence $\scr{J} = \scr{O} _0 ^\scr{S}$ and $\scr{J} _0$ is maximal. 

Uniqueness also follows: If $\scr{J}$ is any maximal (left or right) ideal of $\scr{O} ^\scr{S} _0$, then $\scr{J} \subseteq \scr{J} _0$. Otherwise there would exist $h \in \scr{J}$ so that $h(0) \neq 0$, and as before this implies $\scr{J}$ is the entire ring. Hence, $\scr{J} \subseteq \scr{J} _0$, and by maximality, $\scr{J} = \scr{J} _0$. 
\end{proof}

Moreover, if $\scr{S} = \Pi _1 (\scr{R} )$ is assumed to have the above three properties, (i--iii), then it follows that $\scr{O} _0 ^\scr{S}$ is closed under backward left and right shifts and contains the constants, so that by Cohn's theorem above, $\scr{O} ^\scr{S} _0$ is a semifir, \cite[Proposition 2.9.19]{Cohn}. 

\begin{cor}
If $\scr{S} \subseteq \scr{B} (\cH ) ^d$ has the above properties (i)--(iii) then the local ring $\scr{O} _0 ^\scr{S}$ is a semifir. The embedding $\scr{O} ^\scr{S} _0 \hookrightarrow \fps$ is totally inert, hence honest. The universal skew field of $\scr{O} _0 ^\scr{S}$ is the subfield, $\scr{M} ^\scr{S} _0$ of $\scr{M} ^u _0$, generated by the inclusion $\scr{O} _0 ^\scr{S} \hookrightarrow \scr{O} ^{u} _0$. 
\end{cor}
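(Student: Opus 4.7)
My plan is to prove the three assertions in the corollary sequentially, each building on the previous. For the semifir claim, I would directly verify the hypotheses of Cohn's criterion (the theorem of P.M. Cohn displayed just above the corollary). Properties (i)--(iii) for $\scr{S}$, together with the descriptor algorithm of Subsection \ref{FMalg} (Equations (\ref{dsum})--(\ref{dinv})), ensure that $\scr{O}_0^\scr{S}$ is a subring of $\fps$ containing $\C$ and closed under inversion of every element with nonzero constant term. Closure under the backward left and right shifts, $L_j^*$ and $R_j^*$, is immediate from the identities $L_j^* h \sim (A, A_j^* b, c)$ and $R_j^* h \sim (A, b, A_j c)$ established in Subsection \ref{freehardy}, since neither operation alters the defining $d$-tuple $A \in \scr{S}$. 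Cohn's criterion then gives that $\scr{O}_0^\scr{S}$ is a semifir, and Lemma \ref{lemlocal} already records that it is local.

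For total inertness of the embedding $\scr{O}_0^\scr{S} \hookrightarrow \fps$, I would follow the template of \cite[Theorem 5.3]{KVV-local}, where the analogous statement is proved for $\scr{O}_0^u$. The argument there begins with a matrix factorization $M = PQ$ inside $\fps^{n\times n}$ of a matrix $M$ over $\scr{O}_0^u$, and applies elementary transformations $P \mapsto PU$, $Q \mapsto U^{-1}Q$ with $U \in \mathrm{GL}_n(\fps)$ to produce factors whose entries lie again in $\scr{O}_0^u$. Adapting that argument to our setting, the crucial point is that each realization-level manipulation used --- direct sums, rank-one perturbations, and joint similarities --- is exactly one of the operations preserved by $\scr{S}$ under our standing hypotheses (i)--(iii). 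Hence the normalized factors can be arranged to have entries in $\scr{O}_0^\scr{S}$, proving total inertness. Honesty of the embedding then follows automatically from total inertness by \cite[Theorem 2.9.17]{Cohn}.

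For the identification of the universal skew field, I would use Cohn's characterization: the universal skew field of fractions of a semifir is the unique skew field extension in which every full matrix over the semifir becomes invertible. Since $\scr{O}_0^\scr{S} \hookrightarrow \fps$ is honest, a full matrix over $\scr{O}_0^\scr{S}$ remains full over $\fps$, and hence also full over $\scr{O}_0^u$ (using that $\scr{O}_0^u \hookrightarrow \fps$ is itself honest by \cite[Theorem 5.3]{KVV-local}). Its image in $\scr{M}_0^u$, the universal skew field of $\scr{O}_0^u$ constructed in \cite[Corollary 5.4]{KVV-local}, is therefore invertible. The subfield $\scr{M}_0^\scr{S} \subseteq \scr{M}_0^u$ generated by $\scr{O}_0^\scr{S}$ thus inverts every full matrix over $\scr{O}_0^\scr{S}$, so by the universal property it is the universal skew field of fractions of $\scr{O}_0^\scr{S}$.

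The main obstacle will be the inertness step: carefully tracking the elementary matrix operations used in the KVV-local argument and verifying that each corresponds to a realization-level manipulation that preserves membership in $\scr{S}$. Once this bookkeeping is settled, the remaining parts are routine applications of the general theory of semifirs and their universal skew fields developed in \cite{Cohn}.
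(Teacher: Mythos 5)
Your proof follows the same route as the paper: verify Cohn's criterion (the displayed theorem and the fact that the shifts fix the first component $A$ while $b,c$ are free) to get the semifir claim, then defer to (and adapt) the arguments of \cite[Proposition 5.3, Section 5.2, Corollary 5.4]{KVV-local} for total inertness, honesty, and the identification of the universal skew field as the subfield of $\scr{M}_0^u$ generated by $\scr{O}_0^\scr{S}$. The paper's own proof is exactly this, stated more tersely, so the only addition you make is to flesh out the intermediate bookkeeping.
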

In the above statement, an embedding of rings $\mr{i} : R \hookrightarrow S$ is \emph{totally inert} if given any $U \subseteq S ^{1\times d}$, $V \subseteq S ^{d}$ satisfying $UV \subseteq \mr{i} (R)$, there exists a $P \in \mr{GL} _d (S)$ so that for any $u \in U P ^{-1}$ and $1\leq j \leq d$, either $u_j \in \mr{i} (R)$ or $v_j =0$ for all $v \in P V$ \cite[Section 2.9]{Cohn}. Any totally inert embedding is \emph{honest} \cite[Section 5.4]{Cohn}. Here, the \emph{inner rank} of a matrix $A \in R ^{n\times n}$ is the smallest $m \in \N$ so that $A = B C$ where $B \in R ^{n \times m}$ and $C \in R ^{m \times n}$, and $A \in R^{n\times n}$ is \emph{full} if its inner rank is $n$. An embedding, $\mr{i} : R \hookrightarrow S$ is then \emph{honest}, if every full $A \in R ^{n \times n}$ is such that $\mr{i} (A) \in S^{n\times n}$ is also full. We will not apply any of this algebraic machinery directly. However, in \cite[Subsection 5.2, Corollary 5.4]{KVV-local}, these results are used to explicitly construct $\scr{M} _0 ^u$ as `local' evaluation equivalence classes of NC rational expressions in elements of the semifir $\scr{O} _0 ^u = \scr{O} _0 ^{\scr{B}}$ where $\scr{B} := \scr{B} (\cH ) ^d$. 
\begin{proof}
The fact that $\scr{O} _0 ^\scr{S}$ is a semifir follows from Cohn's theorem above, \cite[Proposition 2.9.19]{Cohn} and Lemma \ref{lemlocal} since $\scr{O} _0 ^\scr{S} \subseteq \fps$ satisfies the statement of Cohn's theorem by construction. The remaining statements follow as in \cite[Proposition 5.3, Section 5.2, Corollary 5.4]{KVV-local}. 
\end{proof}

If $\scr{S}$ also has property (iv), \emph{i.e.} $\scr{S}$ is closed under compressions to semi-invariant subspaces, then any $f \in \scr{O} _0 ^\scr{S}$ has a minimal realization, $f \sim (A,b,c)$, with $A \in \scr{S}$.  Examples of subsets $\scr{S} \subseteq \scr{B} (\cH ) ^d$ with all of these four properties (i)--(iv) include $\scr{C} = \scr{C} (\cH ) ^d$ and $\scr{T} _p := \scr{T} _p (\cH ) ^d$, where $\scr{T} _p (\cH )$ denotes the Schatten $p-$class ideals of trace--class operators. Here, $\scr{T} _p (\cH )$, is the two-sided ideal of all operators $A \in \scr{B} (\cH )$ so that $\mr{tr} \, |A| ^p  < +\infty$, for some  $p \in [1, +\infty)$. Similarly, the sets of all $d-$tuples of `essentially' normal, unitary or say self-adjoint operators, \emph{i.e.} operators which are normal, unitary or self-adjoint modulo compact perturbations, all have the properties (i--iii), and self-adjoint realizations modulo compacts also have property (iv). As another example, the set of all Fredholm operator $d-$tuples has properties (i--iii), but not (iv), since the compression of a Fredholm operator to a semi-invariant subspace need not be Fredholm.  In particular, since, as observed previously in Lemma \ref{uarealize}, any $h \in \scr{O} ^u _0$ has an operator realization, and any $h \sim (A,b,c)$ is uniformly analytic in a uniformly open neighbourhood of $0$, it follows that $\scr{O} ^u _0 = \scr{O} ^\scr{B} _0$ and $\scr{M} ^u _0 = \scr{M} _0 ^{\scr{B}}$, where $\scr{B} = \scr{B} (\cH) ^d$ is the set of all $d-$tuples of bounded linear operators on $\cH$. Similarly, if $\scr{F} := \scr{F} (\cH ) ^d$, where $\scr{F} (\cH)$ denotes the two-sided ideal of finite--rank operators on $\cH$, then $\scr{O} _0 ^\scr{F} = \ratfps$ and $\scr{M} _0 ^\scr{F} = \fskew$.

\subsection{Global uniformly meromorphic NC functions} \label{ss-globalunimero}

Recall that by Theorem \ref{meroreal} and Theorem \ref{merodomain}, any function of a single complex variable that is analytic in a neighbourhood of the origin extends to a meromorphic function on $\C$ if and only if it has a compact realization. More generally, any meromorphic function can be obtained as a rational expression in mermorphic functions which are analytic in an open neighbourhood of $0$. (Even simpler, they are ratios of entire functions.) Moreover, we showed that any NC function is uniformly entire if and only if it has a jointly compact and quasinilpotent realization. By the FM algorithm, it follows that any NC rational expression in uniformly entire NC functions that is regular at $0$ will have a realization that is a finite--rank perturbation of a jointly compact and quasinilpotent realization, and hence will be compact. Motivated by these observations, we defined the ring of global uniformly meromorphic NC functions that are analytic at $0$ as $\scr{O} _0 ^\scr{C}$, where $\scr{C} = \scr{C} (\cH ) ^d$ is the set of all compact operator $d-$tuples. Since we have shown that $\scr{O} _0 ^\scr{C}$ is a semifir, it is now natural to define the set of all (global) uniformly meromorphic NC functions to be $\scr{M} _0 ^{\scr{C}}$, the universal skew field of fractions of $\scr{O} _0 ^{\scr{C}}$, where $\fskew = \scr{M} _0 ^\scr{F} \subseteq \scr{M} _0 ^\scr{C} \subseteq \scr{M} _0 ^\scr{B} = \scr{M} _0 ^u$. The fact that $\scr{M}_0^{\scr{C}}$ is isomorphic to the skew subfield generated by $\scr{O}_0 ^{\scr{C}}$ in $\scr{M}_0^u$ follows from the fact that the embedding is totally inert and honest, as discussed above.

However, elements of $\scr{M} _0 ^u$ are local objects, \emph{i.e.} they are uniformly meromorphic NC germs at $0$. Since $\scr{M} _0 ^\scr{C} \subseteq \scr{M} _0 ^u$ it is not immediately obvious that we can view elements of $\scr{M} _0 ^\scr{C}$ as `globally--defined' NC functions. In this section, we will show that this is indeed the case, and that any germ in the sub-skew field $\scr{M} _0 ^\scr{C}$, of $\scr{M} _0 ^u$, can be identified, uniquely, with a globally defined NC function that is uniformly analytic on its uniformly open NC domain.

In addition to $\scr{O} _0 ^\scr{C}$ and its universal skew field of fractions, $\scr{M} _0 ^\scr{C}$, we will also consider the semifirs generated by familiar NC functions with realizations whose component operators belong to the Schatten $p-$classes. For $p \in [1 , \infty )$, recall that $\scr{T} _p$ denotes the set $\scr{T} _p (\cH ) ^d$, where $\scr{T} _p (\cH )$ is the Banach space of Schatten $p-$class operators on $\cH$ equipped with the Schatten $p-$norm. Here, recall that the Schatten $p-$norm of an operator, $A \in \scr{B} (\cH )$, is 
$$ \| A \| _p := \sqrt[p]{\mr{tr} \, |A | ^p }, $$ where $\mr{tr}$ denotes the trace on $\scr{B} (\cH )$, and $\scr{T} _p (\cH)$ is the Banach space of all bounded linear operators for which this norm is finite. Every $A \in \scr{T} _p (\cH )$ is compact, elements of $\scr{T} _1 (\cH )$ are called trace--class operators and elements of $\scr{T} _2 (\cH )$ are called Hilbert--Schmidt operators. Each of the Schatten $p-$classes is a two-sided ideal in $\scr{B} (\cH )$ that contains the two-sided ideal of all finite--rank operators. The Schatten $p-$norms decrease monotonically. If $1 \leq q \leq p < +\infty$, then 
$$ \| A \| _{1} \geq \| A \| _q \geq \| A \| _p \geq \| A \| _{\scr{B} (\cH )}, $$ so that $\scr{T} _q (\cH ) \subseteq \scr{T} _p (\cH )$ for $q\leq p$. By a classical inequality of H. Weyl, if $A \in \scr{T} _p (\cH )$ has eigenvalues $(\la _j )$, arranged as a sequence of non-increasing magnitude and repeated according to algebraic multiplicity, then 
$$ \sum _{j=1} ^\infty | \la _j | ^p \leq \mr{tr} \, |A| ^p  = \sum _{j=1} ^\infty \sigma _j ^p < + \infty, $$ where $(\sigma _j)$ are the singular values of $A$, \emph{i.e.} the eigenvalues of $|A|$, again repeated according to algebraic multiplicity, so that the eigenvalue sequence of $A$ belongs to $\ell ^p$ \cite{Weyl}. 

As shown in \cite[Section 5.2, Corollary 5.4]{KVV-local}, $\scr{M} _0 ^u$ can be constructed explicitly as the set of certain local evaluation equivalence classes of NC rational expressions composed with elements of $\scr{O} _0 ^u$. Namely, if $\fr$ is a formal NC rational expression in $k$ variables and $g_1, \cdots, g_k \in \scr{O} _0 ^u$, then the composite expression, $G:=\fr (g_1, \cdots, g_k)$, is \emph{locally valid}, if the set of points $X \in \ncu$ at which it is defined, intersected with $r \cdot \rball$ is non-empty, for any $r>0$. We will denote the set of all such NC rational expressions evaluated in elements of a ring, $\scr{R}$, as $\fskew \circ \scr{R}$. We will say that any two such locally valid expressions $G:= \fr (g_1, \cdots, g_k)$, and $F:=\mf{q} (f_1, \cdots, f_p)$ in $\fskew \circ \scr{O} _0 ^u$, \emph{i.e.} for $g_i, f_j \in \scr{O} _0 ^u$, are locally evaluation equivalent (at $0$), and write $F \sim _0 G$, if there is a uniformly open neighbourhood, $\scr{U}$, of $0$, so that $G(X) = \fr (g_1 (X), \cdots , g_k (X) ) = \mf{q} (f_1 (X), \cdots, f_p (X)) = F (X)$, for all $X \in \scr{U}$ at which both $F$ and $G$ are defined. This is a well-defined equivalence relation, and the set of all equivalence classes of such expressions can be identified with $\scr{M} _0 ^u$ \cite[Section 5.2, Corollary 5.4]{KVV-local}. 

In the case where $f \in \scr{M} _0 ^\scr{C} \subseteq \scr{M} _0 ^u$, $\scr{C} = \scr{C} (\cH) ^d$, we can identify $f$ with a unique NC function, defined on a uniformly open and similarity--invariant NC subset of $\ncu$, $\nbdom f$, so that $\mr{Dom} _n  \, f$ is analytic--Zariski dense and open in $\C ^{d\cdot n^2}$ for all sufficiently large $n \in \N$. First, if $F = \fr (f_1, \cdots, f_k) \in \fskew \circ \scr{O} _0 ^\scr{C}$, we define the \emph{domain} of $F$, $\nbdom F$, as the set of all $X \in \ncu$ at which $F(X)$ is defined. Such an expression will be said to be \emph{valid}, if it has a non-empty domain. We define a `global evaluation' relation on such (valid) expressions $F,G \in \fskew \circ \scr{O} _0 ^\scr{C}$, by $F \sim G$ if $F(X) = G(X)$ for all $X \in \nbdom F \cap \nbdom G$.

\begin{thm} \label{globalmerodomain}
If $F, G \in \fskew \circ \scr{O} ^\scr{C} _0$, then:
\bn
    \item[$(i)$] If $F$ is valid, then $\nbdom F$ is uniformly open, joint similarity invariant and there exists an $N \in \N$ so that for each $n\geq N$, $\mr{Dom} _n \, F$ is either (i) all of $\C ^{d\cdot n^2}$, or, (ii) the complement of an analytic--Zariski closed set of codimension $1$ in $\C ^{d\cdot n^2}$.
    \item[$(ii)$] $F$ is valid if and only if it is locally valid.
    \item[$(iii)$] If $F,G$ are both valid, then $F \sim G$ if and only if $F \sim _0 G$.
     \item[$(iv)$] The (global) evaluation relation is an equivalence relation on valid elements in $\fskew \circ \scr{O} _0 ^\scr{C}$.
\en
\end{thm}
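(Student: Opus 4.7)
The strategy is inductive: through the syntactic structure of $\fr$, build a jointly compact FM realization for $F$ --- either the standard FM realization at $0$ when $F$ is regular there, or a matrix-centre realization at some $Y \in \nbdom F$ otherwise --- and reduce all four claims to statements about invertibility domains of compact linear pencils, to which Theorem \ref{globalmerothm} applies. Each generator $f_i \in \scr{O}_0^\scr{C}$ has a jointly compact FM realization by definition. The FM sum formula preserves compactness via direct sum; the product formula (\ref{mult}) gives a block upper-triangular operator with compact diagonal and rank-one off-diagonal entry, which is still compact; and the inversion formula (\ref{inv}) replaces $A$ by the rank-one perturbation $A - \frac{1}{D}BC$ of a compact tuple. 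The matrix-centre analogues (\ref{matFMsum})--(\ref{matFMinv}) behave identically. So the inductively constructed $A^{(F)}$ (or $\mbf{A}^{(F)}$) remains jointly compact.

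For (i), iterated Schur-complement identities --- the same identities that verify the FM inversion formulas --- show that $\nbdom F$ equals the invertibility domain of the constructed compact linear pencil. Theorem \ref{globalmerothm} then states that at every level $n$, $\mr{Dom}_n F$ is either all of $\cdn$ or the complement of an analytic--Zariski closed hypersurface of codimension $1$, so one may take $N=1$. Uniform openness and joint-similarity invariance of invertibility domains of compact linear pencils are standard, following from operator-norm continuity of inversion and the intertwining $L_A(S^{-1}ZS) = (S^{-1} \otimes I)\, L_A(Z)\, (S \otimes I)$.

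Assertion (ii) is then immediate: codimension-$1$ analytic subvarieties of $\C^{d\cdot n^2}$ are Euclidean-nowhere-dense, so non-empty $\mr{Dom}_n F$ accumulates at $0$ at every level and meets every row ball around $0$. For (iii), at each level $n$ the intersection $\mr{Dom}_n F \cap \mr{Dom}_n G$ is a connected Euclidean-open set (complements of codimension-$1$ analytic subvarieties of $\C^N$ are connected, and finite intersections preserve this) on which both $F$ and $G$ are holomorphic. If $F \sim_0 G$, they agree on a non-empty Euclidean-open slice of this intersection, and the classical identity theorem in several complex variables propagates the agreement to the entire intersection. To lift the equality from one level to all levels, I would exploit the NC axiom $F(X \oplus Y) = F(X) \oplus F(Y)$: given $X$ at a level where equality is not yet established, pair with $Y$ at a level where it is, and read off $F(X)=G(X)$. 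The triple-intersection version of the same argument --- using density to ensure that $\mr{Dom}_n F \cap \mr{Dom}_n G \cap \mr{Dom}_n H$ is Euclidean-dense in $\mr{Dom}_n F \cap \mr{Dom}_n H$ --- yields transitivity and hence (iv); reflexivity and symmetry are automatic.

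The main obstacle I anticipate is the careful Schur-complement bookkeeping required to identify $\nbdom F$ with the invertibility domain of the constructed pencil through nested inversions, especially when an intermediate subexpression vanishes at $0$ and the construction must be shifted to a matrix centre, where the matrix-centre FM algorithm needs invertibility (not merely non-vanishing) of the intermediate values $\mbf{D}^{(i)}$. Once this identification is secured, the remainder is a routine interplay between Theorem \ref{globalmerothm}, density and connectedness of complements of analytic hypersurfaces in $\C^N$, and the identity theorem in several complex variables.
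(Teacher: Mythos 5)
Your overall strategy — express $F$ inductively via the (matrix-centre) FM algorithm with jointly compact coefficient, then reduce each claim to properties of invertibility domains of compact pencils, finishing with density and the identity theorem — is the same route the paper takes, and most of the steps are sound. There are, however, two concrete problems.

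The serious one is the claim in part (i) that ``one may take $N = 1$.'' This fails. For an expression $F\in\fskew\circ\scr{O}_0^{\scr{C}}$ that is not regular at the origin, the level-$n$ domain can be \emph{empty} at low levels, and Theorem~\ref{globalmerothm2} explicitly admits the empty case in its trichotomy. A concrete counterexample: $F = (X_1 X_2 - X_2 X_1)^{-1}$ lies in $\fskew\circ\scr{O}_0^{\scr{C}}$ (the commutator is a polynomial, hence in $\ratfps\subseteq\scr{O}_0^{\scr{C}}$) and is valid (its domain at level $2$ is nonempty and Zariski-dense), yet $\mathrm{Dom}_1\,F = \emptyset$ because scalars commute. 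The paper's statement is phrased with ``there exists $N\in\N$'' precisely to accommodate this, and its proof takes $N$ to be the minimal level at which the domain is nonempty. When $F$ is regular at $0$ your argument does give $N=1$, but the general case requires the centre $Y$ to live at some level $m>1$, and a matrix-centre realization at $Y\in\C^{(m\times m)\cdot d}$ only reaches the levels $sm$, $s\in\N$; you would still owe an argument (the NC structure of $\nbdom F$, as the paper invokes) to get the conclusion at the intermediate levels $n\geq N$.

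A secondary point worth flagging: you describe the Schur-complement bookkeeping as establishing that ``$\nbdom F$ equals the invertibility domain of the constructed compact linear pencil.'' That equality can fail — the FM inversion formula can cancel algebraically and produce a pencil whose invertibility domain strictly contains the syntactic domain of the expression (e.g.\ applying the inversion formula twice to $(1+X_1)$ returns an FM realization with everywhere-invertible pencil, even though the nested expression $((1+X_1)^{-1})^{-1}$ is undefined where $1+X_1$ is singular). What is true, and what the paper uses, is the one-sided containment $\scr{D}^Y(\mbf{A})\subseteq\nbdom F$; this containment suffices for Zariski density and openness, so your argument can be repaired, but the justification of equality would not go through.

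Also, your direct-sum lifting in (iii) is not needed: once $\mathrm{Dom}_n\,F\cap\mathrm{Dom}_n\,G$ is analytic-Zariski dense and connected at each level $n$, the local equality $F\sim_0 G$ gives a nonempty Euclidean-open slice of agreement at every such level, and the identity theorem in several complex variables finishes the job level by level, which is what the paper does (via Proposition~\ref{localglobal} and matrix-norm density). As written, the direct-sum bootstrap is partially circular, since you would need agreement at the combined level to read off agreement at the lower one. Your triple-intersection argument for (iv) is fine, but the paper's one-line deduction — $\sim$ coincides with $\sim_0$, which is already an equivalence relation — is simpler.
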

Item (i) in the above statement implies, in particular, that if $F \in \fskew \circ \scr{O} _0 ^\scr{C}$, then $\mr{Dom} _n \, F$ is analytic--Zariski open and dense for all sufficiently large $n$. This implies, in particular, that $\mr{Dom} _n \, F$ is Euclidean and hence matrix--norm dense in $\cdn$ for all sufficiently large $n$. 

This theorem shows that given any $f \in \scr{M} _0 ^\scr{C}$, we can identify $f$ uniquely with a (global) evaluation equivalence class of valid elements of $\fskew \circ \scr{O} _0 ^\scr{C}$, and hence we view $f$ as a uniformly analytic NC function on the domain, 
\be \nbdom f := \bigcup _{\substack{F \in f, \\ F \in \fskew  \circ \scr{O} _0 ^\scr{C}}} \nbdom F, \label{globalequivdom} \ee defined by $f(X) := F(X)$ if $F \in f$ and $X \in \nbdom F$. In particular, $\nbdom f$ satisfies item (i) of the above theorem. That is, $\mr{Dom} _n \, f$ is analytic--Zariski open and dense for all sufficiently large $n \in \N$. The proof of this theorem relies on several preliminary results.

\begin{lemma} \label{globalmero}
If $F \in \fskew \circ \scr{O} _0 ^\scr{C}$, then for any $X \in \nbdom F$, $F(zX)$ is a globally meromorphic matrix-valued function. 
\end{lemma}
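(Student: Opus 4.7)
The plan is to proceed by structural induction on the NC rational expression $\fr$ for which $F = \fr(g_1,\ldots,g_k)$ with each $g_i \in \scr{O}_0^\scr{C}$. Fix $X \in \mr{Dom}_n\, F$ for some $n \in \N$; the goal is to show that $z \mapsto F(zX)$, which is \emph{a priori} only known to be defined near $z = 0$ and at $z = 1$, extends to a globally meromorphic $\C^{n\times n}$-valued function on all of $\C$.

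For the base case, $\fr$ is either a single free variable, so $F = g_i$ for some $i$, or $\fr$ is a scalar constant. The former is handled directly by Theorem \ref{merocompreal}: since $g_i$ admits a jointly compact realization, $g_i(zX)$ is a meromorphic $\C^{n\times n}$-valued function of $z \in \C$; the latter is trivial. For the inductive step I would exploit the assumption $X \in \nbdom F$, which forces every subexpression of $\fr$ to be defined at $X$. When $F = F_1 + F_2$ or $F = F_1 \cdot F_2$, the induction hypothesis gives that $F_1(zX)$ and $F_2(zX)$ are globally meromorphic, and the class of meromorphic $\C^{n\times n}$-valued functions on $\C$ is closed under sums and products. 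When $F = F_1^{-1}$, validity of $F$ at $X$ forces $F_1(X)$ to be an invertible matrix, so by induction $F_1(zX)$ is meromorphic with $\det F_1(zX)$ non-vanishing at $z = 1$, hence not identically zero; Cramer's rule then expresses $F(zX) = \mr{adj}(F_1(zX))/\det F_1(zX)$ as meromorphic on $\C$.

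The main (and essentially the only) obstacle is the inversion step: one must certify that the scalar function $z \mapsto \det F_1(zX)$ is not identically zero in order to invert within the meromorphic class. This is handled uniformly by the standing hypothesis $X \in \nbdom F$, which propagates along the expression tree of $\fr$ and guarantees that each inverted subexpression, evaluated at $z = 1$, is an invertible matrix. Apart from this bookkeeping, everything reduces at the leaves to Theorem \ref{merocompreal}.
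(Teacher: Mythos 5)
Your proof is correct and takes essentially the same route as the paper: the paper's own argument is a single sentence that invokes Theorem \ref{merocompreal} for each $g_j(zX)$ and then observes that $\fr \in \fskew$, leaving the closure of matrix-valued meromorphic functions under NC rational operations implicit. You expand that one-liner into the structural induction it tacitly rests on, and you correctly isolate the only nontrivial point: at an inversion node $F = F_1^{-1}$, the hypothesis $X \in \nbdom F$ propagates down the expression tree and forces $F_1(X)$ to be invertible, so $\det F_1(zX)$ is a meromorphic scalar function that is not identically zero, and Cramer's rule keeps the conclusion inside the meromorphic class. This is exactly the bookkeeping the paper elides.
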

\begin{proof}
If $F \in \scr{M} _0 ^\scr{C}$, then $F =\fr (g_1 , \cdots , g_m ) \in \fskew \circ \scr{O} _0 ^\scr{C}$ with each $g_j \in \scr{O} _0 ^\scr{C}$ and $\fr \in \fskew$. Hence, since each $g_j (zX)$ is meromorphic by Theorem \ref{merocompreal} and $\fr \in \fskew$, it follows that $F (zX)$ is meromorphic.
\end{proof}
\begin{prop} \label{localglobal}
Suppose that $F,G \in \fskew  \circ \scr{O} _0 ^\scr{S}$, where $\scr{S} = \scr{J} ^d$, and $\scr{J} \subseteq \scr{B} (\cH )$ is a two-sided ideal containing all finite--rank operators. Then, for any $Y \in \mr{Dom} _m \, F  \cap \mr{Dom} _m \, G$, there exist matrix-centre realizations, $F \sim _Y (\mbf{A}, \mbf{B}, \mbf{C}, \mbf{D})$, $G \sim _Y (\mbf{A}', \mbf{B}', \mbf{C}', \mbf{D}')$ where $\mbf{A} _j, \mbf{A} ' _j : \C ^{m \times m} \rightarrow \C ^{m \times m} \otimes \scr{J}$, $1 \leq j \leq d$. In particular, if $\scr{S} = \scr{C}$ or $\scr{S} = \scr{T} _p$, then $\mbf{A} _j$ takes values in $\scr{C} ( \C^m \otimes \cH)$ or $\scr{T} _p (\C ^m \otimes \cH)$, respectively. If $\scr{S} = \scr{C}$ and there exists an $r>0$ so that $F(X) = G(X)$ for all $X \in r \cdot \rball (Y)$, then $F(X) = G(X)$ for all $X \in \scr{D} ^Y (\mbf{A} ) \cap \scr{D} ^Y (\mbf{A}')$.
\end{prop}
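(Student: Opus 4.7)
The plan is to prove the proposition in two stages: first, establish the matrix-centre realization claim by induction on the NC rational expression structure of $F, G$; second, use the analytic Fredholm theorem together with the identity theorem for meromorphic matrix-valued functions to extend agreement from the uniform row-ball out to $\scr{D}^Y(\mbf{A}) \cap \scr{D}^Y(\mbf{A}')$.

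For the base of the induction, any familiar $g \in \scr{O}_0^\scr{J}$ has a minimal descriptor realization $(A,b,c)$ with $A \in \scr{J}^d$, hence a minimal FM realization whose $A$-component still lies in $\scr{J}^d$. Theorem~\ref{matrealthm} and Equation~(\ref{matrealformula}) give $\mbf{A}_j(G) = L_A(Y)^{-1}(G \otimes A_j)$, and since $\scr{J}$ is a two-sided ideal in $\scr{B}(\cH)$, the subspace $\C^{m \times m} \otimes \scr{J}$ is a two-sided ideal in $\scr{B}(\C^m \otimes \cH)$, so $\mbf{A}_j$ takes values there. The inductive step applies the matrix-centre FM algorithm, Equations~(\ref{matFMsum})--(\ref{matFMinv}): direct sums and sums preserve the ideal property blockwise; in the product case (\ref{matFMmult}), the diagonal blocks are in $\C^{m \times m} \otimes \scr{J}$ by induction, while the off-diagonal block $\mbf{B}_j(\cdot)\mbf{C}'$ is matrix-entry-wise a rank-one operator of the form $h (c')^*$ with $h \in \cH$ and $c' \in \cH'$, so it lies in $\C^{m \times m} \otimes \scr{F} \subseteq \C^{m \times m} \otimes \scr{J}$ thanks to the hypothesis that $\scr{J}$ contains all finite-rank operators; the inverse case (\ref{matFMinv}) is analogous, since $\mbf{A}_j^{(-1)} - \mbf{A}_j = -\mbf{B}_j(\cdot) \mbf{D}^{-1} \mbf{C}$ is again a finite-rank perturbation. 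Specialization to $\scr{S} = \scr{C}$ or $\scr{S} = \scr{T}_p$ is then immediate.

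For the remaining claim, I fix $X \in \scr{D}^Y(\mbf{A}) \cap \scr{D}^Y(\mbf{A}')$, set $H := X - I_s \otimes Y$, and apply the matrix-centre realization formula~(\ref{matrixrealform}) together with linearity of $\mbf{A}, \mbf{B}$ to write
$$ F(I_s \otimes Y + zH) = I_s \otimes \mbf{D} + z\, I_s \otimes \mbf{C}\, (I - z \mbf{A}(H))^{-1} \mbf{B}(H), $$
and analogously for $G$ using the primed realization. By the first part of the proposition specialised to $\scr{S} = \scr{C}$, the operator $\mbf{A}(H)$ is compact on $\C^{sm} \otimes \cH$, and similarly $\mbf{A}'(H)$ is compact on $\C^{sm} \otimes \cH'$. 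The analytic Fredholm theorem then makes $z \mapsto (I - z\mbf{A}(H))^{-1}$ a meromorphic operator-valued function on $\C$, so $\phi(z) := F(I_s \otimes Y + zH) - G(I_s \otimes Y + zH)$ is a meromorphic $\C^{sm \times sm}$-valued function on $\C$. Since $F = G$ on $r \cdot \rball(Y)$ and $I_s \otimes Y + zH$ lies in this row-ball for $|z|$ sufficiently small, $\phi$ vanishes on a neighbourhood of $0$, so the identity theorem forces $\phi \equiv 0$. The invertibility of $I - \mbf{A}(H)$ and $I - \mbf{A}'(H)$ (which is exactly the condition $X \in \scr{D}^Y(\mbf{A}) \cap \scr{D}^Y(\mbf{A}')$) makes $z = 1$ a regular point of $\phi$, so $F(X) - G(X) = \phi(1) = 0$.

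The main obstacle is the product step of the induction in part (i): one must identify that the cross-term $\mbf{B}_j(\cdot)\mbf{C}'$ is entrywise rank-one and thus finite-rank, which is precisely where the standing hypothesis that $\scr{J}$ contains all finite-rank operators is used to close the induction. Given part (i), the agreement statement in part (ii) is a fairly mechanical application of the analytic Fredholm theorem and the one-variable identity principle along complex lines through the centre $Y$.
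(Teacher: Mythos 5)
Your proof is correct and takes essentially the same approach as the paper, while supplying detail the paper leaves implicit. For part (i) the paper simply invokes the matrix-centre FM algorithm together with Equation~(\ref{matrealformula}); your induction, with the observation that the off-diagonal blocks $\mbf{B}_j(\cdot)\mbf{C}'$ and the perturbation $\mbf{B}_j(\cdot)\mbf{D}^{-1}\mbf{C}$ factor through $\C^m$ and are therefore finite rank (which is exactly where the hypothesis $\scr{F}(\cH)\subseteq\scr{J}$ is used), is the content that the paper's one-line citation compresses. For part (ii) the paper proves level-wise path-connectedness of $\scr{D}^Y_{sm}(\mbf{A})\cap\scr{D}^Y_{sm}(\mbf{A}')$ — by observing that along the line $z\mapsto I_s\otimes Y+z(X-I_s\otimes Y)$ the bad set is the reciprocal of the discrete spectrum of the compact operators $\mbf{A}(H)$ and $\mbf{A}'(H)$, so $X$ is path-connected to $Y$ — and then invokes the identity theorem in several complex variables. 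You instead stay on the line, note $\phi(z)=F(\cdot)-G(\cdot)$ is meromorphic via analytic Fredholm, and apply the one-variable identity theorem; since $z=1$ is a regular point you recover $F(X)=G(X)$. These are two organizations of the same underlying fact about discrete spectra of compact operators; your one-variable phrasing is slightly more self-contained and sidesteps having to argue that the intersection of the two invertibility domains is path-connected. One small caution: in the induction you should note that $Y\in\operatorname{Dom}_m F$ already forces $Y$ to lie in the domain of every subexpression of $\fr(g_1,\dots,g_k)$, so the matrix-centre realizations of the inner $g_i$ at $Y$ exist.
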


\begin{proof}
It follows from the FM algorithm for realizations about the matrix centre $Y$ in Subsection \ref{ss-matrixreal}, and from Equation (\ref{matrealformula}) of Theorem \ref{matrealthm}, that if $F \in \fskew \circ \scr{O} _0 ^\scr{S}$ with $\scr{S} = \scr{J} ^d$, and $Y \in \nbdom _m (F)$, then $F \sim _Y (\mbf{A}, \mbf{B}, \mbf{C}, \mbf{D})$ has a matrix-centre realization about $Y$, with $\mbf{A} _j$ taking values in $\C ^{m \times m} \otimes \scr{J}$. 

If $\scr{S} = \scr{C}$, then the invertibility domains, $\scr{D} ^Y (\mbf{A})$ and $\scr{D} ^Y (\mbf{A}')$ are necessarily level-wise connected since $\mbf{A}$ and $\mbf{A}'$ take values in compact operators. Indeed, if $Y \in \cdm$, and $X \in \scr{D} ^Y _{sm} (\mbf{A} )$, consider $X(z) = z(X-I_s \otimes Y) + I_s \otimes Y$, for $z \in \C$. Then $X(1) = X$, and $X(z) \in \scr{D} ^Y _{sm} (A)$ if and only if 
$$ L_{\mbf{A}} (X(z) - I_s \otimes Y) = I_{sm} \otimes I_\cH - z \mbf{A} (X - I_s \otimes Y), $$ is invertible, \emph{i.e.} if and only if $z^{-1}$ does not belong to the spectrum of $\mbf{A} (X - I_s \otimes Y)$. However, $\mbf{A} (X - I_s \otimes Y) \in \scr{C} (\C ^{sm} \otimes \cH )$ takes values in compact operators so that, assuming $\cH$ is separable, $\sigma (\mbf{A} (X - I_s \otimes Y) )$ is either $\{ 0 \}$, or $\{ 0 \} \cup \{ \la _j \} _{j=1} ^N$, $N \in \N \cup \{ \infty \}$, where $\la _j$ is a sequence of non-zero eigenvalues of finite multiplicities which converge to $0$ if $N = +\infty$. It follows that we can define a continuous path, $\ga :[0,1] \rightarrow \C$, connecting $0$ to $1$, so that $X (\ga (t)) \in \scr{D} _{sm} ^Y (\mbf{A} )$ for all $t \in [0,1]$. Hence $X = X(1)$ is path connected to $I_s \otimes Y$ in $\scr{D} _{sm} ^Y (\mbf{A} )$, and $\scr{D} _{sm} ^Y (\mbf{A} )$ is path-connected. The final claim now follows from the identity theorem in several complex variables. 
\end{proof}

\begin{prop} \label{polesum}
Consider an NC function $F:= \fr (g_1, \cdots , g_k) \in \fskew \circ \scr{O} _0 ^{\scr{S}}$,
with $\scr{S} = \scr{J} ^d$ and $\scr{J} =\scr{T} _p (\cH )$ or $\scr{J} = \scr{C} (\cH)$.  If $Y \in \mr{Dom} _m (F)$, then $F \sim _Y (\mbf{A}, \mbf{B}, \mbf{C} , \mbf{D})$ has a matrix--centre realization about $Y$ with $\mbf{A} _j : \C ^{m \times m} \rightarrow \C ^{m \times m} \otimes \scr{J}$. In particular, for any $X \in \C ^{(sm \times sm) \cdot d}$, $F(zX +I_s \otimes Y)$ is a meromorphic matrix--valued function. If $\scr{S} = \scr{T} _p$ and $\emptyset \neq \{ z _j \}_{j=1} ^N$ is the set of poles of $F(zX + I_s \otimes  Y)$ then the sequence $(1/z_j)$, arranged in decreasing order of magnitude and repeated according to order, is $p-$summable.
\end{prop}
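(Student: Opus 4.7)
The plan is to deduce the first two claims directly from Proposition \ref{localglobal} and the matrix-centre realization formula, and then to obtain the $p$-summability via Weyl's inequality applied to the Schatten class in which $\mbf{A}(X)$ sits. Concretely, to produce the matrix-centre realization $F \sim_Y (\mbf{A}, \mbf{B}, \mbf{C}, \mbf{D})$ with $\mbf{A}_j$ taking values in $\C^{m\times m} \otimes \scr{J}$, I would parse $F = \fr(g_1,\ldots,g_k)$ and apply the matrix-centre Fornasini--Marchesini algorithm (\ref{matFMsum}--\ref{matFMinv}) inductively to matrix-centre realizations of the $g_i$ about $Y$ (which exist by Theorem \ref{matrealthm} since $Y$ lies in the domain of each $g_i$). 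Each of the three arithmetic steps---direct sum for $+$, upper-triangular block for $\cdot$, and the finite-rank perturbation $\mbf{A}_j - \mbf{B}_j(\cdot)\mbf{D}^{-1}\mbf{C}$ for $^{-1}$---preserves the property that $\mbf{A}_j$ takes values in $\C^{m\times m} \otimes \scr{J}$, since $\scr{J}$ is a two-sided ideal containing the finite-rank operators. The meromorphy of $F(zX + I_s\otimes Y)$ then follows from the matrix-centre realization formula (\ref{matrixrealform}),
\[
F(zX + I_s\otimes Y) = I_s \otimes \mbf{D} + I_s \otimes \mbf{C}\,\bigl(I_{sm}\otimes I_\cH - z\mbf{A}(X)\bigr)^{-1} z\,\mbf{B}(X),
\]
together with Theorem \ref{compresolvent} applied to the compact operator $\mbf{A}(X) \in \C^{sm\times sm}\otimes\scr{J} \subseteq \scr{C}(\C^{sm}\otimes\cH)$, whose resolvent is meromorphic on $\C\sm\{0\}$.

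For the summability, I would specialize to $\scr{J} = \scr{T}_p(\cH)$ and use that $\C^{sm\times sm}\otimes\scr{T}_p(\cH)$ identifies naturally with $\scr{T}_p(\C^{sm}\otimes\cH)$, since tensoring with a fixed finite matrix preserves Schatten-$p$ membership. Hence $\mbf{A}(X)\in\scr{T}_p(\C^{sm}\otimes\cH)$, and if $(\la_i)$ denotes the sequence of non-zero eigenvalues of $\mbf{A}(X)$ in decreasing magnitude and repeated according to algebraic multiplicity, Weyl's inequality gives $\sum_i |\la_i|^p \le \|\mbf{A}(X)\|_p^p < \infty$. Every pole $z_j$ of $F(zX + I_s\otimes Y)$ satisfies $1/z_j \in \sigma(\mbf{A}(X))\sm\{0\}$, for at any other nonzero point the resolvent $(I - z\mbf{A}(X))^{-1}$, and hence $F(zX + I_s\otimes Y)$, is holomorphic. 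Moreover, Theorem \ref{compresolvent} bounds the order $n_j$ of the pole of $F$ at $z_j$ above by the order of the pole of $(I - z\mbf{A}(X))^{-1}$ at $z_j$, which equals the size of the largest Jordan block of $\mbf{A}(X)$ at the eigenvalue $\la_j = 1/z_j$, and hence by the algebraic multiplicity $m_j$ of $\la_j$. Combining these,
\[
\sum_j n_j |1/z_j|^p \le \sum_j m_j |\la_j|^p = \sum_i |\la_i|^p \le \|\mbf{A}(X)\|_p^p < \infty,
\]
which is precisely the asserted $p$-summability.

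The main subtlety is the pole-order comparison $n_j \le m_j$: one must argue that the matrix-valued pole order of $F(zX+I_s\otimes Y)$ never exceeds the operator-valued pole order of the resolvent driving it, even allowing for possible cancellations against $\mbf{B}$ and $\mbf{C}$. This is what makes Theorem \ref{compresolvent} essential, as it simultaneously identifies the location of the poles, their maximum orders, and their residue structure in terms of Riesz idempotents and Jordan block sizes of $\mbf{A}(X)$.
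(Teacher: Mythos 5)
Your proposal follows essentially the same route as the paper: the FM algorithm on matrix-centre realizations (Theorem \ref{matrealthm} and Equations (\ref{matFMsum})--(\ref{matFMinv})) to place $\mbf{A}_j$ in $\C^{m\times m}\otimes\scr{J}$, the rewrite $L_{\mbf{A}}(zX)^{-1} = z^{-1}R_{\mbf{A}(X)}(z^{-1})$ plus Theorem \ref{compresolvent} for meromorphy and the pole-order bound, and Weyl's inequality for $p$-summability. You make the bookkeeping $n_j \le (\text{largest Jordan block at }\la_j) \le (\text{algebraic multiplicity of }\la_j)$ slightly more explicit than the paper's formulation (``a subsequence of the eigenvalue sequence repeated according to algebraic multiplicity''), but the argument is the same.
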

\begin{proof}
If $F = \fr (g_1, \cdots , g_k)$, as above and $Y \in \mr{Dom} _m \, F$ then by applying the FM algorithm to realizations around a matrix centre, we see that $F = \fr (g_1, \cdots , g_k )$ has a matrix centre realization around $Y$, $F \sim _Y (\mbf{A}, \mbf{B}, \mbf{C}, \mbf{D} )$, where each $\mbf{A} _j : \C ^{m\times m} \rightarrow \C ^{m\times m} \otimes \scr{J}$ takes values in compact operators, by Theorem \ref{matrealthm}. For $z \in \C \sm \{ 0 \}$,
$$ L_{\mbf{A}} (zX ) ^{-1} = z^{-1} \left( z^{-1} I_s \otimes I - \mbf{A} ( X)  \right) ^{-1} = z^{-1} R _{\mbf{A} (X)} (z^{-1} ). $$  It follows that $L_{\mbf{A}} (zX) ^{-1}$ is a meromorphic $\scr{C} (\C ^{sm} \otimes \cH)-$valued function, and hence
\ba F(zX + I_s \otimes Y ) & = & I_{sm} \otimes \mbf{D} + I_{sm} \otimes \mbf{C} \left( I_{sm} \otimes I_\cH - \mbf{A} (zX) \right) ^{-1} \mbf{B} (zX) \\
& = &  I_{sm} \otimes \mbf{D} + I_{sm} \otimes \mbf{C} \left( z^{-1} I_{sm} \otimes I_\cH - \mbf{A} (X) \right) ^{-1} \mbf{B} (X),  \ea is a meromorphic matrix--valued function. If $\scr{S} = \scr{T} _p$, then $\mbf{A} (X)$ takes values in $\scr{T} _p (\C ^{sm} \otimes \cH )$ by Equation (\ref{matrealformula}) of Theorem \ref{matrealthm}. Moreover if $R_{\mbf{A} (X)} (\la)$ has a pole of order $n_0$ at $\la_0 \neq 0$, it follows that $F$ has a pole of order at most $n_0$ at $z_0 = \la _0 ^{-1}$. Since $\mbf{A} (X) \in \scr{T} _p ( \C ^{sm} \otimes \cH )$, the sequence $(z_j ^{-1} )$, where $z_j$ is the sequence of poles of $F$ repeated according to order, is a subsequence of the sequence of eigenvalues of $\mbf{A} (X)$, repeated according to algebraic multiplicity, and is therefore $p-$summable. 
\end{proof}

\begin{thm} \label{globalmerothm2}
If $\mbf{A} _j : \C ^{m\times m} \rightarrow \scr{C} ( \C ^m \otimes \cH )$, $1 \leq j \leq d$ is a $d-$tuple of completely bounded linear maps, $X = ( X ^{(1)}, \cdots, X^{(d)} )$ is a $d-$tuple of $sm \times sm$ generic matrices, $X^{(\ell)} = ( x^{(\ell)} _{i,j} )$, and $Y \in \cdm$ is fixed, then the set of all $( x^{(\ell)} _{i,j} ) \subseteq \C ^{d\cdot s^2 m^2}$ so that $L_{\mbf{A}} ( (x ^{(\ell)} _{ij}) -I_s \otimes Y)$ is not invertible is either: (i) all of $\C ^{d\cdot s^2m^2}$, (ii) empty,  or (iii) an analytic subset of codimension $1$. In particular, the $sm$ level of the invertibility domain, $\scr{D} ^Y _{sm} (\mbf{A} )$, of $L_{\mbf{A}}$, is either empty or analytic--Zariski open and dense in $\C ^{d\cdot s^2 m^2}$.

Moreover, if $\mbf{A} _j : \C ^{m \times m} \rightarrow \scr{T} _p ( \C ^m \otimes \cH )$ then $h ( x ^{(\ell)} _{i,j} ) := \mr{det} _F \, L_{\mbf{A}} ( (x ^{(\ell)} _{ij}) -I_s \otimes Y)$ is an entire function so that the non-invertibility set of $L_{\mbf{A}} \left( (x ^{(\ell)} _{ij}) - I_s \otimes Y \right)$ is either: (i) all of $\C ^{d \cdot s^2 m^2}$,  (ii) empty, or (iii) the analytic--Zariski closed hypersurface given by the variety of the entire function $h$. 
\end{thm}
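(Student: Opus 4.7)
The plan is to convert $L_{\mbf{A}}(X - I_s \otimes Y)$ into a scalar linear pencil on $\C^{sm} \otimes \cH$ with compact (respectively Schatten $p$-class) coefficients and then invoke the determinant machinery already used in the proof of Theorem~\ref{globalmerothm}.

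First I would unwind the ampliation. Following the convention of Subsection~\ref{ss-matrixreal}, $\mbf{A}_j$ acts on $sm \times sm$ block matrices via $\mr{id}_s \otimes \mbf{A}_j$, so expanding each generic matrix $X^{(j)} = \sum_{k,k',a,b} x^{(j)}_{k,k',a,b}\, E^{(s)}_{k,k'} \otimes E^{(m)}_{a,b}$ in standard matrix units and using linearity of $\mbf{A}_j$ yields
\[
L_{\mbf{A}}(X - I_s \otimes Y) = I_{sm} \otimes I_{\cH} - \sum_{\nu} h_\nu\, T_\nu,
\]
where $\nu$ ranges over the $N := d \cdot s^2 m^2$ multi-indices $(j,k,k',a,b)$, the new indeterminates $h^{(j)}_{k,k',a,b} := x^{(j)}_{k,k',a,b} - \delta_{k,k'}[Y^{(j)}]_{a,b}$ are obtained by an affine change of coordinates, and $T_\nu := E^{(s)}_{k,k'} \otimes \mbf{A}_j(E^{(m)}_{a,b})$. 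Since $\mbf{A}_j(E^{(m)}_{a,b}) \in \scr{C}(\C^m \otimes \cH)$ and tensoring with the finite-rank matrix unit $E^{(s)}_{k,k'}$ preserves the compact (respectively $\scr{T}_p$) operator ideals, each $T_\nu$ lies in $\scr{C}(\C^{sm} \otimes \cH)$ (respectively in $\scr{T}_p(\C^{sm} \otimes \cH)$).

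Next I would apply \cite[Theorem~3]{compactdet} verbatim as in the proof of Theorem~\ref{globalmerothm} to this scalar linear pencil: the set of $h \in \C^N$ at which $I_{sm}\otimes I_{\cH} - \sum_\nu h_\nu T_\nu$ fails to be invertible is either empty, all of $\C^N$, or an analytic subvariety of codimension~$1$. The Schatten-$p$ case proceeds identically but invokes the regularized Fredholm determinant of \cite[Theorem~3.3, Theorem~3.9]{Simon-det}, producing an entire function
\[
h(h_\nu) := \mr{det}_F\!\left(I_{sm}\otimes I_{\cH} - \sum_\nu h_\nu T_\nu\right)
\]
whose variety is precisely the non-invertibility locus of the pencil. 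Undoing the affine substitution $h_\nu \leftrightarrow x_\nu$ transports these descriptions back to $(x^{(\ell)}_{i,j}) \in \C^{d \cdot s^2 m^2}$, which is the claim of the theorem. The ``in particular'' clause is immediate: $h = 0$ (equivalently $X = I_s \otimes Y$) makes the pencil equal to $I_{sm}\otimes I_{\cH}$, so whenever $\scr{D}^Y_{sm}(\mbf{A})$ is non-empty the non-invertibility set cannot be all of $\C^N$, and only alternatives (ii) and (iii) remain.

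The only substantive obstacle is the bookkeeping around the ampliation and affine shift; the analytic content is entirely supplied by the cited determinant theorems, which have already been marshalled for Theorem~\ref{globalmerothm}.
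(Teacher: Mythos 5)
Your proof is correct and is essentially the same argument the paper intends. The paper's own proof of Theorem~\ref{globalmerothm2} is a one-line reference to the proof of Theorem~\ref{globalmerothm}, which in turn writes $L_A(x^{(\ell)}_{ij})$ as a scalar affine pencil $I - \sum x^{(\ell)}_{ij}\, E_{i,j}\otimes A_\ell$ with compact (resp.\ Schatten-$p$) coefficients and invokes \cite[Theorem 3]{compactdet} (resp.\ \cite{Simon-det}). What you have added, correctly and usefully, is the explicit bookkeeping: unwinding the ampliation $\mathrm{id}_s\otimes\mbf{A}_j$ over matrix units $E^{(s)}_{k,k'}\otimes E^{(m)}_{a,b}$ and absorbing the affine shift by $I_s\otimes Y$ into new coordinates, so that $L_{\mbf{A}}(X-I_s\otimes Y)$ becomes a scalar pencil whose coefficients $E^{(s)}_{k,k'}\otimes\mbf{A}_j(E^{(m)}_{a,b})$ visibly lie in the correct operator ideal. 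That reduction is exactly what the paper's citation silently relies on. One minor remark: your observation that $h=0$ always gives $L_{\mbf{A}}(0)=I_{sm}\otimes I_{\cH}$ shows that $\scr{D}^Y_{sm}(\mbf{A})$ is in fact \emph{never} empty, so case (i) of the trichotomy never occurs and the ``either empty or\ldots'' phrasing in the statement is vacuously accommodated; this is consistent with, and slightly sharper than, what the theorem asserts.
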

\begin{proof}
The proof is exactly the same as that of Theorem \ref{globalmerothm}, using the results of \cite{compactdet} and of \cite{Simon-det} on Fredholm determinants.  
\end{proof}

\begin{proof}[Proof of Theorem \ref{globalmerodomain}]
\noindent Proof of (i): If $F = \fr (f_1, \cdots, f_p) \in \fskew \circ \scr{O} _0 ^\scr{C}$ is valid, then since $\fr$ is obtained by applying finitely many of the arithmetic operations of addition, multiplication and inversion to $k$ formal NC variables, since both addition and multiplication are jointly matrix--norm continuous and inversion is matrix--norm continuous, and since each $f_i \sim (A^{(i)}, B ^{(i)} , C_i, D_i)$, with $A^{(i)} \in \scr{C}$, can be defined as a uniformly analytic NC function on a uniformly open and joint-similarity invariant NC set that contains a uniformly open row-ball about $0$ of positive radius, it follows that $\nbdom F$ is also a uniformly open and joint-similarity invariant NC set, and $F$ is a uniformly analytic NC function on $\nbdom F$. By Theorem \ref{matrealthm} and the FM algorithm for realizations about a matrix centre, if $Y \in \mr{Dom} _m \, F$, then $F \sim _Y (\mbf{A}, \mbf{B}, \mbf{C}, \mbf{D})$, with $\mbf{A}$ taking values in compact operators, and the invertibility domain of $L _{\mbf{A}}$, $\scr{D} ^Y (\mbf{A} )$, is uniformly-open, joint similarity invariant and contained in $\nbdom F$. By Theorem \ref{globalmerothm2}, $\scr{D} ^Y _m (\mbf{A})$, is analytic--Zariski open and dense in $\C ^{d\cdot m^2}$. Furthermore, since $F$ is valid, let $N \in \N$ be the minimal natural number so that $\mr{Dom} _N \, F \neq \emptyset$. Theorem \ref{globalmerothm2} and the fact that $F$ is NC then implies that $\mr{Dom} _n \, F$ is the complement of an analytic--Zariski closed set of co-dimension $1$, or all of $\C ^{d\cdot n^2}$ for any $n \geq N$. In particular, $\mr{Dom} _n \, F$ is analytic--Zariski open and dense in $\C ^{d\cdot n^2}$ for any $n \geq N$.

\noindent Proof of (ii): If $F$ is locally valid then it is clearly valid. Conversely, if $F$ is valid, then by (i), $\mr{Dom} _n \, F$ is analytic--Zariski dense and open in $\C ^{d\cdot n^2}$ for all sufficiently large $n$. In particular, $\nbdom F \cap r \cdot \rball$ is not empty for any $r>0$ so that $F$ is locally valid. 

\noindent Proof of (iii): If $F, G$ are both valid (or equivalently, locally valid) then $F \sim G$ necessarily implies that $F \sim _0 G$. Conversely, if $F \sim _0 G$, then $F(X) = G(X)$ for all $X \in \nbdom F \cap \nbdom G \cap \scr{U}$, where $\scr{U}$ is a uniformly open neighbourhood of $0$. Given any $Y \in \scr{U} \cap \nbdom F \cap \nbdom G$, $F \sim _Y (\mbf{A}, \mbf{B}, \mbf{C}, \mbf{D})$, $G\sim _Y (\mbf{A} ', \cdots )$, with $\mbf{A}, \mbf{A} '$ taking values in compact operators, and there is an $r>0$ so that $r \cdot \rball (Y) \subseteq \scr{D} ^Y (\mbf{A}) \cap \scr{D} ^Y (\mbf{A} ') \cap \scr{U} \subseteq \scr{U} \cap \nbdom F \cap \nbdom G$. Since $F(X) = G(X)$ for all $X \in r \cdot \rball (Y)$, Proposition \ref{localglobal} implies that $F(X) = G(X)$ for all $X \in \scr{D} ^Y (\mbf{A}) \cap \scr{D} ^Y (\mbf{A} ')$. Theorem \ref{globalmerothm2} now implies that $\scr{D} _n ^Y (\mbf{A} ) \cap \scr{D} _n ^Y (\mbf{A} ')$ is analytic--Zariski open and dense in $\C ^{dn^2}$ for all sufficiently large $n$, and hence this intersection is also matrix-norm dense in $\cdn$. That, is $\scr{D}  ^Y (\mbf{A} ) \cap \scr{D}  ^Y (\mbf{A} ')$ is a matrix--norm dense subset of $\nbdom F \cap \nbdom G$ at any fixed level. Hence, since $F(X) = G(X)$ for every $X \in \scr{D}  ^Y (\mbf{A} ) \cap \scr{D}  ^Y (\mbf{A} ')$, and $F,G$ are uniformly continuous, it follows that $F(X) = G(X)$ for all $X \in \nbdom F \cap \nbdom G$ and $F, G$ are globally evaluation equivalent, $F\sim G$.  

\noindent Proof of (iv): Since $F \sim G$ if and only if $F \sim _0 G$ by (iii), and since $\sim _0$ is an equivalence relation, it follows immediately that $\sim$ is also an equivalence relation.
\end{proof}

\begin{remark}
Classically, the definition of a meromorphic function is a local one. Namely, a function, $f$, of several complex variables, is meromorphic on an open domain $U \subseteq \C^d$, if it is analytic on $U$ with the possible exception of an analytic hypersurface. Moreover, for every $z \in U$, there is then a neighbourhood of $z$, $z \in V_z \subset U$, so that $f|_{V_z}$ is a quotient of two holomorphic functions on $V_z$. It is then a result that on a contractible (even less is required) Stein domain, every meromorphic function is the quotient of holomorphic functions on $U$. Our definition of an NC meromorphic function, in this paper, is a global one. This raises the interesting question: What would be a satisfactory, `local', definition of an NC meromorphic function? Every global NC meromorphic function, as we have defined it, is defined at all levels of the NC universe, from a certain level onward, with the possible level-wise exception of an analytic hypersurface. Moreover, these analytic hypersurfaces satisfy certain NC regularity assumptions. However, it is not obvious that at every irreducible point such an NC function agrees with an `NC uniformly meromorphic germ' at this point.
\end{remark}

\subsection{Skew fields of uniformly meromorphic NC functions} \label{ss-meroskew}

In addition to $\scr{C} = \scr{C} (\cH ) ^d$ and $\scr{T} _p = \scr{T} _p (\cH ) ^d$, recall our notations, $\scr{B} = \scr{B} (\cH ) ^d$ and $\scr{F} = \scr{F} (\cH ) ^d$, where $\scr{F} (\cH)$ denotes the two-sided ideal of finite--rank operators on $\cH$. Further recall that we can identify $\ratfps$ with $\scr{O} _0 ^\scr{F}$ and $\scr{O} _0 ^u$ with $\scr{O} _0 ^\scr{B}$. 

\begin{thm} \label{skewchain}
For any $1 \leq q < p < + \infty$, $$ \ratfps \subsetneqq \scr{O} _0 ^{\scr{T} _q} \subsetneqq \scr{O} _0 ^{\scr{T} _p} \subsetneqq \scr{O} _0 ^\scr{C} \subsetneqq \scr{O} _0 ^u, $$ and 
$$\fskew \subsetneqq \scr{M} _0 ^{\scr{T} _q} \subsetneqq \scr{M} _0 ^{\scr{T} _p} \subsetneqq \scr{M} _0 ^\scr{C} \subsetneqq \scr{M} _0 ^u. $$
\end{thm}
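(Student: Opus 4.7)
The soft inclusions $\ratfps = \scr{O}_0^{\scr{F}} \subseteq \scr{O}_0^{\scr{T}_q} \subseteq \scr{O}_0^{\scr{T}_p} \subseteq \scr{O}_0^\scr{C} \subseteq \scr{O}_0^u$ follow immediately from the nesting $\scr{F} \subseteq \scr{T}_q \subseteq \scr{T}_p \subseteq \scr{C} \subseteq \scr{B}$ of operator $d$-tuples and the definition of $\scr{O}_0^\scr{S}$. By the corollary preceding the theorem in Section \ref{subsemi}, each $\scr{M}_0^\scr{S}$ is the skew subfield of $\scr{M}_0^u$ generated by $\scr{O}_0^\scr{S}$; in particular $\scr{M}_0^\scr{S} \subseteq \fskew \circ \scr{O}_0^\scr{S}$, so the corresponding chain holds for the universal skew fields as well. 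It remains to establish the four properness assertions.

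For each consecutive pair $(\scr{S}_1, \scr{S}_2)$ in the chain I would exhibit a single-variable ($d=1$) example $f \in \scr{O}_0^{\scr{S}_2} \setminus \scr{M}_0^{\scr{S}_1}$. Since $\scr{O}_0^{\scr{S}_1} \subseteq \scr{M}_0^{\scr{S}_1}$, each such $f$ simultaneously witnesses the strictness of both the semifir and the skew-field containment at that stage.

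For $\ratfps \subsetneqq \scr{O}_0^{\scr{T}_q}$, I take the transcendental entire function $f(z) = \sum_{n=0}^\infty e^{-n^2} z^n$. The explicit realization of Theorem \ref{entire-qnc} has singular values $n^{2/n} e^{-n}$ with multiplicity $n$ at block $n$, so $\mathrm{tr}\,|A|^q = \sum_{n \geq 1} n \cdot n^{2q/n} e^{-qn} < \infty$ for every $q \geq 1$, placing $f$ in $\scr{O}_0^{\scr{T}_q}$ while manifestly excluding it from $\fskew$. For $\scr{M}_0^{\scr{T}_q} \subsetneqq \scr{M}_0^{\scr{T}_p}$ with $q<p$, pick a sequence $(\lambda_j) \in \ell^p \setminus \ell^q$, for instance $\lambda_j = (j^{1/p} \log^{2/p}(j+1))^{-1}$, and set $A = \mathrm{diag}(\lambda_j) \in \scr{T}_p(\ell^2) \setminus \scr{T}_q(\ell^2)$, $b = c = (1/j)_{j \geq 1} \in \ell^2$; then $f(z) = b^*(I-zA)^{-1}c = \sum_j j^{-2}(1-\lambda_j z)^{-1}$ is meromorphic on $\C$ with a simple pole at each $z_j = 1/\lambda_j$, so $f \in \scr{O}_0^{\scr{T}_p}$. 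Proposition \ref{polesum}, applied at $Y=0$ with $X = 1$, forces every $F \in \fskew \circ \scr{O}_0^{\scr{T}_q}$ to have pole-reciprocals in $\ell^q$, ruling out our $f$ since $(1/z_j) = (\lambda_j) \notin \ell^q$. For $\scr{M}_0^{\scr{T}_p} \subsetneqq \scr{M}_0^\scr{C}$, I repeat the same construction with $\lambda_j = 1/\log(j+2) \in c_0 \setminus \bigcup_{p \geq 1} \ell^p$: then $A$ is compact but in no Schatten class, and Proposition \ref{polesum} excludes $f$ from every $\scr{M}_0^{\scr{T}_p}$. Finally, for $\scr{M}_0^\scr{C} \subsetneqq \scr{M}_0^u$, I take $f(z) = \log(1+z)$: its germ at $0$ lies in $\scr{O}_0^u$ by Lemma \ref{uarealize}, but Lemma \ref{globalmero} shows any $F \in \fskew \circ \scr{O}_0^\scr{C}$ is meromorphic on $\C$ when restricted to the scalar diagonal, contradicting the branch point of $\log(1+z)$ at $z = -1$.

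The main bookkeeping obstacle is the Schatten estimate in the entire-function case and the verification that in the diagonal constructions the purported poles of $f$ are not accidentally cancelled; for the latter, the residue of $f$ at $z_j = 1/\lambda_j$ is $-j^{-2}\lambda_j^{-1} \neq 0$, so every pole is genuine and Proposition \ref{polesum} applies. The strict inclusions of semifirs are then immediate corollaries of the strict inclusions of skew fields.
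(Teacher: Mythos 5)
Your proposal is essentially correct and follows the same overall strategy as the paper's proof: the soft inclusions follow from the nesting of two-sided ideals, and each strict inclusion is witnessed by an explicit one-variable example combined with the Schatten pole-summability constraint of Proposition~\ref{polesum}. The paper's examples are structurally parallel to yours (diagonal operator tuples supported in the first coordinate, plus a germ at $0$ that does not extend meromorphically), so your route is not genuinely different, though a few steps are handled in an alternate way and one step needs a small repair.

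For the first strict inclusion the paper argues via pseudo-similarity and Theorem~\ref{minunique} that an infinite-dimensional minimal Schatten realization cannot coexist with a finite one; your alternative---exhibiting $\sum_{n} e^{-n^2} z^n$ and checking that the realization of Theorem~\ref{entire-qnc} lies in every Schatten class---is more concrete and equally valid. The repair concerns the phrase ``Proposition~\ref{polesum}, applied at $Y=0$ with $X=1$'': for a general representative $F=\fr(g_1,\dots,g_k)\in\fskew\circ\scr{O}_0^{\scr{T}_q}$, the scalar point $0$ need not lie in $\nbdom F$, the domain of the formal rational expression where all intermediate inverses exist, so the proposition cannot be invoked there directly. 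Choose instead $Y\in\mr{Dom}_1\, F\sm\{0\}$ near $0$ (this is essentially what the paper does), apply the proposition at that $Y$ with $X=1$, and note that $((z_j-Y)^{-1})_{j}\in\ell^q$ is equivalent to $(z_j^{-1})_{j}=(\lambda_j)_{j}\in\ell^q$ because the poles $z_j=\lambda_j^{-1}\to\infty$; the contradiction then follows exactly as you intended, and the residue check you flag is precisely what guarantees no cancellation. The identical adjustment should be made in your last separation when you apply Lemma~\ref{globalmero} with $X=1$: take any $X\in\mr{Dom}_1\, F\sm\{0\}$ instead, and rescale. With these corrections your argument is complete. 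Your choice $\log(1+z)$ is a perfectly good substitute for the paper's $e^{1/(1-z)}$ at the final step, since neither germ extends to a meromorphic function on $\C$.
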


\begin{proof}
Since, for any $1 \leq q < p < +\infty$ we have the inclusions,
$$ \scr{F} \subseteq \scr{T} _q \subseteq \scr{T} _p \subseteq \scr{C} \subseteq \scr{B}, $$ the inclusions of the corresponding semifirs and of their universal skew fields is immediate.

It is clear that $\ratfps = \scr{O} _0 ^\scr{F} \subsetneqq \scr{O} _0 ^{\scr{T} _q}$ since, if $(A,b,c) \sim f$ is a minimal realization of any $f \in \scr{O} _0 ^{\scr{T} _q}$, then if $f \in \ratfps$, it would follow that $f \sim (A' ,b',c')$ has a minimal and finite--dimensional realization. That is, $A'$ is finite-rank. However, it would then follow that $A, A'$ are pseudo--similar by Theorem \ref{minunique}, which would imply that $A$ has finite--rank, which is generally impossible. (One can easily construct examples of compact and Schatten $q-$class minimal realizations that are not finite--dimensional, see \emph{e.g.} Example \ref{entireeg}.)

Similarly, since we identify $\fskew \simeq \scr{M} _0 ^{\scr{F} }$, where $\scr{F}$ consists of all $d-$tuples of finite rank operators, it follows that $\fskew \subsetneqq \scr{M} _0 ^{\scr{T} _q}$. Indeed, take any $f \in \scr{O} _0 ^{\scr{T} _q}$ that has a minimal and Schatten $q-$class realization $(A,b,c)$, where $A$ is not finite--rank. If $f \in \fskew$, then $f \in \ratfps$, since $f$ is defined at $0$, and $f \notin \ratfps$ by the previous argument.

Suppose that $1 \leq q < p < +\infty$ and that
$$ A_1 = \mr{diag} \, \frac{1}{\sqrt[q]{k}}, \quad \mbox{and} \quad A_j =0, \quad 2\leq j \leq d. $$ Since $\frac{p}{q}>1$, the sequence of diagonal values of $A_1$ belongs to $\ell ^p$ but not $\ell ^q$. Hence $A_1 \in \scr{T} _p (\cH ) \sm \scr{T} _q (\cH )$ so that $A \in \scr{T} _p \sm \scr{T} _q$. Since $A_1$ is self-adjoint (and positive) and multiplicity--free, we can choose $b=c=x$, where $x \in \cH$ is any cyclic vector for $A_1$ and $(A,b,c)$ is then a minimal realization of some familiar $f \in \scr{O} _0 ^{\scr{T} _p}$.  Note that $\| A _1 \| = \| A \| _{\mr{col}}  < 1$, so that for any $n \in \N$, the point 
$$ X := (I_n , 0 _n , \cdots , 0_n ) \in \nbdom f. $$ 
Since $A$ is compact, the function 
$$ g(\la) := \la ^{-1} f(\la ^{-1} X) = I_n \otimes b^* ( \la I_n \otimes I_\cH  - I_n \otimes A_1 ) ^{-1} I_n \otimes c, $$ is a meromorphic matrix--valued function in $\C$. If $\scr{O} ^{\scr{T} _p} _0 \subseteq \scr{O} ^{\scr{T} _q} _0$, then $f$ also has a minimal realization $f \sim (A', b' , c')$ with $A' \in \scr{T} _q$. But by Corollary \ref{anequiv} it follows that $$\sigma (A_1 ' ) \sm \{ 0 \} = \sigma (A _1 ) \sm \{ 0 \} = \left\{ \frac{1}{\sqrt[q]{k}}\right\}, $$ which would imply that the eigenvalues of $A_1$ are $q-$summable, a contradiction. We conclude that $\scr{O} _0 ^{\scr{T} _q} \subsetneqq \scr{O} _0 ^{\scr{T} _p}$ for $q<p$. A similar argument works for the strict inclusion $\scr{O} _0 ^{\scr{T} _p} \subsetneqq \scr{O} _0 ^\scr{C}$ by considering $A_1 := \mr{diag} \, \frac{1}{\ln (k+1)}$, which is compact but not in $\scr{T} ^p (\cH )$ for any $p \in [1 , +\infty)$. Indeed, this diagonal sequence converges to $0$ but is not $p-$summable for any $p \geq 1$.

If $f \in \scr{O} _0 ^{\scr{T} _p} \sm \scr{O} _0 ^{\scr{T} _q}$, $f \sim (A,b,c)$ is the above example, we claim also that $f \notin \scr{M} _0 ^{\scr{T} _q}$ so that $\scr{M} _0 ^{\scr{T} _q} \subsetneqq \scr{M} _0 ^{\scr{T} _p}$. Indeed, if this $f \in \scr{M} _0 ^{\scr{T} _q}$, then since $f$ can be identified with an NC rational expression in elements of $\scr{O} _0 ^{\scr{T} _q}$, it has a matrix-realization about some matrix point $Y \in \cdm$ in its domain, $f \sim _Y (\mbf{A}, \mbf{B}, \mbf{C}, \mbf{D} )$, where $Y$ can be chosen sufficiently close to $0$ so that $Y \in \scr{D} (A)$. By Proposition \ref{polesum}, it follows that the matrix--valued function, $g(z) := f (z Y + Y )$ has poles that, when inverted, and arranged as a sequence repeated according to order, form a $q-$summable sequence. Also, since $\scr{D} (A)$ and $\scr{D} ^Y (\mbf{A} )$ are uniformly open, we can assume, without loss in generality, that $Y_1$ is not nilpotent. Then, since $f \sim (A,b,c)$,
$$ g (\la )  :=  \la ^{-1} f \left( \frac{1-\la}{\la} Y + Y \right) = I_m \otimes b^* \left( \la I_m \otimes I_\cH - Y_1 \otimes A_1 \right) ^{-1} I_m \otimes c. $$ Since $\sigma (Y_1 \otimes A_1) = \sigma (Y_1) \cdot \sigma (A_1)$ and $\sigma (Y_1) \neq \{ 0 \}$ by assumption, it follows that the eigenvalues of $A_1$ are $q-$summable, which is again, a contradiction. A similar argument shows $\scr{M} _0 ^{\scr{T} _p} \subsetneqq \scr{M} _0 ^\scr{C}$. 

We now prove the strict inclusion of $\scr{O} _0 ^\scr{C}$ in $\scr{O} _0 ^u = \scr{O} _0 ^\scr{B}$. For simplicity, first consider the case where $d=1$. If $f \in \scr{O} _0 ^u$ has a compact realization then $f$ is a meromorphic function in $\C$ with no pole at $0$ by Theorem \ref{merodomain}. If it were true that $\scr{O} _0 ^\scr{C} = \scr{O} _0 ^u$, then any analytic germ at $0$, such as that of the function $f(z) := e^{\frac{1}{1-z}}$, would have a compact realization and hence would agree, in an open neighbourhood of $0$, with the analytic germ of a function that extends globally to a meromorphic function. This would imply that $f$ is itself, meromorphic, a contradiction. This proves that $\scr{O} _0 ^\scr{C} \subsetneqq \scr{O} _0 ^u$. Similarly, by Lemma \ref{globalmero}, this $f \notin \scr{M} _0 ^\scr{C}$ as $\scr{M} _0 ^\scr{C}$ consists of meromorphic germs at $0$ that extend analytically to global meromorphic functions. A similar argument works for $d>1$ taking, \emph{e.g.} $f(X) = \mr{exp} (I - X_1 ) ^{-1}$ or $f(X) = \mr{exp} (I - X_1 X_2 ) ^{-1}$ and applying Theorem \ref{merocompreal} and Lemma \ref{globalmero}.
\end{proof}

\paragraph{Schatten $p-$class skew fields.} By the Weierstrass factorization theorem any entire function, $h$, with zero sequence, $(z_j)$, repeated according to order, can be expressed as a product,
\be  h(z) = e^{f(z)} z^m \prod _{k=1} ^\infty \left( 1 - \frac{z}{z_k} \right) e^{q_k (z/z_k)}; \quad \quad q_k (z) := \sum _{j=1} ^{n_k} \frac{z^j}{j}. \label{Wprod} \ee  If $f \in \C [z]$ and all of the polynomials $q_k = q$ have the same degree, $n$, then the above product formula for $h$ is called a \emph{canonical product} if $n \in \N$ is the minimal natural number so that the product converges. Given a canonical product, $h$, the \emph{genus} of $h$ is the maximum of $n$ and the degree of $f$. A canonical product, $h$, has genus, $n \in \N$, if and only if the sequence, $(1/|z_j|)$, where $(z_j)$ is the sequence of zeroes of $h$, is $n+1-$summable. The \emph{order} of an entire function, $h$, is the infimum of all positive values, $a$, so that $|h(z)| \leq \exp \, |z| ^a$, for sufficiently large $|z|$. By a theorem of Hadamard, given a canonical product, $h$, $\mr{genus} (h) \leq \mr{order} (h) < \mr{genus} (h) +1$ \cite[Theorem 5.1]{SS}, \cite{Hadamard}.  Moreover, by \cite[Theorem 2.1]{SS}, if $\mr{order} (h) =\la$, then the sequence $(1/z_j)$ is absolutely $s-$summable for any $s>\la$.

Consider an entire $h \in \scr{O} (\C )$ so that $h \sim (A,b,c)$ has a minimal, compact and quasinilpotent realization on a separable Hilbert space, $\cH$. Further suppose that $A \in \scr{T} _s (\cH)$, $s \in [1, +\infty)$, so that $h \in \scr{O} ^{\scr{T} _s} _0$.
It follows from Theorem \ref{merodomain} and Proposition \ref{polesum}, that the zeroes of $h$ (which are the poles of $h^{-1}$), are $s-$summable. Namely, if $(z_j) _{j=1} ^\infty$ is the sequence of zeroes of $h$, repeated according to order, then the sequence $(1/z_j)$ belongs to $\ell ^s$.  

Hence, suppose that $(z_j)$ is any sequence of complex values so that $(1/z_j)$ is absolutely $s-$summable for some $s \in [1, +\infty)$, and let $p$ be the infimum of all $s \in [1, \infty)$ so that this sequence is absolutely $s-$summable. It follows that the canonical product, $h$, with zeroes $(z_j)$, and $f=0$, has genus $\lfloor p \rfloor$, and also that $p \leq \mr{order} (h)$. By Hadamard's theorem,
$$ \mr{genus} (h) \leq p \leq \mr{order} (h) < \mr{genus} (h) +1. $$ This suggests that in one-variable, the field, $\scr{M} _0 ^{\scr{T} _p}$, could be equal to the field of meromorphic functions in $\C$ generated by all entire functions of order at most $p \in [1, \infty)$. If this is true, it would also be interesting to see whether uniformly entire NC functions with jointly quasinilpotent realizations in $\scr{T} _p$ have `order of growth' at most $p$ in the NC universe.

\setstretch{1}
\setlength{\parskip}{0pt}
\setlength{\itemsep}{0pt}

\bibliographystyle{abbrv}
%\bibliography{opreal}

\begin{thebibliography}{10}

\bibitem{AgMcY}
J.~Agler, J.~E. McCarthy, and N.~J. Young.
\newblock {\em Operator Analysis: {H}ilbert Space Methods in Complex Analysis},
  volume 219.
\newblock Cambridge University Press, 2020.

\bibitem{AG}
N.~I. Akhiezer and I.~M. Glazman.
\newblock {\em Theory of Linear Operators in {H}ilbert Space}.
\newblock Dover Publications, New York, NY, 1993.

\bibitem{Amitsur}
S.~A. Amitsur.
\newblock Rational identities and applications to algebra and geometry.
\newblock {\em Journal of Algebra}, 3:304--359, 1966.

\bibitem{Arv3}
W.~B. Arveson.
\newblock Subalgebras of {$C^*-$}algebras {III}: {M}ultivariable operator
  theory.
\newblock {\em Acta Mathematica}, 181:159--228, 1998.

\bibitem{Augat-freeGrot}
M.~L. Augat.
\newblock The free {G}rothendieck theorem.
\newblock {\em Proceedings of the London Mathematical Society}, 118:787--825,
  2019.

\bibitem{AHKMc-bianalytic}
M.~L. Augat, J.~W. Helton, I.~Klep, and S.~A. McCullough.
\newblock Bianalytic maps between free spectrahedra.
\newblock {\em Mathematische Annalen}, 371:883--959, 2018.

\bibitem{BBF-nc}
J.~A. Ball, V.~Bolotnikov, and Q.~Fang.
\newblock Schur-class multipliers on the {F}ock space: de {B}ranges--{R}ovnyak
  reproducing kernel spaces and transfer--function realizations.
\newblock {\em Journal of Mathematical Analysis and Applications}, 2006.

\bibitem{BBF-commute}
J.~A. Ball, V.~Bolotnikov, and Q.~Fang.
\newblock Schur-class multipliers on the {A}rveson space: de
  {B}ranges--{R}ovnyak reproducing kernel spaces and commutative
  transfer--function realizations.
\newblock {\em Journal of Mathematical Analysis and Applications},
  341:519--539, 2008.

\bibitem{BC-dBR}
J.~A. Ball and N.~Cohen.
\newblock De {B}ranges--{R}ovnyak operator models and systems theory: A survey.
\newblock In {\em Topics in Matrix and Operator Theory: Workshop on Matrix and
  Operator Theory Rotterdam (The Netherlands), June 26--29, 1989}, pages
  93--136, 1991.

\bibitem{Ball-sys}
J.~A. Ball, G.~Groenewald, and T.~Malakorn.
\newblock Structured noncommutative multidimensional linear systems.
\newblock {\em SIAM Journal on Control and Optimization}, 44:1474--1528, 2005.

\bibitem{Bart}
H.~Bart, I.~Gohberg, and M.~A. Kaashoek.
\newblock {\em Minimal factorization of matrix and operator functions}.
\newblock Birkh{\"a}user Basel, 1979.

\bibitem{BR-rational}
J.~Berstel and C.~Reutenauer.
\newblock {\em Noncommutative rational series with applications}, volume 137 of
  {\em Encyclopedia of Mathematics and its Applications}.
\newblock Cambridge University Press, Cambridge, 2011.

\bibitem{opsys}
C.~T. Chen.
\newblock {\em Introduction to linear system theory}.
\newblock Holt, 1970.

\bibitem{Cohn}
P.~M. Cohn.
\newblock {\em Free ideal rings and localization in general rings}, volume~3.
\newblock Cambridge university press, 2006.

\bibitem{Cohn2}
P.~M. Cohn.
\newblock {\em Skew fields, theory of general division rings}, volume~57 of
  {\em Encyclopedia of Mathematics and its Applications}.
\newblock Cambridge university press, 2006.

\bibitem{Conway}
J.~B. Conway.
\newblock {\em A course in functional analysis}.
\newblock Springer, 2019.

\bibitem{KRD-Herr}
K.~R. Davidson.
\newblock Domingo {H}errero: {H}is theorems and problems.
\newblock {\em Houston Journal of Mathematics}, 17:453--470, 1991.

\bibitem{dBR-model}
L.~de~Branges and J.~Rovnyak.
\newblock Canonical models in quantum scattering theory.
\newblock In {\em Perturbation Theory and its Applications in Quantum
  Mechanics}, pages 347--392. Wiley, New York, 1966.

\bibitem{dBR-ss}
L.~de~Branges and J.~Rovnyak.
\newblock {\em Square summable power series}.
\newblock Holt, Rinehart and Winston, 1966.

\bibitem{DouglasShapiroShields}
R.~G. Douglas, H.~S. Shapiro, and A.~L. Shields.
\newblock Cyclic vectors and invariant subspaces for the backward shift
  operator.
\newblock {\em Ann. Inst. Fourier (Grenoble)}, 20:37--76, 1970.

\bibitem{Fliess1}
M.~Fliess.
\newblock Sur le plongement de l’alg{\`e}bre des s{\'e}ries rationnelles non
  commutatives dans un corps gauche.
\newblock {\em C. R. Academy of Science Paris, Series A}, 271:926--927, 1970.

\bibitem{Fliess-Hankel}
M.~Fliess.
\newblock Matrices de {H}ankel.
\newblock {\em J. Math. Pures Appl}, 53:197--222, 1974.

\bibitem{MFliess}
M.~Fliess.
\newblock Sur divers produits de s{\'e}ries formelles.
\newblock {\em Bulletin de la Soci{\'e}t{\'e} Math{\'e}matique de France},
  102:181--191, 1974.

\bibitem{GoKr}
I.~Gohberg and M.~G. Krein.
\newblock {\em Theory and applications of {V}olterra operators in {H}ilbert
  space}, volume~24.
\newblock American Mathematical Society, 1970.

\bibitem{Hup-realize}
U.~V. Haagerup, H.~Schultz, and S.~Thorbj{\o}rnsen.
\newblock A random matrix approach to the lack of projections in {$C^* _{red}
  (\mathbb{F} _2)$}.
\newblock {\em Advances in Mathematics}, 204:1--83, 2006.

\bibitem{Hup-realize2}
U.~V. Haagerup and S.~Thorbj{\o}rnsen.
\newblock A new application of random matrices: {$C^* _{red} (\mathbb{F} _2 )$}
  is not a group.
\newblock {\em Annals of Mathematics}, pages 711--775, 2005.

\bibitem{Hadamard}
J.~Hadamard.
\newblock {\'E}tude sur les propri{\'e}t{\'e}s des fonctions enti{\`e}res et en
  particulier d'une fonction consid{\'e}r{\'e}e par {R}iemann.
\newblock {\em Journal de Math{\'e}matiques Pures et Appliqu{\'e}es},
  9:171--215, 1893.

\bibitem{Helton-opreal}
J.~W. Helton.
\newblock Discrete time systems, operator models, and scattering theory.
\newblock {\em Journal of Functional Analysis}, 16:15--38, 1974.

\bibitem{HKV-poly}
J.~W. Helton, I.~Klep, and J.~Vol{\v{c}}i{\v{c}}.
\newblock Geometry of free loci and factorization of noncommutative
  polynomials.
\newblock {\em Advances in Mathematics}, 331:589--626, 2018.

\bibitem{HMS-realize}
J.~W. Helton, T.~Mai, and R.~Speicher.
\newblock Applications of realizations (aka linearizations) to free
  probability.
\newblock {\em Journal of Functional Analysis}, 274:1--79, 2018.

\bibitem{JMS-ncBSO}
M.~T. Jury, R.~T.~W. Martin, and E.~Shamovich.
\newblock Blaschke--singular--outer factorization of free non-commutative
  functions.
\newblock {\em Advances in Mathematics}, 384:107720, 2021.

\bibitem{JMS-ratFock}
M.~T. Jury, R.~T.~W. Martin, and E.~Shamovich.
\newblock Non-commutative rational functions in the full {F}ock space.
\newblock {\em Transactions of the American Mathematical Society},
  374:6727--6749, 2021.

\bibitem{KVV-diff}
D.~S. Kaliuzhnyi-Verbovetskyi and V.~Vinnikov.
\newblock Noncommutative rational functions, their difference-differential
  calculus and realizations.
\newblock {\em Multidimensional Systems and Signal Processing}, 23:49--77,
  2012.

\bibitem{KVV}
D.~S. Kaliuzhnyi-Verbovetskyi and V.~Vinnikov.
\newblock {\em Foundations of free noncommutative function theory}, volume 199.
\newblock American Mathematical Society, 2014.

\bibitem{Kalman}
R.~E. Kalman, P.~L. Falb, and M.~A. Arbib.
\newblock {\em Topics in Mathematical System Theory}.
\newblock McGraw Hill, 1969.

\bibitem{Kleene}
S.~C. Kleene.
\newblock Representation of events in nerve nets and finite automata.
\newblock In {\em Automata Studies}, volume no. 34 of {\em Ann. of Math.
  Stud.}, pages 3--41. Princeton Univ. Press, Princeton, NJ, 1956.

\bibitem{KS-free}
I.~Klep and {\v{S}}.~{\v{S}}penko.
\newblock Free function theory through matrix invariants.
\newblock {\em Canadian Journal of Mathematics}, 69:408--433, 2017.

\bibitem{KVV-local}
I.~Klep, V.~Vinnikov, and J.~Vol{\v{c}}i{\v{c}}.
\newblock Local theory of free noncommutative functions: {G}erms, meromorphic
  functions, and {H}ermite interpolation.
\newblock {\em Transactions of the American Mathematical Society},
  373:5587--5625, 2020.

\bibitem{KV-freeloci}
I.~Klep and J.~Vol{\v{c}}i{\v{c}}.
\newblock Free loci of matrix pencils and domains of noncommutative rational
  functions.
\newblock {\em Commentarii Mathematici Helvetici}, 92:105--130, 2017.

\bibitem{Kronecker}
L.~Kronecker.
\newblock {\em Zur Theorie der Elimination einer Variablen aus zwei
  Algebraische Gleichungen}.
\newblock K{\"o}nigliche Akad. der Wissenschaften, Berlin, 1881.

\bibitem{Pascoe-IFT}
J.~E. Pascoe.
\newblock The inverse function theorem and the {J}acobian conjecture for free
  analysis.
\newblock {\em Mathematische Zeitschrift}, 278:987--994, 2014.

\bibitem{Pop-freeholo}
G.~F. Popescu.
\newblock Free holomorphic functions on the unit ball of {$\mathscr{B}
  (\mathcal{H}) ^n$}.
\newblock {\em Journal of Functional Analysis}, 241:268--333, 2006.

\bibitem{Pop-joint}
G.~F. Popescu.
\newblock Similarity problems in noncommutative polydomains.
\newblock {\em Journal of Functional Analysis}, 267:4446--4498, 2014.

\bibitem{PV1}
M.~Porat and V.~Vinnikov.
\newblock Realizations of non-commutative rational functions around a matrix
  centre, {I}: {S}ynthesis, minimal realizations and evaluation on stably
  finite algebras.
\newblock {\em Journal of the London Mathematical Society}, 104:1250--1299,
  2021.

\bibitem{PV2}
M.~Porat and V.~Vinnikov.
\newblock Realizations of non-commutative rational functions around a matrix
  centre, {II}: {T}he lost-abbey conditions.
\newblock {\em Integral Equations and Operator Theory}, 95:1--58, 2023.

\bibitem{RnS}
M.~Reed and B.~Simon.
\newblock {\em Methods of Modern Mathematical Physics vol. {$1$}, Functional
  Analysis}.
\newblock Academic Press, San Diego, CA, 1980.

\bibitem{SSS}
G.~Salomon, O.~M. Shalit, and E.~Shamovich.
\newblock Algebras of bounded noncommutative analytic functions on subvarieties
  of the noncommutative unit ball.
\newblock {\em Transactions of the American Mathematical Society},
  370:8639--8690, 2018.

\bibitem{SSS2}
G.~Salomon, O.~M. Shalit, and E.~Shamovich.
\newblock Algebras of noncommutative functions on subvarieties of the
  noncommutative ball: {T}he bounded and completely bounded isomorphism
  problem.
\newblock {\em Journal of Functional Analysis}, 278:108427, 2020.

\bibitem{Sarason}
D.~Sarason.
\newblock Generalized interpolation in {$H^\infty$}.
\newblock {\em Transactions of the American Mathematical Society},
  127:179--203, 1967.

\bibitem{Schut}
M.~P. Sch{\"u}tzenberger.
\newblock On the definition of a family of automata.
\newblock {\em Information and Control}, 4:245--270, 1961.

\bibitem{Simon-det}
B.~Simon.
\newblock Notes on infinite determinants of {H}ilbert space operators.
\newblock {\em Advances in Mathematics}, 24:244--273, 1977.

\bibitem{SS}
E.~M. Stein and R.~Shakarchi.
\newblock {\em Complex analysis}, volume~2.
\newblock Princeton University Press, 2010.

\bibitem{compactdet}
M.~Stessin, R.~Yang, and K.~Zhu.
\newblock Analyticity of a joint spectrum and a multivariable analytic
  {F}redholm theorem.
\newblock {\em The New York Journal of Mathematics}, 17:39--44, 2011.

\bibitem{NF}
B.~Sz.-Nagy and C.~Foia\c{s}.
\newblock {\em Harmonic analysis of operators on \uppercase{H}ilbert space}.
\newblock Elsevier, New York, N.Y., 1970.

\bibitem{Taylor2}
J.~L. Taylor.
\newblock A general framework for a multi-operator functional calculus.
\newblock {\em Advances in Mathematics}, 1972.

\bibitem{Taylor}
J.~L. Taylor.
\newblock Functions of several noncommuting variables.
\newblock {\em Bulletin of the American Mathematical Society}, 1973.

\bibitem{Voic}
D.-V. Voiculescu.
\newblock Free analysis questions {I}: {D}uality transform for the coalgebra of
  {$\partial _{X: B}$}.
\newblock {\em International Mathematics Ressearch Notices}, 2004.

\bibitem{Voic2}
D.-V. Voiculescu.
\newblock Free analysis questions {II}: {T}he {G}rassmannian completion and the
  series expansions at the origin.
\newblock {\em Journal für die Reine und Angewandte Mathematik}, 2010.

\bibitem{Weyl}
H.~Weyl.
\newblock Inequalities between the two kinds of eigenvalues of a linear
  transformation.
\newblock {\em Proceedings of the National Academy of Sciences}, 35:408--411,
  1949.

\bibitem{Woodbury}
M.~A. Woodbury.
\newblock {\em Inverting modified matrices}.
\newblock Department of Statistics, Princeton University, 1950.

\bibitem{Yang-projective}
R.~Yang.
\newblock Projective spectrum in {B}anach algebras.
\newblock {\em Journal of Topology and Analysis}, 1:289--306, 2009.

\end{thebibliography}

\end{document}